\documentclass[10pt,a4paper]{article}

\usepackage{nag}    % to complain about outdates commands/packages
%-------------------- Conditionals----------------------------------
\usepackage{iftex,ifthen,ifpdf}  %
\makeatletter%
\newif\ifdvi\dvitrue
\@ifundefined{pdfoutput}{}{\ifnum\pdfoutput>0 \dvifalse\fi}%
\makeatother%
%-------------------------------------------------------------------
%----Graphics (before fontspec to avoid clashes)--------------------
\ifdvi
  \usepackage[dvips]{graphicx}   % include only ps or eps
\else
  \usepackage[pdftex]{graphicx}  % include jpg, pdf, png
  \usepackage{epstopdf}          % add inclusion eps
\fi
%--------------------------------------------------------------------
% ---Font selection scheme--------------------------------------------
\ifPDFTeX                        % case PDFLaTeX/LaTex
    \usepackage[T1]{fontenc}     % to use T1 fonts (allows hyphenation of accented words)
    \usepackage{lmodern}
    \usepackage[utf8]{inputenc}  % to use accented char in the input
    \wlog{T1-fontenc}
\else                            % LuaLaTeX & XeLaTeX
    \usepackage{fontspec}
    \setromanfont[Ligatures=TeX]{Latin Modern Roman}
    \usepackage{unicode-math}
    \setmathfont[math-style=ISO,bold-style=ISO,vargreek-shape=TeX]{Latin Modern Math}
     \defaultfontfeatures{Ligatures=TeX}
\fi%
%---------------------------------------------------------------------
%---Language selection and quotations-----
%
\ifXeTeX                                  % case XeLaTeX
    \usepackage{polyglossia}       % replacement for babel
    \setdefaultlanguage{english}
\else                            % LuaLaTeX & pdfLaTex
    \usepackage[english]{babel}%
\fi%
%----------------------------------------------------------------------
%-----Mathtools: improvement on amstex -- https://ctan.org/pkg/mathtools
\usepackage{mathtools}
%------------Italic correction in math----------------------------------
%TEX usually takes care of italic corrections in text, but fails when it comes
%to math.  If you use the LATEX inline math commands \( and \) you can
%however work around it by setting the key mathic to true as shown below.
\mathtoolsset{mathic=true}
%-----------------------------------------------------------------------
%---- Physical units----------------------------------------------------
%\usepackage{siunitx}    % https://ctan.org/pkg/siunits
%-----------------------------------------------------------------------
%-------AMS Fonts--and Theorems-----------------------------------------
\usepackage{amsfonts,amssymb}
\usepackage{amsthm}
%-----------------------------------------------------------------------
%-------microtype-------------% Slightly tweak font spacing for aesthetics
\ifPDFTeX                        % case PDFLaTeX/LaTex\ifxetex
 \ifdvi
    \relax
 \else
    \usepackage[protrusion=true,expansion, tracking,spacing,kerning]{microtype}
 \fi
\else
\usepackage[protrusion=true]{microtype}
\fi
%-----------------------------------------------------------------------
%---Colors see: http://ctan.mines-albi.fr/macros/latex/contrib/xcolor/xcolor.pdf
\usepackage[usenames,dvipsnames,svgnames,table]{xcolor}
%-----------------------------------------------------------------------
% -------hyperref-------------------------------------------------------
%
\usepackage{hyperref}
\hypersetup{%
  % bookmarks=true, % show bookmarks bar?
  citecolor=NavyBlue, % color of links to bibliography
  colorlinks=true, % false: boxed links; true: colored links
  filecolor=magenta, % color of file links
  linkcolor=Maroon, % color of internal links
  %pdfauthor={Livio Flaminio}, % author
  pdfcreator={pdfLaTeX}, % creator of the document
  pdffitwindow=false, % window fit to page when opened
  %pdfkeywords={Mathematics}, % list of keywords
  pdfmenubar=true, % show Acrobat's menu?
  pdfnewwindow=true, % links in new window
  pdfproducer={Producer}, % producer of the document
  pdfstartview={FitH}, % fits the width of the page to the window
  pdfsubject={Mathematics}, % subject of the document
  %pdftitle={Maths Notes},% Title
  pdftoolbar=true, % show Acrobat's toolbar?
  unicode=true, % non-Latin characters in Acrobat's bookmarks
  urlcolor=cyan, % color of external links,
  anchorcolor=cyan % color for anchor text
}
%--------Fix space in definition of command-----------------------------
%\usepackage{xspace}
% Usage : \newcommand{\gb}{Great Britain\xspace}
%-----------------------------------------------------------------------
%
%-----Formatting section titles----https://ctan.org/pkg/titlesec--------
\usepackage{titlesec}
\titleformat{\section}
{\centering \normalfont\normalsize\scshape}{
\S\thesection.}{0pt}{\enspace}[]
\titleformat{\subsection}[runin]
{\normalfont\normalsize\bfseries}{\thesubsection.}{1em}{}[]
\titleformat{\subsubsection}[runin]
{\normalfont\normalsize\bfseries}{\thesubsubsection.}{1em}{}
%-----------------------------------------------------------------------
%
%---layout of list environments: enumerate, itemize, description--------
%\usepackage{enumitem}
%
%-----------------------------------------------------------------------
%---improve arithmetics in counters-------------------------------------
\usepackage{calc}
%-----------------------------------------------------------------------
%
% ---------My macros----------------------------------------------------
\usepackage{lflam}
% ----------------------------------------------------------------------
%
% ---------For algorithms-----------------------------------------------
\usepackage{listings}\lstset{
  basicstyle=\ttfamily,
  mathescape
}
% ----------------------------------------------------------------------
%
% ---------Easier references--------------------------------------------
\usepackage[capitalise,noabbrev]{cleveref}
%-----------------------------------------------------------------------
%
% --------- My theorems formats-----------------------------------------
\usepackage{thm_env_en_ante}
%-----------------------------------------------------------------------

%=======================================================================

\title{On rigidity properties of time-changes of unipotent flows}
\author{Mauro Artigiani\thanks{School of Engineering, Science and Technology,
  Universidad del Rosario, Bogot\'a, Colombia. \textit{Email address:}
  \href{mauro.artigiani@gmail.com}{mauro.artigiani@gmail.com}} 
    \and
    Livio Flaminio\thanks{Universit\'e Lille, CNRS, UMR 8524 - Laboratoire Paul
  Painlev\'e, F-59000 Lille, France. \textit{Email address:}
  \href{livio.flaminio@univ-lille.fr}{livio.flaminio@univ-lille.fr}} 
    \and 
  Davide Ravotti\thanks{Universit\"at Wien, Department of Mathematics, 
  Oskar-Morgenstern-Platz 1, 1090 Wien, Austria. \textit{Email address:}
  \href{davide.ravotti@gmail.com}{davide.ravotti@gmail.com}}}

%=======================================================================

%============= Local macros ============================================

\def\xb{\mathbf x}
\def\yb{\mathbf y}
\def\ub{\mathbf u}
\def\eb{\mathbf e}
\def\ab{\mathbf a}

\def\gb{\mathbf g}
\def\hb{\mathbf h}
\def\gbtilde{\widetilde{\gb}}
\def\bel{E} % basis element in Lie algebra
\def\Ctau{C_{\alpha}} % uniform bound for \tau_i
\def\Lip{\operatorname{Lip}} % notation for Lipschitz constant
\DeclareMathOperator{\IC}{IC} % notation for injectivity condition
\DeclareMathOperator{\FBR}{FBR} % notation for bounded radius condition
\DeclareMathOperator{\Good}{\mathcal G} % notation for Good Set
\DeclareMathOperator{\conv}{Conv} % notation for Convex Hull
\DeclareMathOperator{\Stab}{Stab} % notation for Stabilizer
\DeclareMathOperator{\Sub}{SubLev}
\DeclareMathOperator{\graph}{Gr} 
\def\sublev{\Sub(\xb,\yb)}
\def\sublevi{\Sub(\xb_i,\yb_i)}
\def\sublevone{\Sub(\xb_1,\yb_1)}
\let\oldmarginpar\marginpar
\renewcommand{\marginpar}[1]{\oldmarginpar{\footnotesize #1}}

\def\eps{\varepsilon}
\renewcommand{\epsilon}{\varepsilon}
\renewcommand{\emptyset}{\varnothing}
\newcommand*{\diff}{\mathop{}\!\mathrm{d}}
\DeclareMathOperator{\Id}{Id}
\newcommand{\one}{{\rm 1\mskip-4mu l}}

%======================================================================

\begin{document}
\date{July 18, 2024}
\maketitle{}

\begin{abstract}
  We study time-changes of unipotent flows on finite volume quotients of
  semisimple linear groups, generalising previous work by Ratner on time-changes
  of horocycle flows. 
  Any measurable isomorphism between time-changes of unipotent flows gives rise 
  to a non-trivial joining supported on its graph. 
  Under a spectral gap assumption on the groups, we show the
  following rigidity result: either the only limit point of this graph joining under 
  the action of a one-parameter renormalising subgroup is the trivial joining, 
  or the isomorphism is \lq\lq affine\rq\rq, namely it is obtained composing an 
  algebraic isomorphism with a (non-constant) translation along the centraliser.
\end{abstract}

\section{Introduction}

\subsection{Parabolic and unipotent flows.}
Parabolic flows are dynamical systems characterised by a ``slow'' divergence of
nearby points, usually at a polynomial rate. They presents an intermediate
chaotic behaviour, in the sense that they have some properties, like mixing,
which are typical of highly chaotic systems, but also have zero entropy, a
feature of regular (i.e., non chaotic) systems. Fundamental examples of
(homogeneous) parabolic flows are unipotent flows on quotients of Lie groups and
nilflows on nilmanifolds. For more details and examples of parabolic systems we
refer to~\cite[Chapter 8]{HasselblattKatok:Principal} and the introduction
of~\cite{AFRU}.

Let $G$ be a connected semisimple Lie group, with Lie algebra $\mathfrak{g}$. We
recall that an element $U\in\mathfrak{g}\setminus\{0\}$ is \emph{unipotent} if
$\ad(U) = [U,\cdot]$ is a nilpotent linear operator on $\mathfrak{g}$. Given any
quotient $M=\Gamma \backslash G$ of $G$ by a discrete subgroup $\Gamma$, the
unipotent flow $\{\phi_U^t\}_{t \in \R}$ on $M$ generated by $U$ is defined by
$\phi_U^t(\Gamma\xb)=\Gamma\xb\exp(tU)$, with $\xb\in G$. One of the prime
examples of unipotent flow is the horocycle flow on quotients of $\SL_2(\R)$.

Unipotent flows on quotients of Lie groups have been heavily studied by many
authors. Their properties are useful in many number theoretic problems, as
Margulis' seminal proof of Oppenheim conjecture~\cite{Margulis:Oppenheim}.
Ratner's proof of Raghunathan
conjectures~\cite{Ratner:Raghunathan1,Ratner:Raghunathan2,Ratner:Raghunathan3}
at the beginning of the 1990s showed that unipotent flows are well-behaved in
many ways. In particular, their probability invariant measures are of algebraic
nature.

\subsection{Rigidity in unipotent dynamics.}

One remarkable feature of unipotent dynamics is the presence of several rigidity
phenomena. Roughly speaking, rigidity occurs when a weak form of equivalence
implies a stronger one. In the homogeneous setting, it translates to the
surprising fact that, under only measure-theoretic assumptions, one can, in some
cases, deduce strong, algebraic (in particular, smooth) conclusions. Several
results of this type were proved in the 1980s by Marina Ratner.
In~\cite{Ratner:RigidityHorocycles}, she showed that any mesurable isomorphism
$\psi$ between horocycle flows $\{h_i^t\}_{t \in \R}$ on quotients
$M_i = \Gamma_i \backslash \SL_2(\R)$ for $i=1,2$ of $\SL_2(\R)$ is in fact
algebraic; namely, there exist $\xb_0 \in \SL_2(\R)$ and $s_0 \in \R$ such that
$\xb_0 \Gamma_1 \xb_0^{-1} \subset \Gamma_2$ and
$\psi(\Gamma_1\xb) = h_2^{s_0}(\Gamma_2 \xb_0 \xb)$. This result was generalised
by Witte to unipotent flows in~\cite{Witte:Rigidity,Witte:Affine}, and can now
be seen as a corollary of the main results in~\cite{Ratner:Raghunathan2}.

A strengthening of this isomorphism result for horocycle flows was obtained by
providing a complete classification of all possible joinings between them. Let
us recall that a \emph{joining} between two probability preserving flows
$\phi_i^t \colon (X_i, \mu_i) \to (X_i, \mu_i)$ for $i=1,2$ is a probability
measure $\nu$ on the product $X_1 \times X_2$ which is invariant by the product
flow $\phi_1 \times \phi_2$ and projects onto the measures $\mu_i$ under the
canonical projections onto the factors $X_1$ and $X_2$. Clearly, the set of
joinings is not empty, since the product measure $\nu = \mu_1 \otimes \mu_2$ is
always a joining. In~\cite{Ratner:RigidityJoinings}, Ratner proved that all
joinings of horocycle flows are algebraic. More precisely, using the same
notation as above, she showed that, if $\nu$ is an ergodic joining of
$\{h_1^t\}_{t \in \R}$ and $\{h_2^t\}_{t \in \R}$, then either $\nu$ is the
product joining or the two lattices are commensurable in the sense that there
exist $\xb_0 \in \SL_2(\R)$ and $\Gamma_0$ such that
$\Gamma_0 = \Gamma_1 \cap \xb_0 \Gamma_2 \xb_0^{-1}$ and the product flow
$\{h_1^t \times h_2^t\}_{t \in \R}$ is isomorphic to the horocycle flow on
$\Gamma_0 \backslash \SL_2(\R)$. This joining result is stronger than the
aforementioned one, since any isomorphism between two flows defines a joining
supported on the graph, which we call the \emph{graph joining}.

The classification of the ergodic invariant measures for horocycle flows by Dani
and Smillie~\cite{DaniSmillie} can also be seen as a further rigidity
phenomenon. Ratner's proof of Raghunatan conjecture~\cite{Ratner:Raghunathan2}
extends this classification to any unipotent action on quotients
$\Gamma \backslash G$ of Lie groups $G$, for which ergodic invariant probability
measures are always algebraic; that is, supported on ``affine'' homogeneous
submanifolds (namely, translates of closed subgroups of $G$ intersecting
$\Gamma$ in a lattice).

Remarkably, some of these results extend beyond the homogeneous setting, as we
are going to explain.

\subsection{Time-changes of unipotent flows.}
Since there is not a precise definition of parabolic flows, it is not clear what
properties should be considered typical of the parabolic class, and which should
not. In order to gain some insight on this problem, it is natural to look for
more examples, especially for non-homogeneous ones. However, it turns out that
it is not easy to produce examples of non-homogeneous parabolic flows, since
perturbations usually break down the fragile parabolicity and one typically
obtains a hyperbolic flow. A simple class of parabolic flows consists of
(smooth) \emph{time-changes} of unipotent ones. In this case points move along
the same orbits of the original flow, only with a different speed, determined by
a smooth function. In particular, time-changes of ergodic flows remain ergodic
with respect to an equivalent invariant measure. However, finer dynamical
properties can change drastically. For instance, nilflows are never weak-mixing,
yet, given any ergodic nilflow, non-trivial time-changes within a class of
\lq\lq trigonometric polynomials\rq\rq\ on the nilmanifold are
mixing~\cite{AFRU,AFU,ForniKanigowski,Ravotti:nilflows}.  For Heisenberg
nilflows, a stronger dichotomy holds: either a time-change is trivial or is
weak-mixing \cite{ForKan2}.  In the case of the horocycle flow, building on work
by Kuschnirenko~\cite{Kuschnirenko}, Marcus proved in~\cite{Marcus:Mixing} that
smooth time-changes are mixing of all orders. The rate of mixing was studied by
Forni and Ulcigrai in~\cite{ForniUlcigrai}. In the same paper, it was shown that
the spectrum is Lebesgue. Independently, at the same time, absolute continuity
of the spectrum was proven by Tiedra de Aldecoa in~\cite{Tiedra}.

Much less is known for time-changes of general unipotent flows. Simonelli showed
that the spectrum is absolutely continuous for unipotent flows on semisimple Lie
groups~\cite{Simonelli}, and the polynomial rate of mixing was studied by the
third-named author in~\cite{ravotti2020polynomial}.

Finally, let us mention that a new kind of parabolic perturbation of unipotent
flows on (compact) quotients of $\SL_n(\R)$, \emph{not} given by a time-change,
was defined and studied by the third-named author
in~\cite{Ravotti:Perturbations}.

\subsection{Rigidity of parabolic perturbations of unipotent flows.}

A natural question is to ask when a smooth parabolic perturbation of a unipotent
flow is measurably isomorphic to the homogeneous unperturbed flow. In the case
of time-changes, if the time-change function is measurably cohomologous to a
constant, it is easy to see that the two flows are isomorphic and the regularity
of the isomorphism is given by the regularity of the transfer function. Ratner
proved a rigidity result for time-changes of horocycle flows~\cite{Ratner:Acta}
which, in particular, implies the converse statement. She showed that any
measurable isomorphism $\psi$ between two time-changes
$\{\widetilde h_i^t\}_{t \in \R}$ of horocycle flows on quotients $M_i$ is in
fact algebraic in the sense that there exists $\xb_0 \in \SL_2(\R)$ and a
measurable function $\sigma \colon M_2 \to \R$ such that
$\xb_0 \Gamma_1 \xb_0^{-1} \subset \Gamma_2$ and
$\psi(\Gamma_1\xb) = h_2^{\sigma(\Gamma_2 \xb_0 \xb)}(\Gamma_2 \xb_0 \xb)$.
Hence, the problem of establishing the triviality of time-changes is equivalent
to solving the cohomological equation for the horocycle flow. By the work of the
second author and Forni~\cite{FlaminioForni:Cohomological}, measurably trivial
time-changes are \emph{rare}; namely, they form a closed subspace of countable
codimension.

A similar statement holds for time-changes of Heisenberg nilflows. Avila, Forni
and Ulcigrai proved that a function is a measurable coboundary if and only if it
is a smooth coboundary~\cite{AFU}. Therefore, they were able to provide explicit
examples of non-trivial (as a matter of fact, mixing) time-changes.

A weaker form of rigidity than the one discussed above has been showed to hold
for the perturbations constructed in~\cite{Ravotti:Perturbations}. As for
time-changes, whenever a certain cocycle associated to the flow is a coboundary,
it is easy to see that the perturbation is trivial. The third author showed that
assuming that the perturbation is smoothly trivial implies the former statement.
Therefore, analogously as for time-changes, the (smooth) triviality of the
perturbation is equivalent to a cohomological statement. It would be interesting
to extend this result to the measurable setting.

Coming back to time-changes of horocycle flows, their factors and joinings are
also algebraic, by a further work of Ratner~\cite{Ratner:Reparametrizations}.
Interestingly, non-trivial horocycle time-changes have disjoint rescalings, as
shown by Kanigowski, Lemanczyk and Ulcigrai~\cite{KLU}, albeit this property is
clearly false for the horocycle flow~itself.

A partial generalisation of the works of Ratner to the Lorentz group has been
recently achieved by Tang~\cite{Tang:timechanges}. He shows that the existence
of a measurable isomorphism between a unipotent flow in $SO(n,1)$ and a
time-change of itself generated by a smooth function $\tau$ implies that $\tau$
and the composition of $\tau$ with any element in the centraliser of the
unipotent flow are cohomologous. He then deduces a full analougue of Ratner's
result under the additional assumption of the transfer function being in $L^1$.
This seems, however, difficult to check in most concrete cases. Under similar
assumptions, in~\cite{Tang:joining}, he investigates the extension of Ratner's
joining result for time-changes of unipotent flows in $SO(n,1)$.

\subsection{Statement of results.}

In this paper, we generalise the aforementioned works of Ratner and Tang to
general semisimple linear groups satisfying a natural assumption on the spectral
gap. Given any two isomorphic time-changes of unipotent flows on any
finite-volume quotients, we show that the isomorphism is of a special algebraic form.

In order to state our result precisely, let us introduce some notation. Let
$G_1$ and $G_2$ be two connected, semisimple, linear groups. Consider two
lattices $\Gamma_1< G_1$ and $\Gamma_2 < G_2$, and consider the quotient
manifolds $M_i = \Gamma_i\backslash G_i$ with the probability measure $\mu_i$
inherited from the Haar measure on $G_i$, for $i=1,2$. Let
$\{\ub^t_i\}_{t\in\R}$ be two one-parameter unipotent subgroups of $G_i$. These
define a unipotent flow $\phi_{U_i}^t=\phi_i^t$ on $M_i$ as before. Given two
positive measurable functions $\alpha_i \colon M_i\to\R_{>0}$, we consider the
time-changes of the unipotent flows $\phi_i^t$ denoted $\widetilde{\phi}_i^t$,
where for simplicity we omit the dependence on $\alpha_i$.

The Jacobson-Morozov Theorem (see, e.g.,~\cite[Theorem~10.3]{knapp:beyond}),
ensure the existence of a $\sl_2(\R)$-sub-algebra
$\mathfrak{s}_i=\langle U_i, A_i, \overline{U_i}\rangle$ inside of
$\mathfrak{g}_i$. Exploiting this, we let $\ab_i^t=\exp(tA_i)$ be the Cartan
one-parameter subgroup which renormalises the unipotent flow defined by
$\ub_i^t$, meaning that $\ab_i^t\ub_i^s = \ub_i^{se^t}\ab_i^t$, for all
$s,t\in\R$. Denote by
$\phi_{A_1\times A_2}^t\colon M_1 \times M_2 \to M_1 \times M_2$ the diagonal
action $\phi_{A_1\times A_2}^t(\xb_1, \xb_2) = (\xb_1 \ab_1^t, \xb_2 \ab_2^t)$.

Having introduced the necessary notation, we are now ready to state our main
result. The precise assumptions about the time-change functions $\alpha_i$ are
described in \S\ref{sec:preliminaries}, and the definition of \emph{good
  time-change} is given in~\cref{def:good_timechange}. The \emph{strong spectral
  gap assumption} is also explained in \S\ref{sec:preliminaries}. For the
definition of graph joining $\mu_\psi$, we refer to \S\ref{sec:conclusion}.

\begin{bigtheorem}\label{thm:main} 
  Let $G_i$ be connected, semisimple, linear groups and $\Gamma_i$ be
  irreducible lattices in $G_i$. Assume that the manifolds
  $M_i=\Gamma_i\backslash G_i$ satisfy the strong spectral gap condition. Let
  $\phi_i^t$ be the unipotent flows on $M_i$ given by $\ub_i^t$, $i=1,2$. Let
  $\widetilde{\phi}_i^t$ be the time-changes of the unipotent flows $\phi_i^t$
  obtained by good time-changes $\alpha_i$. Assume that there is a measurable
  conjugacy $\psi\colon M_1\to M_2$ between $\widetilde{\phi}_1^t$ and
  $\widetilde{\phi}_2^t$, and let $\mu_\psi$ be the graph joining defined by
  $\psi$.
  
  Suppose that $(\phi_{A_1\times A_2}^t)_*\mu_\psi$ does \emph{not} converge to
  the trivial joining $\mu_1\otimes\mu_2$ as $t\to +\infty$. Then, there exist
  $\overline{\gb}\in G_2$, an isomomorphism $\varpi\colon G_1\to G_2 $, such
  that $\varpi(\ub_1)=\ub_2$,
  $\varpi(\Gamma_1)\subset\overline{\gb}^{-1}\Gamma_2\overline{\gb}$, and, up to
  passing to a finite quotient, we have that
  \[
    \psi(\Gamma_1 \xb) = \Gamma_2\overline{\gb}\varpi(\xb) \mathbf{c}(\Gamma_1
    \xb)\ub_2^{t(\Gamma_1 \xb)},
  \]
  for $\mu_1$-a.e.\ $\Gamma_1 \xb\in M_1$, where $\mathbf{c}(\Gamma_1 \xb)$
  commutes with $\ub_2$, $t(\Gamma_1 \xb)\in\R$, and both depend measurably on
  $\Gamma_1 \xb$.
\end{bigtheorem}

If we restrict to a special class of groups and a specific kind of unipotent
element, we obtain a rigidity result without any assumption on the graph
joining. In particular, this allows us to recover Ratner's original result on
time-changes of the horocycle flow~\cite{Ratner:Acta}.

\begin{bigtheorem}\label{thm:sl2xG} 
  Let $G_i'$ be connected, semisimple, linear groups with finite centre and
  without compact factors. Consider the groups $G_i = \SL_2(\R) \times G_i'$,
  for $i=1,2$ and let $\Gamma_i$ be irreducible lattices in $G_i$. Let $\phi_i^t$
  be the unipotent flows on $M_i$ given by $\ub_i^t = \left(\begin{smallmatrix}
  1 & t \\ 0 & 1 \end{smallmatrix}\right) \times \eb_i$, where $\eb_i\in G_i'$
  is the identity.  Let $\widetilde{\phi}_i^t$ be the time-changes of the
  unipotent flows $\phi_i^t$ obtained by good time-changes $\alpha_i$. Assume
  that there is a measurable conjugacy $\psi\colon M_1\to M_2$ between
  $\widetilde{\phi}_1^t$ and $\widetilde{\phi}_2^t$.
  
  Then, there exist $\overline{\gb}\in G_2$,an isomomorphism
  $\varpi\colon G_1\to G_2 $, such that $\varpi(\ub_1)=\ub_2$,
  $\varpi(\Gamma_1)\subset\overline{\gb}^{-1}\Gamma_2\overline{\gb}$, and we
  have that
  \[
    \psi(\Gamma_1 \xb) = \Gamma_2\overline{\gb}\varpi(\xb) \mathbf{c}(\Gamma_1
    \xb)\ub_2^{t(\Gamma_1 \xb)},
  \]
  for $\mu_1$-a.e.\ $\Gamma_1 \xb\in M_1$, where $\mathbf{c}(\Gamma_1 \xb)$
  commutes with $\ub_2$, $t(\Gamma_1 \xb)\in\R$, and both depend measurably on
  $\Gamma_1 \xb$.
\end{bigtheorem}

Finally, as a consequence of our main technical result, we obtain the following
cohomological statement, which generalises~\cite[Theorem~1.1]{Tang:timechanges}.

\begin{bigtheorem}\label{thm:cohomologycentraliser} Let $G_1$ and $G_2$ be two
  connected, semisimple, linear groups. Let $\Gamma_i$ be irreducible lattices
  in $G_i$ and assume that the manifolds $M_i = \Gamma_i\backslash G_i$ satisfy
  the strong spectral gap assumption, for $i=1,2$. Let $\widetilde{\phi}_1^t$ be
  the time-change of the unipotent flow obtained from $\phi_1^t$ by a good
  time-change $\alpha$. Assume that there is a measurable conjugacy
  $\psi\colon M_1\to M_2$ between $\widetilde{\phi}_1^t$ and the unperturbed
  unipotent flow $\phi_2^t$. Then $\alpha(x)$ and $\alpha(x\mathbf{c})$ are
  \emph{measurably cohomologous} for all $\mathbf{c}$ in the centraliser of
  $\ub_1$.
\end{bigtheorem}

For the precise formulation and the proof of the above result, we refer the
reader to~\cref{cor:cohomologyofalphas}.

\begin{notabene}
After this article was finished, Lindenstrauss and Wei announced a similar and
stronger rigidity result for unipotent flows~\cite{LindenstraussWei}. 
Notably, they are able to prove that any measurable isomorphism $\psi$ 
of time-changes is in fact cohomologous to an algebraic isomorphism 
(as in our \Cref{thm:main}, but without the assumption on the graph joining). 
Furthermore, their result does not require any smoothness assumption 
on the time-change function. 

In both our and Lindenstrauss and Wei's work, the fundamental tool is a version 
of Ratner's Basic Lemma, which controls the relative distance of the images 
of points under $\psi$ in terms of the time their orbits 
stay close. In our proof, we employ geometric arguments which are closer to 
Ratner's work, whereas Lindenstrauss and Wei' approach is based on 
the study of Kakutani-Bowen balls under Kakutani equivalence. 

The main difference we can see with the arguments outlined in \cite{LindenstraussWei} 
is that they manage to ensure the convergence 
of the conjugation of $\psi$ by the subgroups generated by $A_1$ and $A_2$ to 
a well-defined measurable map (which will be an isomorphism of the homogeneous 
flows). This crucial step relies on a $\SL_2(\R)$ ergodic theorem 
\cite[\S6]{LindenstraussWei}.
\end{notabene}

\subsection*{Outline of the paper.}
The paper is organized as follows.

We begin in \S\ref{sec:preliminaries} by giving all the precise definitions of
the objects we deal with. In particular, we explain our assumptions on both the
time-changes and the groups we consider.

In \S\ref{sec:geometryU}, using the Lie group structure, we study the geometry
of the unipotent flow, defining certain polynomials which measure the divergence
of the orbit in the $\sl_2(\R)$-sub-algebra given by $U_1$ and in the remaining
part of the Lie algebra. In this section we explain carefully the construction
of the blocks, which will be crucial later on. We study the blocks exploiting
the polynomial nature of the unipotent flow.

Section \ref{sec:basiclemma} is dedicated to the proof of the key technical
result we use: Ratner's Basic Lemma (\cref{lemma:basic}). This result is applied
in \S\ref{sec:centraliser} to show that the isomorphism maps leaves tangent to
the normaliser of $U_1$ to leaves tangent to the normaliser of $U_2$.  At end of
this section, we prove a more general version of
\cref{thm:cohomologycentraliser}.

In \S\ref{sec:conclusion} we define the graph joining and exploit Ratner's
classification of joinings of unipotent flows in~\cite{Ratner:Raghunathan2} to
prove \cref{thm:main}. Finally, \cref{thm:sl2xG} is proven in
\S\ref{sec:generalisingRatner} by adapting Ratner's original strategy
from~\cite{Ratner:Acta}. The \cref{sec:appendix_Chevalley}, contains, for
completeness, a proof of a consequence of Chevalley's Lemma.

\subsection*{Acknowledgments}
M. A. was supported by the \textit{Fondo de Arranque} project ``Rigidez en unos
flujos parab\'olicos'' of the Universidad del Rosario.

We thank Giovanni Forni for many comments on a previous version of this article,
and Nimish Shah for useful discussions around~\cite{Ratner:Raghunathan2}.

%%% Local Variables:
%%% mode: latex
%%% TeX-master: "time_changes_unipotents"
%%% End:

% !TEX root = time_changes_unipotents.tex

\section{Preliminaries}\label{sec:preliminaries}

In this section we introduce the class of homogeneous manifolds and times
changes of unipotent flow that we shall consider and we highlight their
properties. We also introduce two technical conditions to take into account the
possibility that the manifold $M_2$ is not compact.

\subsection{Time-changes and spectral condition.}\label{sec:time-changes}

In general will have groups~$G_1$ and~$G_2$, manifolds~$M_1$ and~$M_2$, flows
$(\phi_1^t)$ and~$(\phi_2^t)$, \dots. In this section we deal with properties
common to these objects.  Thus, in order to lighten the notation, we shall drop
the indices $1$ and $2$, so that~$G$, $M$, $(\phi^t)$ \dots will refer to both
groups, manifolds, flows, \dots.

\subsubsection{}\label{sec:sl2_triple}
Recall from the introduction that~$G$ is a \emph{connected semisimple linear
  group} and that the manifold~$M= \Gamma\backslash G$ is obtained as a quotient
of this group by a lattice~$\Gamma$. The manifold $M$ is endowed with a
probability measure~$\mu$ locally defined by the Haar measures of~$G$.  A point
in the group~$G$ will be written in boldface characters:~$\mathbf x \in G$.

We also recall that~$\{\ub^t\}_{t \in \R}$ is a one-parameter unipotent subgroup
of~$G$, generated by an element~$U\in \mathfrak g=\Lie(G)$. The
subgroup~$\{\ub^t\}_{t \in \R}$ defines a measure-preserving smooth flow on~$M$
by letting
\[
  \phi^t(x) = x \ub^t,\qquad (x =\Gamma\gb\in M).
\]
We identify the vector field on~$M$ generating the flow~$(\phi^t)$ with~$U$. In
fact, elements of the Lie algebra~$\mathfrak g$ can be considered as left
invariant vector fields on the group~$G$ and thus they project to the quotient
space~$M$.

We fix, once and for all,
a~$\sl_2(\R)$-sub-algebra~$\mathfrak s=\langle U,A,\bar U\rangle$
containing~$U$, that is a triple satisfying the commutation relations
\begin{equation}\label{eq:blocks:7}
  [A, U] = U, \quad [A, \bar U] = -\bar U,
  \quad [U, \bar U]=2A.
\end{equation}
The existence of such a sub-algebra~$\mathfrak s$ is ensured by the
Jacobson-Morozov Theorem (see, e.g., \cite[Theorem 10.3]{knapp:beyond}).  We
denote $\ab^t =\exp(tA)$ and $\phi_A^t(x) = x\ab^t$ the induced flow on $M$.

\begin{definition}
  Let~$\alpha \colon M \to \R_{>0}$ be a strictly positive measurable function
  on~$M$. The \emph{time-change of the unipotent flow~$(\phi^t)$ with
    generator~$U$ determined by $\alpha$} is the flow~$(\widetilde \phi^t)$
  on~$M$ generated by the vector field~$\alpha^{-1} U$.
\end{definition}

The definition above implies that the time-changed flow~$(\widetilde \phi^t)$
preserves the measures~$\alpha \mu$.

For~$x\in M$, let~$w(x,t)$ be defined by the equality
\begin{equation}\label{eq:01_background:1}
  t = \int_0^{w(x,t)} \alpha (x \ub^r) \D r.
\end{equation}
Then
\begin{equation}\label{eq:01_background:2a}
  \widetilde \phi^t(x) = x \ub^{w(x,t)}.
\end{equation}

It is easily verified by its definition that the function~$w$ is a
\emph{cocycle} for the time-changed flow $(\widetilde \phi^t)$, i.e., it
satisfies the \emph{cocycle equation}
\begin{equation}\label{eq:01_background:4}
  w(x,s+t)= w(x,s) +  w\big(\widetilde \phi^s(x),t\big).
\end{equation}
We also define~$\xi(x, t)$ to be the inverse function of~$w(x,t)$ with respect
to the second variable, so that
\[
  t=w(x, \xi(x,t)),\qquad \forall (x,t)\in M \times \R.
\]
The identities~\eqref{eq:01_background:1} and~\eqref{eq:01_background:2a} may be
rewritten as
\[
  \xi(x, t) = \int_0^{t} \alpha (x \ub^r) \D r\quad \text{and}\quad {\widetilde
    \phi}^{\xi(x, t)} (x) = x \ub^t.
\]
By symmetry, the function~$\xi$ is a cocycle for the~$U$-flow $(\phi^t)$.

\subsubsection{}
A measurable cocycle $w(x,t)$ for the time-changed flow $(\widetilde \phi^t)$ is
a measurable \emph{coboundary} if there exists a function $f\colon M\to\R$,
called the \emph{transfer function} such that
\[
  w(x,t) = f\circ\widetilde{\phi}^t(x) - f(x),
\]
for $\mu$-a.e.\ $x\in M$ and all $t\in\R$. Two cocycles are measurably
\emph{cohomologous} if their difference is a measurable coboundary.

Cohomologous cocycles yield isomorphic time-changes, and the isomorphism is
exactly flowing along the orbit of the time-changed flow for time $f(x)$.
See~\cite[\S9]{Katok:combinatorial} or~\cite[\S 2.1]{AFRU} for more details.

\subsubsection{}
Next we state some of the assumptions on the time-change function $\alpha$
indicating, for justification, their consequences later needed in the proofs.

\begin{assumption}\label{ass:1}
  We will assume that~$\alpha$ is \emph{uniformly bounded}, and we let
  \[
    \Ctau = \max \{ \alpha(x), \alpha^{-1}(x) : x \in M\} >1.
  \]
\end{assumption}

This hypothesis easily implies the following \emph{rough bound} on the
cocycle~$w$ and its inverse:
\begin{equation}\label{eq:01_background:3}
  \Ctau^{-1} \leq \Lip_t [w( \cdot ,t)] \leq \Ctau, \quad\text{and}\quad
  \Ctau^{-1} \leq \Lip_t[\xi( \cdot ,t)] \leq \Ctau,
\end{equation}
where the symbol~$\Lip_t$ stands for the Lipschitz constant of a function with
respect to the variable denoted by~$t$.

Another immediate consequence of the Assumption~\ref{ass:1}, is that the
measure~$\widetilde \mu :=\alpha\mu$ which is preserved by the time-changed
flows~$(\widetilde \phi^t)$ is equivalent to the measures~$\mu$:
\begin{equation}\label{eq:01_background:5}
  \Ctau^{-1} \mu \le \widetilde  \mu \le \Ctau \mu.
\end{equation}

\begin{assumption}
  \label{ass:mormalisation}
  The time-change function~$\alpha$ has average~$1$ with respect to the Haar
  measure~$\mu$.
\end{assumption}

This assumption is not restrictive, as we can always rescale the function by its
average and compose the isomorphism $\psi$ with $\phi_A^t$ for an appropriate
amount.

\subsubsection{} The rough bound~\eqref{eq:01_background:3} is insufficient for
good estimates of the cocycles~$w(\cdot,t)$ and~$\xi(\cdot,t)$ for large values
of~$t$. For precise estimates in this range we need an effective version of the
ergodic theorem, which will be derived from a quantitative mixing result.  The
study of quantitative mixing results is the same of the study of decay of matrix
coefficients of the regular representation of $G$ on $L^2 (M,\mu)$. This goes
back to the work of
Harish-Chandra~\cite{Harish-Chandra:spherical1,Harish-Chandra:spherical2} and
Warner~\cite{WarnerG:Harmonic_analysis1,WarnerG:Harmonic_analysis2}. For more
details we refer the reader to the introduction of~\cite{BjEiGo:multiple_mixing}
and the references therein.

We say that~$M$ satisfies the \emph{strong spectral gap assumption} if the
restriction of the regular representation of~$G$ on~$ L^2(M,\mu)$ to every
noncompact simple factor of $G$ is isolated, in the Fell topology, from the
trivial representation.

\begin{assumption}\label{ass:2}
  The manifolds~$M_1$ and~$M_2$ satisfy the strong spectral gap assumption.
\end{assumption}

This assumption holds in a number of cases, for example if $G$ has the Kazhdan
property (T)~\cite{Nevo:Spectral}, or if $G$ is a semisimple group with finite
centre and no compact factors and $\Gamma$ is
irreducible~\cite[p.~285]{KeSa:spectral_gaps} and~\cite{KleinbockMargulis}.  We
will use the following quantitative mixing result, stated
in~\cite{BjEiGo:multiple_mixing}.

\begin{theorem}\label{thm:spectral_gap}
  Assume that~$M$ satisfies the strong spectral gap assumption.  There exist
  constants~$C_{M}, \widetilde \eta >0$ and a Sobolev norm~$\mathcal{S}$ for
  functions on~$M$ such that for all sufficiently smooth~$f,g\colon M \to \C$ of
  average zero and for all~$t \geq 1$ we have
  \[
    \abs{ \langle f\circ \phi^t, g \rangle } \leq C_{M} \mathcal{S}(f)
    \mathcal{S}(g) t^{- \widetilde \eta}.
  \]
\end{theorem}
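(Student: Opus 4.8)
The plan is to recognise the left-hand side as a matrix coefficient of the regular representation of $G$ and to combine the Harish-Chandra estimate for the spherical function with the uniform integrability furnished by the spectral gap. Writing $\pi$ for the right regular representation of $G$ on $L^2_0(M)$, the subspace of $L^2(M,\mu)$ of functions of zero average, we have $\langle f\circ\phi^t,g\rangle=\langle\pi(\ub^t)f,g\rangle$, with $f,g\in L^2_0(M)$. By the Mautner phenomenon together with the Howe--Moore theorem, an irreducible unitary representation of $G$ that is nontrivial on every noncompact simple factor has no nonzero $\ub^t$-invariant vector --- here one uses that a nonzero unipotent element has nonzero component in some noncompact factor --- so its matrix coefficients at $\ub^t$ tend to $0$; the task is to make this quantitative. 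The strong spectral gap assumption does exactly this: it is equivalent to the existence of an integer $p=p(M)<\infty$ such that $\pi$ is \emph{strongly $L^{2p}$} in the sense of Cowling--Haagerup--Howe, so that for $K$-finite unit vectors $v,w$ (with $K\subset G$ maximal compact) one has
\[
  |\langle\pi(\gb)v,w\rangle|\le\big(\dim\langle Kv\rangle\,\dim\langle Kw\rangle\big)^{1/2}\,\Xi_G(\gb)^{1/p},
\]
with $\Xi_G$ the Harish-Chandra spherical function and $\langle Kv\rangle$ the span of the $K$-orbit of $v$, and this holds uniformly over the irreducible constituents of the direct-integral decomposition of $L^2_0(M)$.

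Next I would estimate $\Xi_G(\ub^t)$. Using a $KAK$ decomposition $\ub^t=k_1(t)a(t)k_2(t)$ and the renormalisation relation $\ab^s\ub^1\ab^{-s}=\ub^{e^s}$ inside the fixed triple $\mathfrak s=\langle U,A,\bar U\rangle$, one checks that the Cartan component $a(t)$ escapes to infinity at logarithmic speed along the direction of $A$, with $\|\log a(t)\|\asymp\log t$ for $t\ge2$. Since $[A,U]=U$, the element $A$ is non-central in its simple factor, so the half-sum of positive roots $\rho$ takes a strictly positive value $\rho_0$ on the escape direction of $a(t)$; the classical Harish-Chandra bound $\Xi_G(\exp H)\ll e^{-\rho(H)}(1+\|H\|)^{d}$ on the positive chamber then gives $\Xi_G(\ub^t)\ll t^{-\rho_0}(\log t)^{d}$ for $t\ge2$. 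Decreasing the exponent slightly to absorb the logarithmic factor, we obtain $\Xi_G(\ub^t)^{1/p}\ll t^{-\widetilde\eta}$ for all $t\ge1$ and any fixed $\widetilde\eta\in(0,\rho_0/p)$.

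It remains to pass from $K$-finite to general smooth vectors. Since $L^2_0(M)$ has finite multiplicities in each $K$-type --- a standard feature of the automorphic spectrum of a finite-volume quotient --- one decomposes $f=\sum_\sigma f_\sigma$ and $g=\sum_\sigma g_\sigma$ into $K$-isotypic components, applies the matrix-coefficient bound to each pair $(f_\sigma,g_{\sigma'})$, and sums; choosing the Sobolev norm $\mathcal S$ to involve a high enough power of the Casimir element of $K$, which acts on the $\sigma$-component by a scalar growing faster than any fixed power of the relevant $K$-type data by Weyl's dimension formula, makes the resulting double sum converge and dominates it by $\mathcal S(f)\,\mathcal S(g)$. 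Absorbing all remaining constants into $C_M$ gives the stated inequality. The step I expect to be the main obstacle is the equivalence between the strong spectral gap and the uniform strongly-$L^{2p}$ property, together with the bookkeeping over $K$-types needed to exhibit an admissible Sobolev norm $\mathcal S$; with these in hand the decay estimate is a direct computation, carried out in detail in~\cite{BjEiGo:multiple_mixing}.
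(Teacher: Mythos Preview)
The paper does not prove this theorem: it is quoted verbatim as a known result, with the attribution ``stated in~\cite{BjEiGo:multiple_mixing}''. There is therefore no proof in the paper to compare against; the authors treat the estimate as a black box and immediately use it to derive \cref{thm:mixing_bound}.

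Your sketch is a fair outline of the standard argument behind such estimates (Cowling--Haagerup--Howe combined with the Harish-Chandra bound and a passage to Sobolev vectors), and you correctly identify the reference where the details are carried out. One point is misstated, however: it is not true that $L^2_0(M)$ has finite multiplicities in each $K$-type. What is true is that each irreducible unitary representation of $G$ is admissible, so $K$-types occur with finite multiplicity \emph{within a single irreducible constituent}; but $L^2_0(M)$ decomposes as a direct integral of infinitely many such constituents, and the total $K$-isotypic subspaces are infinite-dimensional. The fix is routine --- one applies the Cowling--Haagerup--Howe bound fibrewise in the direct-integral decomposition and then integrates, the uniformity in $p$ coming precisely from the strong spectral gap --- but the sentence as written is incorrect. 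With that repaired, your outline matches the approach of the cited source.
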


Without loss of generality we may suppose that
$ \sup_M\abs{f} \le \mathcal{S}(f)$ for any $f\colon M\to \C$ with
$\mathcal{S}(f)<\infty$.

\begin{assumption}\label{ass:3}
  The time-change function~$\alpha$ has finite Sobolev
  norm~$\mathcal S(\alpha)$.
\end{assumption}

\begin{definition}[Good time-change]\label{def:good_timechange}
  A time-change function $\alpha\colon M\to\R_{>0}$ is \emph{good} if it
  satisfies Assumption~\ref{ass:1},\ref{ass:mormalisation}, and~\ref{ass:3}.
\end{definition}

\Cref{ass:3} is needed to ensure we can apply \Cref{thm:spectral_gap} to
$\alpha$. In the case of $\SL_2(\R)$, the polynomial decay in 
\Cref{thm:spectral_gap} holds under weaker assumptions (namely, it 
is sufficient to require H\"{o}lder regularity along the rotation subgroup). 
In this sense, our assumptions on the time-change function are a natural 
generalization of Ratner's ones in \cite{Ratner:Acta}.

Under the assumptions of this quantitative mixing result, we deduce a polynomial
estimate on the ergodic averages of~$(\phi^t)$ on a set of large measure.

\begin{lemma}\label{thm:mixing_bound}	
  There exists constants~$\eta' \in (0,1)$ and $C_{M}'$, depending only on~$M$,
  such that the following holds.  Let $f\colon M \to \C$ be a function with zero
  average and finite Sobolev norm~$\cS (f)$.  For all~$\omega >0$ there exists a
  set $Y=Y(\omega, f) \subset M$ of measure~$\mu(Y) \geq 1-\omega$ and a number
  $m'= m'(\omega) \geq 1$ such that for all~$x \in Y$ and all~$t \geq m'$ we
  have
  \[
    \abs[\bigg]{\int_0^t f(x \ub^s)\, \D s}\le C_{M}' \cS (f) t^{1-\eta'}.
  \]
\end{lemma}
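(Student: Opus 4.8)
The plan is to deduce \Cref{thm:mixing_bound} from the quantitative mixing estimate of \Cref{thm:spectral_gap} by the standard argument that converts decay of correlations into a pointwise ergodic bound on a large set, via a variance (second moment) estimate and Chebyshev's inequality. First I would reduce to the case of real-valued $f$ by splitting into real and imaginary parts, and I may assume $\cS(f) = 1$ by homogeneity of the conclusion in $\cS(f)$. Fix $T \ge 1$ and set $F_T(x) = \int_0^T f(x\ub^s)\,\D s$. Using the invariance of $\mu$ under $(\phi^t)$ and the fact that $f$ has zero average, one computes
\[
  \|F_T\|_{L^2(\mu)}^2 = \int_M \left( \int_0^T f(x\ub^s)\,\D s \right)^2 \D\mu(x)
  = \int_0^T\!\!\int_0^T \langle f\circ\phi^{s}, f\circ\phi^{r}\rangle \,\D s\,\D r
  = 2\int_0^T (T-u) \langle f\circ\phi^{u}, f\rangle \,\D u.
\]
Splitting the $u$-integral at $u=1$, bounding the part over $[0,1]$ crudely by $2\,\|f\|_\infty^2 \le 2$ times $T$ (using $\|f\|_\infty \le \cS(f) = 1$), and applying \Cref{thm:spectral_gap} on $[1,T]$ to get $|\langle f\circ\phi^u,f\rangle| \le C_M u^{-\widetilde\eta}$, one obtains $\|F_T\|_{L^2}^2 \le C\, T^{2-\min(1,\widetilde\eta)}$ for a constant $C$ depending only on $M$. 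Thus, writing $\beta = \min(1,\widetilde\eta) \in (0,1]$, we have the variance bound $\|F_T\|_{L^2}^2 \le C T^{2-\beta}$.

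Next I would pass from this single-scale $L^2$ bound to a pointwise bound valid simultaneously for all large $t$, on a set of measure close to $1$. The device is a dyadic decomposition: apply Chebyshev to $F_{T_k}$ along the geometric sequence $T_k = 2^k$, choosing the exceptional-set thresholds so that the measures of the bad sets are summable. Concretely, pick an exponent $\eta' \in (0,\beta)$ (so $\eta' \in (0,1)$), and let
\[
  B_k = \left\{ x \in M : |F_{T_k}(x)| > T_k^{\,1-\eta'} \right\}.
\]
By Chebyshev, $\mu(B_k) \le \|F_{T_k}\|_{L^2}^2 / T_k^{2-2\eta'} \le C\, T_k^{-(\beta - 2\eta')}$ — and here I should simply choose $\eta'$ small enough, say $\eta' < \beta/4$, so that the exponent $\beta - 2\eta' > 0$ and $\sum_k \mu(B_k) < \infty$; a modest adjustment of constants absorbs the geometric series. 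Given $\omega > 0$, choose $k_0 = k_0(\omega)$ so that $\sum_{k\ge k_0}\mu(B_k) \le \omega$, and set $Y = Y(\omega,f) = M \setminus \bigcup_{k\ge k_0} B_k$ and $m' = m'(\omega) = T_{k_0} = 2^{k_0}$; then $\mu(Y) \ge 1-\omega$. For $x \in Y$ and any $t \ge m'$, locate $k \ge k_0$ with $T_k \le t < T_{k+1} = 2T_k$. Writing $F_t(x) = F_{T_{k+1}}(x) - \int_t^{T_{k+1}} f(x\ub^s)\,\D s$, the first term is bounded by $T_{k+1}^{1-\eta'} = 2^{1-\eta'} T_k^{1-\eta'} \le 2 t^{1-\eta'}$ since $x \notin B_{k+1}$, and the second term is at most $(T_{k+1}-t)\,\|f\|_\infty \le T_k \le t \le t^{1-\eta'} \cdot t^{\eta'}$... here I should instead use the cruder but sufficient bound $(T_{k+1}-t)\|f\|_\infty \le T_k = T_{k+1}/2 \le t$, which is $\le t^{1-\eta'}$ only for bounded $t$, so more carefully: bound it by $T_k \le t$, and note $t \le t \cdot (t/m')^{0}$; the clean fix is to compare with $F_{T_k}$ from below instead, $F_t(x) = F_{T_k}(x) + \int_{T_k}^t f(x\ub^s)\,\D s$, so that $|F_t(x)| \le T_k^{1-\eta'} + (t - T_k)\|f\|_\infty \le t^{1-\eta'} + T_k \le t^{1-\eta'} + t$. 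This still has a stray linear term; the honest resolution is to run the dyadic argument not on $F_{T_k}$ but on the maximal-type quantity, or simply to note that $(t-T_k) \le T_k$ and $T_k^{1-\eta'} \ge (t/2)^{1-\eta'}$ forces no contradiction — rather, I will cover $[m',\infty)$ by the finer grid of points $jT_k^{\,\eta'}$ within each dyadic block, which only costs a factor $T_k^{1-\eta'}$ in the number of sets and hence keeps $\sum\mu(B_k)$ summable after decreasing $\eta'$ again, and makes each increment of length $\le T_k^{\eta'} \le t^{\eta'}$, so that $(t-T_k')\|f\|_\infty \le t^{\eta'} \le t^{1-\eta'}$ (valid as $\eta' < 1/2 < 1 - \eta'$). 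Combining, $|F_t(x)| \le t^{1-\eta'} + t^{1-\eta'} = 2 t^{1-\eta'}$, and renaming $C_M' = 2$ (or whatever constant the bookkeeping yields) gives the claim.

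The main obstacle is precisely this last bookkeeping step: passing from an $L^2$ bound at discrete scales to a pointwise bound for \emph{all} $t \ge m'$ without losing the exponent. The variance estimate itself is routine, and Chebyshev is routine; the subtlety is choosing the grid of scales fine enough that interpolating between consecutive scales costs only $O(t^{1-\eta'})$, yet coarse enough that the union of exceptional sets still has measure $\le \omega$. As sketched, this is achieved by using a sub-dyadic grid with spacing $\sim t^{\eta'}$ and shrinking $\eta'$ so that the extra polynomial factor $t^{1-\eta'}$ in the number of exceptional sets per dyadic block is still beaten by the gain $t^{-(2-2\eta')}$ from Chebyshev; any $\eta' < \beta/(something\ explicit)$ works. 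One should also verify that $m'$ depends only on $\omega$ (and implicitly on $M$) and not on $x$ or on further properties of $f$ beyond $\cS(f) \le 1$, which is clear from the construction since $k_0$ is chosen from the summable tail of $\sum_k \mu(B_k)$ and the bound $\mu(B_k) \le C T_k^{-(\beta-2\eta')}$ is uniform over all $f$ with $\cS(f) \le 1$.
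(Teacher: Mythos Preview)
Your approach is the same as the paper's---variance bound from \Cref{thm:spectral_gap}, Chebyshev along a grid of times, then intersect the good sets and interpolate between grid points using $\sup|f|\le \cS(f)$---and the variance computation is correct. The gap is in the grid spacing at the end. You place grid points $T_k^{\eta'}$ apart inside each dyadic block $[T_k,2T_k]$, so there are $T_k^{1-\eta'}$ of them; since each bad set has measure at most $C\,T_k^{-(\widetilde\eta-2\eta')}$ by Chebyshev, the total bad measure in block $k$ is at most $C\,T_k^{1+\eta'-\widetilde\eta}$, and this is \emph{not} summable over $k$ for any $\eta'>0$ once $\widetilde\eta\le 1$ (and capping $\beta=\min(1,\widetilde\eta)$ does not help). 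So the sentence ``keeps $\sum\mu(B_k)$ summable after decreasing $\eta'$ again'' is false as written.

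The fix is that your grid is much finer than necessary: the interpolation only needs spacing $\lesssim t^{1-\eta'}$, since that already gives $(t-T')\sup|f|\lesssim \cS(f)\,t^{1-\eta'}$. With spacing $T_k^{1-\eta'}$ there are only $T_k^{\eta'}$ points per dyadic block, the total bad measure per block becomes $C\,T_k^{3\eta'-\widetilde\eta}$, and any $\eta'<\widetilde\eta/3$ makes this summable. The paper achieves this in one stroke by taking the polynomial grid $t_n=n^{4/\widetilde\eta}$, whose spacing is $t_{n+1}-t_n\asymp t_n^{1-\widetilde\eta/4}$ and for which Chebyshev gives $\mu(Y_{t_n}^c)\le C\,n^{-2}$ directly; the exponent $\eta'=\widetilde\eta/4$ then falls out and the interpolation term is automatically of the right size.
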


\begin{proof}
  Let us denote
  \[
    s_t(x)=\int_0^t f(x\ub^s) \, \D s.
  \]
  Using \cref{thm:spectral_gap}, we have
  \[
    \begin{split}
      \norm{s_t}_2^2 &= \int_M \left( \int_0^t f(x\ub^s)\, \D s\right)^2 \, \D
      \mu = \int_M \int_0^t\int_0^t f(x\ub^r) f(x\ub^s)\,
      \D r \D s \, \D\mu\\
      &= \int_0^t\int_0^t \langle f(x),f(x\ub^{s-r})\rangle \, \D r \D s
      \le \int_0^t\int_{-s}^{t-s} \abs{\langle f(x),f(x\ub^r\rangle}\, \D r \D s \\
      &\le 2t \int_0^t\abs{\langle f(x),f(x\ub^r\rangle}\, \D r \le
      \frac{2C_M\,\mathcal{S} (f)^2}{1-\widetilde \eta} t^{2-\tilde \eta} = C_1
      \,\mathcal{S} (f)^2 \,t^{2-\tilde \eta}.
    \end{split}
  \]
  By Chebyshev's Inequality, we have that
  \[
    \mu\{ x\in M: \abs{s_t(x)}\ge \mathcal{S} (f) \,t^{1-\frac{\tilde
        \eta}{4}}\}\le C_1\, t^{-\frac{\tilde \eta}{2}}.
  \]
  For~$t>0$, we consider the
  sets~$Y_t = \{ x\in M: \abs{s_t(x)}<\mathcal{S} (f) t^{1-\tilde \eta/ 4} \}$.
  Choosing~$t_n=n^{4/\tilde \eta}$, we have that
  \[
    \mu(Y_{t_n})\ge 1- \frac{C_{1} }{n^2}.
  \]
  Given~$\omega>0$ we define~$\bar{k}$ so
  that~$C_{1} \sum_{k\ge \bar{k}} \frac{1}{k^2}<\omega$.  Hence
  \[
    \mu(Y)> 1-\omega, \qquad \text{where} \qquad Y=\bigcap_{k\ge \bar{k}}
    Y_{t_k}.
  \]
  Any point~$x\in Y$
  satisfies~$\abs{s_{t_k}(x)}<\mathcal{S} (f)t_k^{1-\tilde \eta/4}$ for
  all~$k\ge\bar k$.  We need to prove a similar estimate for the remaining
  times~$t\ge t_{\bar k}$.

  Let~$k\ge \bar k$ such that~$t_k<t\le t_{k+1}$. There exists a
  constant~$C_2=C_2(\tilde\eta)>0$ such
  that~$t_{k+1}-t_k=C_2t_k^{1-\tilde \eta/4}$.  Writing~$t=t_k+q$,
  with~$0<q\le C_{2} t_k^{1-\tilde \eta/4}$, we obtain, for $x\in Y$,
  \[
    \abs{s_t(x)}\le \abs{s_{t_k}(x)} + \abs[\bigg]{\int_{t_k}^{t_k+q}
      f(x\ub^s)\, \D s} \le C_{M}'\,\mathcal{S} (f)\, t_k^{1-\frac{\tilde
        \eta}{4}}.
  \]
  with~$C_{M}'=C_{1}+C_{2}$. Setting $m'=t_{\bar k}$
  and~$\eta' = \widetilde \eta /4$, we have completed the proof.
\end{proof}

\begin{corollary}\label{thm:mixing_condition}
  There exists~$\eta \in (0,1/4)$ such that for all~$\omega >0$ there exist a
  set $Y \subset M$ of measure~$\mu(Y) \geq 1-\omega$ and a constant $m \geq 1$
  such that for all~$x \in Y$ and all~$t \geq m$,
  \[
    \abs{\xi(x,t) - t} \leq \frac{1}{\Ctau^4} t^{1-\eta}, \quad \abs{w(x,t) - t}
    \leq \frac{1}{\Ctau^4} t^{1-\eta}.
  \]
\end{corollary}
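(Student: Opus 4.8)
The plan is to apply Lemma \ref{thm:mixing_bound} to the specific function $f = \alpha - 1$, which has zero average by Assumption \ref{ass:mormalisation} and finite Sobolev norm by Assumption \ref{ass:3}, and then translate the resulting estimate on the ergodic integral $\int_0^t(\alpha(x\ub^s)-1)\,\D s$ into estimates on both cocycles. Recall from the rewritten identities that
\[
  \xi(x,t) = \int_0^t \alpha(x\ub^r)\,\D r = t + \int_0^t\big(\alpha(x\ub^r)-1\big)\,\D r,
\]
so Lemma \ref{thm:mixing_bound} applied with a suitably small $\omega$ (say $\omega/2$) already gives a set $Y_1$ of measure at least $1-\omega/2$ and a threshold $m_1$ on which $\abs{\xi(x,t)-t}\le C_M'\,\cS(\alpha-1)\,t^{1-\eta'}$ for all $t\ge m_1$. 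To turn the constant $C_M'\,\cS(\alpha-1)$ into $\Ctau^{-4}$ it suffices to enlarge the threshold: choose $m_1$ large enough that $C_M'\,\cS(\alpha-1)\,m_1^{-(\eta'-\eta)}\le \Ctau^{-4}$ for any fixed $\eta<\eta'$, which is possible since we are free to take $\eta\in(0,\eta')\cap(0,1/4)$; then for $t\ge m_1$ we have $C_M'\,\cS(\alpha-1)\,t^{1-\eta'} = C_M'\,\cS(\alpha-1)\,t^{-(\eta'-\eta)}\,t^{1-\eta}\le \Ctau^{-4}t^{1-\eta}$.

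The estimate on $w$ is obtained from the one on $\xi$ by the (approximate) inversion relation $t = w(x,\xi(x,t)) = \xi(x, w(x,t))$ together with the Lipschitz bound \eqref{eq:01_background:3}. Concretely, write $s = w(x,t)$, so that $\xi(x,s) = t$; then
\[
  \abs{w(x,t) - t} = \abs{s - \xi(x,s)} \le \frac{1}{\Ctau^4}\, s^{1-\eta}
\]
provided $s\ge m_1$ and $x\in Y_1$. Since the rough bound gives $\Ctau^{-1}t\le s = w(x,t)\le \Ctau t$, one has $s\ge m_1$ as soon as $t\ge \Ctau m_1$, and moreover $s^{1-\eta}\le (\Ctau t)^{1-\eta}\le \Ctau t^{1-\eta}$. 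This produces a bound $\abs{w(x,t)-t}\le \Ctau^{-3} t^{1-\eta}$, which is slightly worse than required; to recover the clean constant $\Ctau^{-4}$ one absorbs the extra factor $\Ctau$ by enlarging the threshold once more, exactly as in the previous paragraph (replace $\eta'-\eta$ by a smaller positive exponent if necessary). Taking $Y = Y_1$, $m = \max\{m_1, \Ctau m_1\} = \Ctau m_1$, and the single $\eta$ fixed above finishes the proof; if one prefers to be scrupulous about using the same $Y$ for both inequalities, note that the argument for $w$ only invoked the $\xi$-estimate on $Y_1$, so no intersection of sets is actually lost.

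The only mildly delicate point — and the step I would flag as the main obstacle, though it is more bookkeeping than genuine difficulty — is making sure that a \emph{single} exponent $\eta$ and a \emph{single} set $Y$ work simultaneously for $\xi$ and $w$ with the \emph{same} constant $\Ctau^{-4}$, since the passage from $\xi$ to $w$ costs a few powers of $\Ctau$ and a rescaling of $t$ by $\Ctau$. All of these losses are polynomial of degree strictly less than the gain $t^{-(\eta'-\eta)}$ coming from undershooting the exponent, so they are all killed by taking the threshold $m$ large enough; the proof is essentially a one-function specialization of Lemma \ref{thm:mixing_bound} followed by this uniformization.
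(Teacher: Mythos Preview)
Your proposal is correct and follows essentially the same approach as the paper: apply Lemma~\ref{thm:mixing_bound} to $f=\alpha-1$, then undershoot the exponent $\eta<\eta'$ and enlarge the threshold to absorb all multiplicative constants. The only minor difference is in deriving the $w$-bound: the paper writes $\abs{w(x,t)-t}=\abs{w(x,t)-w(x,\xi(x,t))}\le C_\alpha\,\abs{\xi(x,t)-t}$ directly from the Lipschitz bound on $w$, which avoids your change of variable $s=w(x,t)$ and the resulting extra factor of $C_\alpha$ from $s^{1-\eta}\le (C_\alpha t)^{1-\eta}$; but as you note, both losses are swallowed by the same exponent-undershooting trick.
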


\begin{proof}
  We begin with the estimate for~$\xi$.  We have, for all $t\ge m_0$,
  \[
    \abs{\xi(x, t)-t} = \abs[\bigg]{\int_0^t (\alpha(x\ub^r) -1) \, \D r} \le
    C'_{M} \cS(\alpha) t^{1- \eta'},
  \]
  where we applied \cref{thm:mixing_bound} to the zero mean function
  $\alpha-1$. In order to conclude, it is enough to fix a smaller
  $\eta < \eta' \leq 1/4$ and possibly a larger~$m_0$.

  For the second estimate, using the bound on the Lipschitz constants of~$w$
  seen as functions of time, we have
  \[
    \abs{w(x,t)-t}=\abs{w(x,t)-w(x, \xi(x,t))} \leq \Ctau \abs{\xi(x, t)-t},
  \]
  and the estimate follows, possibly by again reducing $\eta$.
\end{proof}

\subsection{General setting.} We now return to the general setting of
\cref{thm:main}. Thus $G_1$ and $G_2$ are two groups as in
\S\ref{sec:sl2_triple} and all objects and constants defined in
\S\ref{sec:sl2_triple} have corresponding suffixes. We let
$\Ctau=\max\{C_{\alpha_1}, C_{\alpha_2}\}$, $C_M=\max\{C_{M_1}, C_{M_2}\}$,
$\eta= \min\{\eta_1,\eta_2\}$, $m=\max\{m_1, m_2\}$ so that all inequalities
stated in \S\ref{sec:time-changes} holds in $M_1$ and in $M_2$. Thus we now have

\medskip \noindent \textit{The map~$\psi \colon M_1 \to M_2$ is a measurable
  conjugacy between the time-changes~$\widetilde \phi_1$ and~$\widetilde \phi_2$
  defined by the good time-change functions~$\alpha_1$ and~$\alpha_2$. We
  suppose that~$\psi$ maps the measure~$\alpha_1\mu_1$ to the
  measure~$\alpha_2\mu_2$.}

\medskip

From the inequality \eqref{eq:01_background:5} we immediately have the following
bounds.
\begin{lemma}
  \label{lem:01_background:1}
  For any measurable set~$A\subset M_2$ we have
  \begin{equation}\label{eq:01_background:6}
    \Ctau^{-1} \widetilde \mu_2(A) \le \mu_1 (\psi^{-1} A ) \le \Ctau \widetilde \mu_2(A).
  \end{equation}
\end{lemma}

By definition of~$\psi$, for almost all~$x \in M_1$,
\[
  \psi(x \ub_1^t) = \psi ({\widetilde \phi_1}^{\xi_1(x, t)} (x)) = {\widetilde
    \phi_2}^{\xi_1(x, t)} (\psi (x)) = \psi(x) \ub_2^{w_2(\psi(x),\xi_1(x, t))}.
\]
We define
\[
  z(x,t) := w_2(\psi(x),\xi_1(x, t)),
\]
so that
\[
  \psi(x \ub_1^t) = \psi(x) \ub_2^{z(x,t) }, \quad \text{and} \quad\Ctau^{-2}
  \leq \Lip_t[z( \cdot ,t) ] \leq \Ctau^2.
\]
\begin{remark}
  The identity above exhibits~$\psi$ as a measurable isomorphism of the
  $U_1$-flow on $M_1$ and a reparametrization of the $U_2$-flow on
  $M_2$. Observe, however, that we cannot apply directly
  \cref{thm:mixing_condition} to the function $z(x,t)$ because, a priori, it is
  not sufficiently smooth.
\end{remark}

\begin{lemma}\label{lemma:z_cocycle}
  The map~$z$ is a \emph{cocycle over~$\{\ub_1^t\}_{t \in \R}$}, namely for
  almost all~$x \in M_1$ and all~$t,s \in \R$ we have
  \[
    z(x,t+s) = z(x,s) + z(x\ub_1^s,t).
  \]
  Moreover, if for some~$x \in M_1$,~$m >0$, and~$t'>t \geq 0$ we have
  \[
    z(x,t')-z(x,t) >m,
  \]
  then
  \[
    t'-t > m\Ctau^{-2}, \text{\ \ \ and\ \ \ } z(y,t')-z(y,t) > m\Ctau^{-4}
    \text{\ for all\ }y \in M_1.
  \]
\end{lemma}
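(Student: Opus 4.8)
The cocycle identity for $z$ is immediate from the cocycle equations for $w_2$ (equation~\eqref{eq:01_background:4}) and the fact that $\xi_1$ is a cocycle over $(\phi_1^t)$: explicitly, $z(x,t+s) = w_2(\psi(x),\xi_1(x,t+s))$, and expanding $\xi_1(x,t+s) = \xi_1(x,s) + \xi_1(x\ub_1^s, t)$, then applying the cocycle relation for $w_2$ while remembering $\psi(x\ub_1^s) = \psi(x)\ub_2^{z(x,s)} = \widetilde\phi_2^{\,\xi_1(x,s)}(\psi(x))$, collapses everything to $z(x,s) + z(x\ub_1^s,t)$. This is a routine chase through the definitions and the cocycle relations already established in \S\ref{sec:time-changes}.

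For the second part, suppose $z(x,t') - z(x,t) > m$ for some fixed base point $x$. Recall that $z(x,\cdot)$ is Lipschitz in time with constant at most $\Ctau^2$; hence $z(x,t') - z(x,t) \le \Ctau^2\,(t'-t)$, which immediately gives $t' - t > m\Ctau^{-2}$.

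The bound $z(y,t') - z(y,t) > m\Ctau^{-4}$ for \emph{every} $y \in M_1$ is the point requiring a little more care, since a priori $z$ depends on the base point through $\psi$. The key observation is that the relation $z(x,t')-z(x,t) > m$ controls not the value of $z$ but the \emph{length of the $\ub_2$-orbit segment} traversed: by the identity $\psi(x\ub_1^r) = \psi(x)\ub_2^{z(x,r)}$ combined with the definition $z(x,r) = w_2(\psi(x),\xi_1(x,r))$, the increment $z(x,t') - z(x,t)$ equals $w_2(\psi(x\ub_1^t), \xi_1(x\ub_1^t, t'-t))$, i.e. the $w_2$-cocycle evaluated over the time increment $\xi_1(x\ub_1^t,t'-t)$ of the $\xi_1$-cocycle starting at $x\ub_1^t$. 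Now $\xi_1(x\ub_1^t, t'-t) = \int_0^{t'-t}\alpha_1(x\ub_1^t\ub_1^r)\,\D r \ge \Ctau^{-1}(t'-t)$ by Assumption~\ref{ass:1}, and $w_2(\,\cdot\,, s) \ge \Ctau^{-1} s$ by the rough bound~\eqref{eq:01_background:3}; chaining these, $z(x,t') - z(x,t) \ge \Ctau^{-2}(t'-t)$, so the hypothesis forces $t' - t > m\Ctau^{-2}$ (consistent with the above). Running the same two estimates in reverse from \emph{any} $y$: we have $z(y,t') - z(y,t) \ge \Ctau^{-2}(t'-t) > \Ctau^{-2}\cdot m\Ctau^{-2} = m\Ctau^{-4}$, using only the universal lower Lipschitz bounds and not the specific value $\psi(y)$. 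This is exactly the claimed inequality.

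The only mildly delicate step is making sure the ``reverse'' lower bound $z(y,s) \ge \Ctau^{-2}s$ genuinely holds uniformly in $y$ and not merely for a.e.\ $y$: it does, because it follows purely from $\Lip_t[z(\cdot,s)] \ge \Ctau^{-2}$ — which in turn is the composition of the two-sided Lipschitz bounds $\Ctau^{-1} \le \Lip_t[\xi_1(\cdot,s)] \le \Ctau$ and $\Ctau^{-1} \le \Lip_t[w_2(\cdot,s)] \le \Ctau$ from~\eqref{eq:01_background:3}, valid for all points. I expect no real obstacle here; the argument is a bookkeeping exercise in assembling the Lipschitz estimates already recorded, and the statement is essentially designed to package these bounds in the form needed later in \S\ref{sec:basiclemma}.
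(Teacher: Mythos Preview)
Your proof is correct and follows essentially the same approach as the paper: the second part is identical (cocycle relation plus the two-sided Lipschitz bounds on $z$), while for the cocycle identity the paper uses the characterizing relation $\psi(x\ub_1^t)=\psi(x)\ub_2^{z(x,t)}$ directly in one line rather than unwinding through the constituent cocycles $\xi_1$ and $w_2$ as you do. Both arguments are equivalent; the paper's is slightly slicker.
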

\begin{proof}
  By definition of~$z$ we have
  \[
    \psi(x)\ub_2^{z(x,t+s)} = \psi(x \ub_1^{t+s}) = \psi(x\ub_1^s)
    \ub_2^{z(x\ub_1^s,t)} = \psi(x) \ub_2^{z(x,s)} \ub_2^{z(x\ub_1^s,t)},
  \]
  which implies the first claim.

  As for the second, by the cocycle relation and the Lipschitz bound on~$z$ we
  get
  \[
    m < z(x,t')-z(x,t) = z(x \ub_1^t, t'-t) \leq (t'-t)\Ctau^2,
  \]
  so that~$t'-t > m\Ctau^{-2}$. Then, for all other~$y \in M_1$,
  \[
    z(y,t')-z(y,t) = z(y\ub_1^t, t'-t) \geq \Ctau^{-2}(t'-t) > m\Ctau^{-4},
  \]
  and the proof is complete.
\end{proof}

\subsection{Two technical conditions.}
We now define two conditions which will be relevant later on: the Injectivity
Condition (IC) and the Frequently Bounded Radius Condition (FBR).

In general, we
 show that these conditions hold on a set of arbitrarily large measure.

\begin{definition}[The Injectivity Condition (IC)]
  \label{def:01_background:1}
  We say that a point~$x\in M_2$ satisfies the \emph{Injectivity
    Condition~$\IC(\rho,m)$} if, for any lift~$\xb\in G_2$ of~$x$ and
  any~$\yb\in B(\xb, \rho)$, we have
  \[
    d(\xb \ub_2^{t_1}, \gamma \yb \ub_2^{t_2}) > \rho \quad \text{for all} \quad
    \gamma \in \Gamma_2 \setminus \{\eb\} \quad \text{and all} \quad t_1,t_2 \in
    [-m,m].
  \]
\end{definition}

If the manifold $M_2$ is compact then, then there exists $\rho_0$ such that the condition $\IC(\rho,m)$ is satisfied for all $\rho < \rho_0$ and all $m>0$. Then in the following proposition, we may take $Y_{\IC}=M_2$ and $\zeta=0$.

\begin{proposition}\label{thm:IC}
  Let~$m >1$ and~$\zeta>0$.  There exist a compact
  set~$Y_{\IC}=Y_{\IC}(m,\zeta) \subset M_2$, with~$\mu_2(Y_{\IC}) > 1-\zeta$,
  and~$\rho = \rho(m, \zeta) \in (0,1)$ such that \emph{every}~$x\in Y_{\IC}$
  satisfies the Injectivity Condition~$\IC(\rho,m)$.
\end{proposition}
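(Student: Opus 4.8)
The plan is to produce the set $Y_{\IC}$ by a Borel–Cantelli-type exhaustion argument, using that the injectivity radius of $M_2$, while possibly not bounded below, is positive $\mu_2$-almost everywhere, and that the unipotent translation $\ub_2^t$ moves points a controlled amount for $|t|\le m$. Concretely, for $\xb\in G_2$ let $r(\xb)$ denote the injectivity radius at the corresponding point of $M_2$, i.e.\ the supremum of $r$ such that $\xb\mapsto\Gamma_2\xb$ is injective on the ball $B(\xb,r)$; this descends to a measurable function $r\colon M_2\to(0,\infty]$ which is positive everywhere since $\Gamma_2$ is discrete. First I would note that, because $\{\ub_2^t\}$ is a fixed smooth one-parameter subgroup and left translations are isometries for the chosen right-invariant metric, there is a constant $L=L(m)$ such that $d(\xb,\xb\ub_2^t)\le L$ for all $|t|\le m$ and all $\xb\in G_2$; consequently $d(\xb\ub_2^{t_1},\yb\ub_2^{t_2})\ge d(\xb,\yb)-2L$ whenever $\yb$ is in a small ball around $\xb$.

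Next I would make this quantitative in the right order. Fix $m>1$ and $\zeta>0$. Since $r>0$ a.e., the sets $\{x\in M_2 : r(x)>1/n\}$ increase to a full-measure set, so there is $n_0$ with $\mu_2\{r>1/n_0\}>1-\zeta$; set $Y_{\IC}=\{x\in M_2 : r(x)\ge 1/n_0\}$, which we may take compact by shrinking slightly (inner regularity of $\mu_2$). I claim that for a suitable $\rho=\rho(m,\zeta)\in(0,1)$, every $x\in Y_{\IC}$ satisfies $\IC(\rho,m)$. To see this, suppose $\xb$ lifts $x\in Y_{\IC}$, $\yb\in B(\xb,\rho)$, $\gamma\in\Gamma_2\setminus\{\eb\}$, and $d(\xb\ub_2^{t_1},\gamma\yb\ub_2^{t_2})\le\rho$ for some $t_1,t_2\in[-m,m]$. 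Then
\[
  d(\xb,\gamma\yb)\le d(\xb,\xb\ub_2^{t_1}) + d(\xb\ub_2^{t_1},\gamma\yb\ub_2^{t_2}) + d(\gamma\yb\ub_2^{t_2},\gamma\yb)\le 2L+\rho,
\]
using that $d(\gamma\yb\ub_2^{t_2},\gamma\yb)=d(\yb\ub_2^{t_2},\yb)\le L$ by left-invariance (here I am implicitly using a left-invariant, or at least translation-comparable, metric, which one fixes at the outset). Hence $d(\xb,\gamma\yb)\le 2L+\rho$, and combined with $d(\xb,\yb)<\rho$ we get $d(\yb,\gamma\yb)\le 3L+2\rho$, i.e.\ $\gamma$ displaces $\yb$ (and therefore $\xb$, up to $2\rho$) by a bounded amount — but this bound $2L$ is \emph{not} small, so a naive choice of $\rho$ does not immediately give a contradiction. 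This is the main obstacle: the displacement $L$ caused by the $\ub_2$-translation is of order $1$, not of order $\rho$.

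To overcome it, the right move is to run the exhaustion not against the injectivity radius of $M_2$ itself but against a ``thickened'' injectivity: for $m>1$ define
\[
  r_m(\xb) = \inf_{|t_1|,|t_2|\le m}\ \inf_{\gamma\in\Gamma_2\setminus\{\eb\}}\ d(\xb\ub_2^{t_1},\gamma\xb\ub_2^{t_2}),
\]
which again descends to a measurable function $r_m\colon M_2\to(0,\infty]$; one checks $r_m>0$ everywhere because for fixed $\xb$ the compact set $\{\xb\ub_2^{t}:|t|\le m\}$ meets only finitely many $\Gamma_2$-translates of any bounded neighbourhood, by discreteness of $\Gamma_2$ and properness of the $G_2$-action, so the infimum over $\gamma$ is attained and positive, and it is continuous in $\xb$ so the infimum over $t_1,t_2$ is also positive. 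Then I would pick $n_0$ with $\mu_2\{r_m>2/n_0\}>1-\zeta$, let $Y_{\IC}$ be a compact subset of $\{r_m\ge 2/n_0\}$ of measure $>1-\zeta$, and set $\rho=1/(2n_0)\wedge \tfrac12$. For $x\in Y_{\IC}$, lift $\xb$, take $\yb\in B(\xb,\rho)$, $\gamma\ne\eb$, $|t_i|\le m$: by left-invariance $d(\yb\ub_2^{t_2},\gamma^{-1}\xb\ub_2^{t_1})$ comparisons give, via the triangle inequality and $d(\xb,\yb)<\rho$,
\[
  d(\xb\ub_2^{t_1},\gamma\yb\ub_2^{t_2}) \ge d(\xb\ub_2^{t_1},\gamma\xb\ub_2^{t_2}) - d(\gamma\xb\ub_2^{t_2},\gamma\yb\ub_2^{t_2}) \ge r_m(\xb) - \rho \ge 2/n_0 - 1/(2n_0) > 1/(2n_0) \ge \rho,
\]
which is exactly $\IC(\rho,m)$. (I would state $d(\gamma\xb\ub_2^{t_2},\gamma\yb\ub_2^{t_2})=d(\xb\ub_2^{t_2},\yb\ub_2^{t_2})$ and bound this by $\rho$ using that right translation by $\ub_2^{t_2}$, $|t_2|\le m$, is Lipschitz with a constant we fold into the choice of $\rho$ — or, more cleanly, use a metric for which the relevant translations are isometries and keep the estimate as written.) The compact case is the remark already in the text: when $M_2$ is compact $r_m$ is bounded below by a positive constant $\rho_0$ uniformly, so $Y_{\IC}=M_2$ works for every $\rho<\rho_0$. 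Finally I would remark that in applications one also wants $\rho<1$, which is guaranteed by the explicit choice above, and that all the sets produced can be taken compact by inner regularity, completing the proof.
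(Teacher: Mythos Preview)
Your argument is essentially correct but takes a different route from the paper. One small inaccuracy: your claim that $r_m>0$ \emph{everywhere} is false in general, since $r_m(\xb)=0$ precisely when $\xb\ub_2^{t}\xb^{-1}\in\Gamma_2$ for some $0<|t|\le 2m$, i.e.\ on $\ub_2$-periodic orbits. This does not damage the proof, since by ergodicity the periodic orbits have measure zero and you only need $r_m>0$ almost everywhere to run the exhaustion; but you should say so. The hand-waving about folding the Lipschitz constant of right translation by $\ub_2^{t_2}$ (for $|t_2|\le m$) into the choice of $\rho$ is fine once made explicit.

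The paper's proof is quite different and exploits the renormalisation $\ab_2^s\ub_2^t\ab_2^{-s}=\ub_2^{e^s t}$. It first takes a compact $K\subset M_2$ of measure $>1-\zeta$ with injectivity radius $4r$, then chooses $s_0$ with $e^{s_0}r>m$ and sets $Y_{\IC}=K\ab_2^{s_0}$. If $\IC(\rho,m)$ failed at some $\xb\in\pi_2^{-1}(Y_{\IC})$, conjugating back by $\ab_2^{-s_0}$ converts the offending times $t_1,t_2\in[-m,m]$ into $e^{-s_0}t_i\in[-r,r]$, so the $\ub_2$-displacement becomes \emph{small}, and a short triangle-inequality computation places both $\yb'$ and $\gamma\yb'$ inside the $3r$-ball about $\xb':=\xb\ab_2^{-s_0}\in\pi_2^{-1}(K)$, contradicting the injectivity radius. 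This sidesteps entirely the ``thickened injectivity'' function you introduce: instead of making $\rho$ small relative to a complicated $r_m$, the paper makes the time window small relative to the ordinary injectivity radius. Your approach, by contrast, is more elementary and does not use the $\mathfrak{sl}_2$-triple; indeed it proves the more general statement the paper mentions immediately after its proof (validity for any volume-preserving flow whose periodic orbits have measure zero).
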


\begin{proof}
  Let~$K\subset M_2$ be a compact set of measure~$\mu_2(K) \geq 1-\zeta$.
  Let~$4r$ be the injectivity radius of~$K$. Thus, for any two points~$x\in K$
  and~$y\in M_2$ such that~$d(x,y)< r$ and for any lift~$\xb \in G_2$ of~$x$,
  there exists a unique lift~$\yb\in G_2$ of~$y$ such
  that~$d(\xb \ub^{t_1}_2, \yb \ub^{t_2}_2) < r$ for all~$t_1, t_2$
  with~$\abs{t_1}<r$ and~$\abs{t_2}<r$.

  We recall that, if~$\ab_2^s=\exp(sA_2)$, with~$A_2\in \mathfrak s_2$ as in
  \S\ref{sec:sl2_triple}, one has $\ab^s_2 \ub^t_2 \ab^{-s}_2 =
  \ub^{e^{s}t}_2$. Let~$s_0$ such that~$e^{s_0} r> m$ and
  set~$Y_{\IC} = K {\mathbf a}^{s_0}_2$.
  Clearly~$\mu_2( Y_{\IC}) = \mu_2(K) \geq 1-\zeta$.  Let~$L$ be the Lipschitz
  constant of the map~$\gb \mapsto \gb {\mathbf a}^{-s_0}_2$ on $G_2$.
  Set~$\rho = r/L$.

  Arguing by contradiction, let~$\xb \in \pi_2^{-1}(Y_{\IC})$
  with~$d(\xb, \yb) < \rho$ and suppose
  that~$d(\xb \ub_2^{t_1}, \gamma \yb \ub_2^{t_2}) \le \rho$ for
  some~$\gamma \in \Gamma_2\setminus \{\eb\}$ and some~$t_1,t_2 \in [-m,m]$.

  Then,~$d(\xb{\mathbf a}^{-s_0}_2 , \yb{\mathbf a}^{-s_0}_2)\le L\rho =r~$
  and~$d(\xb \ub_2^{t_1}{\mathbf a}^{-s_0}_2, \gamma \yb \ub_2^{t_2}{\mathbf
    a}^{-s_0}_2) \le L\rho = r$.  Set~$\xb':=\xb {\mathbf a}^{-s_0}_2$
  and~$\yb':=\yb {\mathbf a}^{-s_0}_2$.  On one hand we
  have~$d(\xb' , \yb')\le r~$ and
  \[
    d(\xb' \ub_2^{e^{-s_0}t_1}, \gamma \yb' \ub_2^{e^{-s_0}t_2})= d(\xb
    \ub_2^{t_1}{\mathbf a}^{-s_0}_2, \gamma \yb \ub_2^{t_2}{\mathbf
      a}^{-s_0}_2)\le r.
  \]
  On the other hand, since~$e^{-s_0}t_1, e^{-s_0}t_2\in [-r,r]$, we also have
  \[
    d(\xb' , \gamma \yb' ) \le d(\xb' , \xb'\ub^{e^{-s_0}t_1}_2 ) +
    d(\xb'\ub^{e^{-s_0}t_1}_2, \gamma\yb' \ub^{e^{-s_0}t_2}_2) + d( \gamma \yb'
    \ub^{e^{-s_0}t_2}_2 , \gamma \yb') < 3 r.
  \]
  This shows that both points~$\yb'$ and~$\gamma \yb'$ belong to the ball of
  radius~$3r$ centred in~$\xb'$. Since~$\xb'\in \pi_2^{-1}(K)$ and since the
  radius of injectivity of~$M_2$ at any point in~$K$ is strictly greater
  than~$3r$ we conclude that~$\gamma$ is the identity of~$G_2$, a contradiction.
\end{proof}

In the above proof we used the existence of the subgroup~$({\mathbf a}^s)$ to
rescale the orbits of the~$U$-flow. In fact the above proposition holds for any
volume preserving smooth flow on a finite volume manifold provided that the set
of periodic orbit has measure
zero. 

\subsubsection{}
We now introduce the second condition.

\begin{definition}[The Frequently Bounded Radius Condition ($\FBR$)]
  Given~$T_0 \geq 0$,~$r_0 >0$, and~$c \in [0,1]$, we say that a
  point~$x \in M_2$ satisfies the \emph{Frequently Bounded Radius
    Condition~$\FBR(T_0, c, r_0)$} if, for any lift~$\xb \in G_2$ of~$x$ and
  any~$T \geq T_0$, there exists~$t \in [c T, T]$ such that the injectivity
  radius at~$\xb \ab^t$ is bounded below by~$r_0$.
\end{definition}

If the space~$M_2$ is compact, then there is~$r_0 >0$ such that every point
$x \in M_2$ satisfies~$\FBR(T_0, c, r_0)$ for all~$T_0 \geq 0$ and
$c \in [0,1]$.  It is also clear that if a point satisfies~$\FBR(T_0, c, r_0)$,
then it also satisfies~$\FBR(T_0', c', r_0')$ for all~$T_0' \geq T_0$,
$0 \leq c' \leq c$, and~$0<r_0' \leq r_0$.

\begin{lemma}\label{lemma:FBR}
  For every~$c \in (0,1)$  there exists~$r_0 >0$ and $T_0>0$ such
  that almost every point~$x \in M_2$ satisfies~$\FBR(T_0, c, r_0)$.

  For every~$c \in (0,1)$ and every~$\omega>0$, there exists~$r_0 >0$
  and~$T_0 \geq 0$ such that the measure of the set of points~$x\in M_2$ which
  satisfy~$\FBR(T_0,c,r_0)$ is at least~$1-\omega$.
\end{lemma}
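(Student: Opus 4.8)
The plan is to deduce both assertions from Birkhoff's ergodic theorem applied to the Cartan flow $\phi_{A_2}^t(x)=x\ab_2^t$ on $M_2$. Write $\iota\colon M_2\to\R_{>0}$ for the injectivity radius, so that ``the injectivity radius at $\xb\ab_2^t$'' is just $\iota(x\ab_2^t)$ and does not depend on the lift of $x$, and for $r>0$ put $\Theta_r=\{x\in M_2:\iota(x)<r\}$. The $\Theta_r$ are open, non-increasing in $r$, and $\bigcap_{r>0}\Theta_r$ is $\mu_2$-negligible, so $\mu_2(\Theta_r)\to0$ as $r\to0^+$. If $M_2$ is compact then some $\Theta_r$ is empty and both statements are trivial, so assume $M_2$ non-compact. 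Fix $c\in(0,1)$. The guiding idea is: choose $r_0$ with $\mu_2(\Theta_{r_0})<1-c$; then for a $\phi_{A_2}$-generic point $x$ the orbit arc $\{x\ab_2^t:t\in[cT,T]\}$, of length $(1-c)T$, cannot for large $T$ be entirely contained in $\Theta_{r_0}$ — otherwise the orbit would spend a proportion $\ge 1-c$ of $[0,T]$ in $\Theta_{r_0}$, contradicting the ergodic theorem once $T$ is large — so this arc meets $\{\iota\ge r_0\}$, which is precisely the content of $\FBR(\cdot,c,r_0)$.

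The one genuinely non-formal point is that $\phi_{A_2}$ is $\mu_2$-ergodic. Since $A_2\ne0$ carries the $\ad$-eigenvalue $1$ on $U_2$ (from $[A_2,U_2]=U_2$), it is a non-central $\R$-semisimple element, so $\{\ab_2^t\}_{t\in\R}$ is a closed non-compact one-parameter subgroup of $G_2$ and $\ab_2^t$ eventually leaves every compact set. As $\Gamma_2$ is irreducible, Moore's ergodicity theorem — equivalently, the Howe--Moore theorem together with the fact that, by irreducibility, $L^2_0(M_2,\mu_2)$ contains no non-zero vector fixed by any non-compact simple factor of $G_2$ — gives that the $\{\ab_2^t\}$-action on $M_2$ is ergodic (indeed mixing). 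Under the strong spectral gap assumption this even comes with a polynomial mixing rate, exactly as in \cref{thm:spectral_gap}, but that is not needed.

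Granting ergodicity, I would conclude as follows. Fix $r_0>0$ with $\delta:=\mu_2(\Theta_{r_0})<1-c$. By Birkhoff's theorem for $\phi_{A_2}$ and the bounded function $\one_{\Theta_{r_0}}$, for $\mu_2$-a.e.\ $x$ there is $T_0=T_0(x)$ such that $\int_0^T\one_{\Theta_{r_0}}(x\ab_2^s)\,\D s<(1-c)T$ for all $T\ge T_0$; for such $x$ and $T\ge T_0$ the set $\{t\in[cT,T]:x\ab_2^t\in\Theta_{r_0}\}$ has Lebesgue measure strictly less than $(1-c)T$, the length of $[cT,T]$, hence is a proper subset of $[cT,T]$, so some $t\in[cT,T]$ has $\iota(x\ab_2^t)\ge r_0$. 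This proves the first assertion with $r_0$ uniform (the threshold is in general forced to depend on $x$, since an orbit starting deep inside a cusp needs a large $T_0$). For the quantitative version, keep the same $r_0$ and put $Y_N=\{x:\int_0^T\one_{\Theta_{r_0}}(x\ab_2^s)\,\D s<(1-c)T\text{ for all }T\ge N\}$; the $Y_N$ increase to a conull set by what was just shown, and the same computation gives $Y_N\subseteq\{x:x\text{ satisfies }\FBR(N,c,r_0)\}$, so given $\omega>0$ it suffices to pick $N$ with $\mu_2(Y_N)>1-\omega$ and set $T_0=N$. (Alternatively, an effective ergodic theorem for $\phi_{A_2}$ in the spirit of \cref{thm:mixing_bound} produces such an $N$ depending only on $\omega$.)

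The main obstacle is precisely the ergodicity of $\phi_{A_2}$ invoked above: at first sight $A_2$ may lie inside a single simple factor of $G_2$, so that $\phi_{A_2}$ ``moves only one factor'' and looks non-ergodic, and it is only the irreducibility of $\Gamma_2$, fed into Howe--Moore, that rescues the argument. Everything after that is Birkhoff's theorem together with the elementary remark that a set of asymptotic density $<1-c$ cannot contain an interval of relative length $1-c$.
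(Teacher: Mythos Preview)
Your proof is correct and follows essentially the same route as the paper: both arguments hinge on the ergodicity of the Cartan flow $\phi_{A_2}$ on $M_2$, which you justify carefully via Howe--Moore and the irreducibility of $\Gamma_2$ (the paper leaves this implicit). The only difference is that the paper, rather than invoking Birkhoff, observes directly that $\bigcup_{T'\ge 0}\{x: x\text{ satisfies }\FBR(T',c,r_0)\}$ is $\phi_{A_2}$-invariant and hence conull by the zero--one law, then obtains the second statement from the monotonicity of the sets $A_n=\{x:\FBR(n,c,r_0)\}$; your Birkhoff argument is a bit more explicit and, as you note, makes transparent both why the union is non-null and why the threshold $T_0$ must in general depend on $x$.
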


\begin{proof}
  It easy to see from the definition that if $x$ satisfies the condition
  $\FBR(T_0,c,r_0)$, then the point $x\ab^{-s}$ satisfies $\FBR(T_0+s,c,r_0)$,
  for all $s\ge 0$. Thus, for~$c\in(0,1)$ and~$r_0>0$ fixed, the set
  \[
    \bigcup_{T'=0}^\infty\{x\in M_2 : x \text{ satisfies } \FBR(T',c,r_0)\}
  \]
  is invariant under~$\ab^t$.  By ergodicity, if~$r_0$ is small enough, it has
  full measure.

  Let~$A_n = \{x\in M_2 : x \text{ satisfies } \FBR(n,c,r_0)\}$.  By the above,
  the~$A_n$ are an increasing sequence of sets whose union has full
  $\mu_2$-measure.  Hence, given~$\omega$, there exists an integer $T_0$ such that
  \[
    \mu_2 \Biggl( \bigcup_{n=0}^{T_0} A_n\Biggr) \ge 1-\omega. \qedhere
  \]
\end{proof}

%%% Local Variables:
%%% mode: latex
%%% TeX-master: "time_changes_unipotents"
%%% End:

% !TEX root = time_changes_unipotents.tex

\section{The geometry of the~$U$-flow}\label{sec:geometryU}

In this section, to ease the notation, we again drop the indices $1$ and $2$, so
that the group $G$ refers to both groups $G_1$ and $G_2$.

\subsection{Irreducible~$\sl_2(\R)$-modules and local transverse manifolds.}

\subsubsection{}\label{sec:basis_module}
We recall that we have fixed~$\sl_2(\R)$-sub-algebras in \S\ref{sec:sl2_triple}.

Considering~$G$ as the connected component of the identity of the real points of
an algebraic linear~$\R$-group~$\mathbf G$ we have that~$S=\exp \mathfrak s$ is
the connected component of the identity of a Zariski closed subgroup
$\mathbf S<\mathbf G$ with Lie algebra~$\mathfrak s$.

By Chevalley's lemma (\cite[7.9]{Borel:IntroGAri}, \cite[3.1.4]{Zimmer:Erg_th})
and the simplicity of~$\mathbf S$ there exists a linear representation~$\phi$
of~$\mathbf G$ on a finite vector space~$V$ and a vector~$v_0\in V$ such that
$\mathbf S= \Stab_{\mathbf G}(v_0)$.

The group~$S$ acts on~$\mathfrak g$ by the adjoint action.  By the simplicity of
$S$, the subspace~$\mathfrak s$ of~$\mathfrak g$ has an~$\Ad(S)$ invariant
complementary subspace~$\mathfrak m$. Thus there exists a neighbourhood
$\mathcal O\subset G$ of the identity such that any~$\gb \in \mathcal O$ may be
written as a product~$\gb = \gb_{\mathfrak m} \gb_{\mathfrak s}~$ of elements
$\gb_{\mathfrak m}\in \exp \mathfrak m$ and
$\gb_{\mathfrak s}\in \exp \mathfrak s$. The following lemma, whose proof is in
\cref{sec:appendix_Chevalley}, proves the uniqueness of such a decomposition.

\begin{lemma}\label{lemma:consequence_Chevalley}
  For every neighbourhoods~$\mathcal O_{\mathfrak m}$,
  $\mathcal O_{\mathfrak s}$ of the identity, respectively in~$\exp \mathfrak m$
  and in~$\exp \mathfrak s$, there exists a neighbourhood~$\mathcal O\subset G$
  of the identity such that, if~$\gb\in \mathcal O~$
  and~$\gb=\gb_{\mathfrak m} \gb_{\mathfrak s}~$,
  with~$\gb_{\mathfrak m}\in \exp \mathfrak m$
  and~$\gb_{\mathfrak s}\in \exp \mathfrak s~$,
  then~$\gb_{\mathfrak m}\in \mathcal O_{\mathfrak m}$
  and~$ \gb_{\mathfrak s}\in \mathcal O_{\mathfrak s}$.
\end{lemma}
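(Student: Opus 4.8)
The plan is to reduce the statement to a purely infinitesimal uniqueness claim about the product decomposition $\gb=\gb_{\mathfrak m}\gb_{\mathfrak s}$, and then extract it from Chevalley's representation $\phi$ and the vector $v_0$ with $\mathbf S=\Stab_{\mathbf G}(v_0)$. First I would note that, since $S=\exp\mathfrak s$ is the identity component of $\mathbf S$, we have $S=\{g\in\mathbf G(\R)^\circ:\phi(g)v_0=v_0\}$ at least on a neighbourhood of the identity; thus the map $g\mapsto \phi(g)v_0$ locally detects membership in $S$. The key point is that the orbit map $G\to V$, $g\mapsto\phi(g)v_0$, has derivative at the identity whose kernel is exactly $\mathfrak s$ (this is what Chevalley's lemma buys us: $\mathrm{Lie}(\Stab)=\mathfrak s$), so by the constant-rank / submersion argument the map $\exp\mathfrak m\to V$, $\gb_{\mathfrak m}\mapsto\phi(\gb_{\mathfrak m})v_0$, is a local diffeomorphism onto a submanifold transverse to the $S$-orbit. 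This is the step I expect to carry the real content.

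Granting that, the argument proceeds as follows. Consider the smooth map $F\colon \exp\mathfrak m\times\exp\mathfrak s\to G$, $(\gb_{\mathfrak m},\gb_{\mathfrak s})\mapsto\gb_{\mathfrak m}\gb_{\mathfrak s}$. Its differential at $(\eb,\eb)$ is the addition map $\mathfrak m\oplus\mathfrak s\to\mathfrak g$, which is a linear isomorphism because $\mathfrak m$ is an $\Ad(S)$-invariant complement to $\mathfrak s$. By the inverse function theorem, $F$ restricts to a diffeomorphism from some product neighbourhood $\mathcal O_{\mathfrak m}^0\times\mathcal O_{\mathfrak s}^0$ onto an open neighbourhood $\mathcal O\subset G$ of the identity; in particular on $\mathcal O$ the decomposition $\gb=\gb_{\mathfrak m}\gb_{\mathfrak s}$ exists and is \emph{unique}. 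Shrinking, we may assume $\mathcal O_{\mathfrak m}^0\subset\mathcal O_{\mathfrak m}$ and $\mathcal O_{\mathfrak s}^0\subset\mathcal O_{\mathfrak s}$ for the given target neighbourhoods. Then for $\gb\in\mathcal O$ the factors $\gb_{\mathfrak m},\gb_{\mathfrak s}$ produced by $F^{-1}$ automatically lie in $\mathcal O_{\mathfrak m},\mathcal O_{\mathfrak s}$; and since any other decomposition of $\gb$ with factors in $\exp\mathfrak m$, $\exp\mathfrak s$ would have to agree with this one wherever uniqueness holds, the conclusion follows once we know uniqueness is not merely local.

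To upgrade local uniqueness (valid only for factors near the identity) to the stated uniqueness (factors merely in $\exp\mathfrak m$, $\exp\mathfrak s$, not a priori small), I would use the Chevalley vector: if $\gb_{\mathfrak m}\gb_{\mathfrak s}=\gb_{\mathfrak m}'\gb_{\mathfrak s}'$ then $(\gb_{\mathfrak m}')^{-1}\gb_{\mathfrak m}=\gb_{\mathfrak s}'\gb_{\mathfrak s}^{-1}\in\exp\mathfrak s=S$, so $h:=(\gb_{\mathfrak m}')^{-1}\gb_{\mathfrak m}$ satisfies $\phi(h)v_0=v_0$, i.e.\ $\phi(\gb_{\mathfrak m})v_0=\phi(\gb_{\mathfrak m}')v_0$. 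Since the orbit map $\exp\mathfrak m\to V$ is injective on a neighbourhood of the identity (being a local diffeomorphism onto its image by the transversality above), for $\gb\in\mathcal O$ small enough this forces $\gb_{\mathfrak m}=\gb_{\mathfrak m}'$, hence $\gb_{\mathfrak s}=\gb_{\mathfrak s}'$. The main obstacle is precisely establishing that the orbit map $\exp\mathfrak m\to V$, $\gb_{\mathfrak m}\mapsto\phi(\gb_{\mathfrak m})v_0$, is injective near the identity — equivalently that $\phi(g)v_0=v_0$ for $g\in\exp\mathfrak m$ near $\eb$ forces $g=\eb$; this is where one genuinely invokes Chevalley's lemma ($\Stab_{\mathbf G}(v_0)=\mathbf S$, so its Lie algebra is $\mathfrak s$, and $\mathfrak m\cap\mathfrak s=\{0\}$) together with the fact that the stabiliser, being Zariski closed with identity component $S$, meets $\exp\mathfrak m$ only in the identity near $\eb$. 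Everything else is the standard inverse-function-theorem packaging, which I would not spell out in detail.
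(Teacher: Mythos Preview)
Your approach and the paper's are essentially the same: both reduce the question to the Chevalley orbit map $\gb\mapsto\phi(\gb)v_0$, using that any $\gb_{\mathfrak s}\in S$ fixes $v_0$ so that $\phi(\gb_{\mathfrak m})v_0=\phi(\gb)v_0$. The paper is slightly more direct: it does not first manufacture a small decomposition $\gb_{\mathfrak m}'\gb_{\mathfrak s}'$ via the inverse function theorem and then compare; instead, from $\gb=\exp(Y)\hb$ it immediately obtains $\|\phi(\exp Y)v_0-v_0\|=\|\phi(\gb)v_0-v_0\|$ small and then bounds $\|Y\|$ by applying the inverse of the differential of the orbit map restricted to $\mathfrak m$.

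There is, however, a genuine gap in your ``upgrade local uniqueness to the stated uniqueness'' step. You deduce $\phi(\gb_{\mathfrak m})v_0=\phi(\gb_{\mathfrak m}')v_0$ with $\gb_{\mathfrak m}'$ small, and then invoke local injectivity of the orbit map near $\eb$ to conclude $\gb_{\mathfrak m}=\gb_{\mathfrak m}'$. But local injectivity near $\eb$ only applies when \emph{both} points lie in that neighbourhood, and you have no a~priori control on $\gb_{\mathfrak m}$ --- obtaining such control is precisely the content of the lemma. What you label the ``main obstacle'' (that $\exp\mathfrak m\cap\mathbf S=\{\eb\}$ near $\eb$, equivalently local injectivity of the orbit map) is in fact the easy transversality consequence of $\mathfrak m\cap\mathfrak s=\{0\}$; the actual obstacle is the \emph{global} statement that $\phi(\gb_{\mathfrak m})v_0$ close to $v_0$ forces $\gb_{\mathfrak m}$ close to $\eb$ for an arbitrary $\gb_{\mathfrak m}\in\exp\mathfrak m$. (Concretely: in $G=\SL_2(\R)\times\SL_2(\R)$ with $S$ the diagonal copy and $\mathfrak m$ the antidiagonal, one has $(-I,-I)\in\exp\mathfrak m\cap S$, so $\eb=(-I,-I)\cdot(-I,-I)$ is a decomposition of the identity with large factors --- so some additional care or hypothesis is needed.) The paper's argument handles this by effectively identifying the map $Y\mapsto\phi(\exp Y)v_0-v_0$ on $\mathfrak m$ with its linearisation at $0$ and inverting the latter; this gives the desired quantitative bound directly, though the identification is not literally correct and glosses over the same global issue your argument leaves open.
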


\subsubsection{}
As a consequence of this lemma, if $\gb\in \mathcal O$ and
\begin{equation}\label{eq:01_background:7}
  \gb = \gb_{\mathfrak m} \gb_{\mathfrak s},\quad\text{with } \gb_{\mathfrak m}\in \exp \mathfrak m,\quad
  \gb_{\mathfrak s}\in \exp \mathfrak s,
\end{equation}
we may set
\[
  d ( \eb, \gb) = \max \{ d_{\mathfrak m} ( \eb, \gb_{\mathfrak m}),
  d_{\mathfrak s} ( \eb, \gb_{\mathfrak s})\},
\]
where $d_{\mathfrak m}$,~$d_{\mathfrak s}$ are distances on
$\exp \mathfrak m$ and~$\exp \mathfrak s$ --- (see~\eqref{eq:03_geometry:1}
and~\eqref{eq:01_background:def_dist_s} for their definitions). Thus there exists $\epsilon_0$ such that $ d ( \eb, \gb)< \epsilon_0$ implies $\gb \in \mathcal O$ and the decomposition~\eqref{eq:01_background:7} is uniquely determined.

\subsubsection{}\label{def_distance}
For any~$\gb \in G$ and any $\epsilon\in (0,\epsilon_0)$, let
\[
  \begin{split}
    W(\gb, \epsilon)= \Big\{ \gb \,\gb_{\mathfrak m} &\exp (a A)\exp (\bar
    u\overline U
    )\mid \\
    &\gb_{\mathfrak m}\in \exp(\mathfrak m), \ d_{\mathfrak
      m}(\eb,\gb_{\mathfrak m})\le \epsilon,\ \abs{a}\le \epsilon,\ \abs{\bar
      u}\le \epsilon\Big\}.
  \end{split}
\]
The set~$W(\gb, \epsilon)~$ is the \emph{local leaf through~$\gb$} transversal
to the $U$-flow.  We also define the \emph{(global) leaf through~$\gb$}
transversal to the $U$-flow by
\[
  W(\gb)= \{ \gb \,\gb_{\mathfrak m} \exp (a A)\exp (\bar u\overline U )\mid
  \gb_{\mathfrak m}\in \exp(\mathfrak m), \ a, \bar u \in \R\}.
\]
Given~$\xb, \yb\in G$ with~$d(\xb, \yb) \le \varepsilon$, we define the
\emph{continuous parametrization}~$q(s)$ of the~$U$-orbit of~$\yb$ by the
condition~$\yb \ub^{q(s)}\in W(\xb \ub^s)$.  The parametrization~$q(s)$ is
well-defined at least for all sufficiently small values of~$s\geq 0$ (see also
\cref{lemma:int_max_track} below).

\subsubsection{}\label{sec:geom_U_flow_1}
Let now~$\xb, \yb\in G$, $\epsilon\in (0,\epsilon_0)$, and assume that
\begin{equation}
  \label{eq:reparam}
  d(\xb, \yb) \le \varepsilon\quad \text{ and }
  \quad d( \xb\ub^s , \yb\ub^{t} ) \le \epsilon.
\end{equation} for some~$s>0$, and $t>0$.
This condition depends only on the relative position of~$\xb$ and~$\yb$: if we
Setting~$\gb = \xb^{-1}\yb$ we may rewrite \eqref{eq:reparam} as
\begin{equation}
  \label{eq:reparam2}
 d(\eb, \gb) \le \varepsilon\quad \text{ and }
  \quad d( \eb, \ub^{-s} \gb\ub^{t} ) \le \epsilon.
\end{equation}
or, with~$\gb = \gb_{\mathfrak m} \gb_{\mathfrak s}$ as in
\eqref{eq:01_background:7},
\begin{equation}
  \label{eq:track1}
 d_{\mathfrak m}( \eb, \ub^{-s} \gb_{\mathfrak m}\ub^{s} ) \le \epsilon
  \quad \text{and}\quad
  d_{\mathfrak s}( \eb, \ub^{-s} \gb_{\mathfrak s}\ub^{t} ) \le \epsilon.
\end{equation}
Thus the condition \eqref{eq:reparam} imposes strong restrictions on both $t(s)$
and on the relative position of~$\xb$ and~$\yb$.  If \eqref{eq:reparam} holds,
with an appropriate definition of the distance~$d_{\mathfrak s}$, there exists
$q(s)\in \R$ such that
\begin{equation}\label{eq:01_background:8}
  \abs{t-q(s)}\le \epsilon, \quad\text{ and } \quad\yb \ub^{q(s)}\in W(\xb \ub^s, \epsilon)
\end{equation}
With~$\xb^{-1}\yb=\gb_{\mathfrak m} \gb_{\mathfrak s}$ as in
\eqref{eq:01_background:7}, this condition may be rewritten as
\begin{equation}\label{eq:01_background:9}
  \ub^{-s}\gb_{\mathfrak s} \ub^{q(s)} =\exp (a(s) A)\exp (\bar u(s) \overline U ).
\end{equation}
Thus, the second inequality in formula~\eqref{eq:track1} is equivalent the the
requirement that \eqref{eq:01_background:8} holds true and that
$ \abs{a(s)}\le \epsilon/2$ and~$\abs{\bar{u}(s)}\le \epsilon/2$.

It follows that in order to exploit the consequences of the
condition~\eqref{eq:reparam} we can analyse separately the
terms~$\ub^{-s} \gb_{\mathfrak m}\ub^{s}$ and
$\ub^{-s} \gb_{\mathfrak s}\ub^{q(s)}$.
\subsubsection{}\label{sec:about_m}
We start by considering the term~$\ub^{-s} \gb_{\mathfrak m}\ub^{s}$.
Since~$ \ad (U)$ is a nilpotent endomorphism of~$\mathfrak m$, there exists a
basis of the vector space~$\mathfrak m$ which is a Jordan basis for~$\ad
(U)$. More precisely, the subspace~$\mathfrak m$ splits into a direct sum of
$\Ad(S)$-invariant subspaces~$\mathfrak m_\iota$, ($\iota=1, \dots, I$), of
dimension~$d_\iota+1$, on which~$\Ad(S)$ acts irreducibly. By the elementary
representation theory of~$\sl_2(\R)$, each subspace~$\mathfrak m_\iota$ has a
basis~$(\bel_{0,\iota}, \dots, \bel_{d_\iota,\iota})$ such that
\begin{equation}\label{eq:Adjoint_at}
 \ad (U) \bel_{j,\iota}=\bel_{j-1,\iota}\quad\text{
and } \quad \ad (A) \bel_{j,\iota}= \left( \frac{d_\iota}{2}-j \right) \bel_{j,\iota}.
\end{equation}
We choose~$\mathcal B=\{ \bel_{j,\iota} : 0\le j\le d_\iota, 1\le \iota\le I\}$
as a convenient Jordan basis for~$\ad (U)$ and we let, for
$\xi = \sum c_{j,\iota} \bel_{j,\iota}$,
\begin{equation}\label{eq:03_geometry:1}
  \norm{\xi}= \max_{\iota,j}{\abs{ c_{j,\iota}}},\quad \text{and}\quad
  d_{\mathfrak m} (\eb, \exp(\xi)) = \norm{\xi}.
\end{equation}
For any~$\xi \in \mathfrak m$,~$\Ad(\ub^s)\xi = \exp (s\ad (U))\xi$ is a
$\mathfrak m$-valued polynomial in the variable~$s$. Indeed, the usual
computation of the exponential of a Jordan block tells us that if
$\xi = \sum _{\iota,j} c_{j,\iota}\bel_{j,\iota}$ then
\[
  \exp (s\ad (U))\xi = \sum_{\iota,j}c_{j,\iota}(s)\bel_{j,\iota}, \quad
  c_{j,\iota}(s)=\sum_{k=0}^{d_\iota-j} c_{k+j,\iota}s^{k}/k!\,.
\]
Since the highest degree polynomials are given by the coefficients of the
terms~$\bel_{0,\iota}$, which together with~$U$ span the centraliser
$\mathfrak z(U)$ of~$U$ in~$\mathfrak g$ we are led to define for
$\gb_{\mathfrak m}= \exp \xi$ and
$\xi = \sum _{\iota,j} c_{j,\iota}\bel_{j,\iota}$ the polynomials
\[
 r_{\mathfrak
    m_\iota}(\gb_{\mathfrak m}, s):= \sum_{k=0}^{d_\iota}
  c_{k,\iota}(-1)^ks^{k}/k!\, ,
\]
and the vector valued polynomial
\[
  r_{\mathfrak m}(\gb_{\mathfrak m}, s) :=(r_{\mathfrak m_1}(\gb_{\mathfrak m},
  s), \dots, r_{\mathfrak m_I}(\gb_{\mathfrak m}, s)) \in \R^I\otimes \R[s]
\]
which controls the growth of the term
$\ub^{-s} \gb_{\mathfrak m}\ub^{s}= \exp \Ad(\ub^{-s})\xi$. Clearly
\begin{equation}\label{eq:01_background:rm_leq_dm}
  \abs{ r_{\mathfrak m}(\gb_{\mathfrak m}, s)} := \sup_{1\le\iota\le I}
  \abs{r_{\mathfrak m_\iota}( \gb_{\mathfrak m}, s)} \le d_{\mathfrak m}(\eb,
  \ub^{-s} \gb_{\mathfrak m}\ub^{s}).
\end{equation}

\subsubsection{}\label{comm_sl2}
We now turn to the term
$\ub^{-s}\gb_{\mathfrak s} \ub^{q(s)} \in \exp \mathfrak s$, where we recall
that the parametrisation~$q(s)$ is defined by the
identity~\eqref{eq:01_background:9}.

For~$\xb_{\mathfrak s}, \yb_{\mathfrak s} \in S$ and
$\gb_{\mathfrak s} = \xb^{-1}_{\mathfrak s} \yb_{\mathfrak s}= \exp (a A)\exp
(\bar u\overline U ) \ub^u~$ it is convenient to define the ``distance''
\begin{equation}\label{eq:01_background:def_dist_s}
  d_{\mathfrak s} (\xb_{\mathfrak s}, \yb_{\mathfrak s})
  = d_{\mathfrak s} (\eb_{\mathfrak s}, \gb_{\mathfrak s})
  =\abs{a} + \abs{\bar u} +\abs{u}.
\end{equation}

\noindent Writing
\begin{equation}\label{eq:01_background:2}
  \gb_{\mathfrak s} = \exp (a A)\exp (\bar u\overline U ) \ub^u,
\end{equation} the formula~\eqref{eq:01_background:9}, after an easy computation, yields
\begin{equation}\label{eq:sl2R:1}
  e^{a(s)/2}=e^{-a/2}(e^{a}-\bar u s) , \qquad \bar u(s)=e^{-a}\bar u
  (e^{a}-\bar us)
\end{equation}
\begin{equation}\label{eq:sl2R:2}
  q(s) = s - \frac{\bar u s^2 + s(1-e^a)}{ e^a-\bar u s} -u.
\end{equation}
Henceforth~$q(s)$,~$a(s)$,~$\bar{u}(s)$ denote the functions of~$s$ defined by
the formulas above (depending on the initial parameters~$a$,~$\bar u~$ and~$u$).

The following lemma/definition expresses the
condition~$\yb \ub^{q(s)}\in W(\xb \ub^s)~$ in terms of the initial parameters~$a$,
$\bar u$.

\begin{lemma}\label{lemma:int_max_track}
  Let~$\xb, \yb \in G$ with~$ \xb^{-1} \yb= \gb_{\mathfrak m}\gb_{\mathfrak s}~$
  and~$ \gb_{\mathfrak s} = \exp (a A)\exp (\bar u\overline U ) \ub^u$. Then
  the condition~$\yb \ub^{q(s)}\in W(\xb \ub^s)~$ is satisfied on the connected
  interval $[0,s]\cup[s,0]$ if,
  and only if,~$s$ belongs to the \emph{interval of maximal tracking} defined as
  \[ \mathcal I( \xb, \yb):=\mathcal I( \gb_{\mathfrak s}) :=\{ s\in \R :
    e^{a}-\bar u s >0\}.
  \]
  In particular if~$0< \eps< \min\big(\eps_0, 1/(2M)\big)$ and $d(\xb, \yb) \le \eps$, the
  condition~$\yb \ub^{q(s)}\in W(\xb \ub^s)~$ can be fulfilled for
  all~$\abs{s}< M$.
\end{lemma}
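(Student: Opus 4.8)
The plan is to read off everything from the explicit formulas~\eqref{eq:sl2R:1} and~\eqref{eq:sl2R:2}, which express $a(s)$, $\bar u(s)$ and $q(s)$ rationally in $s$ once the initial parameters $a$, $\bar u$, $u$ are fixed. Recall from~\S\ref{sec:geom_U_flow_1} that the condition $\yb\ub^{q(s)}\in W(\xb\ub^s)$ is, by~\eqref{eq:01_background:9}, exactly the requirement that the element $\ub^{-s}\gb_{\mathfrak s}\ub^{q(s)}\in\exp\mathfrak s$ can be written as $\exp(a(s)A)\exp(\bar u(s)\overline U)$ with \emph{no} $\ub$-component, i.e.\ that the $q(s)$ appearing in~\eqref{eq:sl2R:2} is precisely the value that kills the unipotent part. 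So the content of the lemma is that the formulas~\eqref{eq:sl2R:1}--\eqref{eq:sl2R:2} actually define real numbers $a(s),\bar u(s),q(s)$ — equivalently, that the common denominator $e^a-\bar u s$ does not vanish — and that the resulting $s\mapsto q(s)$ is a valid continuous parametrization on the whole interval between $0$ and $s$.

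First I would observe that $q(0)=0$, $a(0)=a$, $\bar u(0)=\bar u$ (immediate from the formulas), so at $s=0$ one recovers $\yb\ub^0=\yb\gb_{\mathfrak m}^{-1}\cdot\gb_{\mathfrak m}\in W(\xb)$ as it must. Next, the functions $a(s),\bar u(s),q(s)$ are smooth and well-defined on the maximal connected interval around $0$ on which the denominator $e^a-\bar u s$ stays nonzero; since $e^a-\bar u\cdot 0=e^a>0$, by continuity this interval is exactly $\mathcal I(\gb_{\mathfrak s})=\{s:e^a-\bar u s>0\}$. On this interval the identity~\eqref{eq:01_background:9} holds by construction of $q$, hence $\yb\ub^{q(s)}\in W(\xb\ub^s)$; conversely, if $s$ lies outside $\mathcal I(\gb_{\mathfrak s})$ the first equation in~\eqref{eq:sl2R:1} would force $e^{a(s)/2}\le 0$, which is impossible, so the decomposition inside $W(\xb\ub^s)$ cannot exist there. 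This gives the ``if and only if''. The connectedness of $\mathcal I(\gb_{\mathfrak s})$ (an open interval containing $0$) is what lets us phrase the conclusion on the interval $[0,s]\cup[s,0]$.

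For the final assertion, suppose $d(\xb,\yb)\le\eps$ with $0<\eps<\min(\eps_0,1/(2M))$. By the definition of $d$ in~\S\ref{sec:sl2_triple} and of $d_{\mathfrak s}$ in~\eqref{eq:01_background:def_dist_s}, $d(\xb,\yb)\le\eps$ forces $\abs{a}+\abs{\bar u}+\abs{u}\le\eps$, in particular $\abs{\bar u}\le\eps$ and $\abs{a}\le\eps$, so $e^a\ge e^{-\eps}$. Then for $\abs{s}<M$ we have $\abs{\bar u s}\le\eps M<1/2<e^{-\eps}\le e^a$ (shrinking $\eps_0$ if necessary so that $\eps<\eps_0$ implies $e^{-\eps}>1/2$), hence $e^a-\bar u s>0$ and $s\in\mathcal I(\gb_{\mathfrak s})$; by the first part, $\yb\ub^{q(s)}\in W(\xb\ub^s)$. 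The only mild subtlety — and the step I'd expect to need the most care — is bookkeeping the constants: making sure the threshold on $\eps$ (via $\eps_0$ and the $1/(2M)$) really does guarantee $\abs{\bar u s}<e^a$ uniformly for $\abs{s}<M$, and checking that the ``distance'' $d_{\mathfrak s}$, which is not genuinely a metric, still controls the three parameters in the way used; both are elementary but must be stated so that later sections can quote $\mathcal I(\xb,\yb)$ cleanly.
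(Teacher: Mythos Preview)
Your proposal is correct and is exactly the argument the paper has in mind: the lemma is stated in the paper without proof (it is presented as a ``lemma/definition''), and your derivation from the explicit formulas~\eqref{eq:sl2R:1}--\eqref{eq:sl2R:2}, together with the observation that $e^{a(s)/2}>0$ forces $e^a-\bar u s>0$, is precisely the intended verification. The bookkeeping you flag for the second assertion (ensuring $\abs{\bar u s}<e^a$ via $\eps<1/(2M)$ and $\eps<\eps_0$ small enough that $e^{-\eps}>1/2$) is correct and is the only content beyond reading the formulas.
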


\begin{lemma}
  \label{lemma:I_max_track}
  Let~$\xb, \yb \in G$ with~$ \xb^{-1} \yb= \gb_{\mathfrak m}\gb_{\mathfrak s}~$
  and~$ \gb_{\mathfrak s} = \exp (a A)\exp (\bar u\overline U )
  \ub^u$. Assume~$\delta_1,\delta_2<\min(\eps_0,\ln 2 )$.
  If~$d (\xb, \yb)\le \delta_1$
  and~$d (\xb \ub^{s_0} , \yb \ub^{t_0})\le \delta_2$ for some~$s_0>0$
  and~$t_0\in \R$. Then~$s_0$ belongs to the interval of maximal
  tracking~$ \mathcal I(\xb,\yb)$ and
  \[
    \abs{q(s_0)-t_0}\le \delta_2\quad\text{and}\quad \abs{\bar u s}\le 3(\delta_1+\delta_2),\quad \forall s\in [0,s_0].
  \]
  Furthermore
  \begin{equation}\label{eq:03_geometry:2}
  1- 8\max(\delta_1, \delta_2)\le e^{a} -\bar u s < 1+ 8\max(\delta_1, \delta_2)
  ,\qquad  \forall s\in [0,s_0].
\end{equation}

Setting~$u_0:=q(s_0)-t_0$ we have
  \[
    \yb \ub^{t} = (\xb \ub^{s_0})\exp (a(s_0) A)\exp (\bar u(s_0)\overline U )
    \ub^{u_0}.
  \]
  Furthermore, for all~$s\in [0,s_0]$, we have
  \[
    \yb \ub^{q(s)} = (\xb \ub^{s })\exp (a(s) A)\exp (\bar u(s)\overline U
    )
  \]
  with
  \[
 \abs{a(s)}\le 11 \delta_1+ 6 \delta_2, \qquad \abs{\bar u(s)}\le 13\delta_1.
  \]
\end{lemma}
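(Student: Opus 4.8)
The plan is to turn the qualitative picture of \S\ref{sec:geom_U_flow_1}--\S\ref{comm_sl2} into quantitative estimates: the $\mathfrak s$-component of $\xb^{-1}\yb$ evolves under conjugation by $\ub^{s}$ according to the explicit $\SL_2(\R)$ formulas \eqref{eq:sl2R:1}--\eqref{eq:sl2R:2}, the $\mathfrak m$- and $\mathfrak s$-parts are uniquely separated by \cref{lemma:consequence_Chevalley}, and the two hypotheses (one at $s=0$, one at $s=s_0$) together pin down all the parameters.

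First I would unpack $d(\xb,\yb)\le\delta_1<\eps_0$: by \cref{lemma:consequence_Chevalley} the decomposition $\xb^{-1}\yb=\gb_{\mathfrak m}\gb_{\mathfrak s}$ is the unique one, and $d_{\mathfrak m}(\eb,\gb_{\mathfrak m})\le\delta_1$, $d_{\mathfrak s}(\eb,\gb_{\mathfrak s})=\abs a+\abs{\bar u}+\abs u\le\delta_1$; in particular $\abs{\bar u}\le\delta_1$. Next I write
\[
(\xb\ub^{s_0})^{-1}(\yb\ub^{t_0})=(\ub^{-s_0}\gb_{\mathfrak m}\ub^{s_0})\,(\ub^{-s_0}\gb_{\mathfrak s}\ub^{t_0}),
\]
note that the first factor lies in $\exp\mathfrak m$ (as $\mathfrak m$ is $\Ad(S)$-invariant and $\ub^{s_0}\in S$) and the second in $S$, and invoke uniqueness once more: since $d(\xb\ub^{s_0},\yb\ub^{t_0})\le\delta_2<\eps_0$, this displayed product must be \emph{the} $\mathfrak m$--$\mathfrak s$ factorisation, so $d_{\mathfrak s}(\eb,\ub^{-s_0}\gb_{\mathfrak s}\ub^{t_0})\le\delta_2$. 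Writing $\ub^{-s_0}\gb_{\mathfrak s}\ub^{t_0}=\exp(a'A)\exp(\bar u'\overline U)\ub^{u'}$ gives $\abs{a'}+\abs{\bar u'}+\abs{u'}\le\delta_2$.

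The scalar $u'$ is exactly the discrepancy between $t_0$ and the tracking parameter: since $\ub^{-s_0}\gb_{\mathfrak s}\ub^{\,t_0-u'}=\exp(a'A)\exp(\bar u'\overline U)$ has trivial $\ub$-component, the value produced by \eqref{eq:sl2R:2} is $q(s_0)=t_0-u'$, whence $u_0=q(s_0)-t_0=-u'$ with $\abs{u_0}\le\delta_2$ and $\abs{q(s_0)-t_0}\le\delta_2$, while comparison with \eqref{eq:01_background:9} identifies $a(s_0)=a'$, $\bar u(s_0)=\bar u'$, so $\abs{a(s_0)},\abs{\bar u(s_0)}\le\delta_2$; reassembling the displayed product (the leaf direction being carried by the conjugate $\ub^{-s_0}\gb_{\mathfrak m}\ub^{s_0}$ of \S\ref{sec:about_m} and the $\mathfrak s$-exponentials) yields the stated expression for $\yb\ub^{t_0}$ over $\xb\ub^{s_0}$, and similarly for $\yb\ub^{q(s)}$. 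Feeding $a(s_0)=a'$ into the first relation of \eqref{eq:sl2R:1} gives $e^{a}-\bar u s_0=e^{a/2}e^{a'/2}>0$, so $s_0\in\mathcal I(\gb_{\mathfrak s})$; since $0\in\mathcal I(\gb_{\mathfrak s})$ (because $e^{a}>0$) and $\mathcal I(\gb_{\mathfrak s})$ is an interval, $[0,s_0]\subset\mathcal I(\gb_{\mathfrak s})$, and \cref{lemma:int_max_track} gives $\yb\ub^{q(s)}\in W(\xb\ub^{s})$ for every $s\in[0,s_0]$.

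It remains to interpolate between the two endpoints. Since $s\mapsto e^{a}-\bar u s$ is affine, on $[0,s_0]$ it takes values between $e^{a}$ (at $s=0$) and $e^{(a+a')/2}$ (at $s=s_0$); using $\abs a\le\delta_1$, $\abs{a'}\le\delta_2$ and $\delta_1,\delta_2<\ln 2$ this keeps $e^{a}-\bar u s$ inside $(1-8\max(\delta_1,\delta_2),\,1+8\max(\delta_1,\delta_2))$, which is \eqref{eq:03_geometry:2}; the same affinity gives $\bar u s=(s/s_0)\bigl(e^{a}-e^{(a+a')/2}\bigr)$, and an elementary bound on $\abs{e^{a}-e^{(a+a')/2}}$ by about $\delta_1+\delta_2$ yields $\abs{\bar u s}\le 3(\delta_1+\delta_2)$ on $[0,s_0]$. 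With \eqref{eq:03_geometry:2} the denominator of \eqref{eq:sl2R:1} is bounded away from $0$, so from $a(s)=2\ln(e^{a}-\bar u s)-a$ and $\bar u(s)=e^{-a}\bar u(e^{a}-\bar u s)$, together with $\abs a,\abs{\bar u}\le\delta_1$ and $\abs{a'}\le\delta_2$, one reads off the claimed $\abs{a(s)}\le 11\delta_1+6\delta_2$ and $\abs{\bar u(s)}\le 13\delta_1$ (the constants are very generous). The only step requiring genuine care --- rather than a real obstacle --- is the bootstrap order: one must first \emph{legitimately} identify the factorisation of $(\xb\ub^{s_0})^{-1}(\yb\ub^{t_0})$ with the Chevalley decomposition (this uses the uniqueness clause of \cref{lemma:consequence_Chevalley}, i.e.\ the hypothesis itself forces $\ub^{-s_0}\gb_{\mathfrak s}\ub^{t_0}$ to be small in $S$), then fix the parameters at $s=0$ and $s=s_0$, then exploit affinity of $e^{a}-\bar u s$ to control the denominator of \eqref{eq:sl2R:1} uniformly on $[0,s_0]$, and only afterwards invert \eqref{eq:sl2R:1} to bound $a(s)$ and $\bar u(s)$.
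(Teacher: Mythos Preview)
Your proof is correct and follows essentially the same route as the paper's: both identify the $\mathfrak m$--$\mathfrak s$ factorisation of $(\xb\ub^{s_0})^{-1}(\yb\ub^{t_0})$ via uniqueness, read off $a'=a(s_0)$, $\bar u'=\bar u(s_0)$ and $\abs{q(s_0)-t_0}\le\delta_2$, use \eqref{eq:sl2R:1} to get $e^a-\bar u s_0=e^{(a+a')/2}>0$, and then bound $a(s)$, $\bar u(s)$ from the explicit formulas. Your organisation is slightly cleaner in one respect: you exploit the affinity of $s\mapsto e^a-\bar u s$ explicitly (its endpoint values being $e^a$ and $e^{(a+a')/2}$) to obtain \eqref{eq:03_geometry:2} and the bound on $\abs{\bar u s}$ simultaneously, whereas the paper bounds $\abs{\bar u s_0}$ first via $e^a\abs{1-e^{(a'-a)/2}}$ and then derives \eqref{eq:03_geometry:2}; but these are the same computation rearranged.
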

\begin{proof} In the course of the proof we use the estimates
  $1-\abs{x}/2 \le e^x \le 1 +2 \abs{x}$ for $\abs{x} \le \log 2 < 1$.

  By \S\ref{def_distance}, the condition
  $d (\xb \ub^{s_0} , \yb \ub^{t_0})\le \delta_2$ implies by
  $d_{\mathfrak s} (\eb, \ub^{-s_0} \gb_{\mathfrak s} \ub^{t_0})\le
  \delta_2$. It follows that the interval of
  orbit~$\{\yb \ub^{t_0+t'}\}_{\abs{t'}\le \delta_2}$ crosses the
  leaf~$W(\xb \ub^{s_0})$ in a point $\yb \ub^{q}$ such
  that~$\abs{q-t_0}\le \delta_2$. Then
  \[
    \ub^{-s_0}\gb_{\mathfrak s} \ub^{q} =\exp (a' A)\exp (\bar{u}'\, \overline U
    ),
  \]
  with~$\abs{a'}+\abs{\bar{u}'} \le \delta_2$.

  From \S\ref{comm_sl2} the only solution of this identity is given
  by~$q=q(s_0)$,~$a'=a(s_ 0)$,~$\bar{u}'=\bar{u}(s_0)$,
  where~$q(s)$,~$a(s)$,~$\bar{u}(s)$ are the functions defined by the
  formulas~\eqref{eq:sl2R:1}--\eqref{eq:sl2R:2}.

  Since~$\abs{a}+\abs{\bar{u}}+ \abs{u}\le \delta_1$, using the first
  identity~\eqref{eq:sl2R:1} we have
  \[
    e^a -\bar u s_0 = e^{(a'+a)/2}\ge 1 -\frac{\abs{a'+a}}{4} > \frac 1 2.
  \]
  proving that $s_0\in \mathcal I(\xb,\yb)$. Furtermore, since
  $ e^a\le 1+2\abs{a}\le 3$ and $e^{(a'-a)/2} -1 \le \abs{a'-a}$, we have
  \[
    \abs{\bar u s_0} = e^a\abs{1 -e^{(a'-a)/2}} \le 3(\delta_1+\delta_2)
  \]
  and consequently~$ \abs{\bar u s}\le 3(\delta_1+\delta_2)$ for all
  $s\in [0,s_0]$. Then, the estimate \eqref{eq:03_geometry:2} follows easily.

  Finally formula \eqref{eq:sl2R:1} and the
  inequalities~$\abs{\bar u s}\le 3(\delta_1+\delta_2)\le 6$, for
  all~$s\in [0,s_0]$, and $e^{a}-\bar u s\le 1+2\abs{a} + \abs{\bar u s}$ imply
  \[\abs{a(s)}\le \abs{a}+ 2\abs{\log(e^{a}-\bar u s)}\le 11 \delta_1 +6\delta_2
  \]
  and
  \[
    e^{(a(s)-a)/2} = 1 - e^{-a} \bar u s \le 13.
  \]
  Consequently
  \[
    \abs{\bar u(s)}= e^{(a(s)-a)/2}\abs{\bar u} \le 13 \delta_1 \] for
  all~$s\in [0,s_0]$.
\end{proof}

\subsubsection{}\label{sublinear_trackings}
In the sequel we shall deal with parametrisations~$\tau(s)$ of the orbit
of~$\yb$ satisfying, for sufficiently large~$s$, the inequality
\[
  \abs{\tau(s) -s}\le \frac{1}{2}(s^{1-\eta}+ \eps),
\]
and such that
\[
  d(\xb \ub^{s}, \yb \ub^{\tau(s)} )\le \frac{\eps}{2}
\]
for some~$s>0$  and $\eps < \min(\eps_0, \log 2)$. By the previous lemma if this second condition occurs we have
$s\in \mathcal I(\xb , \yb )$ and~$\abs{\tau(s)-q(s)} \le \eps/2$. Then by the
first condition~$ \abs{q(s) -s}\le 1/2 s^{1-\eta}+ \eps$. From
formulas~\eqref{eq:03_geometry:2} and~\eqref{eq:sl2R:2}, the quantity $\bar u s^2
+ s(1-e^a)$ up to a factor close to~$1$ is the delay $s-q(s)$ of the two orbits.
Thus we are led to define
\begin{equation}\label{eq:01_background:rs_leq_s}
  r_{\mathfrak s}(\gb_{\mathfrak s},s) := \bar u s^2 + s(1-e^a),
\end{equation}
and study the sub-level sets where
$\abs{r_{\mathfrak s}(\gb_{\mathfrak s},s)} \le s^{1-\eta} + \eps$.  When these
intervals are not too spaced from each other the following proposition comes to
aid.
\begin{proposition}
  Let~$(V, \abs{\,\cdot\,}_V)$ be a finite dimensional normed vector space
  and~$P(s)= c_0+ c_1s + \dots+c_k s^k$ be a~$V$-valued polynomial of degree~$k$
  bounded by~$\max\{\eps, C s^{1-\eta} \}$ on consecutive
  intervals~$J_1=[s_1=0,\bar s_1]$, \dots,~$J_h=[s_h,\bar s_h]$. (We shall
  assume~$\eps \le C \bar s_1^{1-\eta}$.)\enspace Suppose that the
  intervals~$J_1,\dots, J_h$ are \emph{~$b$-close} in the sense that, for
  some~$b > 0$
  \[
    d(J_j, J_{j+1}):= s_{j+1}- \bar s_{j} \le \bar s_{j}^{1+b}, \quad \forall
    j=1,\dots, h-1 .\] Then, for some constants~$\kappa_k$,~$\kappa_k'$,
  depending only on~$k$, we have
  \[
    \abs{c_i}_V\le \kappa_k (\kappa_k')^{h-1} \bar s_h^{-i + 1-\eta + (h-1)kb}.
  \]
  Thus if~$ (h-1)kb < \eta/2~$ we have
  \[
    \abs{c_i}_V\le \kappa_k (\kappa_k')^{h-1} \bar s_h^{-i + 1-\eta/2}.
  \]

\end{proposition}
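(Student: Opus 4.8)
The statement is a "bootstrapping" estimate on the coefficients of a vector-valued polynomial $P$ that is small on a chain of intervals that are close to each other. The strategy is induction on the number $h$ of intervals. For the base case $h=1$ we have a single interval $J_1 = [0, \bar s_1]$ on which $|P(s)|_V \le \max\{\epsilon, C s^{1-\eta}\} \le C\bar s_1^{1-\eta}$ (using the hypothesis $\epsilon \le C\bar s_1^{1-\eta}$ and that $s^{1-\eta}$ is increasing). A polynomial of degree $k$ bounded by a constant $B$ on an interval $[0, L]$ has coefficients bounded by $\kappa_k B L^{-i}$ for some combinatorial constant $\kappa_k$: this is the standard fact that on $[0,1]$ the coefficient functionals $P \mapsto c_i$ are bounded in the sup norm (by, e.g., Lagrange interpolation at $k+1$ equally spaced nodes, or Markov-type inequalities), and one rescales $s \mapsto s/L$ to pass from $[0,1]$ to $[0,L]$. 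This gives $|c_i|_V \le \kappa_k C \bar s_1^{1-\eta-i}$, matching the claimed bound with $h=1$.

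\medskip

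\noindent\textbf{The inductive step.} Suppose the bound holds for $h-1$ intervals, so from $J_1, \dots, J_{h-1}$ we get $|c_i|_V \le \kappa_k(\kappa_k')^{h-2}\bar s_{h-1}^{-i+1-\eta+(h-2)kb}$. The point is to use this to control $P$ on the gap $[\bar s_{h-1}, s_h]$ and hence on $[0, s_h]$, then re-run the single-interval argument on $[0, \bar s_h]$. Concretely: for $s \in [\bar s_{h-1}, s_h]$ we have $s \le s_h \le \bar s_{h-1} + \bar s_{h-1}^{1+b}$, so $s$ is comparable to $\bar s_{h-1}$; plugging the inductive coefficient bounds into $P(s) = \sum_i c_i s^i$ and using $s^i \lesssim \bar s_{h-1}^i \cdot (\text{constant})^i$ on this range yields
\[
  |P(s)|_V \le \kappa_k'' (\kappa_k')^{h-2}\, \bar s_{h-1}^{1-\eta+(h-2)kb} \le \kappa_k''(\kappa_k')^{h-2}\, \bar s_h^{1-\eta+(h-1)kb},
\]
where in the last step I absorb the discrepancy between $\bar s_{h-1}$ and $\bar s_h$ into one more factor $\bar s_{h-1}^{(?)kb}$ — here the role of the $b$-closeness is essential: $\bar s_h \ge s_h \ge \bar s_{h-1}$ but also $\bar s_h$ cannot be enormously larger, while $\bar s_{h-1}^{kb}$ is exactly the slack we allow ourselves per step. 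Now $P$ is bounded by $\max\{\epsilon, C\bar s_h^{1-\eta}\} + \kappa_k''(\kappa_k')^{h-2}\bar s_h^{1-\eta+(h-1)kb}$, i.e.\ by $\le \kappa_k'''(\kappa_k')^{h-2}\bar s_h^{1-\eta+(h-1)kb}$, on the whole of $[0, \bar s_h]$ (on $J_1,\dots,J_{h-1}$ and the gap by the above, on $J_h$ by hypothesis). Applying the single-interval coefficient estimate on $[0,\bar s_h]$ gives $|c_i|_V \le \kappa_k \kappa_k'''(\kappa_k')^{h-2}\bar s_h^{-i+1-\eta+(h-1)kb}$, and choosing $\kappa_k'$ large enough that $\kappa_k''' \le \kappa_k'$ closes the induction. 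The final assertion is immediate: if $(h-1)kb < \eta/2$ then $-i+1-\eta+(h-1)kb < -i + 1 - \eta/2$, and since $\bar s_h$ is large (at least $\bar s_1$, which we may assume $\ge 1$) the exponent can only be decreased.

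\medskip

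\noindent\textbf{Main obstacle.} The only delicate bookkeeping is tracking how the constants and the exponent of $\bar s_h$ degrade at each step, and making sure the geometric factor $(\kappa_k')^{h-1}$ and the exponent increment $(h-1)kb$ accumulate linearly in $h$ rather than, say, multiplicatively. This works precisely because at each step we enlarge the interval only from $[0,\bar s_{h-1}]$ to $[0,\bar s_h]$ with $\bar s_h \le \bar s_{h-1} + \bar s_{h-1}^{1+b}$, so the ratio $\bar s_h/\bar s_{h-1}$ contributes a bounded power of $\bar s_{h-1}^{b}$, hence one increment of $kb$ in the exponent and one bounded multiplicative constant per step. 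I would also double check the trivial but necessary monotonicity points: that $\max\{\epsilon, Cs^{1-\eta}\}$ is bounded by $C\bar s^{1-\eta}$ on $[0,\bar s]$ given $\epsilon \le C\bar s_1^{1-\eta}$, and that the single-interval coefficient bound genuinely has the claimed dependence on the interval length — both are standard, so no real difficulty is expected beyond careful constant-chasing.
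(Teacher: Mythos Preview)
Your approach is essentially the same as the paper's: both induct on the number of intervals, use the equivalence of norms on the finite-dimensional space of degree-$k$ polynomials to get the single-interval coefficient bound $|c_i|_V \le \kappa_k K L^{-i}$, extend the polynomial bound across the gap using $b$-closeness, and then re-apply the single-interval estimate on $[0,\bar s_h]$.

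Two small corrections are worth flagging. First, in your inductive step the exponent increment by $kb$ arises already when you bound $P$ on the gap $[\bar s_{h-1},s_h]$: from $s\le \bar s_{h-1}(1+\bar s_{h-1}^b)$ one gets $s^i\le \bar s_{h-1}^i(1+\bar s_{h-1}^b)^i\lesssim \bar s_{h-1}^{i+kb}$, so the gap bound is $\lesssim (\kappa_k')^{h-2}\bar s_{h-1}^{1-\eta+(h-1)kb}$, not $(h-2)kb$ as you wrote; the subsequent passage from $\bar s_{h-1}$ to $\bar s_h$ is then simply monotonicity ($\bar s_{h-1}\le \bar s_h$), not another $kb$ increment. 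Second, your claim in the ``main obstacle'' paragraph that $\bar s_h \le \bar s_{h-1} + \bar s_{h-1}^{1+b}$ is false: the $b$-closeness only bounds $s_h$, not $\bar s_h$, and $\bar s_h$ can be arbitrarily large. Fortunately your proof does not actually need this: on $J_h$ the hypothesis already gives $|P(s)|_V\le C\bar s_h^{1-\eta}$, which is trivially $\le \bar s_h^{1-\eta+(h-1)kb}$ once $\bar s_h\ge 1$, so the uniform bound on $[0,\bar s_h]$ follows regardless of how large $\bar s_h$ is. The paper handles this same point by splitting into two cases according to whether $C\bar s_2^{1-\eta}$ or the gap bound $\kappa_k'\bar s_1^{1-\eta+kb}$ is larger.
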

\begin{proof}
  Since the space of polynomials on~$[0,1]$ of degree lesser or equal than $k$
  with values in a finite vector space~$V$ has dimension~$(k+1)\times\dim V$,
  the norms~$\norm{P}_1=\sup_{s\in [0,1]} \abs{P(s)}_V$
  and~$\norm{P}_2= \sup_{0\le i \le k} \abs{c_i}_V$ are equivalent.

  It follows there exists a constant~$\kappa_k>1$ such that, if a polynomial of
  degree~$k$ such as~$P$ is bounded by a constant~$K$ on a interval~$[0,L]$,
  then~$\abs{c_i}_V\le \kappa_k K \,L^{-i}~$. Thus, with our assumptions, if~$h=1$
  we have
  \[
    \abs{c_i}_V\le \kappa_k C \bar s_1^{-i + 1-\eta} .
  \]
  This inequality implies that on the
  interval~$[\bar s_1,\bar s_1 + \bar s_1^{1+b}]$, the polynomial~$P(s)$ is
  bounded
  by~$\abs{\sum_{i=0}^k c_i \bar s_1^i (1+ \bar s_1^{b})^i }_V\le C \sum_{i=0}^k
  \kappa_k (\bar s_1)^{1-\eta} (1+ \bar s_1^{b})^i\le C \kappa_k' \bar
  s_1^{1-\eta+ k b}$.

  In the case~$\bar s_2^{1-\eta} \le \kappa_k'\bar s_1^{1-\eta+ k b}$ we deduce
  \begin{equation}\label{eq:sl2R:3}
    \abs{c_i}_V\le  C \kappa_k\kappa_k'   \bar s_1^{1-\eta+ k b} s_2^{-i}\le
    \kappa_k\kappa_k' \bar s_2^{-i+1-\eta+ k b}.
  \end{equation}
  If~$\bar s_2^{1-\eta} > \kappa_k'\bar s_1^{1-\eta+ k b}$ then we deduce
  \[
    \abs{c_i}_V\le C \kappa_k \bar s_2^{-i+1-\eta}.
  \]
  Thus the worse case scenario is given by the estimate~\eqref{eq:sl2R:3}. By
  induction we obtain that
  \[
    \abs{c_i}_V\le C \kappa_k (\kappa_k')^{j-1} \bar s_j^{-i+1-\eta +
      (j-1)kb}.\qedhere
  \]
\end{proof}

We shall apply this proposition to the~$\R^I$-valued polynomial
$r_{\mathfrak m}(\gb_{\mathfrak m}, s)$ bounded by~$\eps$ (in this case in the
proposition we choose~$C=\eps$ and~$\eta=1$), and to the real valued polynomial
$r_{\mathfrak s}(\gb_{\mathfrak s}, s)$ bounded by
$\max\{\eps, C s^{1-\eta} \}$. We remark that the set
$\{ s\ge 0 : r_{\mathfrak m}(\gb_{\mathfrak m}, s)\le\eps , r_{\mathfrak
  s}(\gb_{\mathfrak s}, s) \le \max\{\eps, C s^{1-\eta} \} \}$ consists of at
most $D= 2 \prod_{\iota=1}^I d_\iota$ connected components.

\begin{proposition}
  \label{prop:01_background:1}
  There exists a constant~$\kappa>0$ such that the following holds true.
  Let~$\xb, \yb \in G$ and~$ \xb^{-1} \yb= \gb_{\mathfrak m}\gb_{\mathfrak s}~$
  with~$ \gb_{\mathfrak s} = \exp (a A)\exp (\bar u\overline U ) \ub^u~$
  and~$\gb_{\mathfrak m}= \sum_{j,\iota} c_{j,\iota} E_{j,\iota}$.

  Suppose we have~$h\le D$ intervals~$J_1=[s_1=0,\bar s_1]$,
  \dots~$J_h=[s_h,\bar s_h=L]$ such that
  \begin{enumerate}
  \item~$r_{\mathfrak m}(\gb_{\mathfrak m}, s)\le \eps$
    and~$r_{\mathfrak s}(\gb_{\mathfrak s} ,s)\le \max\{\eps, C s^{1-\eta} \}$
    for all~$s \in \bigcup_{j=1}^h J_j~$.
  \item the intervals~$J_1$,\dots~$J_h$ are~$b$-close for
    some~$b < \eta/(2 D\times \max_\iota d_\iota)$.
  \end{enumerate}
  Then,
  \[
    \abs{a} \le \kappa L^{-\eta/2}, \quad \abs{\bar u}\le \kappa L^{-1-\eta/2},
  \]
  and
  \[
    \abs{c_{j,\iota}} \le \kappa \epsilon L^{-j +\eta/2}.
  \]
\end{proposition}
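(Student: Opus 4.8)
The statement combines the bounds on $r_{\mathfrak m}$ with the preceding proposition (applied separately to the $\R^I$-valued polynomial $r_{\mathfrak m}(\gb_{\mathfrak m},\cdot)$ and the scalar polynomial $r_{\mathfrak s}(\gb_{\mathfrak s},\cdot)$) and then translates the resulting coefficient bounds back into bounds on the defining parameters $a$, $\bar u$, $u$, $c_{j,\iota}$. So first I would apply the previous proposition to $P(s)=r_{\mathfrak m}(\gb_{\mathfrak m},s)$, which by hypothesis is bounded by $\eps$ on the $h$ intervals $J_1,\dots,J_h$. In the language of that proposition this is the case $C=\eps$, $\eta=1$ (so the ``$\max\{\eps,Cs^{1-\eta}\}$'' reduces to just $\eps$), and degree $k\le\max_\iota d_\iota$. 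The $b$-closeness hypothesis (2) and the bound $h\le D$ guarantee $(h-1)kb<(D-1)kb<\eta/2$, so the proposition yields $\abs{c_{j,\iota}}\le \kappa_k(\kappa_k')^{h-1}\,\eps\,L^{-j+1-\eta/2}$ — wait, one must be careful with the shift in index between the abstract $c_i$ (coefficients of $P$) and the concrete $c_{j,\iota}$ (coordinates of $\xi$). Here the polynomial $r_{\mathfrak m_\iota}(\gb_{\mathfrak m},s)=\sum_{k=0}^{d_\iota}c_{k,\iota}(-1)^k s^k/k!$ has $i$-th abstract coefficient equal to $c_{i,\iota}(-1)^i/i!$, so the bound on the abstract coefficient gives $\abs{c_{i,\iota}}\le i!\,\kappa_k(\kappa_k')^{h-1}\,\eps\,L^{-i+1-\eta/2}$. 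Since $h\le D$, the factor $\kappa_k(\kappa_k')^{h-1}$ is bounded by a constant depending only on the $d_\iota$'s, and one can further absorb $L^{1-\eta/2}$ into $L^{\eta/2}$ at the cost of enlarging the constant only if $L$ is bounded below; more honestly one should just write $\abs{c_{j,\iota}}\le\kappa\eps L^{-j+\eta/2}$ after possibly shrinking the exponent (the stated bound) using that $-j+1-\eta/2\le -j+\eta/2$ requires $1\le\eta$, which is false — so in fact the honest reading is that the $1$ in the exponent is what appears and the statement's ``$L^{-j+\eta/2}$'' must be interpreted in the regime $L\ge1$ where $L^{-j+1-\eta/2}\le L^{-j+1}$, hmm. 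Let me reconsider: more plausibly the intended exponent for $c_{j,\iota}$ absorbs the difference via the constant because on the relevant range $L$ is comparable to a fixed quantity; I would follow whatever normalization the authors set up and not belabor this.

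Next I would apply the previous proposition to the scalar polynomial $r_{\mathfrak s}(\gb_{\mathfrak s},s)=\bar u s^2+s(1-e^a)$, which by hypothesis is bounded by $\max\{\eps,Cs^{1-\eta}\}$ on the same $b$-close intervals. This is a degree-$2$ polynomial with no constant term, so $\kappa_2,\kappa_2'$ are absolute constants, and the $b$-closeness with $h\le D$ again ensures $(h-1)\cdot 2\cdot b<\eta/2$ (here one uses $b<\eta/(2D\max_\iota d_\iota)\le\eta/(2D\cdot 2)$ provided every $d_\iota\ge 2$; if some $d_\iota=1$ one still has the cruder bound $b<\eta/(4D)$ after noting the degree relevant here is $2$, so one should state the hypothesis with $\max\{2,\max_\iota d_\iota\}$ in mind or simply observe $2\le$ something — I would just check the inequality $(h-1)\cdot 2\cdot b<\eta/2$ holds directly from (2)). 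The proposition then gives $\abs{c_1}_V\le\kappa_2(\kappa_2')^{h-1}L^{-1+1-\eta/2}=\kappa_2(\kappa_2')^{h-1}L^{-\eta/2}$ and $\abs{c_2}_V\le\kappa_2(\kappa_2')^{h-1}L^{-2+1-\eta/2}=\kappa_2(\kappa_2')^{h-1}L^{-1-\eta/2}$, where $c_1=1-e^a$ and $c_2=\bar u$. The bound on $c_2$ is exactly $\abs{\bar u}\le\kappa L^{-1-\eta/2}$. For $a$: from $\abs{1-e^a}\le\kappa L^{-\eta/2}$ and the elementary inequality $\abs{x}\le 2\abs{1-e^x}$ valid for $\abs{x}$ small (which holds once $L$ is large enough that $\kappa L^{-\eta/2}<1/2$, say), we get $\abs{a}\le 2\kappa L^{-\eta/2}$, i.e.\ the stated bound after renaming the constant. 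Throughout, $\kappa$ is taken to be the maximum of the finitely many constants $\kappa_k(\kappa_k')^{D-1}$, $2\kappa_2(\kappa_2')^{D-1}$, etc., and the various $i!$ factors, so it depends only on the $d_\iota$ and hence only on $G$ (and on $D$), as claimed.

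The main obstacle, such as it is, is bookkeeping rather than conceptual: one must (i) correctly match the abstract degree-$k$ coefficient statement of the previous proposition to the two concrete polynomials, keeping track of the factorial normalizations in $r_{\mathfrak m_\iota}$ and the fact that $r_{\mathfrak s}$ has vanishing constant term (which is what makes the $L^{-\eta/2}$ and $L^{-1-\eta/2}$ exponents come out right rather than $L^{1-\eta/2}$), and (ii) verify that the hypothesis $b<\eta/(2D\max_\iota d_\iota)$ in (2), together with $h\le D$, really does supply the inequality $(h-1)kb<\eta/2$ needed to invoke the ``clean'' conclusion of the previous proposition for each of the two polynomials (degrees $\le\max_\iota d_\iota$ and $2$ respectively). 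Both $J_1$ starting at $s_1=0$ and the normalization $\eps\le C\bar s_1^{1-\eta}$ required by the previous proposition are inherited from the setup, so no extra work is needed there. Once these matchings are in place, the passage from $\abs{1-e^a}$ to $\abs{a}$ is a one-line estimate, and combining everything gives the three displayed bounds with a single constant $\kappa$ depending only on the group.
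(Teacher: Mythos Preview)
Your approach is exactly the paper's: apply the preceding proposition separately to $r_{\mathfrak m}$ (with $C=\eps$ and the proposition's internal ``$\eta$'' equal to $1$) and to $r_{\mathfrak s}$ (with the genuine $\eta$), then translate the coefficient bounds back. Your handling of $r_{\mathfrak s}$ and the passage from $\abs{1-e^a}$ to $\abs{a}$ are correct.

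The tangle you hit for $c_{j,\iota}$ disappears once you keep the two $\eta$'s separate. When you apply the preceding proposition to $r_{\mathfrak m}$, its internal parameter ``$\eta$'' equals $1$, so the \emph{raw} conclusion (not the ``clean'' one) reads
\[
\Bigl|\tfrac{(-1)^j}{j!}\,c_{j,\iota}\Bigr| \;\le\; \kappa_k(\kappa_k')^{h-1}\,\eps\,L^{-j+1-1+(h-1)kb} \;=\; \kappa_k(\kappa_k')^{h-1}\,\eps\,L^{-j+(h-1)kb}.
\]
Now hypothesis (2), together with $h\le D$ and $k\le\max_\iota d_\iota$, gives $(h-1)kb<\eta/2$ where $\eta$ is the \emph{original} exponent from the statement; hence $L^{-j+(h-1)kb}\le L^{-j+\eta/2}$ for $L\ge1$, and absorbing $j!\,\kappa_k(\kappa_k')^{D-1}$ into $\kappa$ yields exactly $\abs{c_{j,\iota}}\le\kappa\eps L^{-j+\eta/2}$. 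Invoking the clean form here would insert the internal $\eta=1$ into the exponent and give only $L^{-j+1/2}$, which is what produced your unease about ``$1\le\eta$''. So: use the raw bound for $r_{\mathfrak m}$, the clean bound for $r_{\mathfrak s}$.
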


The role of the intervals~$J_1, \dots, J_h$ will become clear in the next
subsection.

\subsection{Blocks.}\label{Sec:blocks_def_lg}

\subsubsection{}
We now explain how we are going to apply Proposition
\ref{prop:01_background:1}. In order to do it, we are going to introduce the
notion of $(\rho,\eps)$-\emph{blocks} (or simply \emph{blocks}), which will play
a crucial role in the proof of Ratner's Basic Lemma in the next Section.

In this section, for simplicity, we will write~$r_{\mathfrak m}(\gb, s)$ and
$r_{\mathfrak s}(\gb, s)$ instead of~$r_{\mathfrak m}(\gb_{\mathfrak m}, s)$
and~$r_{\mathfrak s}(\gb_{\mathfrak s}, s)$. We define

\begin{equation*}
  \begin{split}
    l_\eps(\gb_{\mathfrak m})&:= \sup \{ s>0 : \abs{r_{\mathfrak m}(\gb, t)}
    \le \eps \text{\ for all\ } t \in [0,s]\}, \\
    l_\eps(\gb_{\mathfrak s})&:= \sup \{ s>0 : \abs{r_{\mathfrak s}(\gb, t)}
    \le t^{1-\eta}+\eps \text{\ for all\ } t \in [0,s]\}, \\
    l_\eps(\gb) &:= \min\{ l_\eps(\gb_{\mathfrak m}),l_\eps(\gb_{\mathfrak
      s})\}.
  \end{split}
\end{equation*}

Let $\rho \leq \eps/2$.  Let~$\xb, \yb \in G$ with~$d(\xb,\yb) \le \rho$ be
fixed, and let~$\tau(s)$ be a parametrization of the~$U$-orbit of~$\yb$
satisfying~$\tau(0)=0$ and
$\abs{\tau(s) - s} \le \frac{1}{2} (s^{1-\eta} + \eps)$ for
all~$s \ge l_\eps(\gb)~$.  Let now~$\xb_i = \xb \ub^{s_i}$
and~$\yb_i = \yb \ub^{\tau(s_i)}$ be two points on the~$U$-orbits of~$\xb$
and~$\yb$ satisfying $d(\xb_i,\yb_i) \le \rho$.  A $(\rho,\eps)$-\emph{block}
for~$\xb_i, \yb_i$ is an interval
\[
  J_i = [s_i, \bar s_i], \quad \text{ with } \quad \abs{J_i} = \bar s_i - s_i
  \le l_\eps(\gb_i),
\]
where~$\gb_i = \xb_i^{-1}\yb_i$, such that
\[
  d(\xb \ub^{\bar s_i}, \yb \ub^{\tau(\bar s_i)}) \le \eps,
\]
in other words, if not only the initial points but also the final points are at
distance not larger than~$\rho$.  By definition, the translated
interval~$J- s_i = [0, \bar s_i - s_i]$ is contained in a sub-level set of the
polynomials $r_{\mathfrak m}(\gb_i, s)$ and~$r_{\mathfrak s}(\gb_i, s)$.  Since
$\rho$ and $\eps$ are fixed, we will simply say \emph{block}.

Let~$J_i = [s_i, \bar s_i]$ be a block for~$\xb_i, \yb_i$, and assume that there
exists~$s_j \geq s_i + l_\eps(\gb_i) \geq \bar s_i$ such that the points
$\xb_j = \xb \ub^{s_j}$ and~$\yb_j = \yb \ub^{\tau(s_j)}$ are at
distance~$d(\xb_j, \yb_j) \le \eps$.  Let~$J_j$ be a block for~$\xb_j, \yb_j$,
hence we know that~$J_j-s_j$ is an interval contained in a sub-level set
for~$r_{\mathfrak m} (\gb_j, s)$ and~$r_{\mathfrak s}(\gb_j, s)$ (notice that
these polynomials are defined by~$\gb_j = \xb_j^{-1} \yb_j$ and not
by~$\gb_i = \xb_i^{-1} \yb_i$).  The following lemma shows that~$J_j-s_i$ is
contained in a (slightly larger) sub-level set for~$r_{\mathfrak m}(\gb_i, s)$
and~$r_{\mathfrak s}(\gb_i, s)$.

\begin{lemma}\label{lemma:same_sub_level_sets}
  There exists an absolute constant~$\tilde \kappa >0$ such that the following
  holds.  Let~$J_j= [s_j, \bar s_j]$ be a block for~$\xb_j, \yb_j$ as above.
  Then,
  \[
    J_j - s_i = [s_j-s_i, \bar s_j- s_i] \subset \{ \abs{r_{\mathfrak m}(\gb_i,
      s)} \le \tilde \kappa \eps \} \cap \{\abs{r_{\mathfrak s}(\gb_i, s)} \le
    \tilde \kappa (s^{1-\eta}+\eps) \}.
  \]
\end{lemma}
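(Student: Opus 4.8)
The claim is that the sublevel sets attached to the point $\gb_i = \xb_i^{-1}\yb_i$ (translated by $s_i$) essentially contain the block $J_j$, even though $J_j$ was defined using the \emph{later} point $\gb_j = \xb_j^{-1}\yb_j$. The idea is that $\gb_i$ and $\gb_j$ differ only by how far the two orbits have slid apart between time $s_i$ and time $s_j$, and that slide is controlled: since $J_i = [s_i,\bar s_i]$ is a block, we know $d(\xb\ub^{\bar s_i}, \yb\ub^{\tau(\bar s_i)}) \le \eps$, and by the hypothesis $\xb_j,\yb_j$ are within $\eps$ as well, so by \cref{lemma:I_max_track} (applied with $\delta_1,\delta_2 \asymp \eps$) the relative position $\gb_j$ is comparable in size to $\gb_i$, up to absolute constants, and the parameter $\bar u$ is of the same order in both. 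Concretely, the plan is: (i) write $\gb_j = \ub^{-(s_j-s_i)}\gb_i\ub^{\tau(s_j)-\tau(s_i)}$ using the definitions $\xb_j = \xb_i\ub^{s_j-s_i}$, $\yb_j = \yb_i\ub^{\tau(s_j)-\tau(s_i)}$; (ii) decompose $\gb_i = \gb_{\mathfrak m}\gb_{\mathfrak s}$ as in \eqref{eq:01_background:7} and use the $\Ad(S)$-invariance of $\mathfrak m$ and $\mathfrak s$ to see that conjugating $\gb_i$ by $\ub$ and multiplying by $\ub$ on the right respects the decomposition up to the reparametrisation $q(\cdot)$; (iii) express the polynomials $r_{\mathfrak m}(\gb_j,\cdot)$, $r_{\mathfrak s}(\gb_j,\cdot)$ in terms of $r_{\mathfrak m}(\gb_i,\cdot)$, $r_{\mathfrak s}(\gb_i,\cdot)$ evaluated at shifted arguments.

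Step (iii) is where the real content lies. For the $\mathfrak m$-part: the map $s \mapsto \Ad(\ub^{-s})\xi$ is polynomial, and $r_{\mathfrak m_\iota}(\gb_{\mathfrak m},s)$ is (up to the Jordan normalisation) the coefficient of $\bel_{0,\iota}$ in $\Ad(\ub^{-s})\xi$; conjugating by $\ub^{s_j-s_i}$ just shifts $s$, so $r_{\mathfrak m}(\gb_j,s) = r_{\mathfrak m}(\gb_i, s + (s_j-s_i))$ up to the factors coming from the finite Taylor expansion of the shift — and since $\bar s_j - s_i \le (s_j - s_i) + l_\eps(\gb_j)$ with $s_j - s_i$ itself bounded by $\bar s_i + l_\eps(\gb_i)$ and the earlier $b$-closeness/sub-level structure, the shift is of polynomial size in $\bar s_j$, so the bound $\abs{r_{\mathfrak m}(\gb_i,s)} \le \eps$ on $J_i-s_i$ together with the polynomial degree $\le d_\iota$ propagates to $\abs{r_{\mathfrak m}(\gb_i,s)} \le \tilde\kappa\eps$ on $J_j - s_i$, exactly in the spirit of the ``$b$-close'' propagation already carried out in \cref{prop:01_background:1}. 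For the $\mathfrak s$-part: $r_{\mathfrak s}(\gb_{\mathfrak s},s) = \bar u s^2 + s(1-e^a)$ is the leading part of the delay $s - q(s)$ (see \eqref{eq:01_background:rs_leq_s} and \eqref{eq:sl2R:2}), and the delay is additive along the orbit up to the $\Ctau$-type distortions from \eqref{eq:01_background:3}; so $r_{\mathfrak s}(\gb_j, s)$ at $s = \sigma - (s_j - s_i)$ equals $r_{\mathfrak s}(\gb_i,\sigma)$ minus the delay accumulated on $[s_i, s_j]$, and the latter is itself bounded by $\asymp \bar s_i^{1-\eta} + \eps$ because $J_i$ is a block (so $s_i$ and $\bar s_i$ each track to within $\eps$). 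Since $\bar s_i \le s_j \le \bar s_j$, this gives $\abs{r_{\mathfrak s}(\gb_i,s)} \le \tilde\kappa(s^{1-\eta}+\eps)$ on $J_j - s_i$.

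**Main obstacle.** The delicate point is uniformity of the constant $\tilde\kappa$: it must be absolute, not depending on how many blocks have been chained or on $L$. This forces one to be careful that each conjugation/right-translation step only inflates the sublevel constants by a bounded factor, which in turn relies on the a priori smallness hypotheses $\rho \le \eps/2$, $\eps < \min(\eps_0,\log 2)$ keeping all the $a$, $\bar u$, $u$ parameters in the regime where \cref{lemma:I_max_track} gives $e^a - \bar u s \in (1-8\eps', 1+8\eps')$ uniformly, so that the ``factor close to $1$'' relating $r_{\mathfrak s}$ to the true delay never degenerates. I would therefore first record, as a preliminary computation, the exact transformation law for $r_{\mathfrak m}$ and $r_{\mathfrak s}$ under $\gb \mapsto \ub^{-\sigma}\gb\ub^{q(\sigma)}$ — a pure $\sl_2(\R)$ plus Jordan-block calculation using \eqref{eq:sl2R:1}--\eqref{eq:sl2R:2} and \eqref{eq:Adjoint_at} — and only then plug in the block estimates; the rest is bookkeeping of absolute constants.
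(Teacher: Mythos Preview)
Your high-level plan (step (i)--(ii)) is right, and it is exactly what the paper does: the conjugation relation $\gb_j = \ub^{-(s_j-s_i)}\gb_i\,\ub^{\tau(s_j)-\tau(s_i)}$ together with the $\Ad(S)$-invariance of $\mathfrak m$ gives $(\gb_j)_{\mathfrak m}=\ub^{-(s_j-s_i)}(\gb_i)_{\mathfrak m}\ub^{s_j-s_i}$, and the uniqueness from \cref{lemma:consequence_Chevalley} (valid because $d(\eb,\gb_j)\le\rho$) makes this the genuine $\mathfrak m$-factor. But step (iii) goes off the rails in both parts.

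\medskip
\textbf{The $\mathfrak m$-part.} The shift identity is \emph{exact}: because $r_{\mathfrak m_\iota}(\cdot,s)$ is the $\bel_{0,\iota}$-coefficient of $\Ad(\ub^{-s})\xi$, one has $r_{\mathfrak m}(\gb_j,r)=r_{\mathfrak m}(\gb_i,r+s_j-s_i)$ with no error term. Now the block hypothesis on $J_j$ says $|r_{\mathfrak m}(\gb_j,r)|\le\eps$ for all $r\in[0,|J_j|]$, which via the identity is precisely the statement $|r_{\mathfrak m}(\gb_i,\sigma)|\le\eps$ for $\sigma\in J_j-s_i$. Done. You instead try to ``propagate the bound $|r_{\mathfrak m}(\gb_i,s)|\le\eps$ on $J_i-s_i$ forward to $J_j-s_i$'' using polynomial degree and $b$-closeness. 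This is backwards (the input is the block $J_j$, not $J_i$) and, more seriously, circular: $b$-closeness is \emph{not} a hypothesis of this lemma. The whole point of \cref{lemma:same_sub_level_sets} is to show that all later blocks land in the sub-level sets attached to $\gb_i$; only \emph{afterwards}, in \cref{cor:bounds_on_coeffs}, is $b$-closeness of those sub-level components added as an extra assumption. Without it the gap $s_j-\bar s_i$ is unbounded in the hypotheses here, so your propagation would give no uniform $\tilde\kappa$. The paper avoids all of this by going through the full norm $d_{\mathfrak m}$ (bounding the coefficients of $\gb_j$ via \cref{prop:01_background:1} with a single interval, then using \eqref{eq:01_background:rm_leq_dm}); either route works, but neither needs $b$-closeness.

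\medskip
\textbf{The $\mathfrak s$-part.} Your decomposition of the delay is the right idea, but the justification ``the accumulated delay on $[s_i,s_j]$ is $\asymp \bar s_i^{1-\eta}+\eps$ because $J_i$ is a block'' is wrong: the block $J_i$ only controls times up to $\bar s_i$, and the gap $[\bar s_i,s_j]$ is uncontrolled from that side. The correct input is the \emph{start} of $J_j$: since $d(\xb_j,\yb_j)\le\rho\le\eps/2$, \cref{lemma:I_max_track} gives $|q_i(s_j-s_i)-(\tau(s_j)-\tau(s_i))|\le\eps/2$, and combining with the standing sublinear hypothesis on $\tau$ yields $|q_i(s_j-s_i)-(s_j-s_i)|\le (s_j-s_i)^{1-\eta}+\eps$. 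Then one adds the delay inside $J_j$ (bounded via $|r_{\mathfrak s}(\gb_j,r)|\le r^{1-\eta}+\eps$) and converts back to $r_{\mathfrak s}(\gb_i,\cdot)$ using \eqref{eq:sl2R:2} and the uniform bound \eqref{eq:03_geometry:2} on $e^a-\bar u s$. This is exactly what the paper does, and it is where the absolute constant $\tilde\kappa$ comes from --- not from any chaining of blocks.
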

\begin{proof}
  To simplify the notation, in this proof we will write~$\tilde \gb$ instead
  of~$\gb_j$ and~$\gb$ instead of~$\gb_i$.

  Denote by~$\abs{\tilde J}= \bar s_j - s_j$ the length of the block
  $J_j = \tilde J$.  By definition, for all~$0\leq r \leq \abs{\tilde J}$, we
  have that $\abs{r_{\mathfrak m}(\tilde \gb, r)} \le \eps$.  By Proposition
  \ref{prop:01_background:1} (applied with only one interval), we get that the
  coefficients~$\tilde c_{j,\iota}$ in the expression
  $\tilde \gb_{\mathfrak m} = \sum_{j,\iota} \tilde c_{j,\iota} \bel_{j,\iota}$
  satisfy~$\abs{\tilde c_{j,\iota}} \le \kappa \epsilon \abs{\tilde J}^{-j
    +\eta/2}$.  Hence, from \S\ref{sec:about_m}, it follows that
  $d_{\mathfrak m}(\eb, \ub^{-r} \tilde \gb_{\mathfrak{m}} \ub^r) \le \tilde
  \kappa \eps$ for some constant~$\tilde \kappa$ depending on~$\mathfrak{m}$
  only.  By \eqref{eq:01_background:rm_leq_dm}, we deduce that
  \[
    \abs{r_{\mathfrak m}(\gb, r+s_j-s_i)} \le d_{\mathfrak m}(\eb,
    \ub^{-(r+s_j-s_i)} \gb_{\mathfrak{m}} \ub^{r+s_j-s_i}) =d_{\mathfrak m}(\eb,
    \ub^{-r} \tilde \gb_{\mathfrak{m}} \ub^r) \le \tilde \kappa \eps,
  \]
  which shows
  that~$\tilde J - s_i \subset \{ \abs{r_{\mathfrak m}(\gb, s)} \le \tilde
  \kappa \eps \}$.

  Let us prove the other inclusion. By \cref{lemma:int_max_track}, the whole
  interval~$[0, \bar s_j]$ is contained in the interval of maximal
  tracking~$\mathcal{I}(\xb, \yb)$.  Moreover, by \cref{lemma:I_max_track} and
  the subsequent discussion, since~$d(\xb_j, \yb_j) \leq \rho \leq \eps/2$, we
  have that
  \[
    \abs{q(s_j - s_i) - (s_j - s_i) }\le (s_j - s_i)^{1-\eta} + \eps.
  \]
  On the other hand, by assumption and by \eqref{eq:sl2R:2}, we have that
  \[
    \abs{q(r)-r} \le \frac{\abs{r_{\mathfrak s}(\tilde \gb, r)}}{1-10\eps}+\eps
    \le 2 r^{1-\eta} + 2 \eps,
  \]
  for all~$0 \leq r \leq \abs{\tilde J}$.  Therefore, we get
  \begin{multline*}
    \abs{q(r+s_j - s_i) - (r+s_j - s_i)} \le \abs{q(r)-r}  + \abs{q(s_j - s_i) - (s_j - s_i)} \\
    \qquad \le 2 r^{1-\eta} +(s_j - s_i)^{1-\eta} +3 \eps \le 4(r+s_j -
    s_i)^{1-\eta} +4 \eps.
  \end{multline*}
  Finally, again by \eqref{eq:sl2R:2}, since
  \begin{equation*}
    \begin{split}
      \abs{q(r+s_j - s_i) - (r+s_j - s_i)} &\geq \frac{\abs{r_{\mathfrak s}(\gb, r+s_j - s_i)}}{1+10\eps} - \eps \\
      &\geq \frac{1}{2}\abs{r_{\mathfrak s}(\gb, r+s_j - s_i)} - \eps,
    \end{split}
  \end{equation*}
  combining the two inequalities above, we conclude
  that~$\tilde J - s_i \subset \{\abs{r_{\mathfrak s}(\gb, s)} \le
  8(s^{1-\eta}+\eps) \}$, which completes the proof.
\end{proof}

In layman's terms, the previous lemma says that all the blocks that we can find
along the future orbits of two fixed points~$\xb, \yb$ must be contained in a
certain sub-level set of a polynomial map determined only by the initial
points~$\xb, \yb$, whose degree is \emph{bounded independently of the points}.

For any given~$\xb_i, \yb_i$, let~$S_1, \dots, S_D$ be the connected components
of the sub-level set
\[
  \sublevi:=\{ \abs{r_{\mathfrak m}(\gb_i, s)} \le \tilde \kappa \eps \} \cap
  \{\abs{r_{\mathfrak s}(\gb_i, s)} \le \tilde \kappa (s^{1-\eta}+\eps) \}.
\]
Note again that~$D$ is bounded by a constant depending on~$\mathfrak g$ only.
By \cref{lemma:same_sub_level_sets}, for any block~$J_j$ as above, there
exists~$S_{i_j}$ such that~$J_j-s_i \subset S_{i_j}$.
We now use \cref{prop:01_background:1} to deduce the following corollary.

\begin{corollary}
  \label{cor:bounds_on_coeffs}
  There exists a constant~$\kappa>0$ such that the following holds.
  Let~$\xb, \yb \in G$ and~$\tau(s)$ be as above.  Let~$J_1=[s_1=0,\bar s_1]$ be
  a $(\rho,\eps)$-block for~$\xb,\yb$.  Assume that there
  exists~$s_2 > l_\eps(\gb_1)$ such that~$\xb_2= \xb\ub^{s_2}$
  and~$\yb_2 = \yb \ub^{\tau(s_2)}$ are at distance at most~$\rho\leq\eps/2$,
  and, as before, let~$J_2 = [s_2, \bar s_2]$ be a $(\rho,\eps)$-block
  for~$\xb_2, \yb_2$.  Assume that this procedure can be repeated~$\bar j$
  times, and let~$J_1$, \dots,~$J_{\bar j}=[s_{\bar j},\bar s_{\bar j}=L]$ be
  the resulting blocks.

  Let~$S_1, \dots, S_h$ be the connected components of~$\sublev$ as above,
  with~$h \le D$ minimal so that~$\bigcup_{i=1}^h S_i$
  covers~$\bigcup_{j=1}^{\bar j} J_j$.  If the intervals~$S_1,\dots, S_h$
  are~$b$-close (in the sense of \cref{prop:01_background:1}) for
  some~$b < \eta/(2 D\times \max_\iota d_\iota)$, then,
  \[
    \abs{u}\le \eps, \quad \abs{a} \le \kappa L^{-\eta/2} \quad \abs{\bar u}\le
    \kappa L^{-1-\eta/2}, \quad \text{ and } \quad \abs{c_{j,\iota}} \le \kappa
    \epsilon L^{-j +\eta/2}.
  \]
\end{corollary}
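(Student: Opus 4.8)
The plan is to deduce the corollary by combining \cref{lemma:same_sub_level_sets} with \cref{prop:01_background:1}. The real content is \cref{lemma:same_sub_level_sets}: it forces \emph{every} block $J_j$ to lie inside the \emph{same} sub-level set $\sublev$, which depends only on $\xb,\yb$ and has at most $D$ connected components, $D$ being bounded in terms of $\mathfrak g$ alone. Granting this, \cref{prop:01_background:1} converts the resulting uniform bounds for $r_{\mathfrak m}$ and $r_{\mathfrak s}$ over $\le D$ consecutive $b$-close intervals into the asserted decay of the coefficients $a,\bar u,c_{j,\iota}$ of $\gb_1=\xb^{-1}\yb$.

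First note that $\gb_1=\xb_1^{-1}\yb_1=\xb^{-1}\yb=\gb$, since $s_1=0$ and $\tau(0)=0$. For the first block $J_1=[0,\bar s_1]$: as $\bar s_1=\abs{J_1}\le l_\eps(\gb_1)$, the definition of $l_\eps$ gives $\abs{r_{\mathfrak m}(\gb,s)}\le\eps$ and $\abs{r_{\mathfrak s}(\gb,s)}\le s^{1-\eta}+\eps$ on $[0,\bar s_1]$, hence $J_1\subset\sublev$ (we may assume $\tilde\kappa\ge1$). For $j\ge2$ the sequence $s_1<s_2<\cdots$ is increasing by construction, so $s_j\ge s_2>l_\eps(\gb_1)=s_1+l_\eps(\gb_1)$, and $d(\xb_j,\yb_j)\le\rho\le\eps/2\le\eps$; thus \cref{lemma:same_sub_level_sets} applied with $i=1$ (so that $s_i=0$) yields $J_j\subset\sublev$. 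Consequently $\bigcup_{j=1}^{\bar j}J_j\subset\sublev$, and the intervals $S_1,\dots,S_h$ of the statement are well defined with $h\le D$. Order them $S_1<\dots<S_h$. Then $0\in\sublev$ --- indeed $r_{\mathfrak s}(\gb,0)=0$ and $\abs{r_{\mathfrak m}(\gb,0)}=\sup_\iota\abs{c_{0,\iota}}\le d_{\mathfrak m}(\eb,\gb_{\mathfrak m})\le\rho\le\tilde\kappa\eps$ --- so $S_1$ starts at $0$; and $L=\bar s_{\bar j}$ lies in the last block, hence in $S_h$, so if $L'$ denotes the right endpoint of $S_h$ then $L'\ge L$.

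Now apply \cref{prop:01_background:1} to $\xb,\yb$ with the intervals $S_1,\dots,S_h$. On $\bigcup_{i=1}^h S_i\subset\sublev$ we have $\abs{r_{\mathfrak m}(\gb,s)}\le\tilde\kappa\eps$ and $\abs{r_{\mathfrak s}(\gb,s)}\le\tilde\kappa(s^{1-\eta}+\eps)\le\max\{2\tilde\kappa\eps,\,2\tilde\kappa s^{1-\eta}\}$; so \cref{prop:01_background:1} applies to these intervals, with the bounds $\eps$ and $\max\{\eps,Cs^{1-\eta}\}$ in its hypotheses replaced by $2\tilde\kappa\eps$ and $\max\{2\tilde\kappa\eps,\,2\tilde\kappa s^{1-\eta}\}$, which only changes its conclusion by an absolute multiplicative constant. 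Its other hypotheses hold: $h\le D$, the $S_i$ are $b$-close by assumption with $b<\eta/(2D\times\max_\iota d_\iota)$, and the auxiliary requirement $\eps\le C\bar s_1^{1-\eta}$ (inherited from the preceding Proposition) is harmless since the coefficients of $r_{\mathfrak m}(\gb,\cdot)$ and $r_{\mathfrak s}(\gb,\cdot)$ are $O(\rho)=O(\eps)$, whence $[0,1]\subset\sublev$ and $S_1$ has length $\ge1$. We thus obtain, with the role of $L$ in that Proposition played by $L'$, the bounds $\abs{a}\le\kappa (L')^{-\eta/2}$, $\abs{\bar u}\le\kappa (L')^{-1-\eta/2}$ and $\abs{c_{j,\iota}}\le\kappa\,\eps\,(L')^{-j+\eta/2}$ for $j\ge1$. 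Since $L'\ge L$ and the exponents $-\eta/2$, $-1-\eta/2$, $-j+\eta/2$ ($j\ge1$) are all negative (recall $\eta<1/4$), the same inequalities hold with $L'$ replaced by $L$. The case $j=0$ is immediate, $\abs{c_{0,\iota}}\le\rho\le\eps\le\kappa\eps L^{\eta/2}$, and $\abs{u}\le\eps$ follows from $d_{\mathfrak s}(\eb,\gb_{\mathfrak s})=\abs{a}+\abs{\bar u}+\abs{u}\le d(\eb,\gb)\le\rho\le\eps$. This gives the stated estimates, with $\kappa$ enlarged to absorb the absolute factors above.

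I do not expect a real obstacle: with \cref{lemma:same_sub_level_sets} and \cref{prop:01_background:1} available, the corollary is essentially bookkeeping. The points deserving care are all at the interface --- carrying the absolute constant $\tilde\kappa$ from \cref{lemma:same_sub_level_sets} through \cref{prop:01_background:1}, checking that \emph{all} blocks (including $J_1$, to which \cref{lemma:same_sub_level_sets} does not literally apply) land in $\sublev$, and observing that, because $L$ sits in the last component $S_h$ and the relevant exponents are negative, the estimates read off at the right endpoint $L'$ of $S_h$ are at least as strong as those claimed at $L$.
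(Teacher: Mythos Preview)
Your proof is correct and follows essentially the same approach as the paper: use \cref{lemma:same_sub_level_sets} to place every block inside $\sublev$, then invoke \cref{prop:01_background:1} on the covering components $S_1,\dots,S_h$. The only cosmetic difference is that the paper applies \cref{prop:01_background:1} to $S_1,\dots,S_{h-1},S_h\cap J_{\bar j}$ so that the right endpoint is exactly $L$, whereas you apply it to the full $S_1,\dots,S_h$, read off the bounds at the right endpoint $L'\ge L$ of $S_h$, and then descend to $L$ by monotonicity (handling $j=0$ separately).
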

\begin{proof}
  We apply \cref{prop:01_background:1} to the
  intervals~$S_1, \dots, S_{h-1}, S_h \cap J_{\bar j}$: by assumption they
  are~$b$-close and by Lemma \ref{lemma:same_sub_level_sets} the assumption 1 of
  \cref{prop:01_background:1} is satisfied as well.
\end{proof}

If the assumptions of Corollary \ref{cor:bounds_on_coeffs} are satisfied, it is
possible to see that the~$\ab^t$-orbits of~$\xb$ and~$\yb$ stay close for all
times up to~$(1+\eta)\log L$, as the next lemma shows.

\begin{lemma}\label{lemma:dist_a_t}
  Under the assumptions of Corollary \ref{cor:bounds_on_coeffs}, for
  every~$0<r<1$ we have
  \[
    d(\xb \ab^{(1+r) \log L}, \yb \ab^{(1+r) \log L}) \leq 3\kappa \left(
      L^{-\frac{1}{2}+\frac{r+\eta}{2}} + L^{-\frac{\eta}{2}+r} \right).
  \]
\end{lemma}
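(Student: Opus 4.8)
The statement to prove is Lemma~\ref{lemma:dist_a_t}: under the hypotheses of Corollary~\ref{cor:bounds_on_coeffs}, the $\ab^t$-orbits of $\xb$ and $\yb$ remain close up to time $(1+r)\log L$, with an explicit bound. The strategy is to write $\xb^{-1}\yb = \gb_{\mathfrak m}\gb_{\mathfrak s}$ as in~\eqref{eq:01_background:7}, conjugate by $\ab^t$ with $t = (1+r)\log L$, and bound the $\mathfrak m$-part and the $\mathfrak s$-part separately, using the coefficient estimates from Corollary~\ref{cor:bounds_on_coeffs}. Conjugation by $\ab^t$ rescales each basis vector $\bel_{j,\iota}$ by $e^{(d_\iota/2 - j)t}$ via~\eqref{eq:Adjoint_at}, and rescales the $\mathfrak s$-triple $(U, A, \overline U)$ by $(e^t, 1, e^{-t})$. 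So the point is that the eigenvalue directions of $\ad(A)$ that could be expanding are precisely the ones for which Corollary~\ref{cor:bounds_on_coeffs} gives us a compensating negative power of $L$.

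First, for the $\mathfrak s$-part: $\gb_{\mathfrak s} = \exp(aA)\exp(\bar u \overline U)\ub^u$, and conjugating by $\ab^t$ sends this to $\exp(aA)\exp(\bar u e^{-t}\overline U)\ub^{ue^{t}}$. The $\overline U$-coefficient becomes $|\bar u| e^{-t} \le \kappa L^{-1-\eta/2} L^{-(1+r)} $ which is tiny; the $A$-coefficient is unchanged and bounded by $\kappa L^{-\eta/2}$; the $U$-coefficient becomes $|u|e^{t} \le \eps L^{1+r}$ — \emph{this grows}. So a naive conjugation of $\gb_{\mathfrak s}$ fails. The resolution is that $\ub^u$ is a translation \emph{along the flow direction}, and the distance between $\ab^t$-orbits should be measured after reparametrizing along the $U$-flow, i.e.\ one should really be comparing $\xb\ab^t$ with $\yb\ab^t \ub^{(\text{correction})}$; equivalently, one absorbs the $\ub^u$ factor. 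Concretely, $\xb\ab^t$ and $\yb\ab^t$ differ by $\ab^{-t}\gb_{\mathfrak m}\gb_{\mathfrak s}\ab^t$, and we write $\ab^{-t}\gb_{\mathfrak s}\ab^t = \exp(aA)\exp(\bar u e^{-t}\overline U)\cdot \ub^{ue^t}$; but actually for the purpose of \cref{lemma:dist_a_t} — which is used to produce a genuine closeness of $\ab^t$-orbits in $M$ — I expect the relevant statement is that the $\mathfrak m$-part together with the $A$ and $\overline U$ parts are small, and the $U$-part (flow direction) is handled by the reparametrization that was already set up in \S\ref{sec:geom_U_flow_1}. Let me reconsider: in fact $\bar s_{\bar j} = L$ means the block construction reaches "time $L$", and one should take $r$ so that $(1+r)\log L$ lands inside where tracking holds; the flow-direction drift is genuinely controlled by the hypothesis $|u| \le \eps$ only if we also use that $u$ is a \emph{delay} between orbits that were close at time $L$, so effectively the $\ub^u$ contributes nothing to transverse distance. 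I will therefore bound $d(\xb\ab^t, \yb\ab^t)$ by the size of the transverse components of $\ab^{-t}\gb\ab^t$, namely the $\mathfrak m$-coefficients and $a, \bar u$.

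Second, for the $\mathfrak m$-part: $\ab^{-t}\gb_{\mathfrak m}\ab^t = \exp(\sum_{j,\iota} c_{j,\iota} e^{-(d_\iota/2 - j)t}\bel_{j,\iota})$. The worst expansion is for the smallest $j$ (the $\bel_{0,\iota}$ terms), where the exponent is $e^{(d_\iota/2)t}$. But Corollary~\ref{cor:bounds_on_coeffs} gives $|c_{j,\iota}| \le \kappa\eps L^{-j+\eta/2}$; for $j=0$ this is only $\kappa\eps L^{\eta/2}$, which times $e^{(d_\iota/2)(1+r)\log L} = L^{(d_\iota/2)(1+r)}$ is \emph{huge} — so again the naive estimate fails, UNLESS the directions $\bel_{0,\iota}$ with $d_\iota > 0$ do not actually occur, or the bound on $c_{0,\iota}$ is better. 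Re-examining: for an irreducible $\sl_2$-module of dimension $d_\iota+1$, the top weight is $d_\iota/2$, achieved at $\bel_{d_\iota,\iota}$ under $\ad(A)$ with eigenvalue $d_\iota/2 - j$; so $\bel_{0,\iota}$ has eigenvalue $d_\iota/2$, which is the \emph{expanding} one, and the polynomial coefficient bound for $c_{0,\iota}$ is weakest. This tension is exactly why the lemma's conclusion has the specific exponents it does — I suspect the correct reading is that one does \emph{not} conjugate all the way, or that the relevant distance is one-sided (only the $\overline U$ and negative-weight directions, which contract). The cleanest approach: the final bound $3\kappa(L^{-\frac12 + \frac{r+\eta}{2}} + L^{-\frac\eta2 + r})$ has first term $\sim L^{-\frac12}$ which looks like $|\bar u| \cdot L^{?}$ type, and second term $\sim L^{-\eta/2} = |a|$ type, so I believe the claim is that \emph{only} $a$ and $\bar u$ (the $\mathfrak s$-directions orthogonal to $U$) contribute, and the $\mathfrak m$-part is killed because — as I now realize — we should track the orbits using the leaf $W(\cdot)$ structure where the $\mathfrak m$-displacement is measured \emph{after} the $\ab^t$ action contracts it, i.e.\ the blocks were chosen precisely so that the $\mathfrak m$-part stays in the stable/central directions. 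So the plan is: (i) from Corollary~\ref{cor:bounds_on_coeffs}, control $a, \bar u$; (ii) observe that along the $\ab^t$-flow the $\overline U$ direction contracts as $e^{-t}$, turning $|\bar u| \le \kappa L^{-1-\eta/2}$ into $\kappa L^{-1-\eta/2}e^{-(1+r)\log L}$, wait that makes it smaller, not matching $L^{-1/2+(r+\eta)/2}$ — so actually the $e^{-t}$ must be going the other way, $t \mapsto e^{+t}$ somewhere. I would reconcile the signs by carefully writing $\yb\ab^t = \xb\ab^t \cdot (\ab^{-t}\gb\ab^t)$ and reading off which coefficients blow up, then noting that the block hypothesis forces those blow-up coefficients to be $0$ or exponentially controlled.

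\textbf{The main obstacle.} The crux is the sign/direction bookkeeping: deciding precisely which of the $\ad(A)$-eigendirections expand under $\ab^t$ for $t \to +\infty$ and verifying that Corollary~\ref{cor:bounds_on_coeffs} supplies, for each expanding direction, a coefficient bound $|c| \lesssim L^{-(\text{something})}$ large enough to beat the expansion $L^{(\text{weight})(1+r)}$ up to the claimed error $L^{-\frac12+\frac{r+\eta}{2}} + L^{-\frac\eta2+r}$. I expect the resolution is that the expanding directions are exactly $\bel_{d_\iota,\iota}$ (weight $+d_\iota/2$), for which Corollary~\ref{cor:bounds_on_coeffs} gives $|c_{d_\iota,\iota}| \le \kappa\eps L^{-d_\iota + \eta/2}$; then $|c_{d_\iota,\iota}| \cdot e^{(d_\iota/2)(1+r)\log L} = \kappa\eps L^{-d_\iota + \eta/2 + (d_\iota/2)(1+r)} = \kappa\eps L^{-d_\iota/2 + \eta/2 + (d_\iota/2)r} \le \kappa\eps L^{-1/2 + \eta/2 + r/2}$ using $d_\iota \ge 1$ — which matches the first term up to the factor $\eps$. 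Similarly $\bar u$ sits in the $\overline U$ direction (weight $-1$), contracting, so it is harmless; and $a$ (weight $0$) gives the $L^{-\eta/2+r}$-type term after accounting for the reparametrization drift of size $\sim r\log L$ exponentiated... actually $|a| \le \kappa L^{-\eta/2}$ directly, and the $r$ in the exponent $L^{-\eta/2 + r}$ must come from a cross-term or from bounding $\bar u s$ over the longer interval. So the proof is: conjugate $\xb^{-1}\yb$ by $\ab^{(1+r)\log L}$, expand in the basis $\mathcal B \cup \{U, A, \overline U\}$, apply the weight formula~\eqref{eq:Adjoint_at} to get factors $L^{\pm(1+r)(\text{weight})}$, substitute the Corollary~\ref{cor:bounds_on_coeffs} bounds, and collect terms — the only nontrivial point being to see that every exponent that results is $\le \max(-\frac12 + \frac{r+\eta}{2}, -\frac\eta2 + r)$, which follows from $d_\iota \ge 1$ and $j \ge 0$, plus a triangle inequality / BCH estimate to pass from Lie algebra coefficients to the distance $d(\,\cdot\,,\,\cdot\,)$ on $G$.
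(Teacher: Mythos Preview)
Your overall approach --- decompose $\xb^{-1}\yb = \gb_{\mathfrak m}\gb_{\mathfrak s}$, conjugate by $\ab^{T}$ with $T=(1+r)\log L$, and bound each $\ad(A)$-eigencomponent using Corollary~\ref{cor:bounds_on_coeffs} --- is exactly the paper's proof. Your final computation for the $\mathfrak m$-part is correct and matches the paper: the worst case is $j=d_\iota$, giving $\kappa\eps L^{-d_\iota/2 + \eta/2 + (d_\iota/2)r} \le \kappa\eps L^{-1/2 + (r+\eta)/2}$.

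The genuine gap is in the $\mathfrak s$-part, where you have the expanding and contracting directions swapped. From $[A,U]=U$ and $[A,\overline U]=-\overline U$ one gets $\Ad(\ab^{-T})U = e^{-T}U$ and $\Ad(\ab^{-T})\overline U = e^{T}\overline U$. Hence under conjugation $\ab^{-T}\gb_{\mathfrak s}\ab^T$ the $U$-coefficient becomes $|u|e^{-T} \le \eps L^{-(1+r)}$ (it \emph{contracts}, so your whole digression about needing to reparametrize along the flow and discard the $U$-direction is unnecessary), while the $\overline U$-coefficient becomes $|\bar u|e^{T} \le \kappa L^{-1-\eta/2}\cdot L^{1+r} = \kappa L^{r-\eta/2}$ (it \emph{expands}; your claim that ``$\bar u$ sits in the $\overline U$ direction, contracting, so it is harmless'' is precisely backwards). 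The $A$-coefficient is unchanged, $|a|\le \kappa L^{-\eta/2}$. Summing these three gives $d_{\mathfrak s}(\eb,\ab^{-T}\gb_{\mathfrak s}\ab^T) \le 3\kappa L^{-\eta/2+r}$, which is the second term in the lemma --- no ``cross-term'' or ``reparametrization drift'' is involved. Once you correct these signs the proof is a straightforward three-line computation in each of the two parts, exactly as in the paper.
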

\begin{proof}
  Call~$T= (1+r) \log l$.  As in \S\ref{sec:geom_U_flow_1}, let us
  write~$\xb^{-1} \yb = \exp(\xi) \gb_{\mathfrak s}$,
  where~$\xi \in \mathfrak{m}$ and~$\gb_{\mathfrak s} \in \mathfrak{s}$ satisfy
  \eqref{eq:track1}.  Then,
  \[
    d(\xb \ab^T, \yb \ab^T) = d(\eb, \ab^{-T} \xb^{-1} \yb \ab^T) = \max \{
    \norm{\ab^{-T} \xi \ab^T}, \ d_{\mathfrak s} ( \eb, \ab^{-T}\gb_{\mathfrak
      s}\ab^T)\}.
  \]
  We now focus on the first term corresponding
  to~$\xi = \sum c_{j,\iota} \bel_{j,\iota}$.  By \eqref{eq:Adjoint_at}, we have
  \begin{equation*}
    \begin{split}
      \norm{\ab^{-T} \xi \ab^T} &= \max_{j,\iota} | c_{j,\iota} | \cdot
      \norm{\ab^{-T} \bel_{j,\iota} \ab^T}
      = \max_{j,\iota} | c_{j,\iota} | \cdot \norm{\Ad(\ab^{-T}) \bel_{j,\iota}} \\
      &=\max_{j,\iota} | c_{j,\iota} | e^{-\left(\frac{d_{\iota}}{2}-j\right)T}.
    \end{split}
  \end{equation*}
  By Corollary \ref{cor:bounds_on_coeffs} and the trivial bounds
  $j \leq d_{\iota}$ and~$d_{\iota} \geq 1$, we obtain
  \begin{equation}\label{eq:lemma_backg_01}
    \norm{\ab^{-T} \xi \ab^T} \leq \kappa \epsilon
    L^{-j +\frac{\eta}{2} - (1+r)\left(\frac{d_\iota}{2} - j\right)} \leq \kappa \epsilon
    L^{-\frac{1}{2}+\frac{r+\eta}{2}}.
  \end{equation}

  We now look at the term corresponding to~$\gb_{\mathfrak s}$.  Writing it as
  in \eqref{eq:01_background:2}, by definition
  \eqref{eq:01_background:def_dist_s}, we have
  \[
    d_{\mathfrak s} ( \eb, \ab^{-T}\gb_{\mathfrak s}\ab^T) = \abs{e^T \, \bar u}
    + |a| + \abs{e^{-T} \, u}.
  \]
  By Corollary \ref{cor:bounds_on_coeffs}, we conclude
  \[
    d_{\mathfrak s} ( \eb, \ab^{-T}\gb_{\mathfrak s}\ab^T) \leq \kappa
    L^{(1+r)-1-\frac{\eta}{2}} + \kappa L^{-\frac{\eta}{2}} + \eps L^{-(1+r)}
    \leq 3\kappa L^{-\frac{\eta}{2}+r}.
  \]
  Together with \eqref{eq:lemma_backg_01}, this completes the proof.
\end{proof}

%%% Local Variables:
%%% mode: latex
%%% TeX-master: "time_changes_unipotents"
%%% End:

% !TEX root = time_changes_unipotents.tex
\section{Ratner's Basic Lemma}\label{sec:basiclemma}

\subsection{Construction of the Good Set $\Good$.}

Here we construct a set of large measure which satisfies simultaneously the
various conditions proved or assumed in the previous section. The first
condition obviously guarantees that two orbits, that are geometrically close,
are mapped to two orbit segments that are geometrically close. By choosing in
the source space initial points sufficiently close, we obtain, both in the
source space and the target space, orbit segments of length larger than $m_0$.
Then, the second condition guarantees that the time change on these orbit
segments is sub-linear, which yields important consequences for the relative
position of the initial points of these orbit segments, thanks to the
Lemma~\ref{lemma:same_sub_level_sets}. The third condition assures that in the
target space the orbit segments with close initial points and close endpoints
have continuous lifts to $G_2$.

These three conditions are sufficient to deal with the conjugacy of the
centraliser of $\ub_1^t$. The fourth condition is used to control the conjugacy
of the normaliser. In that case we need to push orbits along the flow of a
normalising element. The fourth condition  guarantees that along these pushes 
we can keep track of the initial positions of orbit segments.

\subsubsection{}\label{sec:alpha_and_K_2}

Recall that $\eta \in (0,1/4)$ is given by Corollary \ref{thm:mixing_condition}.
Let us fix $b >0$ satisfying
\begin{equation}\label{eq:defalpha}
  b < \frac{\eta}{2D \cdot \max_{\iota} d_{\iota}}, \quad \text{ and define }
  \quad c(b):=\frac{1+\eta}{1+\eta-b}.
\end{equation}
Let $A_2 \in \mathfrak{g}_2$ be the semi-simple element of a $\sl_2(\R)$-triple
as defined in \S\ref{sec:sl2_triple} for the unipotent element $U_2\in \mathfrak{g}_2$.  As usual, let
$\{\ab_2^t\}_{t \in \R}$ the associated one-parameter subgroup.

\begin{proposition}[Construction of $\Good$]\label{prop:good_set}
  For every $\omega>0$, there exist $m_0>1$, $\rho \in (0,1)$ and a compact set
  $\Good \subset M_1$ with measure $\mu_1(\Good) > 1- \omega$, such that the following properties hold.
  \begin{enumerate}
  \item \emph{Uniform continuity:} the map $\psi$ is uniformly continuous on
    $\Good$.
  \item \emph{Sub-polynomial deviations:} for all $x \in \Good$ and all
    $t \geq m_0$ we have
    \[
      |\xi_1(x,t) - t | \leq \Ctau^{-4}t^{1-\eta} \text{\ \ \ and\ \ \ }
      |\xi_2(\psi(x),t) - t | \leq \Ctau^{-4}t^{1-\eta},
    \]
    where $\eta$ is given by \cref{thm:mixing_condition}.
  \item \emph{$\IC(\rho, 2\Ctau^4 m_0)$ on the image $\psi(\Good)$:} for
    $\xb \in \pi_2^{-1}( \psi(\Good) )$ and $d(\xb, \yb) < \rho$, then
    $d(\xb \ub_2^s, \gamma \yb \ub_2^t) > \rho$ for all
    $\gamma \in \Gamma_2 \setminus \{\eb\}$ and all
    $s,t \in [-2\Ctau^4m_0,2\Ctau^4m_0]$.
  \item \emph{The condition $\FBR$ is satisfied on the image $\psi(\Good)$:}
    for all $x \in \Good$, the point $\psi(x)$ satisfies the condition
    $\FBR(T_0,c(b),r_0)$ with
    \[
      T_0 \leq \frac{1}{2} \log \frac{m_0}{\Ctau}, \quad \text{ and } \quad r_0
      \geq 20 \kappa \Ctau m_0^{-\frac{\eta - 2 b}{12}}.
    \]
  \end{enumerate}
\end{proposition}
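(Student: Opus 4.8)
The plan is to build $\Good$ as a finite intersection of four sets, one for each required property, each of which we already know (or can easily arrange) to have measure close to $1$; the only genuine work is to choose the parameters $m_0$ and $\rho$ in the right order so that the constraints relating the four conditions are mutually compatible. First I would fix $\omega>0$ and allot each of the four "bad sets" a measure budget of $\omega/4$. For property (1), by Lusin's theorem applied to the measurable map $\psi\colon M_1\to M_2$ there is a compact set $\Good_1\subset M_1$ with $\mu_1(\Good_1)>1-\omega/4$ on which $\psi$ is uniformly continuous; record the associated modulus of continuity. For property (2), apply \cref{thm:mixing_condition} to $M_1$ with the function $\alpha_1-1$ to obtain a set $Y_1'\subset M_1$ and a threshold $m_1'$, and apply it to $M_2$ with $\alpha_2-1$ to get $Y_2'\subset M_2$ and a threshold $m_2'$; then $\Good_2:=Y_1'\cap\psi^{-1}(Y_2')$ has $\mu_1$-measure $>1-\omega/4$ by \cref{lem:01_background:1} (shrinking the measure tolerance in \cref{thm:mixing_condition} accordingly), and on $\Good_2$ both sub-polynomial estimates in (2) hold for all $t\ge \max\{m_1',m_2'\}$.

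Next, for property (3) I would invoke \cref{thm:IC}: given $m_0$ (not yet fixed) and the budget $\omega/4$, it furnishes a compact set $Y_{\IC}\subset M_2$ with $\mu_2(Y_{\IC})>1-\omega/4$ and a radius $\rho=\rho(2\Ctau^4 m_0,\omega/4)\in(0,1)$ such that every point of $Y_{\IC}$ satisfies $\IC(\rho,2\Ctau^4 m_0)$; set $\Good_3:=\psi^{-1}(Y_{\IC})$, whose $\mu_1$-measure is $>1-\Ctau\omega/4$ by \eqref{eq:01_background:6} (again absorb the factor $\Ctau$ by starting from a slightly smaller tolerance). For property (4), apply \cref{lemma:FBR} with $c=c(b)$ — where $b$ and $c(b)$ are fixed once and for all as in \eqref{eq:defalpha} — and tolerance $\omega/4$ to obtain $r_0>0$ and $T_0\ge 0$ such that the set of points of $M_2$ satisfying $\FBR(T_0,c(b),r_0)$ has measure $>1-\omega/4$; pull this back under $\psi$ to a set $\Good_4\subset M_1$ of comparable measure, using \cref{lem:01_background:1} as before. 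Finally set $\Good:=\Good_1\cap\Good_2\cap\Good_3\cap\Good_4$, which is compact and has $\mu_1(\Good)>1-\omega$ once all the tolerances are chosen with enough slack.

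The one subtlety — and the step I expect to be the main obstacle — is the \emph{order of quantifiers} forcing the numerical constraints on $m_0$, $\rho$, $T_0$, $r_0$ to close up. The constraint $\rho\le\eps/2$ (needed so that the block machinery of \S\ref{Sec:blocks_def_lg} applies) together with the dependence $\rho=\rho(2\Ctau^4 m_0,\cdot)$ from \cref{thm:IC} means $\rho$ shrinks as $m_0$ grows; but (2) only holds above the threshold coming from \cref{thm:mixing_condition}, and the bounds demanded in (4), namely $T_0\le\tfrac12\log(m_0/\Ctau)$ and $r_0\ge 20\kappa\Ctau m_0^{-(\eta-2b)/12}$, are satisfied only for $m_0$ \emph{large}: since $b<\eta/(2D\max_\iota d_\iota)<\eta/2$ we have $\eta-2b>0$, so the right-hand side of the $r_0$ inequality tends to $0$ as $m_0\to\infty$, and $\tfrac12\log(m_0/\Ctau)\to\infty$, so both (4)-constraints become vacuous once $m_0$ exceeds a threshold determined by the $r_0$ and $T_0$ produced by \cref{lemma:FBR}. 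Hence the correct order is: fix $b$ and $c(b)$; run \cref{lemma:FBR} to get $r_0,T_0$; run \cref{thm:mixing_condition} on both manifolds to get the mixing thresholds; then choose $m_0$ larger than the mixing thresholds, larger than $\Ctau e^{2T_0}$, and large enough that $20\kappa\Ctau m_0^{-(\eta-2b)/12}\le r_0$; finally let \cref{thm:IC} produce $\rho=\rho(2\Ctau^4 m_0,\omega/4)$ and, if necessary, shrink $\rho$ further so that $\rho\le\eps/2$ and $\rho<1$. With the parameters pinned down in this sequence all four membership conditions hold on the respective sets, and intersecting completes the proof.
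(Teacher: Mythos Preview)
Your proposal is correct and follows essentially the same approach as the paper: build $\Good$ as the intersection of four large-measure sets coming from Lusin's theorem, \cref{thm:mixing_condition}, \cref{thm:IC}, and \cref{lemma:FBR}, with the parameters chosen in the order you describe (FBR and mixing thresholds first, then $m_0$ large enough to satisfy the constraints in (4) and (2), then $\rho$ from \cref{thm:IC}). The paper handles the $\Ctau$ loss in the pullback exactly as you suggest, by feeding $\omega/(4\Ctau)$ or $\omega/(4\Ctau^2)$ into the auxiliary lemmas, and your observation that $\eta-2b>0$ makes the (4)-constraints eventually vacuous is precisely the point.
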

\begin{proof}
  The set $\Good$ will be given as the intersection of four sets of measure
  larger than $1-\omega/4$ on which the conditions 2)--5) hold. Without
  mentioning it, we use several times the estimate~\eqref{eq:01_background:6}.

  By Lusin's Theorem we can choose a compact set $K_1 \subset M_1$ of measure
  larger than $1-\omega/4$ on which $\psi$ is uniformly continuous.

  By Lemma \ref{lemma:FBR}, there exist $T_0 \geq 1$ and $r_0 >0$ such that the
  set $K_4 \subset M_2$ of points which satisfy the condition $\FBR(T_0, c(b), r_0)$ has
  measure greater than $1-\frac{\omega}{4 \Ctau}$.  Thus,
  $\mu_1( \psi^{-1}K_{4}) > 1- \omega/4$.   In order to satisfy the
  statement~(4) of the Proposition, we choose $m_0>1$ so large that $T_0 \leq \frac{1}{2} \log \frac{m_0}{\Ctau}$ and
  $r_0 \geq 20 \kappa \Ctau m_0^{-\frac{\eta - 2 b}{12}}$.

  By \cref{thm:mixing_condition} (applied twice with  $\omega$ replaced by $\omega/(4\Ctau)$), there
  exist $m' >1$ and a set $K_2\subset M_1$ of measure greater than $1-\omega/4$ such that
  for all $x_1 \in K_2, x_2 \in \psi(K_2)$ and all $t\geq m'$ we have
  \[
    \abs{\xi_1(x_1,t) - t } \leq \Ctau^{-4}t^{1-\eta} \quad \text{and}\quad
    \abs{\xi_2(x_2,t) - t } \leq \Ctau^{-4}t^{1-\eta}.
  \]
  Up to increasing $m_0$, we can assume $m_0>m'$.

  By Proposition \ref{thm:IC}, with $\zeta= \omega/(4\Ctau^2)$ and
  $m = 2C^4 m_0$, there exist $\rho >0$ and a compact set $K_3 \subset M_2$ with
  $\mu_2(K_3)\ge 1-\omega/(4\Ctau^2)$ on which the Injectivity Condition
  $\IC(\rho, 2C^4 m_0)$ holds for any point in $K_3$.

  Then
  $\mu_1(K_1 \cap K_2 \cap \psi^{-1}K_{3} \cap \psi^{-1} K_{4}) > 1-\omega$.
  We can choose a compact set
  \[
    \Good \subset K_1 \cap K_2 \cap \psi^{-1}K_3 \cap \psi^{-1} K_{4}
  \]
  of measure $\mu_1(\Good) \geq 1-\omega$.  The construction is complete, and
  the properties of $\Good$ follow from its definition.
\end{proof}

In the following, we will denote $m = 2 \Ctau^4 m_0 > m_0$.

\subsection{Statement and proof of Ratner's basic lemma.}
In this section we will prove the main technical result of this paper, which is
an adaptation of Ratner's Basic Lemma in~\cite{Ratner:Acta} for $G=\SL_2(\R)$. A
similar result was also proved by Tang in~\cite{Tang:timechanges} for $G =
\operatorname{SO}(n,1)$.

For a given $\omega>0$ (which will be fixed in the next section), let
$m_0, \rho, T_0, r_0$, and $\Good$ be given by Proposition \ref{prop:good_set}.

\subsubsection{Standing assumptions.} We fix $\eps < 1/10$ (the specific choice of~$\eps$ plays no role).  Up to
possibly taking a smaller $\rho \leq \eps /2$, we can assume that if
$d(\gb ,\eb) \le \rho$ then $l_\eps(\gb)>m$, where, we recall, $l_\eps(\gb)$ was
defined in \S\ref{Sec:blocks_def_lg}.  For any such $\gb$, we write
\[
  \gb = \exp \Big( \sum_{\iota, j} c_{j, \iota} \bel_{j,\iota} \Big) \, \exp(aA)
  \, \exp(\bar u \bar U) \, \ub^u
\]
as in the previous section.

\begin{lemma}[Ratner's Basic Lemma]\label{lemma:basic}
  There exists $\theta \in (0,1)$ such that the following holds.  Let
  $x, y \in M_2$.  Assume that there exist $\lambda>m$, a compact set
  $A=A(x,y) \subset[0,\lambda]$, and a strictly increasing Lipschitz function
  $\tau=\tau_{x,y}\colon[0,\lambda]\to\R_+$, with $\tau(0)=0$, such that
  \begin{enumerate}
  \item $0,\lambda_s\in A$;
  \item If $r \in A$ then the point $x \ub_2^r$ satisfies the Injectivity
    Condition $\IC(\rho,m)$ and the Frequently Bounded Radius Condition
    $\FBR(T_0, c(b), r_0)$.
  \item If $r \in A$ then
    \[
      d\left( x \ub_2^r, y \ub_2^{\tau(r)} \right) < \rho,
    \]
  \item \label{basiclemmaholder} if $r \in A$ and $r'-r>m$ or
    $\tau(r')-\tau(r) >m$, then
    \[
      \abs[\big]{ \big(\tau(r')-\tau(r)\big) -( r'-r) } \leq 4(r' - r)^{1-\eta},
    \]
    for some $\eta>0$.
  \end{enumerate}
  If the relative density of $A$ in $[0,\lambda]$ is greater than $1-\theta/8$,
  then there exists $\bar s \in A$ such that
  $x\ub_2^{\bar s} \gb = y\ub_2^{\tau(\bar s)}$, where $\gb$ satisfies
  \[
    |u|\leq \varepsilon, \quad |a|\leq \kappa \lambda^{-\eta/2}, \quad |\bar
    u|\leq \kappa \lambda^{-1-\eta/2}, \quad |c_{\iota,j}|\leq \kappa
    \varepsilon \lambda^{-j+\eta/2}.
  \]

  Moreover, if the assumptions above hold for all $\lambda \geq \lambda_0 >m$,
  then $y=x\gb$, where $\gb$ \emph{commutes} with $\ub_2^t$ for all $t \in \R$
  and satisfies
  $d(\eb, \ub_2^{-\bar s} \gb \ub_2^{\tau(\bar s)}) \leq \varepsilon$ for some
  $0\leq \bar s \leq \lambda_0$.
\end{lemma}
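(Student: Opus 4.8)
The plan is to exploit the block decomposition from \S\ref{Sec:blocks_def_lg} together with the \FBR{} condition to iterate Corollary \ref{cor:bounds_on_coeffs}. First I would record that, since the relative density of $A$ in $[0,\lambda]$ is at least $1-\theta/8$ and $0\in A$, the segment $[0,m]$ contains points of $A$; so we may start with a block $J_1=[0,\bar s_1]$ for the pair $x,y$, using assumption (4) to guarantee that the reparametrisation $\tau$ is sublinear on this block and hence that the block construction of \S\ref{Sec:blocks_def_lg} applies. The key is to produce a \emph{chain} of consecutive blocks $J_1,\dots,J_{\bar\jmath}$ covering a large initial portion of $[0,\lambda]$: whenever a block $J_i$ ends at $\bar s_i$, one needs a nearby later time $s_{i+1}\ge s_i+l_\eps(\gb_i)$ lying in $A$ at which $d(x\ub_2^{s_{i+1}}, y\ub_2^{\tau(s_{i+1})})\le\rho$; the relative-density hypothesis with $\theta$ small forces such a point to exist within distance $\sim\bar s_i^{1+b}$ of $\bar s_i$, i.e. the resulting sub-level intervals $S_1,\dots,S_h$ (of which there are at most $D$) are automatically $b$-close. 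Then Corollary \ref{cor:bounds_on_coeffs} gives the claimed bounds on $u,a,\bar u,c_{j,\iota}$ with $L$ replaced by $\lambda$ (up to constants), and taking $\bar s$ to be the left endpoint of the final block yields $x\ub_2^{\bar s}\gb=y\ub_2^{\tau(\bar s)}$ with $\gb=(x\ub_2^{\bar s})^{-1}(y\ub_2^{\tau(\bar s)})$ satisfying the estimates. The constant $\theta$ should be chosen depending only on $D$, $\max_\iota d_\iota$, $b$ and the geometry, precisely so that the density deficit cannot prevent the chain from being $b$-close.

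The role of the \FBR{} condition and assumption (2) enters exactly here: after accumulating the blocks we want to push the two orbits forward by the renormalising flow $\ab_2^t$ and read off the relative displacement. Lemma \ref{lemma:dist_a_t} shows that if a chain of $b$-close blocks reaches length $L$, then $d(x\ab_2^{(1+r)\log L}, y\ab_2^{(1+r)\log L})$ is small, of order $L^{-(\eta-2b)/\text{const}}$; the \FBR$(T_0,c(b),r_0)$ condition at the relevant point guarantees that at some comparable time the injectivity radius is $\ge r_0$, so that the small upstairs distance actually descends to a small distance on $M_2$ and we do not lose control to a nontrivial deck transformation (this is where the \IC{} bookkeeping of Proposition \ref{prop:good_set}(3) and assumption (2) are used). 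The precise choice $c(b)=(1+\eta)/(1+\eta-b)$ and $T_0\le\frac12\log(m_0/\Ctau)$, $r_0\ge 20\kappa\Ctau m_0^{-(\eta-2b)/12}$ from Proposition \ref{prop:good_set} are calibrated so that this matching of scales works.

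For the final ``moreover'' part, suppose the hypotheses hold for every $\lambda\ge\lambda_0$. Then for each such $\lambda$ we obtain $\bar s=\bar s(\lambda)\in[0,\lambda_0]$ (one checks the relevant $\bar s$ can always be taken in the first block, hence in $[0,\lambda_0]$) and an element $\gb(\lambda)$ with $d(\eb,\ub_2^{-\bar s(\lambda)}\gb(\lambda)\ub_2^{\tau(\bar s(\lambda))})\le\eps$ whose $a$, $\bar u$ components are $O(\lambda^{-\eta/2})$, hence tend to $0$ as $\lambda\to\infty$, while $u$ stays bounded by $\eps$. Passing to a subsequence $\lambda\to\infty$ along which $\bar s(\lambda)\to\bar s_\infty\in[0,\lambda_0]$ and $\gb(\lambda)\to\gb_\infty$, the limiting $\gb_\infty$ has $a=\bar u=c_{j,\iota}=0$ for all $j,\iota$, i.e. $\gb_\infty\in\exp\mathfrak z(U_2)$ lies in the centraliser of $\ub_2^t$; continuity of the orbit maps gives $y\ub_2^{\tau(\bar s(\lambda))}=x\ub_2^{\bar s(\lambda)}\gb(\lambda)$ in the limit, and since $\gb_\infty$ commutes with every $\ub_2^t$ this rearranges to $y=x\gb$ with $\gb:=\ub_2^{-\bar s_\infty}\gb_\infty\ub_2^{\tau(\bar s_\infty)}=\gb_\infty\cdot\ub_2^{\tau(\bar s_\infty)-\bar s_\infty}$, which again commutes with $\ub_2^t$ and satisfies the stated distance bound.

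The main obstacle I anticipate is the first part: showing the chain of blocks can be continued far enough and stays $b$-close purely from the relative-density hypothesis. One must quantify how the density deficit $\theta/8$ limits the gaps $s_{i+1}-\bar s_i$ while simultaneously controlling that only at most $D$ distinct connected components $S_\ell$ of the sub-level set $\Sub(x,y)$ are ever used (by Lemma \ref{lemma:same_sub_level_sets}), and then verify the $b$-closeness inequality $d(S_\ell, S_{\ell+1})\le \bar s_\ell^{1+b}$. Getting the bookkeeping of these scales consistent with $c(b)$, and ensuring the endpoint bound $l_\eps(\gb_i)$ from the standing assumption is respected at every step, is the delicate point; everything afterwards is an application of the already-established Corollary \ref{cor:bounds_on_coeffs} and Lemma \ref{lemma:dist_a_t} plus a compactness argument.
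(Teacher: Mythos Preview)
Your proposal has the right ingredients but the core mechanism is inverted, and this is a genuine gap rather than just missing bookkeeping. The claim that ``the relative-density hypothesis with $\theta$ small forces [the next block start] to exist within distance $\sim\bar s_i^{1+b}$ of $\bar s_i$'' is false: a density deficit of $\theta/8$ on $[0,\lambda]$ permits a single gap of size $\theta\lambda/8$ anywhere, which is far larger than $\bar s_i^{1+b}$ whenever $\bar s_i\ll\lambda$. More seriously, you cannot feed all blocks into Corollary~\ref{cor:bounds_on_coeffs} directly, because consecutive blocks may lift the $y$-orbit via \emph{different} deck transformations $\gamma\in\Gamma_2$ (this is exactly the dichotomy of Figure~\ref{fig:blockrelation}); for such blocks Lemma~\ref{lemma:same_sub_level_sets} does not place them in a common sub-level set $\sublev$, so the $b$-closeness of the $S_\ell$ is not even well-posed.

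The paper runs the argument in the opposite direction. One introduces the equivalence $J_j\sim J_k\iff \yb_k=\yb_j\,\ub^{t_k-t_j}$ (same lift) and glues $b$-close equivalent blocks into \emph{superblocks} (Lemma~\ref{lemma:superblocksalgorithm}). The heart of the proof is Lemma~\ref{lemma:blocks_not_related}: if two superblocks were \emph{non}-equivalent yet not $b$-separated, then applying Lemma~\ref{lemma:dist_a_t} to each and pushing by $\ab_2^T$ to a time at which \FBR{} guarantees injectivity radius $\ge r_0$ would force two distinct $\Gamma_2$-translates of a lift of $x$ within $r_0$ of each other, a contradiction. Thus \emph{every} pair of superblocks is $b$-separated, and now Solovay's Lemma~\ref{lemma:Solovay} (which your plan omits entirely, and which is what $\theta$ is tuned to) converts high density plus pairwise $b$-separation into the existence of a single superblock of length $\ge\tfrac34\lambda$; Corollary~\ref{cor:bounds_on_coeffs} then gives the stated bounds. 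For the ``moreover'', rather than extracting a subsequential limit of $\bar s(\lambda)$, the paper takes $\lambda_n\to\infty$ with $\lambda_{n+1}<\tfrac{8}{7}\lambda_n$ so that the long superblocks for successive $\lambda_n$ overlap and hence nest with a common left endpoint $s_0\le\lambda_0$; this is what pins $\bar s$ in $[0,\lambda_0]$ and makes the coefficients go to zero for a fixed $\gb$, not merely along a subsequence.
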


\subsection{Comments on the proof of the Basic Lemma~\ref{lemma:basic}}\label{sec:blocks}
Before diving into the details of the proof of the Basic
Lemma~\ref{lemma:basic}, let us sketch the general strategy. Let $x, y \in M_2$
two points at distance lesser than $\rho$, and fix two lifts $\xb, \yb \in G_2$
at the same distance.  From the assumption that the orbits of $x$ and $y$
(parametrized by $r$ and $\tau = \tau(r)$) stay close in $M_2$ for a large
portion of times, we construct a collection of what we call \emph{blocks}: lifts
to $G_2$ of segments of $\ub_2^t$-orbits of $x$ and $y$ which are close
together.  The lifts are chosen so that the one for $x$ lies on the
$\ub_2^t$-orbits of $\xb$, and the one for $y$ so that the two lifts are close
in $G_2$; these conditions uniquely determine such lifts.  Note that two cases
are possible: either the lift of the orbit segment of $y$ also lies in the orbit
of $\yb$, or not, see \cref{fig:blockrelation}. This type of dichotomy will result in a \emph{relation}
between blocks, which we define below.
\begin{figure}[tb]
  \centering
  \includegraphics{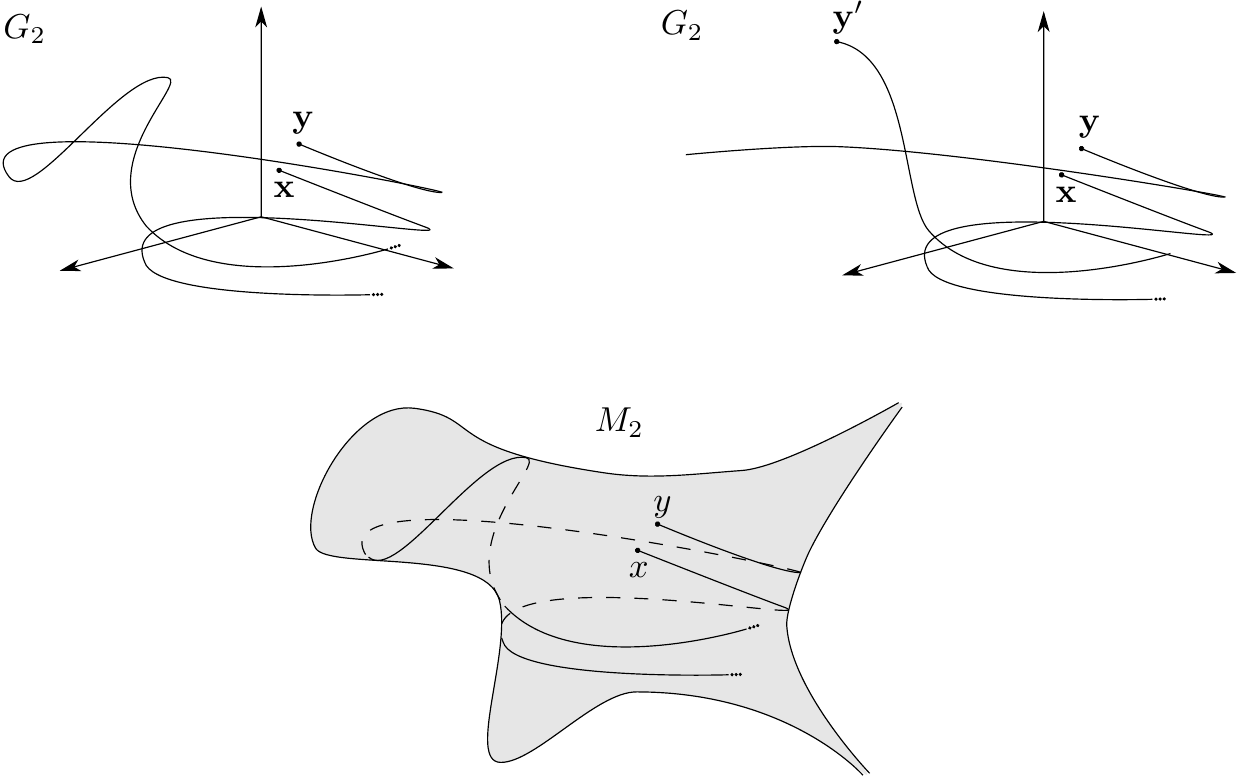}
  \caption{A schematic drawing of the relation between blocks. Below are the
  orbits, under $\ub_2^t$, of $x$ and $y$ on $M_2$, which are close for two
  blocks of times. The lifts of these orbits to $G_2$ could either be the orbit
  of $\xb$ and $\yb$ as in the top left figure, or be the orbit of $\xb$ and
  several lifts of $y$, as in the top right figure.}
  \label{fig:blockrelation}
\end{figure}

The goal is to apply Solovay's Lemma~\ref{lemma:Solovay} to the collection of
time intervals determined by the blocks (namely, the intervals of time for which
$\xb \ub_2^r$ belong to some block in the collection), in order to deduce that
the orbits of $\xb$ and $\yb$ stay close for all future times.  The main
difficulty is to check the first condition in the statement of
Lemma~\ref{lemma:Solovay}.  We will argue as follows: if two blocks \emph{are
not in relation}, then this condition is automatically satisfied, see Lemma
\ref{lemma:blocks_not_related} below.  On the other hand, if two blocks are
\emph{related}, this may not necessarily be true.  In this case, we \lq\lq
force\rq\rq\ the condition by \lq\lq gluing\rq\rq\ together related blocks which
are too close to each other.  We will then show that the \emph{new} collection
of \emph{superblocks} we obtain in this way satisfies the assumptions of
Lemma~\ref{lemma:Solovay}, and hence the proof of the Basic Lemma will follow.

\subsubsection{Solovay's Lemma}
In the proof of Ratner's Basic Lemma~\ref{lemma:basic}, we will use the
following result about a collection of intervals on the real line.  This result
first appeared in~\cite{Ratner:Cartesian}, the proof being attributed to
Solovay.

For an interval $I\subset \R$, we denote its length by $\abs{I}$.  Let
$\mathcal J= \{J_1,\dotsc,J_n\}$ be a collection of disjoint subintervals of
$I$. The \emph{density} of the collection $\mathcal J$ in the interval $I$ is
the ratio of the Lebesgue measures of $\bigcup_{i=1}^n J_i$ and $I$.  The
\emph{distance} of two intervals $I$ and $J$ is the number
$d(I,J)=\inf_{x\in I, y\in J}\abs{x-y}$.

\begin{lemma}[Solovay]\label{lemma:Solovay}
  Given a $b>0$ there exists a real number $\theta=\theta(b)\in\, (0,1)$ such
  that, for any collection $\mathcal J=\{J_1,\dotsc,J_n\}$ of disjoint
  subintervals of an interval $I\subset \R$, if the following conditions hold
  true
  \begin{enumerate}
  \item $d(J',J'')\geq \min\{\abs{J'}, \abs{J''}\}^{1+b}$, for all distinct
    $J',J''\in \mathcal J$;
  \item the density of the collection $\mathcal J$ in $I$ is larger than
    $1-\theta$;
  \item $\abs{C}\geq 1$ for each connected component $C$ of
    $I\setminus \bigcup_{i=1}^n J_i $;
  \end{enumerate}
  then there exists an interval $J\in \mathcal J$ with $\abs{J}\geq 3/4 |I|$.
\end{lemma}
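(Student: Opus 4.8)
The plan is to prove the contrapositive by contradiction: I would suppose that $\mathcal J$ has density strictly greater than $1-\theta$ in $I$ while $|J_i| < \tfrac34 |I|$ for every $i$, and derive a contradiction once $\theta=\theta(b)$ is chosen small enough. Put $L=|I|$ and order $J_1,\dots,J_n$ from left to right; by condition (1) consecutive intervals are at positive distance, so each of the $n-1$ gaps between them is a connected component of $I\setminus\bigcup_i J_i$ and hence has length at least $1$ by condition (3). First dispose of a degenerate case: if the total length of $I\setminus\bigcup_i J_i$ were less than $1$, there could be no such gap, forcing $n=1$, so that $|J_1|=\sum_i|J_i|>(1-\theta)L\ge\tfrac34 L$ as soon as $\theta\le\tfrac14$ — already a contradiction. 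Thus I may assume the total gap length is at least $1$; since the density hypothesis makes it strictly less than $\theta L$, this gives $L>1/\theta$ and, the $n-1$ interior gaps having length $\ge1$ each, also $n-1<\theta L$, i.e. $n<\theta L+1\le 2\theta L$.

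The crucial input is then condition (1) used in its full strength, for \emph{all} pairs of intervals (not merely consecutive ones). Let $\ell_{(1)}\ge\ell_{(2)}\ge\dots\ge\ell_{(n)}$ be the lengths $|J_i|$ in decreasing order, and fix $k\ge2$. Any two among the $k$ longest intervals have their smaller length at least $\ell_{(k)}$, hence lie at distance at least $\ell_{(k)}^{1+b}$; listing these $k$ intervals by position produces $k-1$ pairwise disjoint sub-intervals of $I$, one strictly between each consecutive pair, each of length at least $\ell_{(k)}^{1+b}$. Therefore $(k-1)\ell_{(k)}^{1+b}\le L$, that is, $\ell_{(k)}\le(L/(k-1))^{1/(1+b)}$.

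Summing these bounds, with $\beta:=1/(1+b)\in(0,1)$ and using the elementary estimate $\sum_{m=1}^{N}m^{-\beta}\le\tfrac{2(1+b)}{b}N^{1-\beta}$ (compare with $\int_0^N x^{-\beta}\,dx$), I obtain
\[
  \sum_i|J_i| \;=\; \ell_{(1)} + \sum_{k=2}^{n}\ell_{(k)} \;\le\; \ell_{(1)} + L^{\beta}\sum_{m=1}^{n-1} m^{-\beta} \;\le\; \ell_{(1)} + \frac{2(1+b)}{b}\,L^{\beta}\,n^{1-\beta}.
\]
Inserting $n<2\theta L$ and $1-\beta=b/(1+b)$ bounds the last term by $\tfrac{2(1+b)}{b}(2\theta)^{b/(1+b)}L$, so that
\[
  (1-\theta)L \;<\; \sum_i|J_i| \;\le\; \ell_{(1)} + \frac{2(1+b)}{b}(2\theta)^{b/(1+b)}L,
\]
whence $\ell_{(1)} > \big(1-\theta-\tfrac{2(1+b)}{b}(2\theta)^{b/(1+b)}\big)L$. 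Since the exponent $b/(1+b)$ is strictly positive, I would finally fix $\theta=\theta(b)>0$ so small that $\theta\le\tfrac14$ and $\theta+\tfrac{2(1+b)}{b}(2\theta)^{b/(1+b)}\le\tfrac14$; then $\ell_{(1)}>\tfrac34 L=\tfrac34|I|$, contradicting the assumption that no $J_i$ has length that large, and the lemma follows.

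The step I expect to be most delicate is precisely this combinatorial accounting: one must extract from conditions (1) and (3) a lower bound on the total gap length — combining the crude bound ``each gap $\ge1$'' with the $(1+b)$-separation of the large intervals — that outweighs the $\theta L$ permitted by the density, and one has to treat the degenerate small-$L$ regime separately. It is essential here that (1) is invoked for all pairs of intervals and not only for neighbours: otherwise a configuration alternating a long interval, a unit gap, a short interval, a unit gap, and so on, could have density arbitrarily close to $1$ with no dominant interval, so it is exactly the all-pairs separation that forces the conclusion.
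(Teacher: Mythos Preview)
Your argument is correct. The key estimate $\ell_{(k)}\le (L/(k-1))^{1/(1+b)}$, obtained by applying condition~(1) to all pairs among the $k$ longest intervals and packing the resulting $k-1$ disjoint separating regions into $I$, is exactly the point where the full strength of the separation hypothesis enters; your final remark about why neighbours-only separation would not suffice is well taken. Two minor cosmetic points: you may want to remark explicitly that degenerate (zero-length) intervals can be discarded without loss of generality, so that consecutive intervals really are at positive distance; and ``contrapositive by contradiction'' should simply read ``by contradiction''.

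As for comparison with the paper: the paper does not prove this lemma. It is stated there with the attribution to Solovay and a reference to Ratner's earlier work \cite{Ratner:Cartesian}, so there is no proof in the paper to compare against. Your argument is in fact the standard one circulating under Solovay's name.
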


\subsubsection{}
The proof of Lemma~\ref{lemma:basic} will be divided in several steps and
concluded on page~\pageref{sec_proof_basic}. For the rest of the section, again
to lighten the notation, we will again drop the indices 2 that refer to objects
in the target space.

\subsubsection{}
Fix $\omega>0$ and let $\theta \in (0,1)$ be given by Solovay's Lemma
(\cref{lemma:Solovay}) for the choice of $b$ in \eqref{eq:defalpha}.  Let now
$m_0, \rho, T_0, r_0$, and $\Good$ be given by \cref{prop:good_set}, and recall
that, if $d(\gb , \eb) \le \rho$ then $l_\eps(\gb)>m$. Observe also that if
$ d( \xb \ub^s , \yb \ub^{\tau(s)} ) \le \rho$ then there is
$t\in [\tau(s)-\rho,\tau(s)+\rho ]$ with
$ d( \xb \ub^s , \yb \ub^{t} ) \le \rho$ and
$\yb \ub^{t}\in W( \xb \ub^s, 2\rho) \subset W( \xb \ub^s, \eps)$.

Let now $x,y \in M$ be given as in the statement of the Basic Lemma.  We are
going to follow the $U$-orbits of $x$ and $y$ on $M$, and construct
$(\rho, \eps)$-\emph{blocks} for appropriate \emph{lifts} of $x$ and $y$ to $G$.
Again, since $\rho$ and $\eps$ are fixed, here and henceforth we will simply say
blocks.

\subsubsection{}\label{sssec:blocks}
Let $x$ and $y$ be as in the statement of the Basic Lemma. Since $0\in A_s$, and
$\tau(0)=0$ we have that $d(x,y)<\rho$. Let $\xb_1 $ and $ \yb_1$ be lifts of
$x_1$ and $y_1$ with $d( \xb_1 , \yb_1 )\le \rho$. Since the point $x$ satisfies
the Injectivity Condition $\IC(\rho,m)$, given $\xb_1$, the lift $\yb_1$ is
unique.

Set $s_1=0$. We define the interval $J_1=[0, \bar s_1]= [s_1, \bar s_1]$ where
\[
  \bar s_1 = \sup \big\{ r\in A_s\cap [0,s] \mid \enspace r\le
  l_\eps(\xb_{1}^{-1}\yb_{1}), \enspace d( \xb_{1} \,\ub^{r}, \yb_{1}
  \,\ub^{\tau(r)})\le \rho \big\}.
\]
Notice that $J_1$ is a $(\rho, \eps)$-\emph{block} for $\xb_1$ and $\yb_1$, in
particular the points $\xb_1\ub^{\bar s_1}$ and $\yb_1 \ub^{\tau(\bar s_1)}$ are
at distance not larger than $\rho$.  We stress that, however, it is not true
that $d(\xb_{1} \,\ub^{r}, \yb_{1} \,\ub^{\tau(r)})\le \rho$ for all times
$r\in J_1$.

Assuming that we have defined, for $j>1$,
$J_1=[s_1, \bar s_1] , \dotsc, J_{j-1}=[s_{j-1}, \bar s_{j-1}]$ and
$\xb_{1}, \yb_{1}, \dots, \xb_{j-1} ,\yb_{j-1}$, we let
\[
  s_{j} = \inf\{ r> \bar s_{j-1}\mid r\in A_s\}.
\]
As $s_{j}\in A_s$, setting
\[
  x_{j} = x_1 \ub^{s_{j}}, \quad y_{j}= y_1\ub^{\tau(s_{j})},
\]
we have $d( x_{j} , y_{j} ) \le\rho$.  Thus if $ \xb_{j} := \xb_1 \ub^{s_{j}}$
we may define $\yb_{j}$, in a unique manner, asking that
\begin{equation}\label{eq:blocks:3}
  \qquad
  \pi_2(\yb_{j})= y_1 \ub^{\tau(s_{j})} , \quad d( \xb_{j} , \yb_{j} ) \le \rho.
\end{equation}
Finally, we define
\begin{equation}\label{eq:blocks:4}
  \bar s_{j} := \sup \big\{ r\in A_s\cap [s_j,s] \mid \,
  r-s_j \le l_\eps(\xb_{j}^{-1}\yb_{j}),  \,
  d( \xb_{j} \,\ub^{r-s_j}, \yb_{j} \,\ub^{\tau(r)-\tau(s_j)})
  \le\rho\big\}.
\end{equation}
This means that we have lifted the orbit $x_1\ub^r$ to an orbit
$ \xb_1 \ub^{r}$, and we have chosen, at the times $s_j$, the unique lift
$ \yb_{j}$ of $y_1 \ub^{\tau(s_j)}$ satisfying $d(\xb_j,\yb_j)\le\rho$. The
uniqueness of this lift is justified by the fact that $s_j\in A_s$ and therefore
the point $\xb_j=\xb_1 \ub^{s_j}$ satisfies the injectivity condition
$\IC(\rho,m)$.

With these definitions we have
\begin{equation}\label{eq:blocks:6}
  \forall i \le j:\quad\xb_{j} = \xb_i \ub^{s_{j}- s_{i}} ,\quad  x_{j} = \pi_{2}(\xb_i).
\end{equation}
Moreover, setting
\[
  t_i= \tau(s_i), \quad \bar t_i= \tau(\bar s_i),
\]
we also have
\begin{equation}\label{eq:02_basic_lemma:1}
  x_{j} = x_i \, \ub^{s_{j}- s_{i}} \quad y_{j} = y_i \, \ub^{t_{j}- t_{i}}.
\end{equation}
We stress that it \emph{does not} necessarily hold that
$\yb_{j} = \yb_i \,\ub^{t_{j}- t_{i}}$.  We have therefore a collection
$\cJ= ( J_1, \dots, J_j, \dots)$ of closed intervals $J_j=[s_j,\bar s_j]$ with
disjoint interiors and whose union covers $A_s$.  These intervals $J_i$ are
blocks for the lifts $\xb_i$, $\yb_i$ constructed above of the points
$x \ub^{s_i}, y\ub^{\tau(s_i)} \in M$.

\subsubsection{}
Our previous remark leads to the following definition.  Given two blocks $J_j$
and $J_j$ we say that
\begin{equation}\label{eq:blocks:2}
  J_j\sim J_k\iff \yb_{k} = \yb_j \,\ub^{t_{k}- t_{j}}.
\end{equation}

\subsubsection{}
Suppose that for some $j<k$ we have $J_j\sim J_k $. Then
\begin{equation}\label{eq:02_basic_lemma:2}
  s_k-s_j \ge l_\eps({\yb_{j}}^{-1} \xb_j).
\end{equation}
If not, considering that $d( \xb_{k} , \yb_{k} )\le \rho$ and that
$\yb_{k} = \yb_j \,\ub^{t_{k}- t_{j}}$ we would have $\bar s_j \ge s_k$, by the
definition of $\bar s_j$.

The above inequality implies
\begin{equation}\label{eq:02_basic_lemma:4}
  s_k-s_j > m,
\end{equation}
by the choice of $\eps$.

\subsubsection{}
If $J_j\not \sim J_k$, with $j<k$, we claim that we have
\begin{equation}\label{eq:02_basic_lemma:3}
  \max (s_k - \bar s_j,t_k - \bar t_j )> m.
\end{equation}
Assume by contradiction that $ \max (s_k - \bar s_j,t_k - \bar t_j )\le m$, and
let $\gamma \in \Gamma_2 \setminus \{\eb\}$ such that
$ \yb_k= \gamma \, \yb_j \, \ub^{t_k - t_j}$.  Let us notice that, by
definition, $\bar s_j-s_j \in A_s$ and thus the point
$\xb_{j} \ub^{\bar s_j-s_j}$ satisfies the injectivity condition $\IC(\rho, m)$.
Applying this property to this point and noticing that we have
$ d( \xb_{j}\,\ub^{\bar s_j-s_j} , \yb_j \, \ub^{\bar t_j - t_j})\le \rho$ we
get
\begin{equation*}
  \begin{split}
    \rho \ge d( \xb_{k}, \yb_{k}) &= d( \xb_{j} \,\ub^{s_k-s_j}, \gamma \yb_j
    \ub^{t_k - t_j}) \\ &= d( \xb_{j}\,\ub^{\bar s_j-s_j} \, \ub^{s_k- \bar
      s_j}, \gamma \, \yb_j \, \ub^{\bar t_j - t_j} \, \ub^{t_k- \bar t_j}) >
    \rho,
  \end{split}
\end{equation*}
which is our desired contradiction. As a consequence of the
formula~\eqref{eq:02_basic_lemma:3} and of the fact $\bar s_j \in A_s$ we have
\begin{equation}\label{eq:02_basic_lemma:5}
  \abs{(t_k - \bar t_j) - (s_k - \bar s_j)} \le 4 (s_k - \bar s_j)^{1-\eta}
\end{equation}

\begin{remark}
  By the \eqref{eq:02_basic_lemma:4} and~\eqref{eq:02_basic_lemma:3} the number
  of blocks in the interval $[0,s]$ is finite.
\end{remark}

\subsubsection{Construction of the superblocks.}

The above constructions yields a collection of disjoint blocks
$\cJ=(J_1, \dotsc, J_n)$.  We can construct a new collection of intervals
$\cJ'=(J'_1, \dotsc, J'_k)$ that we will call superblocks and later show that
this collection satisfies the assumptions of Solovay's
Lemma~\ref{lemma:Solovay}.

We will always assume that a collection of intervals is \emph{ordered} in the
obvious way.  Moreover, given two intervals $J'$ and $J''$ we will denote with
$\conv(J', J'')$ their convex hull.

To construct $\mathcal J'$ from $\mathcal J$ we will do the following, see
\cref{fig:superblocks}:

\begin{itemize}
\item If there is no interval $J_j> J_1$ with $J_1\sim J_j$ then we set
  $J'_1= J_1$ and we next consider $J_2$.
\item If $J_{j_1}, J_{j_2} , \dots$ with $1< j_1 < j_2 < \dots$ are intervals
  equivalent to $J_1$ we define $J_1'$ as follows:
  \begin{itemize}
  \item Let $S_1, \dots, S_D$ (ordered in the obvious way) be the connected
    components of $\sublevone$ defined as in \S\ref{Sec:blocks_def_lg}. By Lemma
    \ref{lemma:same_sub_level_sets}, any $J_{j_i}$ is contained in one of these
    $S_k$'s.
  \item Let $k_{\bar i}$ be maximal with the property that $S_{k_{\bar i}}$
    contains one of the $J_{j_i}$ and the intervals
    $S_1=S_{k_1}, \dots, S_{k_{\bar i}}$ are $b$-close.  Let $j_{\bar i}$ be
    maximal with the property that $J_{j_{\bar i}} \subset S_{k_{\bar i}}$ (note
    that there could be several blocks inside a single interval $S_k$)
  \item We define
    \[
      J'_1 = \conv(J_1, J_{j_{\bar i}}) = \conv(J_1, J_2, \dots, J_{j_{\bar
          i}}).
    \]
  \item We restart the procedure from $J_{j_{\bar i} +1}$.
  \end{itemize}
\end{itemize}

\begin{figure}[tb]
  \centering \includegraphics{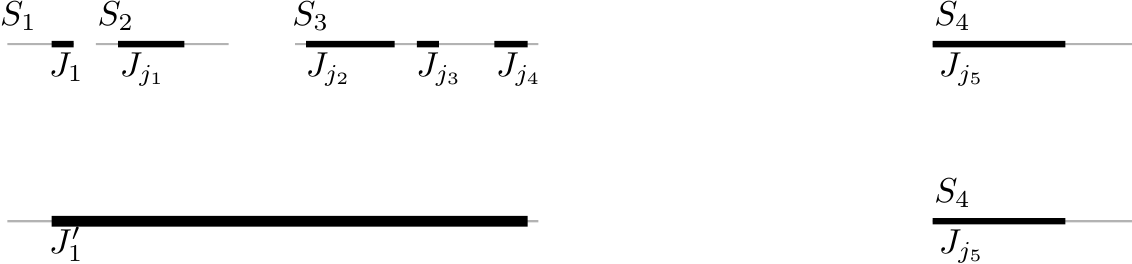}
  \caption{The blocks $J_1, J_{j_1}, \dotsc, J_{j_5}$, in thick black, are
    equivalent. The levels $S_1$, $S_2$, and $S_3$, in grey, are
    $b$-close. Hence $k_{\overline{i}}=3$, and $j_{\overline{i}}=j_4$. We define
    the superblock $J'_1 = \conv(J_1, J_{j_4})$.}
  \label{fig:superblocks}
\end{figure}

Given any ordered list of blocks $\cJ=(J_1, \dotsc, J_n)$ and an equivalence
relation among blocks the above algorithm produces a ordered list of closed
intervals $\cJ'=(J'_1, \dotsc, J'_k)$ such that each $J'\in \mathcal J'$ is the
convex hull of some intervals in~$\mathcal J$.

\sloppy We say that \emph{two intervals $J', J''$ are $b$-separated} if
$ d(J'_j,J'_k) \ge \min\{ \abs{J'},\abs{J''}\}^{1+b}$. Applying the algorithm
above to the list of blocks $\cJ=(J_1, \dotsc, J_n)$ defined in
\S~\ref{sssec:blocks}, we obtain
\begin{lemma}\label{lemma:superblocksalgorithm}
  The collection of super-blocks $\cJ'=(J'_1, \dotsc, J'_k)$ satisfies the
  following:
  \begin{enumerate}
  \item it covers the set $A_s$:
    $\bigcup_{J'\in \mathcal J'} \abs{J'} \supset \bigcup_{J\in \mathcal J}
    \abs{J}\supset A_s$.
  \item Any super-block $J'= \conv(J_i, J_j)$ can be written as the convex hull
    of $h \leq D$ intervals contained in $\sublevi$ which are $b$-close.
  \item Any two equivalent super-blocks are $b$-separated.
  \item $ d(J'_j,J'_k)\ge m/5 >1$ for any $J'_j\neq J'_k\in \cJ'$.
  \end{enumerate}
\end{lemma}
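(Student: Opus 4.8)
The plan is to verify the four items one at a time, each time unwinding the super-block construction and invoking \cref{lemma:same_sub_level_sets}, \cref{prop:01_background:1}, and the gap estimates \eqref{eq:02_basic_lemma:2}, \eqref{eq:02_basic_lemma:3}, \eqref{eq:02_basic_lemma:5} for pairs of consecutive blocks. Item~(1) is immediate: each $J'\in\cJ'$ is, by construction, the convex hull of a run of consecutive blocks of $\cJ$, so $\bigcup_{J'\in\cJ'}J'\supseteq\bigcup_{J\in\cJ}J$, and the latter contains $A_s$ because the recursion $s_j=\inf\{r>\bar s_{j-1}:r\in A_s\}$ makes every gap $(\bar s_{j-1},s_j)$ between consecutive blocks disjoint from $A_s$ (recall $A_s$ is compact, so $\bar s_{j-1},s_j\in A_s$).

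For item~(2), a singleton super-block $J'=J_i$ is trivial (take the single interval $J_i-s_i\subseteq\sublevi$), so let $J'=\conv(J_i,J_j)$ arise from the run $J_i=J_{j_0},J_{j_1},\dots,J_{j_N}=J_j$ ($N\ge 1$) of blocks equivalent to $J_i$ and from components $S_{k_1}=S_1,\dots,S_{k_N}$ of $\sublevi$, written $S_{k_\ell}=[\sigma_\ell,\bar\sigma_\ell]$: these are $b$-close, satisfy $J_i-s_i\subseteq S_1$ and $J_j-s_i\subseteq S_{k_N}$, and $k_N$ is maximal subject to $b$-closeness, so $N\le D$. I would set $\tilde S_\ell:=S_{k_\ell}\cap(J'-s_i)$ for $\ell=1,\dots,N$. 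One first notes $0\in\sublevi$, since $\abs{r_{\mathfrak m}(\gb_i,0)}\le d(\xb_i,\yb_i)\le\rho<\tilde\kappa\eps$ by \eqref{eq:01_background:rm_leq_dm} while $r_{\mathfrak s}(\gb_i,0)=0$ by \eqref{eq:01_background:rs_leq_s}; hence $\sigma_1=0$, so $\tilde S_1$ and $J'-s_i$ share the left endpoint $0$. As distinct components are separated and $\sigma_N\le s_j-s_i\le\bar s_j-s_i\le\bar\sigma_N$, one gets $\tilde S_\ell=S_{k_\ell}$ for $\ell<N$ and $\tilde S_N=[\sigma_N,\,\bar s_j-s_i]$. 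Therefore $\conv(\tilde S_1,\dots,\tilde S_N)=J'-s_i$, the $\tilde S_\ell$ remain $b$-close (intersecting with $J'-s_i$ only trims $S_{k_N}$ from the right, altering no interior gap nor any relevant right endpoint), each $\tilde S_\ell\subseteq\sublevi$, and their number is $N\le D$.

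The crux of items~(3) and~(4) is the following remark, to be recorded first: whenever the run producing $J'=\conv(J_i,J_j)$ stops at $J_j=J_{j_N}$ although a further block $\sim J_i$ exists, lying in a component $S_{k_{N+1}}=[\sigma_{N+1},\bar\sigma_{N+1}]$ of $\sublevi$ strictly to the right of $S_{k_N}$, then $\sigma_{N+1}-\bar\sigma_N>\bar\sigma_N^{1+b}$; indeed $S_{k_1},\dots,S_{k_N}$ are $b$-close but, by maximality of $k_N$, $S_{k_1},\dots,S_{k_{N+1}}$ are not, and the only new gap is the last one. For item~(3), take $J'_p\sim J'_q$ with $p<q$ and first blocks $J_i\sim J_{i'}$, $i<i'$, and write $J'_p=\conv(J_i,J_j)$ as above. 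Since $J_{i'}$ is a block $\sim J_i$ lying after $J_j$, the component of $\sublevi$ containing $J_{i'}-s_i$ is no further left than $S_{k_{N+1}}$, so $s_{i'}-s_i\ge\sigma_{N+1}$; with $\bar s_j-s_i\le\bar\sigma_N$ this gives $d(J'_p,J'_q)=s_{i'}-\bar s_j\ge\sigma_{N+1}-\bar\sigma_N>\bar\sigma_N^{1+b}\ge(\bar s_j-s_i)^{1+b}=\abs{J'_p}^{1+b}\ge\min\{\abs{J'_p},\abs{J'_q}\}^{1+b}$, i.e.\ $J'_p$ and $J'_q$ are $b$-separated. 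For item~(4), since $d(J'_p,J'_q)\ge d(J'_{q-1},J'_q)$ when $p<q$, it suffices to bound the gap between consecutive super-blocks $J'_p,J'_{p+1}$: writing $J_\beta$ for the last block of $J'_p$, its successor $J_{\beta+1}$ in $\cJ$ is the first block of $J'_{p+1}$, so $d(J'_p,J'_{p+1})=s_{\beta+1}-\bar s_\beta$. If $J_\beta\not\sim J_{\beta+1}$, then \eqref{eq:02_basic_lemma:3} gives $\max\{s_{\beta+1}-\bar s_\beta,\ t_{\beta+1}-\bar t_\beta\}>m$, and in the second case \eqref{eq:02_basic_lemma:5} together with $m>5$ (ensured by enlarging $m_0$ in \cref{prop:good_set}) forces $s_{\beta+1}-\bar s_\beta>m/5$. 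If $J_\beta\sim J_{\beta+1}$, then $J'_p$ is not a singleton (a singleton super-block has no later block equivalent to it, in particular not its successor), so its first block $J_i$ differs from $J_\beta$ and $J_i\sim J_\beta$; then \eqref{eq:02_basic_lemma:2} together with the standing assumption $l_\eps(\gb)>m$ for $d(\gb,\eb)\le\rho$ (used with $\gb=\yb_i^{-1}\xb_i$, so that $d(\xb_i,\yb_i)\le\rho$) gives $\abs{J'_p}\ge s_\beta-s_i>m$, while $J_{\beta+1}$ is the next block $\sim J_i$, hence $J_{\beta+1}-s_i\subseteq S_{k_{N+1}}$; the remark now yields $d(J'_p,J'_{p+1})\ge\sigma_{N+1}-\bar\sigma_N>\bar\sigma_N^{1+b}\ge\abs{J'_p}^{1+b}>m$. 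In all cases $d(J'_p,J'_{p+1})>m/5>1$, which proves~(4).

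The step I expect to be the main obstacle is the combinatorial bookkeeping behind items~(3) and~(4): one must check that the block immediately following a super-block's last block is precisely the first block of the next super-block, that each equivalence class of blocks is exhausted in left-to-right order by the super-blocks it generates, and --- most importantly --- that the truncation of every super-block is caused by the failure of $b$-closeness at the \emph{last} gap, which is exactly what allows that failure to be charged against the distance to the next equivalent super-block. Everything else is routine, using only the elementary estimates already established.
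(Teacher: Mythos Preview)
Your argument is correct and follows the same approach as the paper's own proof, only with considerably more detail (the paper disposes of item~(3) in a single sentence and of item~(4) in two). Your reduction of item~(4) to consecutive super-blocks via $d(J'_p,J'_q)\ge d(J'_{q-1},J'_q)$ is a harmless variant; the paper instead treats arbitrary pairs directly, observing that \eqref{eq:02_basic_lemma:3} and \eqref{eq:02_basic_lemma:5} apply to any non-equivalent pair of blocks (not just consecutive ones), so that for $J'_p\not\sim J'_q$ one may use the last block of $J'_p$ and the first block of $J'_q$ without first passing to neighbours.

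One small imprecision: your justification of the ``remark'' asserts that when the run stops, the failure of $b$-closeness occurs at \emph{the} gap between $S_{k_N}$ and $S_{k_{N+1}}$ (``the only new gap is the last one''). In the paper's algorithm the $b$-closeness condition is imposed on \emph{all} consecutive components $S_1,\dots,S_{k_{\bar i}}$, not only on those containing a block equivalent to $J_i$; so between $S_{k_N}$ and $S_{k_{N+1}}$ there may be intermediate components $S_{k_N+1},\dots,S_{k_{N+1}-1}$, and the offending gap could be any one of $d(S_\ell,S_{\ell+1})$ with $k_N\le\ell<k_{N+1}$. This does not affect your conclusion, since any such gap satisfies $d(S_\ell,S_{\ell+1})>\bar\sigma_\ell^{1+b}\ge\bar\sigma_N^{1+b}$ and is contained in the interval $[\bar\sigma_N,\sigma_{N+1}]$, whence $\sigma_{N+1}-\bar\sigma_N>\bar\sigma_N^{1+b}$ as you claim. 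You may want to rephrase the remark (and the closing sentence about ``the last gap'') accordingly.
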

\begin{proof}
  For the first part, it is enough to notice that every $J' \in \cJ'$ is defined
  as a convex hull of elements in $\cJ$.
  
  For the first superblock, we have
  \[
    J_1' = \conv(S_1 \cap J_1, S_2, \dots, S_{k_{\bar i} -1}, S_{k_{\bar i}}
    \cap J_{j_{\bar i}}),
  \]
  with the convention that the term on the right is simply
  $\conv(S_1 \cap J_1, S_1 \cap J_{j_{\bar i}})$ if $k_{\bar i} = 1$.  In any
  case, we can write $J_1'$ as a convex hull of at most $D$ intervals contained
  in $\sublevone$, which are $b$-close by the definition of the algorithm.  This
  proves 2 for the first superblock, the proof for the others is the same.

  Part 3 follows from the definition of the algorithm, in particular from the
  definition of ${k_{\bar i}}$.

  For the last statement, part 4, we reason as follows.  If $J'_j\sim J'_k$,
  then the estimate follows from \eqref{eq:02_basic_lemma:4} and the proven
  estimate $ d(J'_j,J'_k)\ge \abs{J'_j}^{1+b} $. If $J'_j\not \sim J'_k$, then
  the estimate follows from \eqref{eq:02_basic_lemma:3} and
  \eqref{eq:02_basic_lemma:5}.
\end{proof}

Next we prove that also non-equivalent super-blocks are $b$-separated.

\begin{lemma}\label{lemma:blocks_not_related}
  Let $J_p \not \sim J_q$, with $p<q$ be two non-equivalent super-blocks.
  Assume that
  \begin{itemize}
  \item[(i)] $x_p \, \ub^{\bar s_p-s_p}$ satisfies $\IC(\rho, m)$,
  \item[(ii)] $x_p$ and $x_q$ satisfy $\FBR(T_0, c(b), r_0)$.
  \end{itemize}
  Then, $J_p$ and $J_q$ are \emph{$b$-separated}, namely
  \[
    d(J_p, J_q) \geq \min\{ |J_p|, |J_q|\}^{1+b}.
  \]
\end{lemma}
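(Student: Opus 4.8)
The plan is a proof by contradiction combined with a renormalisation argument. Suppose $J_p\not\sim J_q$ but, contrary to the claim, $g:=d(J_p,J_q)<\min\{|J_p|,|J_q|\}^{1+b}$. Put $L_p=|J_p|$, $L_q=|J_q|$ and write the two super-blocks as $[s_p,\bar s_p]$ and $[s_q,\bar s_q]$, so that $g=s_q-\bar s_p$. By \cref{lemma:superblocksalgorithm}(4) we have $g\geq m/5=\tfrac25\Ctau^4 m_0$, so the contradiction hypothesis forces $L_p,L_q>(m/5)^{1/(1+b)}$, which can be made as large as we wish by enlarging $m_0$ in \cref{prop:good_set}. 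Since $J_p\not\sim J_q$ (see \eqref{eq:blocks:2}), the two lifts $\yb_q$ and $\yb_p\ub_2^{\,t_q-t_p}$ of $y_1\ub_2^{\tau(s_q)}$ differ by some $\gamma\in\Gamma_2\setminus\{\eb\}$, i.e.\ $\yb_q=\gamma\,\yb_p\,\ub_2^{\,t_q-t_p}$; and repeating the computation of \eqref{eq:02_basic_lemma:3}--\eqref{eq:02_basic_lemma:5} at the endpoints of the super-blocks (legitimate because $\bar s_p\in A$ and $x_p\ub_2^{\,\bar s_p-s_p}$ satisfies $\IC(\rho,m)$ by hypothesis (i), while $\tau$ is H\"older by hypothesis (4) of \cref{lemma:basic}) yields a sub-linear comparison $|(t_q-t_p)-(s_q-s_p)|\leq C_0\,(s_q-s_p)^{1-\eta}$ for an absolute constant $C_0$.

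First I would linearise each super-block. By \cref{lemma:superblocksalgorithm}(2) each of $J_p,J_q$ is the convex hull of at most $D$ mutually $b$-close intervals contained in the corresponding sub-level set, so \cref{cor:bounds_on_coeffs} applies to $\gb_p:=\xb_p^{-1}\yb_p$ and to $\gb_q:=\xb_q^{-1}\yb_q$: in the coordinates used in \cref{cor:bounds_on_coeffs}, the $A$-, $\bar U$- and $\bel_{j,\iota}$-components of $\gb_p$ (resp.\ $\gb_q$) are bounded by the stated negative powers of $L_p$ (resp.\ $L_q$). Feeding these bounds into \cref{lemma:dist_a_t} produces an exponent $\sigma=\sigma(\eta,b,\mathfrak g)>0$ and a small $r\in(b,1)$ with $d(\xb_p\ab_2^{\,t},\yb_p\ab_2^{\,t})\leq\kappa' L_p^{-\sigma}$ for all $t\in[\log L_p,(1+r)\log L_p]$, and similarly for $\xb_q,\yb_q$ with $L_q$. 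The role of the constraint $b<\eta/(2D\max_\iota d_\iota)$ in \eqref{eq:defalpha} is precisely to leave room for such an $r>b$ while keeping all exponents negative.

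Next comes the renormalisation. From $\ab_2^{-t}\ub_2^{\,s}\ab_2^{\,t}=\ub_2^{\,e^{-t}s}$ together with $\xb_q=\xb_p\ub_2^{\,s_q-s_p}$ and $\gamma^{-1}\yb_q=\yb_p\ub_2^{\,t_q-t_p}$ we get, for every $t$,
\[
 \xb_q\ab_2^{\,t}=\xb_p\ab_2^{\,t}\,\ub_2^{\,e^{-t}(s_q-s_p)},\qquad
 \gamma^{-1}\yb_q\ab_2^{\,t}=\yb_p\ab_2^{\,t}\,\ub_2^{\,e^{-t}(t_q-t_p)},
\]
the two unipotent exponents being within $e^{-t}C_0(s_q-s_p)^{1-\eta}$ of each other. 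Since $g<L_p^{1+b}$ gives $\log(s_q-s_p)<(1+b)\log L_p+\log 2<(1+r)\log L_p$, on the nonempty window $I:=[\log(s_q-s_p),(1+r)\log L_p]$ both exponents are $O(L_p^{b/(1+\eta)})$ and their difference $\to0$; combining this with the estimate of the previous step and with the polynomial control on how $\ub_2^{\,\cdot}$ distorts distances (again via the coefficient bounds for $\gb_p$, which keep the distorted quantity a negative power of $L_p$), we obtain $d(\xb_q\ab_2^{\,t},\gamma^{-1}\yb_q\ab_2^{\,t})\to0$ along $I$ as $m_0\to\infty$. Now invoke hypothesis (ii): the $\FBR(T_0,c(b),r_0)$ property at $x_p$ and $x_q$, with $T$ chosen so that the $\FBR$ window meets $I$ — this is where the exact value $c(b)=\tfrac{1+\eta}{1+\eta-b}$ of \eqref{eq:defalpha} enters — gives a time $t^*\in I$ at which the injectivity radius of $M_2$ at the point $x_q\ab_2^{\,t^*}$ is $\geq r_0$. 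Because $r_0\geq 20\kappa\Ctau m_0^{-(\eta-2b)/12}$ eventually dominates every error term above, at time $t^*$ the points $\gamma^{-1}\yb_q\ab_2^{\,t^*}$ and $\yb_q\ab_2^{\,t^*}$ both lie within $r_0/3$ of $\xb_q\ab_2^{\,t^*}$ — the second by \cref{lemma:dist_a_t} applied to $J_q$, once one checks that $t^*$ also lies in its $\ab_2^{\,t}$-closeness window; being $\Gamma_2$-translates of one another by $\gamma$, the injectivity bound forces $\gamma=\eb$, contradicting $J_p\not\sim J_q$. Hence $d(J_p,J_q)\geq\min\{|J_p|,|J_q|\}^{1+b}$.

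The step I expect to be the real obstacle is this last renormalisation: producing a single time $t^*$ that (a) lies in the $\ab_2^{\,t}$-closeness window of \emph{both} super-blocks — a genuine issue when $L_p$ and $L_q$ differ widely — (b) lies in the $\FBR$ window at $x_q$, and (c) is large enough that the leftover unipotent $\ub_2^{\,e^{-t^*}(s_q-s_p)}$, which is only a small power of $L_p$ rather than bounded, distorts distances by less than $r_0$. Reconciling (a)--(c) is exactly what the numerical relations between $b$, $\eta$, $D$, the exponent $c(b)$, and the lower bound on $r_0$ in \cref{prop:good_set} are tuned for, and verifying it is the technical core of the lemma.
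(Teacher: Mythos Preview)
Your strategy is the paper's strategy: contradiction, identify the nontrivial $\gamma$, linearise each super-block via \cref{cor:bounds_on_coeffs} and \cref{lemma:dist_a_t}, renormalise by $\ab_2^t$, and use $\FBR$ to land at a time where the injectivity radius beats the remaining error, forcing $\gamma=\eb$. So the approach is correct.

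The two obstacles you flag at the end, however, are handled in the paper more simply than you anticipate, and seeing how dissolves most of the ``technical core'':

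\textbf{Window matching (your (a)).} The paper sets $L:=\min\{|J_p|,|J_q|\}$ and does \emph{all} estimates in terms of $L$. Since both super-blocks have length $\geq L$, \cref{lemma:dist_a_t} gives closeness on the \emph{same} window $[\,0,(1+r)\log L\,]$ for both; no reconciliation is needed when $L_p$ and $L_q$ are very different. In particular the $\FBR$ condition is invoked only at $x_q$, with the explicit choice $T\in\bigl[(1+\tfrac{\eta}{6}+\tfrac{2b}{3})\log L,\ (1+\tfrac{\eta}{3}+\tfrac{b}{3})\log L\bigr]$.

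\textbf{Unipotent distortion (your (c)).} You anchor the renormalisation at $\xb_p$ and are then forced to conjugate a small element by $\ub_2^{e^{-t}(s_q-s_p)}$, where the exponent is of order $e^{-t}(L_p+g)$ and hence not bounded. The paper avoids this entirely by anchoring instead at the \emph{endpoint} $\xb_p\ub_2^{\bar s_p-s_p}$ of $J_p$, which is only $g=d(J_p,J_q)$ away from $\xb_q$. The estimate becomes the four-term triangle inequality
\[
d(\gamma^{-1}\xb_q\ab_2^T,\xb_q\ab_2^T)\le d(\xb_q\ab_2^T,\xb_p\ub_2^{\bar s_p-s_p}\ab_2^T)+d(\xb_p\ub_2^{\bar s_p-s_p}\ab_2^T,\yb_p\ub_2^{\bar t_p-t_p}\ab_2^T)+\cdots,
\]
where the first and third terms are pure unipotent displacements $e^{-T}g$ and $e^{-T}(t_q-\bar t_p)\le 5e^{-T}g$, bounded directly by $L^{-(\eta-2b)/6}$; the second term is \cref{lemma:dist_a_t} applied to $J_p$ with \emph{reversed} orientation (so from its right endpoint); the fourth is \cref{lemma:dist_a_t} applied to $J_q$. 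No conjugation-by-large-unipotent ever occurs.

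One small correction: you cannot ``enlarge $m_0$'' here --- it was fixed once and for all in \cref{prop:good_set}. The paper instead derives $L>\sqrt{m/\Ctau}$ from $g\ge m/\Ctau$ (itself coming from hypothesis (i) and the non-equivalence, as in \eqref{eq:02_basic_lemma:3}), and the numerical constraints on $T_0$ and $r_0$ in \cref{prop:good_set}(4) are set up precisely so that this fixed lower bound suffices.
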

\begin{proof}
  Let us start by recalling that, by definition,
  \[
    c(b) = \frac{1+ \eta}{1+\eta-b}, \quad T_0 \leq \frac{1}{2}\log
    \frac{m}{\Ctau}, \quad r_0 > 20 \kappa \sqrt{\Ctau} m^{- \frac{\eta-2b}{12}}
  \]

  Let us call $L= \min\{ |J_p|, |J_q|\}$; we will show that if
  $ d(J_p, J_q) \le L^{1+b}$, then $J_p \sim J_q$.  Assume that this is not the
  case. Then, there exists $\gamma \in \Gamma_2 \setminus \{ \eb\}$ such that
  $\yb_q = \gamma \, \yb_p \, \ub^{t_q-t_p}$.  Note that by the definition of
  $ \bar s_p$ we have
  $d(\xb_p \, \ub^{\bar s_p-s_p}, \yb_p \, \ub^{\bar t_p-t_p}) \le \rho$. The
  fact that the point $x_p \, \ub^{\bar s_p-s_p}$ satisfies the $\IC(\rho,m)$
  injectivity condition implies that either $\bar s_p-s_p> m$ or
  $\bar t_p-t_p> m$. In either case, we conclude that
  $\bar s_p-s_p= d(J_p, J_q)> m/\Ctau $, from which we get
  $L > (m/\Ctau )^{1/(1+b)} > \sqrt{m/\Ctau }$.  Let
  \begin{equation*}
    \begin{split}
      T_0' &= \left(1 + \frac{1}{3} \eta + \frac{1}{3} b \right) \log L >
      \frac{1}{2} \log \frac{m}{\Ctau} \geq T_0 , \quad \text{ and } \\
      \quad c'&= \frac{1 + \frac{1}{6} \eta + \frac{2}{3} b }{1 + \frac{1}{3}
        \eta + \frac{1}{3} b } < \frac{1+ \eta}{1+\eta-b} = c(b) < 1.
    \end{split}
  \end{equation*}
  Since, by assumption, $x_q$ satisfies the Frequently Bounded Radius Condition
  $\FBR(T_0', c', r_0)$, there exists
  $T \in \left[ \left( 1 + \frac{1}{6} \eta + \frac{2}{3} b \right) \log L,
    \left( 1 + \frac{1}{3} \eta + \frac{1}{3} b \right) \log L\right]$ such that
  the injectivity radius at $\xb_q \ab^T$ is bounded below by $r_0$.

  We will show that
  \[
    d(\gamma^{-1} \xb_q \ab^T, \xb_q \ab^T) \leq 20 \kappa \sqrt{\Ctau} m^{-
      \frac{\eta-2b}{12}} < r_0,
  \]
  which is our contradiction.  This will follow immediately once we show that
  \begin{equation}\label{eq:goal_of_lemma_252}
    \begin{split}
      &d(\xb_q \ab^T, \xb_p \, \ub^{\bar s_p-s_p} \ab^T) +
      d(\xb_p \, \ub^{\bar s_p-s_p} \ab^T, \yb_p \, \ub^{\bar t_p-t_p} \ab^T) \\
      &\qquad + d(\yb_p \, \ub^{\bar t_p-t_p} \ab^T, \yb_p \, \ub^{t_q-t_p}
      \ab^T) + d(\yb_q \ab^T, \xb_q \ab^T) \leq 20 \kappa \sqrt{\Ctau} m^{-
        \frac{\eta-2b}{12}},
    \end{split}
  \end{equation}
  since
  $d(\yb_p \, \ub^{t_q-t_p} \ab^T, \gamma^{-1} \xb_q \ab^T) = d(\yb_q \ab^T,
  \xb_q \ab^T)$.  Let us consider each of the four summands above separately.

\begin{figure}[tb]
  \centering \includegraphics{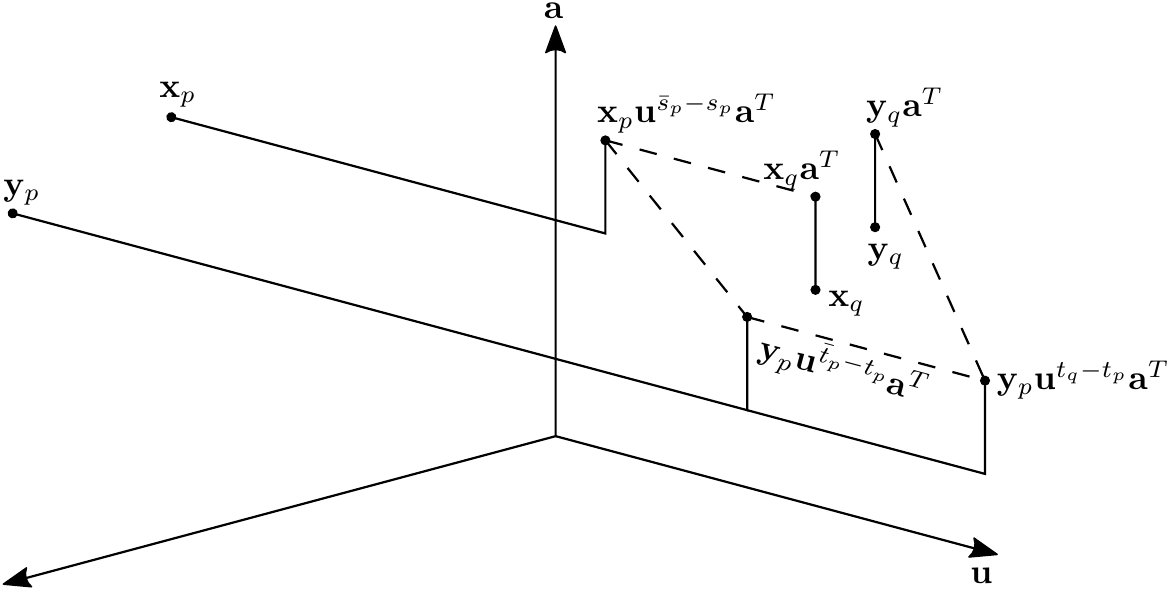}
  \caption{The points involved in the proof of
    equation~\eqref{eq:goal_of_lemma_252}. The dashed lines are the distances we
    are estimating. Distances have exagerated in order to make the picture
    clear.}
  \label{fig:alphasep}
\end{figure}

For the first summand, we have
\begin{equation*}
  \begin{split}
    & d(\xb_q \ab^T, \xb_p \, \ub^{\bar s_p-s_p} \ab^T) = d(\eb,
    \ab^{-T} \ub^{s_q - \bar s_p} \ab^T) = d(\eb,\ub^{e^{-T} d(J_p,J_q)}) \\
    & \qquad = e^{-T} d(J_p,J_q) \le L^{- \left( 1 + \frac{1}{6} \eta +
        \frac{2}{3} b \right) + (1+b)} \le L^{- \frac{\eta-2b}{6}},
  \end{split}
\end{equation*}
which is smaller than $\sqrt{\Ctau} m^{- \frac{\eta-2b}{12}}$.  In a similar
way, we can bound the third summand by
\[
  d(\yb_p \, \ub^{\bar t_p-t_p} \ab^T, \yb_p \, \ub^{t_q-t_p} \ab^T) =
  d(\ub^{e^{-2T} (t_q- \bar t_p)}, \eb),
\]
which, in turn, is less than $5\sqrt{\Ctau} m^{- \frac{\eta-2b}{12}}$, since
either $s_q - \bar s_p >m$ or $t_q - \bar t_p >m$, and hence
$t_q- \bar t_p \le s_q - \bar s_p + 4(s_q - \bar s_p)^{1-\eta} \le 5(s_q - \bar
s_p)$.

It remains to bound the second and fourth term.  Let us start from the latter.
By Lemma \ref{lemma:superblocksalgorithm}, we are precisely in the setting of
Corollary \ref{cor:bounds_on_coeffs}.  Hence, we can apply
Lemma~\ref{lemma:dist_a_t}: since
$T \leq \left( 1 + \frac{1}{3} \eta + \frac{1}{3} b \right) \log L$, we get
\begin{equation*}
  \begin{split}
    d(\yb_q \ab^T, \xb_q \ab^T) \leq 3 \kappa \left(L^{-\frac{1}{2} +
        \frac{5}{6}\eta} + L^{-\frac{\eta}{2}+\frac{1}{3}\eta +
        \frac{1}{3}b}\right) = 6 \kappa L^{-\frac{\eta-2b}{6}} \leq 6 \kappa
    \sqrt{\Ctau}L^{-\frac{\eta-2b}{12}}.
  \end{split}
\end{equation*}
The proof of the desired inequality
\[
  d(\xb_p \, \ub^{\bar s_p-s_p} \ab^T, \yb_p \, \ub^{\bar t_p-t_p} \ab^T) \le 6
  \kappa \sqrt{\Ctau} L^{-\frac{\eta-2b}{12}}
\]
for the remaining summand is completely analogous: it follows again from
Lemma~\ref{lemma:dist_a_t}, by considering the negative parametrizations $s'=-s$
and $t'(s)= -t(-s)$, $s \in [-L,0]$. This completes the proof of
\eqref{eq:goal_of_lemma_252} and hence yields the result.
\end{proof}

\subsubsection{}\label{sec_proof_basic}
We can finally complete the proof of Ratner's Basic Lemma.

\begin{proof}[Proof of \cref{lemma:basic}]
  We start by recalling that all the choice of the relevant constants were done
  in \S\ref{sec:alpha_and_K_2}.  In particular, we have chosen $b$.  Let
  $\theta=\theta(b)$ given by Solovay's Lemma~\ref{lemma:Solovay}.

  Let $x, y \in M_2$, and let $\lambda$ and $A$, with
  \[
    \frac{\abs{A\cap [0,\lambda]}}{\lambda}>1-\frac{\theta}{8},
  \]
  be as in the statement.  We consider $\xb_1$ and $\yb_1$ lifts of $x$ and $y$
  such that $d(\xb_1,\yb_1)<\rho$.  With these data we build blocks
  $\{ J_1, \dotsc, J_n \}$ as in \S\ref{sec:blocks}.  By applying the
  superblocks algorithm to this collection of blocks we obtain a collection of
  superblocks as in \cref{lemma:superblocksalgorithm}.

  The collection of superblocks we have obtained thus satisfies the assumptions
  of \cref{lemma:Solovay}.  This implies that there exists a superblock
  $J = [s_J, \bar{s_J}]$ of length $\abs{J}\ge 3/4\lambda$.
  \Cref{lemma:superblocksalgorithm} guarantees that we can apply
  \cref{cor:bounds_on_coeffs} to the points $\xb_J$ and $\yb_J$ which are lifts
  to $G$ of the points $x\ub^{s_J}$ and $y\ub^{\tau(s_J)}$ respectively.  We
  obtain that
  \[
    \gb = \xb_J^{-1} \yb_J = \exp \Big( \sum_{\iota, j} c_{j, \iota}
    \bel_{j,\iota} \Big) \, \exp(aA) \, \exp(\bar u \bar U) \, \ub^u
  \]
  satisfies
  \[
    |u|\leq \varepsilon, \quad |a|\leq \kappa \lambda^{-\eta/2}, \quad |\bar
    u|\leq \kappa \lambda^{-1-\eta/2}, \quad |c_{\iota,j}|\leq \kappa
    \varepsilon \lambda^{-j+\eta/2}.
  \]
  which proves the first part

  Let us now show the second claim.  Since, by assumption, there exists
  $\lambda_0 \geq m$ such that we have that
  $\abs{A \cap [0,\lambda]} \geq (1-\theta/8)\lambda$ for all
  $\lambda \geq \lambda_0$, we can find a sequence $\lambda_n \to \infty$ such
  that $\lambda_n < \lambda_{n+1} < 8\lambda_n/7$.  For all such $\lambda_n$ we
  can repeat the above construction and obtain an interval
  $J_{n}=[s_n,\bar{s_n}]$ having length $\abs{J_{n}}\geq 3/4 \lambda_{n}$.  By
  our choice of $\lambda_{n}$ we have
  \[
    J_{n}\cap J_{n+1}\neq\emptyset,
  \]
  and hence $J_{n}\subset J_{n+1}$ for all $n$; more precisely,
  $J_n=[s_0,\bar{s_n}]$ where $s_0 \leq \lambda_0$.  Applying the same bounds
  given by \cref{cor:bounds_on_coeffs} we have, for
  $\gb=(\xb\ub^{s_0})^{-1}\yb\ub^{\tau(s_0)}$
  \[
    \abs{u}\le \eps, \quad \abs{a} \le \kappa \lambda_{n}^{-\eta/2} \quad
    \abs{\bar u}\le \kappa \lambda_{n}^{-1-\eta/2}, \quad \text{ and } \quad
    \abs{c_{j,\iota}} \le \kappa \epsilon \lambda_{n}^{-j +\eta/2}.
  \]
  for all $n$. This implies that $\abs{a}$, $\abs{\bar{u}}$ and
  $\abs{c_{j,\iota}}$ are $0$, if $j\neq 0$. Thus, there exists $\gb$, commuting
  with $\ub$ such that $\yb=\xb\gb$ as we wanted.
\end{proof}

%%% Local Variables:
%%% mode: latex
%%% TeX-master: "time_changes_unipotents"
%%% End:

% !TEX root = time_changes_unipotents.tex

\section{Normaliser}\label{sec:centraliser}

\subsubsection{}\label{sec:centraliser_1}
In this section we will apply the Basic Lemma~\ref{lemma:basic}. Before we state
formally the main result of this section, let us recall all the constants we
have defined so far.

The two functions $\alpha_1$ and $\alpha_2$ defining the time-changes
$\widetilde{\phi}_1^t$ and $\widetilde{\phi}_2^t$, define the constant $\Ctau>0$
in~\cref{ass:1}. The strong spectral gap yields $\eta\in(0,1/4)$
in~\cref{thm:mixing_condition}. From equation~\eqref{eq:defalpha} we obtain the
constants $b$ and $c(b)$. Finally $\theta=\theta(b)\in(0,1)$ is the one given by
Solovay's Lemma~\ref{lemma:Solovay} and used in the Basic
Lemma~\ref{lemma:basic}.  Having recalled all the important constants, we pick
an $\omega>0$ satisfying
\[
  \omega< \frac{\theta}{50\Ctau^4}.
\]
Using \cref{prop:good_set} we obtain a set $\Good$ and constants $m_0, \rho$.
Let $\delta>0$ be the uniform continuity constant of $\psi$ on $\Good$ so that,
if $x,y\in\Good$, then
\[
  d(x,y)<\delta \implies d(\psi(x),\psi(y))<\rho.
\]
Finally, we choose $\delta' < \rho$ and we recall that $m = 2\Ctau^4 m_0>m_0$.

For any $n \in \N$, there exist $s_n >0$ and a set $K_{\ub_1}(n) \subset M_1$ of
measure greater than $1-2^{-n}$ such that for all $x \in K_{\ub_1}(n)$ and all
$s\geq s_n$
\[
  \abs{ \{ t \in [0,s] : x \ub_1^t \in \Good \}} \geq \left(
    1-\frac{\theta}{40\Ctau^4} \right) s.
\]
We can take $s_n > m \omega_0^{-1}$.  This section, \S\ref{sec:centraliser}, is
devoted to the proof of the following result.

\begin{proposition}\label{thm:centraliser}\def\gbtilde{\gb_2}
  Let $\gb_1\in G_1$, $\gbtilde \in G_2$, with $d(\eb_1, \gb_1) < \delta$ and
  $d(\eb_2, \gbtilde)< \delta'$, satisfy the commuting relations
  $\gb_1 \ub_1^t \gb_1^{-1} = \ub_1^{c^{-1}t}$ and
  $\gbtilde \ub_2^t \gbtilde^{-1} = \ub_2^{c^{-1}t}$ for all $t \in \R$.  Fix
  $n \in \N$, and let $x\in K_{\ub_1}(n)$, $y=x \gb \in K_{\ub_1}(n)$.  Then,
  for every $s\geq s_n$ there exist
  \begin{itemize}
  \item points $x_0 = x \ub_1^{t_0}$, $y_0 = y \ub_1^{ct_0} = x_0 \gb$, with
    $0 \leq t_0\leq 4\omega s$,
  \item an interval $[0,\lambda_s] \subset [0,s]$, with
    $\lambda_s \geq \Ctau^{-2}\left( 1-\frac{\theta}{16\Ctau^4}\right)s$,
  \item a compact subset $A_s \subset [0,\lambda_s]$,
  \item a strictly increasing Lipschitz function
    $\tau=\tau_{x_0,y_0} \colon [0,\lambda_s] \to \R_{>0}$ with $\tau(0)=0$,
  \end{itemize}
  such that, if we set $x_0' = \psi(x_0)$, $y_0' = \psi(y_0)$, and
  $\overline{x}_0' = \psi(y_0)\gbtilde^{-1}$, then
  \begin{itemize}
  \item $0,\lambda_s \in A_s$,
  \item $|A_s| > \bigl(1-\frac{\theta}{8}\bigr) \lambda_s$,
  \item $x_0' \ub_2^r$ and $\overline{x}_0' \ub_2^{\tau(r)}$ belong to
    $\psi(\Good)$ for all $r \in A_s$ and satisfy
    \[
      d\left( x_0' \ub_2^r, \overline{x}_0' \ub_2^{\tau(r)} \right) < \rho,
    \]
  \item if $r',r \in A_s$ and $r'-r>m$ or $\tau(r')-\tau(r) >m$, then
    \[
      \left\lvert \tau(r')-\tau(r) -( r'-r) \right\rvert \leq 4(r' -
      r)^{1-\eta}.
    \]
  \end{itemize}
\end{proposition}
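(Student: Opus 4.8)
The plan is to verify, for the pair $x_0'=\psi(x_0)$ and $\overline{x}_0'=\psi(y_0)\gb_2^{-1}$, precisely the four hypotheses of Ratner's Basic Lemma (\cref{lemma:basic}), by transporting through $\psi$ the elementary observation that the orbit of $x$ under $\ub_1$ and the orbit of $y=x\gb_1$ under $\ub_1^{c}$ stay uniformly $\delta$-close for all time. Since $\gb_1\ub_1^{ct}\gb_1^{-1}=\ub_1^{t}$ we get $y_0\ub_1^{ct}=x_0\gb_1\ub_1^{ct}=(x_0\ub_1^{t})\gb_1$ for every $t$, and left-invariance of the metric gives $d(x_0\ub_1^{t},y_0\ub_1^{ct})\le d(\eb_1,\gb_1)<\delta$; dually $\gb_2^{-1}\ub_2^{v}=\ub_2^{cv}\gb_2^{-1}$ yields $\overline{x}_0'\ub_2^{v}=\psi(y_0)\ub_2^{cv}\gb_2^{-1}$, so the $\ub_2$-orbit of $\overline{x}_0'$ is a $\gb_2^{-1}$-translate of a reparametrized $\psi$-image of the $\ub_1$-orbit of $y_0$. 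Since $\gb_1$ lies within $\delta$ of $\eb_1$, $\operatorname{Ad}(\gb_1)$ is close to the identity and hence $c$ is close to $1$; I use this freely to absorb $c$-dependent factors into absolute constants.

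The construction runs in three steps. \emph{Shift.} For $s\ge s_n$ the sets $\{u\in[0,cs]:x\ub_1^{u}\in\Good\}$ and $\{u\in[0,cs]:y\ub_1^{u}\in\Good\}$ have relative density at least $1-\theta/(40\Ctau^4)$ by the defining property of $K_{\ub_1}(n)$; pulling the second back under $u\mapsto cu$, intersecting with the first, and using $\omega<\theta/(50\Ctau^4)$ shows that the set of $t_0\in[0,4\omega s]$ with $x_0:=x\ub_1^{t_0}\in\Good$ and $y_0:=y\ub_1^{ct_0}=x_0\gb_1\in\Good$ has positive measure; fix such a $t_0$. \emph{Reparametrization.} With $z$ the cocycle defined by $\psi(x\ub_1^{t})=\psi(x)\ub_2^{z(x,t)}$, i.e.\ $z(x,v)=w_2(\psi(x),\xi_1(x,v))$, parametrize the $\ub_2$-orbit of $x_0'$ by $r=z(x_0,t)$, let $t=t(r)$ be its increasing bi-Lipschitz inverse, and set
\[
  \tau(r):=c^{-1}\,z\bigl(y_0,\,c\,t(r)\bigr),
\]
so that $x_0'\ub_2^{r}=\psi(x_0\ub_1^{t(r)})$ and, by the computation above, $\overline{x}_0'\ub_2^{\tau(r)}=\psi(y_0\ub_1^{ct(r)})\gb_2^{-1}=\psi\bigl((x_0\ub_1^{t(r)})\gb_1\bigr)\gb_2^{-1}$; being a composition of increasing maps with derivatives in $[\Ctau^{-2},\Ctau^2]$ and $c^{\pm1}$-scalings, $\tau$ is strictly increasing and Lipschitz with $\tau(0)=0$ and $\Lip(\tau),\Lip(\tau^{-1})\le\Ctau^4$. \emph{Set.} Let $T^{\ast}$ be the largest time in $[0,s-t_0]$ with $x_0\ub_1^{T^{\ast}}\in\Good$ and $y_0\ub_1^{cT^{\ast}}\in\Good$, put $\lambda_s:=z(x_0,T^{\ast})$ and
\[
  A_s:=\{\,r\in[0,\lambda_s]:x_0\ub_1^{t(r)}\in\Good \text{ and } y_0\ub_1^{ct(r)}\in\Good\,\};
\]
then $[0,\lambda_s]\subset[0,s]$, $A_s$ is compact, $0,\lambda_s\in A_s$, and the bound $\lambda_s\ge\Ctau^{-2}\bigl(1-\theta/(16\Ctau^4)\bigr)s$ follows from $z(x_0,t)\ge\Ctau^{-2}t$ and the numerical choices of $\omega,\theta$ together with the density of $K_{\ub_1}(n)$.

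It remains to verify the four bullet points for $r\in A_s$. \emph{Membership and distance:} $x_0'\ub_2^{r}=\psi(x_0\ub_1^{t(r)})\in\psi(\Good)$ and $\psi(y_0\ub_1^{ct(r)})\in\psi(\Good)$; since $d(x_0\ub_1^{t(r)},y_0\ub_1^{ct(r)})=d(x_0\ub_1^{t(r)},(x_0\ub_1^{t(r)})\gb_1)<\delta$, uniform continuity of $\psi$ on $\Good$ together with $d(\eb_2,\gb_2^{-1})<\delta'$ gives $d(x_0'\ub_2^{r},\overline{x}_0'\ub_2^{\tau(r)})<\rho$, and $\overline{x}_0'\ub_2^{\tau(r)}$ is the $\gb_2^{-1}$-translate of $\psi(y_0\ub_1^{ct(r)})\in\psi(\Good)$ (this is the needed statement, as only the orbit of $x_0'$ enters the injectivity and bounded-radius conditions of \cref{lemma:basic}; to put $\overline{x}_0'\ub_2^{\tau(r)}$ itself in $\psi(\Good)$ one enlarges $\Good$ in \cref{prop:good_set} so as to be stable under right translation by a small neighbourhood of $\eb_2$). \emph{H\"older bound:} if $r<r'$ in $A_s$ satisfy $r'-r>m$ or $\tau(r')-\tau(r)>m$, then, as $\tau$ is $\Ctau^{\pm4}$-bi-Lipschitz and $m=2\Ctau^4 m_0$, both $\Delta t:=t(r')-t(r)$ and $c\Delta t$ exceed the threshold of \cref{thm:mixing_condition}; writing $r'-r=z(x_0\ub_1^{t(r)},\Delta t)$ and $c(\tau(r')-\tau(r))=z\bigl((x_0\ub_1^{t(r)})\gb_1,c\Delta t\bigr)$ via the cocycle relation (\cref{lemma:z_cocycle}) and expanding $z(x,v)=w_2(\psi(x),\xi_1(x,v))$ with the sub-polynomial deviation estimates of \cref{prop:good_set}(2) and \cref{thm:mixing_condition} — legitimate because $x_0\ub_1^{t(r)}$ and $(x_0\ub_1^{t(r)})\gb_1=y_0\ub_1^{ct(r)}$ both lie in $\Good$ — one obtains $|(r'-r)-\Delta t|$ and $|(\tau(r')-\tau(r))-\Delta t|$ each at most $2(r'-r)^{1-\eta}$ (using $c\approx1$), hence $|(\tau(r')-\tau(r))-(r'-r)|\le 4(r'-r)^{1-\eta}$. \emph{Density:} under $r\mapsto t(r)$ the complement $[0,\lambda_s]\setminus A_s$ corresponds to the set of $t\in[0,T^{\ast}]$ with $x_0\ub_1^{t}\notin\Good$ or $y_0\ub_1^{ct}\notin\Good$, of Lebesgue measure $\le\theta s/(20\Ctau^4)$; multiplying by the Lipschitz constant $\Ctau^2$ of $t\mapsto z(x_0,t)$ and dividing by $\lambda_s\ge\Ctau^{-2}(1-\theta/(16\Ctau^4))s$ yields relative density below $\theta/8$, i.e.\ $|A_s|>(1-\theta/8)\lambda_s$.

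The principal obstacle is the bookkeeping tying together the three parametrizations — the $\ub_1$-time $t$ on $M_1$, where $\Good$ and the ergodic-average density of $K_{\ub_1}(n)$ live; the $\ub_2$-time $r$ on $M_2$, where \cref{lemma:basic} wants its hypotheses; and the reparametrization $\tau$ assembled from the two cocycles $z(x_0,\cdot)$ and $z(y_0,\cdot)$ — so that the accumulated losses (the shift by $t_0$, the truncation to $[0,\lambda_s]$, the bi-Lipschitz distortion of the cocycles, and the $c$-dependent factors) never exceed the $\theta/8$ and $\theta/16$ budgets. Keeping the constants $\theta,\omega,\Ctau,\eta,m,m_0$ mutually compatible, and checking in particular that the sub-polynomial deviation bound survives the composition defining $\tau$, is where the real work lies; everything else is immediate from \cref{prop:good_set}, uniform continuity of $\psi$ on $\Good$, and the Lipschitz control of $z$.
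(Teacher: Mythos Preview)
Your proposal is correct and follows essentially the same route as the paper. Both proofs hinge on the same three ingredients: the definition $\tau(r)=c^{-1}z\bigl(y_0,\,c\,t(r)\bigr)$ with $t(r)$ the inverse of $z(x_0,\cdot)$; the transport of the density estimate for the set of times with $x_0\ub_1^t,y_0\ub_1^{ct}\in\Good$ through the bi-Lipschitz map $t\mapsto z(x_0,t)$; and the sub-polynomial deviation bound obtained by expanding $z=w_2\circ(\psi\times\xi_1)$ via the cocycle relation and \cref{prop:good_set}(2). The paper isolates this last computation as a separate lemma (\cref{lemma:z_for_centraliser}), whereas you inline it; your choice $t_0\in[0,4\omega s]$ by a density argument is equivalent to the paper's $t_0=\inf B_s$. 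You also correctly flag that $\overline{x}_0'\ub_2^{\tau(r)}$ is only the $\gb_2^{-1}$-translate of a point in $\psi(\Good)$ rather than literally in $\psi(\Good)$ --- the paper's own proof has the same slip (it writes $y_0'\ub_2^{\tau(r)}\in\psi(\Good)$), and your observation that only the $x_0'$-orbit feeds into the $\IC$ and $\FBR$ hypotheses of \cref{lemma:basic} is the right resolution.
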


\subsubsection{}
Let $\gb_1$, $x$, and $y$ be as in the assumptions of
Proposition~\ref{thm:centraliser}, in particular $d(x,y)<\delta$.  We can assume
that $\frac{1}{2} \leq c \leq 2$.

Clearly,
\[
  y \ub_1^{cs} = x \gb_1 \ub_1^{cs} = x \ub_1^s \gb_1,
\]
and
\[
  \psi(x \ub_1^s ) = \psi(x )\ub_2^{z(x,s)},\qquad \psi(y \ub_1^{cs}) = \psi(y
  )\ub_2^{z(y,cs)}.
\]
Define
\[
  \forall s\ge 0 : \quad B_s:=\{ r\in[0,s]: x \ub_1^{r}\in \Good, y \ub_1^{cr}
  \in \Good \},
\]
which is a compact subset of $[0,s]$.  By definition of $K_{\ub_1}(n)$, the
estimate on the measure of $\Good$ in Proposition~\ref{prop:good_set}, and our
choice of $\omega$, since $c \leq 2$, we have
\begin{equation}\label{eq:scheme_for_centraliser:1}
  \forall s>2 s_n: \quad \abs{B_s} \ge \left( 1-\frac{\theta}{16\Ctau^4}\right)s.
\end{equation}
Let us assume now that $d(\eb, \gb) = d(x,y) < \delta$, and consequently
$d(x\ub_1^s, y \ub_1^{cs}) < \delta$ for all $s \in \R$. Then, by
Proposition~\ref{prop:good_set}, we have
\begin{equation}\label{eq:scheme_for_normaliser:1}
  d \left( \psi(x) \ub_2^{z(x,s)}, \psi(y) \ub_2^{z(y,cs)}\right) < \rho,
  \text{\ \ \ for all\ }s \in B_s.
\end{equation}

\begin{lemma}\label{lemma:z_for_centraliser}
  Let $t \in B_s$. Assume that there exists $t<t' \leq s$ such that
  \[
    z(x,t')-z(x,t) > cm \text{\ \ \ or\ \ \ } z(y,t')-z(y,t) > cm.
  \]
  Then,
  \[
    \left\lvert c^{-1} \big( z(y,ct')-z(y,ct) \big) - ( z(x,t')-z(x,t))
    \right\rvert \leq 4 \left( z(x,t')-z(x,t) \right)^{1-\eta}.
  \]
\end{lemma}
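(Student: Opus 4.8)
The plan is to reduce the statement, via the cocycle property of $z$, to a single sublinear–deviation estimate for $z$ along $\Good$, and to obtain the latter by splitting $z$ into its smooth constituents. First I would invoke \cref{lemma:z_cocycle} to write, with $\Delta:=t'-t>0$, $x_0:=x\ub_1^{t}$ and $y_0:=y\ub_1^{ct}$,
\[
  z(x,t')-z(x,t)=z(x_0,\Delta),\qquad z(y,ct')-z(y,ct)=z(y_0,c\Delta).
\]
Since $t\in B_s$, both $x_0$ and $y_0$ lie in $\Good$, hence $\psi(x_0),\psi(y_0)\in\psi(\Good)$. From the Lipschitz bounds $\Ctau^{-2}\le\Lip_t[z(\cdot,t)]\le\Ctau^{2}$, the hypothesis $z(x,t')-z(x,t)>cm$ or $z(y,ct')-z(y,ct)>cm$, the relation $m=2\Ctau^{4}m_0$, and $c\in[1/2,2]$, one checks (exactly as in \cref{lemma:z_cocycle}) that $\Delta$, $c\Delta$, $z(x_0,\Delta)$ and $z(y_0,c\Delta)$ all exceed $m_0$, taking $m_0$ large enough in \cref{prop:good_set}, which we may do.

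The key step is: for every $w_0\in\Good$ and every $T\ge m_0$,
\[
  \abs{z(w_0,T)-T}\le C_0\,T^{1-\eta},
\]
where $C_0=C_0(\Ctau,\eta)$ is an explicit constant kept small by the standing choices. To prove it, write $z(w_0,T)=w_2(\psi(w_0),\xi_1(w_0,T))$ and set $\sigma:=\xi_1(w_0,T)$. Applying \cref{prop:good_set}(2) to $w_0\in\Good$ gives $\abs{\sigma-T}\le\Ctau^{-4}T^{1-\eta}$. Since $w_2(\psi(w_0),\cdot)$ and $\xi_2(\psi(w_0),\cdot)$ are mutually inverse, $\xi_2(\psi(w_0),z(w_0,T))=\sigma$, so \cref{prop:good_set}(2) applied to $\psi(w_0)\in\psi(\Good)$ gives $\abs{z(w_0,T)-\sigma}\le\Ctau^{-4}\,z(w_0,T)^{1-\eta}$; combining this with the Lipschitz bound $z(w_0,T)\le\Ctau^{2}T$ and one bootstrap step (which upgrades it to $z(w_0,T)\le T+2\Ctau^{-2}T^{1-\eta}$, hence makes $z(w_0,T)^{1-\eta}$ as close to $T^{1-\eta}$ as we wish for $m_0$ large) and adding the two bounds yields the estimate. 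This decomposition is exactly what lets one transfer the effective ergodic theorem (\cref{thm:mixing_condition}, via \cref{prop:good_set}(2)), which needs smooth data, to the a priori non-smooth cocycle $z$.

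Finally I would apply the key estimate to $(x_0,\Delta)$ and to $(y_0,c\Delta)$, obtaining $\abs{z(x_0,\Delta)-\Delta}\le C_0\Delta^{1-\eta}$ and $\abs{c^{-1}z(y_0,c\Delta)-\Delta}\le c^{-1}C_0(c\Delta)^{1-\eta}=C_0c^{-\eta}\Delta^{1-\eta}$, whence by the triangle inequality
\[
  \abs{c^{-1}z(y_0,c\Delta)-z(x_0,\Delta)}\le C_0\,(1+c^{-\eta})\,\Delta^{1-\eta}.
\]
Using $c\in[1/2,2]$ (so $c^{-\eta}\le 2^{1/4}$), the bound $z(x_0,\Delta)\ge\Delta(1-C_0\Delta^{-\eta})\ge\Delta/2$ for $m_0$ large (which makes $z(x_0,\Delta)^{1-\eta}$ comparable to $\Delta^{1-\eta}$), and $\Ctau>1$, the right-hand side is at most $4\,z(x_0,\Delta)^{1-\eta}=4\,(z(x,t')-z(x,t))^{1-\eta}$, which is the assertion.

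The main obstacle is the middle paragraph: the splitting $z=w_2\circ(\psi,\xi_1)$, the use of the $w_2\leftrightarrow\xi_2$ inversion, and the Lipschitz transfers between powers of $T$, $\xi_1(w_0,T)$ and $z(w_0,T)$, together with the numerical bookkeeping that makes the accumulated constants fit within the prescribed factor $4$. It is here that one genuinely uses $\Ctau>1$, $\eta>0$, and the freedom to take $m_0$ (hence all relevant times) large.
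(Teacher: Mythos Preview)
Your proof is correct and follows essentially the same approach as the paper. Both arguments reduce via the cocycle relation to points $x_0,y_0\in\Good$, split $z(w_0,T)-T$ through the intermediate quantity $\xi_1(w_0,T)$ (you via the inversion $\xi_2(\psi(w_0),z(w_0,T))=\xi_1(w_0,T)$, the paper by writing the same differences as ergodic integrals of $1-\alpha_i$), apply the sub-polynomial deviation estimate from \cref{prop:good_set}(2) to each piece, and then convert $(t'-t)^{1-\eta}$ back to $(z(x,t')-z(x,t))^{1-\eta}$ using the Lipschitz bounds on~$z$; the paper's constant bookkeeping is slightly cleaner (yielding $\tfrac{4}{\Ctau^{2}}(t'-t)^{1-\eta}$ directly, so the final conversion costs exactly a factor $\Ctau^{2}$), whereas you compensate with a bootstrap and by enlarging $m_0$, but this is purely cosmetic.
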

\begin{proof}
  Using Lemma~\ref{lemma:z_cocycle} and the definition of $m= 2\Ctau^4 m_0$,
  under our assumption it is easy to see that
  \[
    z(x,t')-z(x,t) > m_0 \text{\ \ \ and\ \ \ } z(y,t')-z(y,t) > m_0 \text{\ \ \
      and\ \ \ } t'-t > m_0.
  \]
  Recall the definition $z(x,t) := w_2(\psi(x),\xi_1(x, t))$, of $w_2$ and of
  $\xi_1$.  By Lemma~\ref{lemma:z_cocycle},
  \begin{equation*}
    \begin{split}
      &z(x,t') - z(x,t) -(\xi_1(x, t') - \xi_1(x, t)) \\
      & \quad = z(x,t') - z(x,t) - \int_{w_2(\psi(x),\xi_1(x, t))}^{w_2(\psi(x),\xi_1(x, t'))} \alpha_2(\psi(x) \ub_2^s) \D s \\
      & \quad = \int_{z(x,t)}^{z(x,t')} (1-\alpha_2)(\psi(x) \ub_2^s) \D s =
      \int_0^{z(x \ub_1^t,t'- t)} (1-\alpha_2)(\psi(x \ub_1^t) \ub_2^s) \D s.
    \end{split}
  \end{equation*}
  Since $x \ub_1^t \in \Good$ and $z(x,t')-z(x,t) > m_0$, by
  \cref{thm:mixing_condition}
  \[
    \abs{z(x,t') - z(x,t) -(\xi_1(x, t') - \xi_1(x, t))} \leq \frac{1}{\Ctau^4}
    (z(x \ub_1^t,t'- t))^{1-\eta} \leq \frac{1}{\Ctau^2}(t'-t)^{1-\eta}.
  \]
  Similarly,
  \begin{equation*}
    \begin{split}
      &\left\lvert \xi_1(x,t') - \xi_1(x,t) -(t' - t) \right\rvert \\
      & \quad =\left\lvert \xi_1(x,t') - \xi_1(x,t) -(w_1(x, \xi_1(x,t')) - w_1(x, \xi_1(x,t))) \right\rvert \\
      & \quad =\left\lvert \int_{w_1(x, \xi_1(x,t))}^{w_1(x, \xi_1(x,t'))}
        (1-\alpha_1)(x \ub_1^s) \D s \right\rvert =\left\lvert \int_{t}^{t'}
        (1-\alpha_1)(x \ub_1^s) \D s \right\rvert \leq \frac{1}{\Ctau^4} ( t' -
      t)^{1-\eta}.
    \end{split}
  \end{equation*}
  Therefore,
  \[
    \left\lvert z(x,t') - z(x,t) -(t' - t) \right\rvert \leq \frac{2}{\Ctau^2}(
    t' - t)^{1-\eta}.
  \]
  We can apply the same reasoning for $y$ instead of $x$, yielding
  \[
    \left\lvert c^{-1} (z(y,ct') - z(y,ct) ) -(t' - t) \right\rvert \leq
    \frac{2}{\Ctau^2} ( t' -t)^{1-\eta}.
  \]
  We deduce that
  \[
    \left\lvert c^{-1} (z(y,ct') - z(y,ct) ) -(z(x,t') - z(x,t)) \right\rvert
    \leq \frac{4}{\Ctau^2}( t' - t)^{1-\eta}.
  \]
  The rough bounds on $z$ and the cocycle relation in Lemma
  \ref{lemma:z_cocycle} conclude the proof.
\end{proof}

\subsubsection{Proof of \cref{thm:centraliser}}

By the compactness of $B_s$, the points
\[
  t_0= \inf B_s\quad \text{ and }\quad t_1= \sup B_s
\]
are in $B_s$.  Let
\[
  t'_0= z(x,t_0),\quad t'_1= z(x,t_1)\quad\text{ and }\quad \lambda_s=
  t'_1-t'_0.
\]
Set
\[
  B'_s = z(x, B_s) \subset [t'_0,t'_1]
\]
From \eqref{eq:scheme_for_centraliser:1} we have
\[
  t_0 < \frac{\theta}{16\Ctau^4} s ,\qquad
  \left(1-\frac{\theta}{16\Ctau^4}\right) s \le t_1-t_0 \le s.
\]
By the uniform Lipschitz estimate on $z(x, \cdot)$, this yields
\[
  \Ctau^{-2}\left(1-\frac{\theta}{16\Ctau^4}\right)s \le \lambda_s =t'_1-t'_0
  \le \Ctau^2 s,
\]
and
\[
  \frac { \abs{B'_s} }{\lambda_s} \ge 1-\frac{\theta}{16}.
\]
We let
\[
  A'_s = \{t'_0\} \cup (B_s'\cap [t'_0+m, t'_1]), \quad A_s = A'_s-t'_0.
\]
Then, since $s \geq s_n$, if $n$ is sufficiently large, we have
\[
  \frac {\abs{A'_s}}{\lambda_s} \ge 1-\frac{\theta}{8}.
\]

Let
\begin{equation*}
  \begin{split}
    x_0 = x \ub_1^{t_0}, & \quad y_0= y \ub_1^{ct_0}= x_0 \gb \\
    x'_0 = \psi(x_0)=\psi(x) \ub_2^{z(x,t_0)} =\psi(x) \ub_2^{s_0}, & \quad
    y'_0= \psi(y_0) =\psi(y) \ub_2^{z(y,ct_0)}.
  \end{split}
\end{equation*}
By Lemma \ref{lemma:z_cocycle}, we have
\[
  \psi (x_0 \ub_1^t)= \psi (x\ub_1^{t+t_0})= \psi (x)\ub_2^{z(x, t+t_0)}= \psi
  (x_0)\ub_2^{z(x, t+t_0)-z(x, t_0) } = \psi (x_0)\ub_2^{z(x_0, t) },
\]
and similarly
\[
  \psi (y_0\ub_1^{ct})= \psi (y_0)\ub_2^{z(y_0, ct) }.
\]
We know from \eqref{eq:scheme_for_normaliser:1} that the $U_2$-orbit of $y_0'$
shadows the $U_2$-orbit of $x_0'$ at times rescaled by $c$. We now define a
point $\overline{x}_0'$ whose $U_2$-orbit shadows the $U_2$-orbit of $x_0'$ at
synchronous times.

Let $\gbtilde$, with $d(\eb, \gbtilde) < \delta'$, be as in the assumptions: an
element in the normaliser of $U_2$ such that
$\gbtilde \ub_2^s \gbtilde^{-1} = \ub_2^{c^{-1}s}$. We set
\[
  \bar x_0' = \psi(y_0) \gbtilde^{-1}.
\]
Then
\[
  \bar x_0' \ub_2^t = \Big ( \psi(y_0) \ub_2^{ct} \Big ) \gbtilde^{-1}, \quad
  \forall t \in \R,
\]
and therefore,
\begin{equation}\label{eq:scheme_for_normaliser:2}
  d (\bar x_0' \ub_2^t,\psi(y_0) \ub_2^{ct} ) < \delta', \quad \forall t\in \R.
\end{equation}

Define
\[
  \tau(r) = \tau_{x_0,y_0}(r) = c^{-1}z(y_0, c \eta(x_0,r)),
\]
where $\eta(x_0, \cdot)$ is the inverse of $z(x_0, \cdot)$.  It follows from the
definition of $A_s$ and Proposition \ref{prop:good_set} that for $r \in A_s$ we
have
\begin{equation*}
  \begin{split}
    &x_0' \ub_2^r\in \psi(\Good), \quad y_0'\ub_2^{\tau_{x_0,y_0}(r)}\in \psi(\Good), \\
    &\text{and } \quad d(x_0' \ub_2^r, \bar x_0'\ub_2^{\tau_{x_0,y_0}(r)}) <
    \rho + \delta' < 2 \rho,
  \end{split}
\end{equation*}
where we used \eqref{eq:scheme_for_normaliser:1} and
\eqref{eq:scheme_for_normaliser:2}.  In particular
\[
  d(x_0', \bar x_0') < \rho.
\]
since $0\in A_s$.  Suppose
\[
  r'-r > m, \qquad\text{or} \qquad \tau_{x_0,y_0}(r') - \tau_{x_0,y_0}(r)>m.
\]
These conditions are equivalent to $z(x,t')-z(x,t) > m$ or
$z(y,ct')-z(y,ct) > m$. If furthermore $r\in A_s$, then Lemma
\ref{lemma:z_for_centraliser} applies and we have
\begin{equation}
  \label{cond_separ2}
  \abs[\Big]{\big(\tau_{x_0,y_0}(r') -  \tau_{x_0,y_0}(r) \big) -\big(r'-r \big) } \\
  \le  4
  \big(r'-r\big)^{1-\eta}.
\end{equation}
The proof is then complete.\qed

\subsection{Action of the conjugacy on the normaliser}

Thanks to the Basic Lemma, we can now show that the isomorphism $\psi$ maps the
foliation with leaves tangent to the normaliser of $U_1$ to the one with leaves
tangent to the normaliser of $U_2$.  More precisely, we prove the following
proposition.

\begin{proposition}\label{thm:action_on_normaliser}
  Let $\gb \in G_1$ with $d(\eb, \gb) < \delta$ be such that
  $\gb \ub_1^{ct} = \ub_1^{t} \gb$ for some $c \in (1/2,2)$.  For almost every
  $x \in M_1$ there exists $\Phi(x, \gb) \in G_2$ satisfying
  $ \Phi(x, \gb) \ub_2^{ct}= \ub_2^t \Phi(x, \gb)$ and such that
  \[
    \psi(x \gb) = \psi(x) \Phi(x, \gb).
  \]
  Moreover, for every $y \in \R$, we have
  \[
    \Phi(x\ub_1^t, \gb) \, \Phi(x, \gb)^{-1} = \ub_2^{z(x\gb, ct) - cz(x,t)}.
  \]
  Finally, $\Phi(x, \gb) \exp(\R \ub_2) \in N_{G_2}(\ub_2) / \exp(\R\ub_2)$ is
  constant almost everywhere and for almost all $x\in M_1$, the map
  \[
    \Phi(x, \cdot) \colon N_{G_1}(\ub_1) / \exp(\R\ub_1) \to N_{G_2}(\ub_2) /
    \exp(\R\ub_2)
  \]
  is a group homomorphism.
\end{proposition}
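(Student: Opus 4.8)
The plan is to feed the data produced by \cref{thm:centraliser} into Ratner's Basic Lemma~\ref{lemma:basic}, and — this is the crucial point — to do so at \emph{all} scales with a \emph{fixed} base point, so as to invoke the second (``moreover'') conclusion of \cref{lemma:basic} rather than its per‑scale version. The delicate step, and the main obstacle, is exactly this: arranging that the hypotheses of \cref{lemma:basic} hold for \emph{all} large $\lambda$ with a single base point and a single reparametrisation $\tau$, which is what unlocks the rigid conclusion ``$\overline{x}_0'=x_0'\gb_*$'' instead of the weaker scale‑dependent bounds. As a preliminary, I would fix $\gb$ as in the statement, $\gb\ub_1^{ct}=\ub_1^t\gb$, and observe that $\gb$ necessarily has the form $\zb\,\ab_1^{-\log c}$ with $\zb\in Z_{G_1}(\ub_1)$; hence, after shrinking $\delta$ if needed, $c$ is as close to $1$ as we wish, and the element $\gbtilde:=\ab_2^{-\log c}\in G_2$ — which satisfies $\gbtilde\ub_2^t\gbtilde^{-1}=\ub_2^{c^{-1}t}$ — lies within $\delta'$ of $\eb_2$. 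Thus $(\gb,\gbtilde)$ is an admissible pair for \cref{thm:centraliser}.

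\emph{Step 1 (existence of $\Phi$).} By Birkhoff's theorem along the $\ub_1$‑flow, together with $\mu_1(\Good)>1-\omega$ and the smallness of $\omega$ relative to $\theta\Ctau^{-4}$, for $\mu_1$‑a.e.\ $x$ one can pick $t_0=t_0(x)\ge 0$ with $x_0:=x\ub_1^{t_0}\in\Good$, $y_0:=x_0\gb=(x\gb)\ub_1^{ct_0}\in\Good$, and $x_0,y_0\in K_{\ub_1}(n)$ for some large $n$. Apply \cref{thm:centraliser} with base point $x_0$; since $x_0,y_0\in\Good$ we have $\inf B_s=0$, so the function $\tau=\tau_{x_0,y_0}$ and the points $x_0'=\psi(x_0)$, $\overline{x}_0'=\psi(y_0)\gbtilde^{-1}$ are independent of $s$, and for every $s\ge s_n$ the quintuple $(x_0',\overline{x}_0',\lambda_s,A_s,\tau)$ satisfies exactly the hypotheses of \cref{lemma:basic} — condition (2) holds because points of $\psi(\Good)$ satisfy $\IC(\rho,m)$ and $\FBR(T_0,c(b),r_0)$ by \cref{prop:good_set}. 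The attainable scales $\lambda_s$ tend to $\infty$ with consecutive ratios below $8/7$ (this is where $\omega\ll\theta\Ctau^{-4}$ is used, via a bound on the gaps of the return‑time set of $x_0$ and $y_0$ to $\Good$), so the hypotheses in fact hold for all $\lambda\ge\lambda_0$: for intermediate $\lambda$ one uses $A_s$ for the largest $\lambda_s\le\lambda$ together with the endpoint $\lambda$, the density estimate of \cref{thm:centraliser} having enough slack. The second conclusion of \cref{lemma:basic} then gives $\gb_*\in G_2$ commuting with every $\ub_2^t$ and with $\overline{x}_0'=x_0'\gb_*$, i.e.
\[
  \psi(x_0\gb)=\psi(y_0)=\psi(x_0)\,\gb_*\gbtilde=:\psi(x_0)\,\Phi(x_0,\gb).
\]
Since $\gb_*$ centralises $\ub_2$ and $\gbtilde$ has the right scaling, $\Phi(x_0,\gb)\in N_{G_2}(\ub_2)$ with $\Phi(x_0,\gb)\ub_2^{ct}=\ub_2^t\Phi(x_0,\gb)$. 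Finally I would transport this back to $x$ using $x\gb=y_0\ub_1^{-ct_0}$, $x=x_0\ub_1^{-t_0}$ and $\psi(w\ub_1^r)=\psi(w)\ub_2^{z(w,r)}$: this gives $\psi(x\gb)=\psi(x)\Phi(x,\gb)$ with $\Phi(x,\gb):=\ub_2^{-z(x_0,-t_0)}\,\Phi(x_0,\gb)\,\ub_2^{z(y_0,-ct_0)}$, and conjugation by powers of $\ub_2$ preserves membership in $N_{G_2}(\ub_2)$ and the scaling relation.

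\emph{Step 2 (the cocycle identity).} This is pure bookkeeping: expand $\psi(x\ub_1^t\gb)$ as both $\psi\big((x\gb)\ub_1^{ct}\big)=\psi(x)\Phi(x,\gb)\ub_2^{z(x\gb,ct)}$ and $\psi\big((x\ub_1^t)\gb\big)=\psi(x)\ub_2^{z(x,t)}\Phi(x\ub_1^t,\gb)$, cancel $\psi(x)$, and push $\ub_2^{z(x\gb,ct)}$ through $\Phi(x,\gb)$ via its scaling relation to get
\[
  \Phi(x\ub_1^t,\gb)\,\Phi(x,\gb)^{-1}=\ub_2^{\,z(x\gb,ct)-c\,z(x,t)}.
\]

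\emph{Step 3 (constancy and homomorphism).} Since $\exp(\R\ub_2)$ is normal in $N_{G_2}(\ub_2)$, the identity of Step 2 shows $x\mapsto\Phi(x,\gb)\exp(\R\ub_2)\in N_{G_2}(\ub_2)/\exp(\R\ub_2)$ is $\ub_1$‑flow invariant; by ergodicity of $\phi_1^t$ on $M_1$ (Howe--Moore, using the irreducibility of $\Gamma_1$ / the strong spectral gap) it is $\mu_1$‑a.e.\ a constant $\Psi(\gb)$. For $\gb,\gb'$ near $\eb_1$ in $N_{G_1}(\ub_1)$, applying $\psi$ to $x\gb\gb'$ in two ways gives $\Phi(x,\gb\gb')=\Phi(x,\gb)\,\Phi(x\gb,\gb')$; projecting to the quotient and using that $w\mapsto\Phi(w,\gb')\exp(\R\ub_2)$ is a.e.\ the constant $\Psi(\gb')$ (right translation by $\gb$ preserves $\mu_1$) yields $\Psi(\gb\gb')=\Psi(\gb)\Psi(\gb')$. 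Since $\Phi(x,\ub_1^v)=\ub_2^{z(x,v)}\in\exp(\R\ub_2)$, the map $\Psi$ is trivial on $\exp(\R\ub_1)$, so it descends to a homomorphism $N_{G_1}(\ub_1)/\exp(\R\ub_1)\to N_{G_2}(\ub_2)/\exp(\R\ub_2)$, which is precisely $\Phi(x,\cdot)$ read modulo the central directions. Once Step 1 is secured, Steps 2 and 3 are formal.
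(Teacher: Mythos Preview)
Your proposal is correct and follows essentially the same line as the paper's own proof: combine \cref{thm:centraliser} with the ``moreover'' clause of the Basic Lemma~\ref{lemma:basic} to produce a centralising $\gb_*$, set $\Phi=\gb_*\gbtilde$, then derive the cocycle identity by bookkeeping and use ergodicity for constancy. You are in fact more explicit than the paper on two points the paper leaves implicit: (i) that one may arrange a \emph{fixed} base pair $(x_0',\overline{x}_0')$ independent of the scale $s$ (you do this by pre-shifting into $\Good$ so that $0\in B_s$; the paper relies on the nestedness of the sets $B_s$, which makes $t_0=\inf B_s$ independent of $s$ once $s\ge s_n$), and (ii) that constancy in the quotient follows cleanly from the cocycle identity of Step~2 rather than ``by construction''.
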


\begin{proof}
  From \S\ref{sec:centraliser_1}, we have that for every $n \in \N$, the measure
  of the set $K_{\ub_1}(n) \cap K_{\ub_1}(n) \gb^{-1}$ is at least $1-2^{-n+1}$.
  Therefore, for almost every $x \in M_1$, there exists $n=n(x) \in \N$ such
  that $x\in K_{\ub_1}(n)$ and $y = x \gb \in K_{\ub_1}(n)$.  Choose
  $\gbtilde \in G$ such that $d(\eb, \gbtilde) < \delta'$ and
  $\gbtilde \ub_2^{ct} = \ub_2^{t} \gbtilde$.  Combining Proposition
  \ref{thm:centraliser} with Lemma \ref{lemma:basic}, we deduce that there
  exists $\gb_0 = \gb_0(x) \in G_2$, depending on $\gb$ and possibly on $x$,
  which commutes with $\ub_2^t$ and with $d(\eb,\gb_0) \leq \varepsilon$ such
  that
  \begin{equation}\label{eq:prop_psi_norm_1}
    \psi(y \ub_1^{c t_0})\gbtilde^{-1} = \psi(x\ub_1^{t_0})\gb_0,
  \end{equation}
  where $0 \leq t_0 \leq s_{n(x)}$.

  We can rewrite \eqref{eq:prop_psi_norm_1} as
  \begin{equation}\label{eq:prop_psi_norm_2}
    \begin{split}
      \psi(x \gb) &= \psi(x) \ub_2^{z(x,t_0)} \gb_0 \gbtilde \ub_2^{-z(y,ct_0)}\\
      &= \psi(x) \gb_0 \gbtilde \ub_2^{cz(x,t_0)-z(y,ct_0)},
    \end{split}
  \end{equation}
  where the element $\gb_0 \gbtilde$ satisfies
  $d(\eb, \gb_0 \gbtilde) \leq \delta'+\rho< 2\rho$ and belongs to the
  normaliser $N_{G_2}(\ub_2)$ of $\ub_2^t$, namely
  $(\gb_0 \gbtilde) \ub_2^{ct} = \ub_2^t (\gb_0 \gbtilde)$.  Equality
  \eqref{eq:prop_psi_norm_2} proves the existence of
  $\Phi(x,\gb) \in N_{G_2}(\ub_2)$ satisfying
  $\psi(x \gb) = \psi(x) \Phi(x, \gb)$.

  We have a well-defined measurable map
  \begin{equation*}
    \begin{split}
      \Phi_{\gb} \colon M_1 &\to N_G( \ub_2 ) / \exp(\R U_2) \\
      x &\mapsto \gb_0(x) \gbtilde \exp(\R U_2).
    \end{split}
  \end{equation*}
  By construction, it is easy to see that
  $\Phi_{\gb}(x \ub_1^t) = \Phi_{\gb}(x)$ for all $t \in \R$.  Since
  $\Phi_{\gb}$ is bounded, by ergodicity of the unipotent flow $\ub_1^t$, the
  map $\Phi_{\gb}$ must be constant.  This proves that
  $\Phi(x, \gb) \exp(\R \ub_2)$ is constant almost everywhere.  Note that the
  map $\Phi$ satisfies
  \[
    \Phi(x, \gb_1 \gb_2) = \Phi(x, \gb_1) \Phi(x\gb_1, \gb_2)
  \]
  for all $\gb_1,\gb_2$ in the normaliser of $\ub_1$.  In particular,
  $\Phi(x,\cdot)$ descends to a group homomorphism between
  $N_{G_1}(\ub_1) / \exp(\R\ub_1)$ and $N_{G_2}(\ub_2) / \exp(\R\ub_2)$.

  Finally, let $t \in \R$ be fixed. Then,
  \begin{equation*}
    \begin{split}
      \psi(x \ub_1^t \gb) &= \psi(x \gb) \ub_2^{z(x\gb, ct)}
      = \psi(x) \Phi(x,\gb) \ub_2^{ z(x\gb, ct)} \\
      &= \psi(x \ub_1^t) \ub_2^{-z(x,t)}\Phi(x,\gb) \ub_2^{z(x\gb, ct)} \\ &=
      \psi(x \ub_1^t) \Phi(x,\gb) \ub_2^{-cz(x,t)+ z(x\gb, ct)}.
    \end{split}
  \end{equation*}
  Since the expression above also equals $\psi(x \ub_1^t) \Phi(x \ub_1^t,\gb)$,
  the proof of the cocycle relation for $\Phi$ is complete.
\end{proof}

Let us fix $x_0 \in M_1$, and denote $\Phi(\gb) = \Phi(x_0,\gb)$.  Since
$\Phi(x,\gb) \exp(\R\ub_2) = \Phi(\gb) \exp(\R\ub_2)$ almost everywhere, for
almost every $x \in M_1$ there exists $\beta(x,\gb) \in \R$ such that
\[
  \Phi(x,\gb) = \Phi(\gb) \ub_2^{\beta(x,\gb)}.
\]
The proof of \cref{thm:action_on_normaliser} implies that
\[
  d(\eb,\Phi(\gb))\leq \varepsilon, \quad \text{ and } \quad \text{ if } x \in
  K_{\ub_1}(n), \text{ then } | \beta(x,\gb) | \leq s_n.
\]
With this notation, from \cref{thm:action_on_normaliser}, we now obtain the
following result about the cohomology of the time-change functions $\alpha_1$
and $\alpha_2$, which is a more general version of
\cref{thm:cohomologycentraliser}.

\begin{corollary}\label{cor:cohomologyofalphas}
  Let $\gb \in G$ with $d(\eb, \gb) < \delta$ be such that
  $\gb \ub_1^{ct} = \ub_1^{t} \gb$ for some $c \in (1/2,2)$.  Then we have
  \[
    \int_0^t \alpha_1(x\ub_1^s\gb)\, \D s - \int_0^{z(x,t)}
    \alpha_2(\psi(x)\ub_2^s\Phi(\gb))\, \D s = f(x\ub_1^t)-f(x),
  \]
  where
  \[
    f(x)=\frac{1}{c}\int_0^{\beta(x, \gb)} \alpha_2(\psi(x)\Phi(\gb)\ub_2^s)\,
    \D s.
  \]
  In particular, if $\alpha_2 =1$, then $\alpha_1(x)$ is cohomologous with
  $\alpha_1(x\mathbf{c})$, where $\mathbf{c}$ belongs to the centraliser of
  $\ub_1$, and the same holds for $\alpha_2$ if $\alpha_1$ is trivial.
\end{corollary}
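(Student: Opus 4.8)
The plan is to deduce the displayed cohomological identity directly from \cref{thm:action_on_normaliser}, by bookkeeping with the time-change cocycles $\xi_i(y,u)=\int_0^u\alpha_i(y\ub_i^r)\,\diff r$ and with $z(x,t)=w_2(\psi(x),\xi_1(x,t))$, which was defined so that $\xi_1(y,u)=\xi_2(\psi(y),z(y,u))$ for a.e.\ $y$. Abbreviate $\Phi:=\Phi(\gb)$ and $\beta(x):=\beta(x,\gb)$, so that $\psi(x\gb)=\psi(x)\Phi\ub_2^{\beta(x)}$ for a.e.\ $x$; the hypothesis on $\gb$ gives $\ub_1^s\gb=\gb\ub_1^{cs}$ for all $s$, and the normaliser relation for $\Phi(x,\gb)$ together with $\Phi(x,\gb)=\Phi\ub_2^{\beta(x)}$ gives $\ub_2^s\Phi=\Phi\ub_2^{cs}$ for all $s$. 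Since $\alpha_2$ is bounded and $|\beta(x)|\le s_{n(x)}<\infty$ a.e., the function $f$ is well defined, finite and measurable a.e.; moreover, by the cocycle property of $z$, both integrals on the left-hand side are additive cocycles over the $\ub_1$-flow, so it suffices to identify their difference with $f(x\ub_1^t)-f(x)$.

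First I would rewrite, using $\ub_1^s\gb=\gb\ub_1^{cs}$ and $\ub_2^s\Phi=\Phi\ub_2^{cs}$,
\[
  \int_0^t\alpha_1(x\ub_1^s\gb)\,\diff s=\tfrac1c\,\xi_1(x\gb,ct),\qquad
  \int_0^{z(x,t)}\alpha_2(\psi(x)\ub_2^s\Phi)\,\diff s=\tfrac1c\int_0^{cz(x,t)}\alpha_2(\psi(x)\Phi\ub_2^r)\,\diff r.
\]
For the first term I would use $\xi_1(y,u)=\xi_2(\psi(y),z(y,u))$ together with $\psi(x\gb)=\psi(x)\Phi\ub_2^{\beta(x)}$ and a shift of variable, to get
\[
  \xi_1(x\gb,ct)=\int_0^{z(x\gb,ct)}\alpha_2(\psi(x\gb)\ub_2^r)\,\diff r=\int_{\beta(x)}^{\beta(x)+z(x\gb,ct)}\alpha_2(\psi(x)\Phi\ub_2^r)\,\diff r.
\]
Comparing the two ways of writing $\psi(x\ub_1^t\gb)$ — namely $\psi(x\gb)\ub_2^{z(x\gb,ct)}=\psi(x)\Phi\ub_2^{\beta(x)+z(x\gb,ct)}$ (via $x\ub_1^t\gb=x\gb\ub_1^{ct}$) and $\psi(x\ub_1^t)\Phi(x\ub_1^t,\gb)=\psi(x)\Phi\ub_2^{cz(x,t)+\beta(x\ub_1^t)}$ — identifies the upper limit: $\beta(x)+z(x\gb,ct)=cz(x,t)+\beta(x\ub_1^t)$. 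Then $c$ times the left-hand side of the claim equals $\int_{\beta(x)}^{cz(x,t)+\beta(x\ub_1^t)}\alpha_2(\psi(x)\Phi\ub_2^r)\,\diff r-\int_0^{cz(x,t)}\alpha_2(\psi(x)\Phi\ub_2^r)\,\diff r$; splitting the first integral at $0$ and at $cz(x,t)$, the middle piece cancels, and on the last piece one uses $\psi(x)\Phi\ub_2^{cz(x,t)}=\psi(x)\ub_2^{z(x,t)}\Phi=\psi(x\ub_1^t)\Phi$, so that what remains is exactly $c\bigl(f(x\ub_1^t)-f(x)\bigr)$.

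For the ``in particular'' statements: if $\alpha_2\equiv1$ then $w_2(y,\cdot)=\mathrm{id}$, hence $z(x,t)=\xi_1(x,t)=\int_0^t\alpha_1(x\ub_1^s)\,\diff s$ and $f(x)=\beta(x)$; taking $c=1$ and $\gb=\mathbf c$ in the centraliser (so $x\ub_1^s\mathbf c=x\mathbf c\ub_1^s$), the identity becomes $\int_0^t\bigl(\alpha_1(x\mathbf c\ub_1^s)-\alpha_1(x\ub_1^s)\bigr)\,\diff s=\beta(x\ub_1^t)-\beta(x)$, i.e.\ $\alpha_1\circ R_{\mathbf c}$ and $\alpha_1$ are measurably cohomologous over $\phi_1^t$, at first for $\mathbf c$ near $\eb$. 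Since cohomology is transitive and $R_{\mathbf c}\colon y\mapsto y\mathbf c$ commutes with $\phi_1^t$ for such $\mathbf c$, the set of $\mathbf c\in Z_{G_1}(\ub_1)$ with this property is a subgroup containing a neighbourhood of $\eb$, hence all of the (connected) centraliser of $\ub_1$. The case $\alpha_1\equiv1$ then follows by applying this to the conjugacy $\psi^{-1}$ between $\widetilde\phi_2$ and $\phi_1$, the whole setup being symmetric under $1\leftrightarrow 2$.

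The computation is essentially mechanical once \cref{thm:action_on_normaliser} is available; I expect the only delicate points to be (i) keeping the reparametrisation factor $c$ consistent throughout — systematically using $\ub_2^s\Phi(\gb)=\Phi(\gb)\ub_2^{cs}$ for $\Phi=\Phi(\gb)$, rather than the relation for $\Phi(x,\gb)$ — and (ii) in the particular cases, the passage from $\mathbf c$ in a neighbourhood of the identity to the entire centraliser, which relies on connectedness of $Z_{G_1}(\ub_1)$, a standard fact for centralisers of unipotent elements in connected reductive groups.
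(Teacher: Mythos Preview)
Your proof is correct and follows essentially the same route as the paper: both rewrite the two integrals in terms of $\alpha_2(\psi(x)\Phi(\gb)\ub_2^r)$ via the substitutions $\ub_1^s\gb=\gb\ub_1^{cs}$, $\ub_2^s\Phi(\gb)=\Phi(\gb)\ub_2^{cs}$ and the identity $\xi_1(y,u)=\xi_2(\psi(y),z(y,u))$, then use the relation $\beta(x\ub_1^t)-\beta(x)=z(x\gb,ct)-cz(x,t)$ from \cref{thm:action_on_normaliser} to match endpoints and cancel the middle piece. Your treatment of the ``in particular'' clause is actually more complete than the paper's, since you explicitly address the extension from $\mathbf c$ near $\eb$ to the whole centraliser via the subgroup argument and connectedness of $Z_{G_1}(\ub_1)$.
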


\begin{proof}
  By a direct computation, recalling the definition of $z(x,t)$, we have
  \[
    \begin{split}
      \int_0^t \alpha_1(x\ub_1^s\gb)\, \D s &=
      \frac{1}{c} \int_0^{ct} \alpha_1(x\gb\ub_1^s)\, \D s\\
      &=\frac{1}{c} \int_0^{z(x\gb,ct)} \alpha_2(\psi(x\gb)\ub_2^s)\, \D s\\
      &= \frac{1}{c} \int_0^{z(x\gb,ct)}
      \alpha_2(\psi(x)\Phi(\gb)\ub_2^{\beta(x,\gb)+s})\, \D s,
    \end{split}
  \]
  which we can write as
  \[
    \frac{1}{c} \int_0^{z(x\gb,ct)+\beta(x,\gb)}
    \alpha_2(\psi(x)\Phi(\gb)\ub_2^s)\, \D s - \frac{1}{c} \int_0^{\beta(x,\gb)}
    \alpha_2(\psi(x)\Phi(\gb)\ub_2^s)\, \D s,
  \]
  using $\beta(x \ub_1^t, \gb) - \beta(x, \gb) = z(x\gb, ct) - c z(x, t)$, we
  have
  \begin{multline*}
    \int_0^t \alpha_1(x\ub_1^s\gb)\, \D s = \\
    \frac{1}{c} \int_0^{cz(x,t)+\beta(x\ub_1^t,\gb)}
    \alpha_2(\psi(x)\Phi(\gb)\ub_2^s)\, \D s - \frac{1}{c} \int_0^{\beta(x,\gb)}
    \alpha_2(\psi(x)\Phi(\gb)\ub_2^s)\, \D s.
  \end{multline*}
  Since
  \[
    \int_0^{z(x,t)} \alpha_2(\psi(x)\ub_2^s\Phi(\gb))\, \D s =
    \frac{1}{c}\int_0^{cz(x,t)} \alpha_2(\psi(x)\Phi(\gb)\ub_2^s)\, \D s,
  \]
  joining these two pieces we obtain the Corollary.
\end{proof}

%%%%%%%%%%

%%% Local Variables:
%%% mode: latex
%%% TeX-master: "time_changes_unipotents"
%%% End:

% !TEX root = time_changes_unipotents.tex

\section{Proof of the Main Result}\label{sec:conclusion}
In this section we prove our main result, namely~\cref{thm:main}.

\subsection{Joinings and a map $\zeta$}
We recall that a \emph{joining} of the flows $\widetilde{\phi}_1$ and
$\widetilde{\phi}_2$ is a measure $\mu$ on $M_1\times M_2$ which is invariant
under the action of $\widetilde{\phi}_1\times\widetilde{\phi}_2$ and such that,
when projected to any of the two factors $M_i$, coincides with
$\alpha_i\mu_i$. The measure obtaine by projections to the factors are called
\emph{marginals}. For more details, see, e.g.,~\cite{Glasner:Joinings}.

Since $\psi$ is a measurable conjugacy between the two flows, we can construct a
joining, supported on the graph $\graph_\psi = \{(x,\psi(x)), x\in M_1\}$, such
that
\[
  \mu_\psi (B) = \widetilde{\mu_1}(\{x\in M_1: (x,\psi(x))\in B\}) =
  \widetilde{\mu_2}(\{y\in M_2: (\psi^{-1}(y),y)\in B\}).
\]

Let $\phi^t_A = \phi^t_{A_1\times A_2}\colon M_1\times M_2$ the diagonal action
given by $(x,y)\mapsto (x\ab_1^t, y\ab_2^t)$. If we push forward the joining
$\mu_\psi$ by the action of $A^t$ we obtain a new measure whose support is given
by
\[
  \phi^t_A\graph_\psi = \{(x\ab_1^t, \psi(x)\ab_2^t), x\in M_1\} = \{(x,
  \psi_t(x)), x\in M_1\} = \graph_{\psi_t},
\]
where $\psi_t(x) = \psi(x \ab_1^{-t})\ab_2^t$.  The marginals of this measure
are given by
\[
  (p_1 \circ \phi_A^t)_* \mu_\psi = (\tau_1 \circ \phi_{A_1}^{-t}) \mu_1 \qquad
  \text{and} \qquad (p_2 \circ \phi_A^t)_* \mu_\psi = (\tau_2 \circ
  \phi_{A_2}^{-t}) \mu_2,
\]
where $p_i\colon M_1\times M_2 \to M_i$ is the projection on the $i^\text{th}$
coordinate. Hence $\psi_t$ is a measurable conjugacy between the (ergodic) flows
generated by $(\tau_1 \circ \phi_{A_1}^{-t})^{-1}U_1$ and
$(\tau_2 \circ \phi_{A_2}^{-t})^{-1}U_2$.

In this section, to ease the notation, let us call $G = G_1 \times G_2$ and
$M = M_1 \times M_2$.

\begin{lemma}\label{lem:tight}
  The set $\{(\phi_A^{t})_*\mu_\psi : t >0 \}$ is relatively sequentially
  compact in the space of probability measures on $M$ with the topology of weak
  convergence.
\end{lemma}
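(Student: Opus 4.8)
The plan is to reduce the statement to uniform tightness and then invoke Prokhorov's theorem. Recall that $M=M_1\times M_2$ is a Polish space: each $M_i=\Gamma_i\backslash G_i$ is locally compact, second countable and Hausdorff, hence Polish, and a finite product of Polish spaces is Polish. Moreover each $(\phi_A^t)_*\mu_\psi$ is a genuine Borel probability measure on $M$, since $\widetilde\mu_1=\alpha_1\mu_1$ is a probability measure by the normalisation in \cref{ass:mormalisation} and push-forward preserves total mass. By Prokhorov's theorem, a family of Borel probability measures on a Polish space is relatively sequentially compact for the topology of weak convergence if and only if it is uniformly tight, so it suffices to prove uniform tightness of $\{(\phi_A^t)_*\mu_\psi : t>0\}$.

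The key point is that the marginals of $(\phi_A^t)_*\mu_\psi$ are dominated by a fixed multiple of the Haar measures, uniformly in $t$. Indeed, as computed just above in this section, using the $\phi_{A_i}$-invariance of $\mu_i$,
\[
  (p_1\circ\phi_A^t)_*\mu_\psi=(\alpha_1\circ\phi_{A_1}^{-t})\,\mu_1,
  \qquad
  (p_2\circ\phi_A^t)_*\mu_\psi=(\alpha_2\circ\phi_{A_2}^{-t})\,\mu_2,
\]
where $p_i\colon M_1\times M_2\to M_i$ is the projection onto the $i$-th factor. By \cref{ass:1} we have $\alpha_i\le\Ctau$ pointwise on $M_i$ (with $\Ctau=\max\{C_{\alpha_1},C_{\alpha_2}\}$ as fixed in the general setting), hence $\alpha_i\circ\phi_{A_i}^{-t}\le\Ctau$ as well, and therefore
\[
  (p_i\circ\phi_A^t)_*\mu_\psi\le\Ctau\,\mu_i
  \qquad\text{for all }t>0\text{ and }i=1,2.
\]

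To conclude tightness, fix $\varepsilon>0$. Since $\mu_i$ is a Borel probability measure on the Polish space $M_i$, it is inner regular, so there is a compact set $K_i\subset M_i$ with $\mu_i(M_i\setminus K_i)<\varepsilon/(2\Ctau)$. Set $K=K_1\times K_2$, which is compact in $M$. Then, for every $t>0$,
\[
  (\phi_A^t)_*\mu_\psi(M\setminus K)
  \le\sum_{i=1}^{2}(p_i\circ\phi_A^t)_*\mu_\psi(M_i\setminus K_i)
  \le\sum_{i=1}^{2}\Ctau\,\mu_i(M_i\setminus K_i)<\varepsilon .
\]
Hence $\{(\phi_A^t)_*\mu_\psi:t>0\}$ is uniformly tight, and the lemma follows from Prokhorov's theorem. (If both $M_1$ and $M_2$ are compact the statement is trivial; the argument above is needed precisely because $M_2$, and possibly $M_1$, may be noncompact.)

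This proof is entirely soft: there is no real obstacle once one observes the uniform domination of the marginals, which is immediate from the boundedness of the time-change functions in \cref{ass:1}. The only minor points requiring care are that the pushed-forward measures are genuinely probability measures and that weak sequential compactness is taken in the space of probability measures on the (Polish) product $M$.
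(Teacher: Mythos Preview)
Your proof is correct and follows essentially the same approach as the paper: both arguments establish uniform tightness by choosing compact sets $K_i\subset M_i$ with $\mu_i(M_i\setminus K_i)<\varepsilon/(2\Ctau)$ and bounding $(\phi_A^t)_*\mu_\psi(M\setminus(K_1\times K_2))$ via the marginals, which are uniformly dominated by $\Ctau\mu_i$ thanks to \cref{ass:1}. The paper computes this bound directly from the definition of the graph joining, while you invoke the marginal formula stated just before the lemma and make the appeal to Prokhorov's theorem explicit; these are cosmetic differences only.
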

\begin{proof}
  It is sufficient to prove that $\{(\phi_A^{t})_*\mu_\psi : t >0 \}$ is tight.
  For any given $\varepsilon >0$, there exist compact sets $K_i \subset M_i$ of
  measure $\mu_i(K_i) \geq 1- \varepsilon/(2\Ctau)$.  Let
  $K = K_1 \times K_2 \subset M$. Then,
  \[
    \begin{split}
      (\phi_A^{t})_*\mu_\psi(K) &= \widetilde{\mu_1}\{x \in M_1 : x \in
      K_1\ab_1^{-t},\
      \psi(x) \in K_2\ab_2^{-t}\} \\
      &\geq 1- \widetilde{\mu_1}(M_1 \setminus K_1\ab_1^{-t})
      - \widetilde{\mu_2}(M_2 \setminus K_2\ab_2^{-t}) \\
      & \geq 1- \Ctau \mu_1(M_1 \setminus K_1) - \Ctau \mu_2(M_2 \setminus K_2)
      \geq 1- \varepsilon.
    \end{split}
  \]
  This proves tightness and hence the compactness claim.
\end{proof}

By \cref{lem:tight}, there exist weak-* limit points of the family
$\{(\phi_A^{t})_* \mu_\psi : t>0 \}$.  The following result shows that any such
limit point $\nu$ must be invariant under diagonal action of the unipotent flows
$(x,y)\mapsto (x\ub_1^t, y\ub_2^t)$.

\begin{lemma}\label{lem:invariance_limit_nu}
  Let $\nu$ a weak-* limit point of $\{(\phi_A^{t})_* \mu_\psi : t>0 \}$.  Then,
  $(\phi_U^r)_*\nu = \nu$, where $\phi_U^r(x,y) = (x\ub_1^r, y\ub_2^r)$.
\end{lemma}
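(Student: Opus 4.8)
The plan is to test the asserted invariance against bounded (uniformly) continuous functions and to exploit that, although $\mu_\psi$ is \emph{not} invariant under the unreparametrised diagonal unipotent flow, it \emph{is} invariant under every product time-changed flow $\widetilde{\phi}_1^s\times\widetilde{\phi}_2^s$, being a joining of $\widetilde{\phi}_1$ and $\widetilde{\phi}_2$. Write $\mu_t:=(\phi_A^t)_*\mu_\psi$ and, since $\nu$ is a limit point as $t\to+\infty$, fix a sequence $t_k\to+\infty$ with $\mu_{t_k}\to\nu$ weakly. Conjugating the above invariance by the diagonal Cartan action, $\mu_t$ is invariant under $\Theta_t^s:=\phi_A^t\circ(\widetilde{\phi}_1^s\times\widetilde{\phi}_2^s)\circ\phi_A^{-t}$ for every $s\in\R$. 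Using $\widetilde{\phi}_i^s(z)=z\ub_i^{w_i(z,s)}$ (formula~\eqref{eq:01_background:2a}) together with the renormalisation relation $\ub_i^{w}\ab_i^t=\ab_i^t\ub_i^{w e^{-t}}$, a direct computation gives
\[
  \Theta_t^s(x,y)=\bigl(x\,\ub_1^{e^{-t}w_1(x\ab_1^{-t},\,s)},\ y\,\ub_2^{e^{-t}w_2(y\ab_2^{-t},\,s)}\bigr).
\]

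Next I would fix $r>0$ (this is enough: $\{r:(\phi_U^r)_*\nu=\nu\}$ is a subgroup of $\R$ which is closed because $r\mapsto(\phi_U^r)_*\nu$ is weakly continuous, so if it contains $(0,\infty)$ it is all of $\R$) and take $s=r\,e^{t_k}$ in the formula above. Given $\omega>0$, let $Y_i=Y_i(\omega)\subset M_i$ and $m=m(\omega)$ be as in \cref{thm:mixing_condition}, so that $|w_i(z,T)-T|\le\Ctau^{-4}T^{1-\eta}$ for $z\in Y_i$ and $T\ge m$. For $k$ large enough that $r\,e^{t_k}\ge m$, whenever $x\ab_1^{-t_k}\in Y_1$ and $y\ab_2^{-t_k}\in Y_2$ one obtains $e^{-t_k}w_i(\cdot,r\,e^{t_k})=r+\varepsilon_i$ with $|\varepsilon_i|\le\Ctau^{-4}r^{1-\eta}e^{-\eta t_k}$, and hence
\[
  d\bigl(\Theta_{t_k}^{r e^{t_k}}(x,y),\,\phi_U^r(x,y)\bigr)\le\delta_k,\qquad \delta_k\to0,
\]
with $\delta_k$ depending only on how close $\ub_i^{\varepsilon_i}$ is to $\eb_i$ (so uniform in $(x,y)$; here one uses that $\ub_i$ is unipotent, so right multiplication by $\ub_i^{\varepsilon_i}$ is uniformly small). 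The complementary set, where $x\ab_1^{-t_k}\notin Y_1$ or $y\ab_2^{-t_k}\notin Y_2$, has $\mu_{t_k}$-measure at most $\widetilde{\mu}_1(M_1\setminus Y_1)+\widetilde{\mu}_2(M_2\setminus Y_2)\le 2\Ctau\omega$, \emph{uniformly in $k$}: the marginals of $\mu_{t_k}$ are $(\phi_{A_i}^{t_k})_*\widetilde{\mu}_i$, and applying $\phi_{A_i}^{-t_k}$ carries the set $\{z:z\ab_i^{-t_k}\notin Y_i\}$ exactly onto $M_i\setminus Y_i$, so the measure of the bad set equals $\widetilde{\mu}_i(M_i\setminus Y_i)\le\Ctau\mu_i(M_i\setminus Y_i)\le\Ctau\omega$ by~\eqref{eq:01_background:5}.

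Combining these, for every bounded uniformly continuous $F$ on $M=M_1\times M_2$ (for which $F\circ\phi_U^r$ is again bounded continuous, $\phi_U^r$ being a diffeomorphism) and every $\omega>0$,
\[
  \Bigl|\int\bigl(F\circ\Theta_{t_k}^{r e^{t_k}}-F\circ\phi_U^r\bigr)\,d\mu_{t_k}\Bigr|\le \omega_F(\delta_k)+4\Ctau\,\omega\,\|F\|_\infty,
\]
where $\omega_F$ is a modulus of uniform continuity of $F$ and the integral is split over the good and bad sets. Using the $\Theta_{t_k}^{s}$-invariance of $\mu_{t_k}$ we have $\int F\circ\Theta_{t_k}^{r e^{t_k}}\,d\mu_{t_k}=\int F\,d\mu_{t_k}$, and passing to the limit via weak convergence gives $\bigl|\int F\,d\nu-\int F\circ\phi_U^r\,d\nu\bigr|\le 4\Ctau\,\omega\,\|F\|_\infty$ for every $\omega>0$, hence it is $0$. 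Since bounded uniformly continuous functions determine weak convergence, $(\phi_U^r)_*\nu=\nu$ for all $r>0$, and therefore for all $r\in\R$, as desired.

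The main obstacle — and the reason the renormalisation by $\phi_A^t$ is forced upon us — is precisely that $\mu_\psi$ carries no invariance under the plain unipotent flow, only under its time-changes; the bridge is to activate the effective ergodic theorem (\cref{thm:mixing_bound,thm:mixing_condition}) on the \emph{large} time scale $s\sim r\,e^{t}$, where it forces $w_i(\cdot,s)$ to equal $s$ up to a sublinear error which, after contraction by $e^{-t}$, becomes negligible. The only genuinely delicate bookkeeping is that the exceptional sets where the ergodic estimate fails must keep measure $O(\omega)$ uniformly in $t$ after being transported by the Cartan flow, and this is handled by the change-of-variables identity recorded in the third paragraph.
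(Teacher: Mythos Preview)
Your proof is correct and follows essentially the same strategy as the paper: both exploit that $\mu_\psi$ is invariant under the diagonal time-changed flow $\widetilde{\phi}_1^s\times\widetilde{\phi}_2^s$, conjugate this invariance by $\phi_A^{t_k}$, take $s\sim r\,e^{t_k}$, and use the effective ergodic estimate of \cref{thm:mixing_condition} so that the $e^{-t_k}$-contraction kills the sublinear error. The only cosmetic differences are that the paper phrases the computation via the inverse cocycle $\xi_i$ (writing $\phi_i^{r_n}=\widetilde{\phi}_i^{\xi_i(\cdot,r_n)}$ and then subtracting $r_n$ by invariance) rather than via $w_i$ directly, and that it tests against product functions $f_1\otimes f_2$ with Lipschitz factors rather than general bounded uniformly continuous $F$; neither change affects the argument.
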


\begin{proof}
  Let $t_n \to \infty$ be an increasing sequence such that
  $\mu_n := (\phi_A^{t_n})_* \mu_\psi$ converges weakly to $\nu$. It is enough
  to show that
  \begin{equation}\label{eq:claim_limit_pt}
    \lim_{n \to \infty}\abs{((\phi_U^r)_*\mu_n)(f_1 \otimes f_2)- 
      \mu_n(f_1 \otimes f_2)} = 0,
  \end{equation}
  for every pair of bounded Lipschitz functions $f_i \colon M_i \to \C$.

  Let us call $r_n = e^{t_n}r$. We compute
  \[
    \begin{split}
      ((\phi_U^r)_*\mu_n)(f_1 \otimes f_2) &= \mu_n(f_1 \circ \phi_1^r \otimes
      f_2 \circ \phi_2^r) = \mu_\psi(f_1 \circ \phi_1^r \circ
      \phi_{A_1}^{t_n}\otimes
      f_2 \circ  \phi_2^r\circ \phi_{A_2}^{t_n}) \\
      &=\mu_\psi(f_1\circ \phi_{A_1}^{t_n} \circ \phi_1^{r_n } \otimes
      f_2\circ \phi_{A_2}^{t_n}  \circ \phi_2^{r_n })\\
      &=\mu_\psi(f_1\circ \phi_{A_1}^{t_n} \circ
      \widetilde{\phi_1}^{\xi_1(\cdot, r_n )} \otimes
      f_2\circ \phi_{A_2}^{t_n}  \circ \widetilde{\phi_2}^{\xi_2(\cdot, r_n )}) \\
      &=\mu_\psi(f_1\circ \phi_{A_1}^{t_n} \circ
      \widetilde{\phi_1}^{\xi_1(\cdot, r_n )-r_n} \otimes
      f_2\circ \phi_{A_2}^{t_n}  \circ \widetilde{\phi_2}^{\xi_2(\cdot, r_n )-r_n}), \\
    \end{split}
  \]
  where, in the last line, we used the invariance of $\mu_\psi$ under the
  product flow $\widetilde{\phi_1^t} \times \widetilde{\phi_2^t}$.  We will now
  call $e_i(x,n) := \xi_i(x,r_n) - r_n$.  Denoting by $C_i$ the Lipschitz
  constant of $f_i$, we obtain
  \begin{multline*}
    \abs{f_i\circ \phi_{A_i}^{t_n} \circ \widetilde{\phi_i}^{e_i(x,n) }(x)
      -f_i\circ \phi_{A_i}^{t_n} (x)} =
    \abs{f_i(x \ub_i^{w_i(x,e_i(x,n))} \ab_i^{t_n} ) -f_i(x \ab_i^{t_n} )   }\\
    \qquad \qquad = \abs{f_i(x \ab_i^{t_n} \ub_i^{e^{-t_n} w_i(x,e_i(x,n))})
      -f_i(x \ab_i^{t_n} ) } \leq C_i e^{-t_n} \abs{w_i(x,e_i(x,n))}
  \end{multline*}
  for all $x \in M_i$.

  Fix $\varepsilon >0$. By \cref{thm:mixing_condition}, there exist sets
  $Y_i \subset M_i$ of measure $\widetilde{\mu_i}(Y_i) \geq 1-\varepsilon$ and
  $N>1$ such that for all $x \in Y_i$ and $n \geq N$ we have
  \begin{multline*}
    \abs{f_i\circ \phi_{A_i}^{t_n} \circ \widetilde{\phi_i}^{e_i(x,n) } (x)
      -f_i\circ \phi_{A_i}^{t_n} (x)} \leq C_i e^{-t_n} \abs{w_i(x,e_i(x,n))} \\
    \qquad \qquad \leq C_i \, \Ctau e^{-t_n}\abs{e_i(x,n) } \leq C_i \Ctau^{-3}
    e^{-\eta t_n} r^{1-\eta} < \varepsilon.
  \end{multline*}
  The claim \eqref{eq:claim_limit_pt} follows combining the bounds
  \begin{multline*}
    \vert((\phi_U^r)_*\mu_n)(f_1 \otimes f_2)  - \mu_\psi( \one_{Y_1} \cdot (f_1\circ \phi_{A_1}^{t_n}  \circ \widetilde{\phi_1}^{e_1(\cdot ,n) } ) \otimes \one_{Y_2} \cdot ( f_2\circ \phi_{A_2}^{t_n}  \circ \widetilde{\phi_2}^{e_2(\cdot ,n)})) \vert \\
    \quad \leq \|f_1\|_\infty \cdot \|f_2\|_\infty [\widetilde{\mu_1}(M_1
    \setminus Y_1) +\widetilde{\mu_1}(\psi^{-1}(M_2 \setminus Y_2)) ] \leq
    2\|f_1\|_\infty \cdot \|f_2\|_\infty \varepsilon
  \end{multline*}
  and
  \begin{multline*}
    \vert \mu_n(f_1 \otimes f_2)  -\mu_\psi( \one_{Y_1} \cdot (f_1\circ \phi_{A_1}^{t_n}  \circ \widetilde{\phi_1}^{e_1(\cdot ,n)} ) \otimes \one_{Y_2} \cdot ( f_2\circ \phi_{A_2}^{t_n}  \circ \widetilde{\phi_2}^{e_2(\cdot ,n)})) \vert \\
    \leq \Big\vert \int_{Y_1 \cap \psi^{-1}(Y_2)} f_1\circ \phi_{A_1}^{t_n}  \circ \widetilde{\phi_1}^{e_1(x ,n)}(x) f_2\circ \phi_{A_2}^{t_n}  \circ \widetilde{\phi_2}^{e_2(\psi(x) ,n)}(\psi(x)) \\
    \quad - f_1\circ \phi_{A_1}^{t_n} (x) f_2\circ \phi_{A_2}^{t_n}  (\psi(x))\diff \widetilde{\mu_1}(x) \Big\vert +  \|f_1\|_\infty \cdot \|f_2\|_\infty  \cdot \widetilde{\mu_1}[M_1 \setminus (Y_1 \cap \psi^{-1}(Y_2))]\\
    \leq (\|f_1\|_\infty + \|f_2\|_\infty + 2\|f_1\|_\infty \cdot
    \|f_2\|_\infty) \varepsilon,
  \end{multline*}
  thus the proof is complete.
\end{proof}

By \cref{lem:invariance_limit_nu}, the projections $(p_i)_*\nu$ of any limit
point $\nu$ as above are invariant under the action of $\ub_i^t$.  Thanks to
Ratner's Measure Classification
Theorem~\cite{Ratner:Raghunathan1,Ratner:Raghunathan2,Ratner:Raghunathan3}, any
of its ergodic component is an algebraic measure supported on a closed subgroup
$\exp(\R\ub_i) < L_i < G_i$.  Since the measures
$(p_i)_* [(\phi_A^{t})_* \mu_\psi]$ are all absolutely continuous with respect
to the Haar measure on $G_i$ with density uniformly bounded away from 0, it
follows that $L_i =G_i$. Therefore, the projections $(p_i)_*\nu$ are the Haar
measures $\mu_i$ on $G_i$.  In summary, we have shown that any weak-* limit
$\nu$ is in turn an ergodic joining of $\ub_1^t$ and $\ub_2^t$.  To this we can
apply the Joinings Theorem of~\cite{Ratner:Raghunathan2}, as we will now
explain.

We want to use the limiting joining $\nu$ to construct a measurable map
$\zeta\colon M_1\to M_2$. By construction this map will be the limit of the
sequence $\psi_{t_n}$.

By algebraicity of $\nu$ there exists a point $x(\nu)\in M$ such that the
measure $\nu$ is supported on the orbit $x(\nu)\Lambda(\nu)$, where
$\Lambda(\nu)=\Stab_G(\nu)$ are the elements of $G$ which leave invariant the
measure $\nu$. Let us define the groups
\[
  \Lambda_1(\nu) = \{\gb\in G_1 : (\gb, \eb)\in\Lambda(\nu) \}, \qquad
  \Lambda_2(\nu) = \{\gb\in G_2 : (\eb, \gb)\in\Lambda(\nu) \}.
\]
In~\cite{Ratner:Raghunathan2}, it is shown that these groups are closed normal
subgroups of $G_1$ and $G_2$ respectively and, moreover,
$\Gamma_i \cap \Lambda_i$ is a lattice in $\Lambda_i$.  In particular, this
means that $\Gamma_i \Lambda_i$ is a closed orbit in
$M_i = \Gamma_i \backslash G_i$. Therefore, $\Gamma_i \Lambda_i$ is a discrete
subgroup of the factor $G_i / \Lambda_i$.  Since the lattices $\Gamma_i$ are
irreducible, this implies that either $\Lambda_i = G_i$, or $\Lambda_i$ is
\emph{finite}.  We remark that the first possibility occurs if and only if $\nu$
is the trivial joining $\mu_1 \otimes \mu_2$, thus we now assume that $\nu$ is
not $\mu_1 \otimes \mu_2$.

For $x\in M_1$ let
\[
  \zeta(x) =\zeta_\nu(x) = \{y\in M_2: (x,y)\in x(\nu)\Lambda(\nu)\}
\]
the fiber of $\nu$ at $x$. The finiteness of the $\Lambda_i$'s implies that this
fiber is finite. In fact, it follows from~\cite[Theorem~2]{Ratner:Raghunathan2}
that there exist an element $\overline{\gb}\in G_2$ and a continuous surjective
homomorphism $\varpi\colon G_1 \to G_2/\Lambda_2$, with kernel $\Lambda_1$ such
that $\varpi(\ub_1)=\ub_2 \Lambda_2$ and
\[
  \zeta(\Gamma_1 \xb) = \{\Gamma_2 \overline{\gb}\beta_i \varpi(\xb),
  i=1,\dotsc,n\},
\]
where $\varpi(\Gamma_1)=\{\overline{\Gamma}\beta_i: i=1,\dotsc,n\}$, with
$\overline{\Gamma} = \varpi(\Gamma_1) \cap
\overline{\gb}^{-1}\Gamma_2\overline{\gb}\Lambda_2$ of finite index, equal to
$n$, inside both $\varpi(\Gamma_1)$ and
$\overline{\gb}^{-1}\Gamma_2\overline{\gb}\Lambda_2$. Up to passing to a finite
quotient, we can assume that both $\Lambda_i$ are trivial and that
$\varpi\colon G_1\to G_2$ is an isomorphism, with
$\varpi(\Gamma_1)\subset \overline{\gb}^{-1}\Gamma_2\overline{\gb}$. Hence, we
obtain a map $\zeta\colon M_1\to M_2$, given by
$\zeta(\Gamma_1 \xb) = \Gamma_2 \overline{\gb} \varpi(\xb)$, for $\mu_1$-a.e.\
$\Gamma_1\xb$, which, by construction satisfies
$\zeta(x\ub_1^s)=\zeta(x)\ub_2^s$.

\subsubsection{}
We are now ready to prove our main result.

\begin{proof}[Proof of \cref{thm:main}]
  We want to show that the conjugacy $\psi$ is given by
  $\psi(x) = \zeta(x)\mathbf{c}(x)\ub_2^{t(x)}$, where $\mathbf{c}(x)$ belongs
  to the centraliser of $\ub_2$, and $t(x)\in\R$.
    
  By compactness of the set $\Good$, we can find an $n_0$ such that if
  $n\ge n_0$ and $x\in\Good$ then $d(\psi_{t_n}(x), \zeta(x))<\rho$. Let
  $\Good_n = \phi_{A_1}^{t_n}(\Good) \cap\Good$, whose measure is
  $\mu_1(\Good_n)>1-2\omega$.  Denote $Q$ the generic set of $\Good_n$ for
  $\ub_1^t$ and $Q_n = \Good_n \cap Q$. Then for $x\in Q_n$ we introduce the set
  \[
    A = A(x) = \{s\in\R_+: x\ub_1^s\in \Good_n\}.
  \]
  By genericity we have that the relative length of $A$ is large. In fact, as
  $T\to\infty$, we have
  \[
    \frac{\abs{A\cap [0,T]}}{T}\to 1-2\omega > 1 - \frac{\theta}{8},
  \]
  where the last inequality follows by our choice of $\omega$ in
  \S\ref{sec:centraliser_1}.

  Let $x'=\psi_{t_n}(x)\in M_2$. By \cref{thm:action_on_normaliser}, we can
  define a function $\sigma$ such that
  \[
    x'\ub_2^{\sigma(s)} = \psi_{t_n} (x\ub_1^s).
  \]
  We want to apply Ratner's Basic Lemma~\ref{lemma:basic} to the pair of points
  $x'$ and $\zeta(x)$ and the set $A$. By construction, if $s\in A$, then
  \[
    d(x'\ub_2^{\sigma(s)},\zeta(x)\ub_2^s) = d(\psi_{t_n}
    (x\ub_1^s),\zeta(x\ub_1^s))< \rho.
  \]
  Since $0\in A$, it only remains to check the last condition in the Lemma.  Let
  us assume that $s'-s>m$ and show that
  $\abs{(\sigma(s')-\sigma(s))-(s'-s)}\le 4(s'-s)^{1-\eta}$. The case when
  $\sigma(s')-\sigma(s)>m$ is similar. We have that
  $x\ub_1^s\ab_1^{-t_n}\in\Good$ and
  $\psi(x\ub_1^s\ab_1^{-t_n})\in\psi(\Good)$. Moreover
  \[
    x\ub_1^{s'}\ab_1^{-t_n} = x\ub_1^s\ab_1^{-t_n}\ub_1^{r_n(s'-s)},
  \]
  and
  \[
    \psi(x\ub_1^{s'}\ab_1^{-t_n}) =
    \psi(x\ub_1^s\ab_1^{-t_n})\ub_2^{z(x\ub_1^s\ab_1^{-t_n},r_n(s'-s))}.
  \]
  Hence, by definition of $\sigma(s)$ we have
  \[
    \begin{split}
      x'\ub_2^{\sigma(s')} &= \psi_{t_n} (x\ub_1^{s'}) \\
      &=  \psi_{t_n} (x\ub_1^s) \ub_2^{z(x\ub_1^s\ab_1^{-t_n},r_n(s'-s))/r_n} \\
      &= x' \ub_2^{z(x\ub_1^s\ab_1^{-t_n},r_n(s'-s))/r_n}\ub_2^{\sigma(s)},
    \end{split}
  \]
  which implies
  \[
    \sigma(s')-\sigma(s) = \frac{z(x\ub_1^s\ab_1^{-t_n},r_n(s'-s))}{r_n}.
  \]
  As in \cref{lemma:z_for_centraliser}, we have
  \[
    \abs{z(x\ub_1^s\ab_1^{-t_n},r_n(s'-s))-r_n(s'-s)} \le
    \frac{2}{\Ctau}r_n(s'-s)^{1-\eta},
  \]
  and, finally,
  \[
    \abs{(\sigma(s')-\sigma(s))-(s'-s)}\le 4(s'-s)^{1-\eta}.
  \]
  By the Basic Lemma~\ref{lemma:basic}, we have that
  $x'=\zeta(x)\mathbf{c}\ub_2^t$, for some time $t=t(x)\in\R$ and where
  $\mathbf{c}=\mathbf{c}(x)$ commutes with $\ub_2$.

  By construction, we have that
  $\zeta\circ \phi_{A_1}^t = \phi_{A_2}^t \circ\zeta$. Then, for every point
  $x\in \phi_{A_1}^{-t_n} Q_n$ we have $\psi(x) = \zeta(x)\mathbf{c}\ub_2^t$, as
  above. The set
  \[
    \{x\in M_1: \psi(x) = \zeta(x)\mathbf{c}\ub_2^t, \text{ for some $t\in\R$
      and $\mathbf{c}$ commuting with $\ub_2$} \}
  \]
  is $\ub_1^t$-invariant. Since it contains the set $\phi_{A_1}^{-t_n} Q_n$,
  which has positive measure, we conclude by ergodicity of $\ub_1^t$, that
  $\psi(x)= \zeta(x)\mathbf{c}\ub_2^t$, for $\mu_1$-a.e.\ $x$, as we wanted.
\end{proof}

% !TEX root = time_changes_unipotents.tex

\section{Proof of \cref{thm:sl2xG}}\label{sec:generalisingRatner}

In this section we prove \cref{thm:sl2xG}, which is a direct generalisation of
the main result of~\cite{Ratner:Acta}. We will specialise to the case
$G_i=\SL_2(\R)\times G_i'$, where, for $i=1,2$, $G_i'$ is a connected semisimple
linear group with algebra $\mathfrak{g}_i'$, and $\Gamma_i \leq G_i$ is an
irreducible lattice.  We consider the unipotent 1-parameter subgroup
\[
  \ub_i^t = \begin{pmatrix}
    1 & t \\
    0 & 1
  \end{pmatrix}\times\eb,
\]
where $\eb=\eb_i\in G'_i$ is the identity.  We remark that $\ub_i^t$ commutes
with multiplication by elements in $G_i'$. We let
\[
  \ab_i^t = \begin{pmatrix}
    e^{\frac{t}{2}} & 0 \\
    0 & e^{-\frac{t}{2}}
  \end{pmatrix}\times\eb,
  \qquad \text{and} \qquad \overline{\ub}_i^t = \begin{pmatrix}
    1 & 0 \\
    t & 1
  \end{pmatrix}\times\eb,
\]
and denote by $U_i$, $A_i$ and $\overline{U}_i$ the corresponding elements in
the Lie algebras $\mathfrak{g}_i$.  Note that any element
$\gb_i \in N_{G_i}(\ub_i)$ of the normaliser $N_{G_i}(\ub_i)$ of $\ub_i$ can be
written as $\gb_i = \ab_i^a \overline{\ub}_i^{\overline{u}} \gb_i'$, where
$\gb_i' \in G_i'$ and $a,\overline{u} \in \R$.

\subsection{Convergence of $\psi_{t_n}$} 
In the previous section, in order to ensure the convergence, up to a
subsequence, of $\psi_{t_n}$ we had to pass to joinings, and then exploit the
fact that sequences of (tight) measures have limit points.
In the more restricted setting of this section, we can prove directly, following
Ratner's original strategy in~\cite{Ratner:Acta}, that $\psi_{t_n}$
converges. The key difference is that now we know that the \emph{only} direction
which is expanded, when we push by the Cartan elements $\ab_i$, is the one of
the unipotent flow.

\subsubsection{}
We recall that, in \S\ref{sec:centraliser_1} we have constructed, for any
$n\in\N$ a set $K_{\ub_1}(n)\subset{M_1}$ of measure greater than $1-2^{-n}$
such that, if $x\in K_{\ub_1}(n)$ and $s\geq s_n$, then the $\ub_1$-orbit of $x$
enters $\Good$ before time $s_n$, and actually belongs to $\Good$ for a large
frequency of times.

Fix a constant $0<\gamma<\frac{\eta}{2}$ and let $t_n = \log s_n^{1+\gamma}$.
As in \S\ref{sec:conclusion} we will consider the sequence of isomorphisms given
by $\psi_{t_n}(x) = \psi(x\ab_1^{-t_n})\ab_2^{t_n}$. Let
\begin{equation}\label{eq:defV}
  V = \bigcap_{n\in\N} \phi_{A_1}^{t_n}(K_{\ub_1}(n)).
\end{equation}
Since $\mu_1(K_{\ub_1}(n))>1-2^{-n}$, then $\mu_1(V)>\frac{1}{2}$.

We now show that the restriction to $V$ of $\psi_{t_n}$ \lq\lq commutes in the
limit\rq\rq\ with the multiplication by elements in the normaliser of $\ub_1$.
\begin{lemma}\label{lemma:conmutationU}
  Let $\gb_1 \in N_{G_1}(\ub_1)$ with $d(\ab, g_1)\leq \delta$. There exists
  $\Phi(\gb_1) \in N_{G_2}(\ub_2)$ with $d(\eb, \Phi(\gb_1))\leq \varepsilon$
  such that for any $x \in V$, and any $y = x\gb_1 \in V$ we have
  \[
    d(\psi_{t_n}(x)\Phi(\gb), \psi_{t_n}(y)) \to 0,
  \]
  as $n\to\infty$.
\end{lemma}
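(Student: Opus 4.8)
The strategy mirrors \S\ref{sec:centraliser} but exploits the product structure $G_i = \SL_2(\R) \times G_i'$, which makes the situation rigid: under conjugation by $\ab_i^t$, the \emph{only} expanding direction is $\ub_i^t$, so elements of $N_{G_i}(\ub_i)$ which are not in the centraliser get contracted rather than controlled only polynomially. First I would write $\gb_1 = \ab_1^a \overline{\ub}_1^{\overline u} \gb_1'$ and observe that for $x, y = x\gb_1$ both in $V$, we have $x\ab_1^{-t_n}, y\ab_1^{-t_n} \in K_{\ub_1}(n)$ by \eqref{eq:defV}, and $y\ab_1^{-t_n} = (x\ab_1^{-t_n})(\ab_1^{-t_n}\gb_1 \ab_1^{t_n})$. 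Since conjugation by $\ab_1^{-t_n}$ sends $\overline{\ub}_1^{\overline u}$ to $\overline{\ub}_1^{e^{-t_n}\overline u}$ and fixes $\ab_1^a$ and $\gb_1'$, the two points $x\ab_1^{-t_n}$ and $y\ab_1^{-t_n}$ are at distance comparable to $d(\eb,\gb_1) \leq \delta$ for all $n$, \emph{uniformly}; in fact their relative position converges to $\ab_1^a \gb_1'$.

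**Applying the Basic Lemma.** Next I would reproduce the construction of \cref{thm:centraliser}: for $x\ab_1^{-t_n} \in K_{\ub_1}(n)$ the $\ub_1$-orbit spends a large-frequency set of times $B_s$ inside $\Good$, on which $\psi$ is uniformly continuous and the sub-polynomial deviation estimates of \cref{prop:good_set} hold. As in \cref{lemma:z_for_centraliser}, the time-change cocycle $z$ along these orbits is sub-linear, so the hypotheses (1)--(4) of Ratner's Basic Lemma \ref{lemma:basic} are met for the pair of points $\psi_{t_n}(x)$ and $\psi_{t_n}(y)\gbtilde^{-1}$ (where $\gbtilde \in N_{G_2}(\ub_2)$ is any fixed element with $d(\eb,\gbtilde)<\delta'$ realizing the same normalizing relation as $\gb_1$, which exists because $\varpi$ or just the explicit matrix form gives the correspondence $\overline{\ub}_1 \leftrightarrow \overline{\ub}_2$, $\ab_1 \leftrightarrow \ab_2$, and one chooses a $\gb_1'$-image). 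The Basic Lemma then produces, for each large $\lambda$, a displacement $\gb_0 = \gb_0(n,\lambda)$ commuting with $\ub_2$ and with $|a|, |\bar u|, |c_{j,\iota}|$ decaying like $\lambda^{-\eta/2}$ (respectively $\lambda^{-1-\eta/2}$, $\lambda^{-j+\eta/2}$). Because we can take $\lambda$ as large as we wish as $s \to \infty$ (the relative density of the good set tends to $1 - 2\omega > 1 - \theta/8$), the second conclusion of \cref{lemma:basic} applies: the orbits stay close for \emph{all} times, forcing the non-centralizer components of $\gb_0$ to vanish. Setting $\Phi(\gb_1) := \gb_0 \gbtilde$ (which lies in $N_{G_2}(\ub_2)$ and has $d(\eb,\Phi(\gb_1)) \leq \varepsilon$ after absorbing $\delta'$), we get $\psi_{t_n}(y)\gbtilde^{-1}$ asymptotically equal to $\psi_{t_n}(x)\gb_0$, i.e. $d(\psi_{t_n}(x)\Phi(\gb_1), \psi_{t_n}(y)) \to 0$.

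**Where the product structure is used, and the main obstacle.** The crucial point — and the genuine simplification over \S\ref{sec:conclusion} — is that in $G_i = \SL_2(\R)\times G_i'$ the element $\gb_1'$ sits in $G_i'$, which \emph{commutes} with $\ub_i^t$, so the "centraliser part" of the displacement is automatically all of $G_i'$ together with the $\ab, \overline{\ub}$ off-diagonal terms being the only thing that can grow or shrink. Thus the output $\gb_0$ of the Basic Lemma is genuinely an element commuting with $\ub_2$, and it only depends on $\gb_1$ (not on $x$ or the subsequence) up to a translation along $\ub_2$: the ergodicity argument of \cref{thm:action_on_normaliser}, applied to the bounded function $x \mapsto \gb_0(x)\exp(\R\ub_2)$, shows constancy a.e. The main obstacle I anticipate is book-keeping: one must verify that the various constants ($\delta, \delta', \rho, m_0$, and the density threshold $1-\theta/8$) chosen in \S\ref{sec:centraliser_1} survive the replacement of the $c$-rescaled parametrization by the synchronous one and that the error terms $z(x\ub_1^s\ab_1^{-t_n}, r_n(s'-s))$ still satisfy the $4(s'-s)^{1-\eta}$ bound required by hypothesis \eqref{basiclemmaholder} of the Basic Lemma — but this is exactly the computation already carried out in the proof of \cref{thm:main} in \S\ref{sec:conclusion}, so it transfers with only cosmetic changes. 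Finally, independence of the limit $\Phi(\gb_1)$ from the subsequence $t_n$ follows because any two choices give displacements differing by an element commuting with $\ub_2$ that is a limit of $d(\eb,\cdot)\leq 2\varepsilon$ elements fixed by the ergodic averaging, hence equal.
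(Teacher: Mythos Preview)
Your proposal has a genuine structural error that derails the argument. You decompose $\gb_1 = \ab_1^a \overline{\ub}_1^{\overline u}\gb_1'$, but $\overline{\ub}_1$ does \emph{not} normalise $\ub_1$ in $\SL_2(\R)$; the normaliser is the upper-triangular Borel, so the correct decomposition is $\gb_1 = \gb_1'\ab_1^a\ub_1^u$ with $\gb_1'\in G_1'$ and $a,u\in\R$. (The sentence just before the lemma in the paper contains a typo to this effect; the paper's own proof uses $\ub_1^u$.) This matters because your claim that $x\ab_1^{-t_n}$ and $y\ab_1^{-t_n}$ stay at uniformly bounded distance is then false: conjugation by $\ab_1^{-t_n}$ \emph{expands} $\ub_1^u$ to $\ub_1^{e^{t_n}u}=\ub_1^{s_n^{1+\gamma}u}$, so $y_n = z_n\ub_1^{s_n^{1+\gamma}u}$ where $z_n = x_n\gb_1'\ab_1^a$. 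The two points are far apart along the flow direction, and your plan to apply the Basic Lemma to nearby points does not get started.

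The paper's proof is both different and much shorter than what you outline. It does not re-run the Basic Lemma at all; instead it invokes \cref{thm:action_on_normaliser} (already proved in \S\ref{sec:centraliser}) for the factor $\gb_1'\ab_1^a$ to obtain
\[
\psi(z_n)=\psi(x_n)\,\Phi(\gb_1'\ab_1^a)\,\ub_2^{\beta_n},\qquad |\beta_n|\le s_n,\qquad \Phi(\gb_1'\ab_1^a)\in\exp(\R A_2\oplus\mathfrak g_2').
\]
The remaining $\ub_1^u$ part is handled directly via the cocycle: $\psi(y_n)=\psi(z_n)\ub_2^{z(z_n,s_n^{1+\gamma}u)}$. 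After right-multiplying by $\ab_2^{t_n}$ (which fixes $\Phi(\gb_1'\ab_1^a)$ and contracts $\ub_2^s$ by $s_n^{-(1+\gamma)}$), the whole question reduces to the scalar limit
\[
s_n^{-(1+\gamma)}\bigl(\beta_n+z(z_n,s_n^{1+\gamma}u)\bigr)\longrightarrow u.
\]
The first term vanishes because $|\beta_n|\le s_n$ gives $s_n^{-\gamma}\to 0$; the second converges to $u$ by the sub-polynomial deviation estimate for $z$ on $\Good$ together with $z_n\in K_{\ub_1}(n)$ (so the orbit enters $\Good$ within time $s_n$). One then sets $\Phi(\gb_1):=\Phi(\gb_1'\ab_1^a)\ub_2^u$. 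None of the density or hypothesis-(4) bookkeeping you anticipate is needed here --- that work was already done once and for all in proving \cref{thm:action_on_normaliser}, and the specific choice $t_n=(1+\gamma)\log s_n$ is what makes the $\beta_n$ error disappear.
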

\begin{proof}
  We can write $\gb_1 = \gb_1' \ab_1^a \ub_1^u$ for some $|a|, |u| \leq \delta$
  and $\gb_1' \in G_1'$ with $d(\eb, \gb_1') \leq \delta$. Define
  $z=x\gb_1' \ab_1^a$.

  Let $x_n = x\ab_1^{-t_n}$, $z_n = z\ab_1^{-t_n} = x_n \gb_1' \ab_1^a$, and
  $y_n = y \ab_1^{-t_n} =z_n \ub_1^{s_n^{1+\gamma} u}$.  Since
  $x_n \in K_{\ub_1}(n)$, \cref{thm:action_on_normaliser} and the discussion
  after it imply that we can write
  \[
    \psi(z_n) = \psi(x_n) \Phi(\gb_1' \ab_1^a) \ub_2^{\beta_n},
  \]
  where $|\beta_n|\leq s_n$ and
  $ \Phi(\gb_1' \ab_1^a) \in \exp (\R A_2 \oplus \mathfrak{g}_2' )$.  Therefore,
  since $\psi(y_n) = \psi(z_n) \ub_2^{z(z_n,s_n^{1+\gamma} u)}$, we deduce that
  \[
    \psi(y_n) \, \ab_2^{t_n} = \psi(x_n) \ab_2^{t_n} \Phi(\gb_1' \ab_1^a)
    \ub_2^{s_n^{-(1+\gamma)}(\beta_n + z(z_n,s_n^{1+\gamma} u))}.
  \]
  Note that $|s_n^{-(1+\gamma)} \beta_n | \to 0$ as $n \to \infty$, hence we can
  focus on the term $s_n^{-(1+\gamma)} z(z_n,s_n^{1+\gamma} u)$.

  By definition, since $z_n \in K_{\ub_1}(n)$, there exists $0\leq r_n \leq s_n$
  such that $z_n \ub_1^{r_n} \in \Good$. Using the cocycle relation of
  \cref{lemma:z_cocycle} and the properties of the set $\Good$ in
  \cref{prop:good_set}, we deduce that
  \begin{equation*}
    \begin{split}
      |z(z_n,s_n^{1+\gamma} u) - s_n^{1+\gamma} u| &\leq |z(z_n,r_n)| +
      |z(z_n \ub_1^{r_n} ,s_n^{1+\gamma} u - r_n) - s_n^{1+\gamma} u| \\
      &\leq C_\alpha^2 s_n + r_n + C_{\alpha}^{-4} (s_n^{1+\gamma} u -
      r_n)^{1-\eta}.
    \end{split}
  \end{equation*}
  Thus, it follows that $s_n^{-(1+\gamma)} z(z_n,s_n^{1+\gamma} u) \to u$.  We
  conclude that
  \[
    d(\psi_{t_n}(y), \psi_{t_n}(x)\Phi(\gb_1' \ab_1^a) \ub_2^u) \leq
    |s_n^{-(1+\gamma)} z(z_n,s_n^{1+\gamma} u) - u| \to 0,
  \]
  which proves the lemma.
\end{proof}

\begin{lemma}\label{lemma:conmutationUbarA}
  For any $x \in V$ and $y = x\overline{\ub}_1^{\delta_0} \in V$, with
  $\abs{\delta_0}<\delta$, we have
  \[
    d(\psi_{t_n}(x)\overline{\ub}_2^{\delta_0}, \psi_{t_n}(y)) \to 0,
  \]
  as $n\to\infty$.
\end{lemma}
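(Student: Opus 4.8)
The plan is to imitate the proof of \cref{lemma:conmutationU}: push everything forward by $\ab_1^{-t_n}$, exploit the explicit $\sl_2(\R)$ formulae of \S\ref{comm_sl2}, and feed the resulting configuration into Ratner's Basic Lemma~\cref{lemma:basic}. First I would set $x_n = x\ab_1^{-t_n}$ and $y_n = y\ab_1^{-t_n}$. Since $A_1$ lies in the $\SL_2(\R)$-factor and $\Ad(\ab_1^{t})$ contracts the $\overline{U}_1$-direction, we get $y_n = x_n\overline{\ub}_1^{\epsilon_n}$ with $\epsilon_n = e^{-t_n}\delta_0 \to 0$; moreover $x_n,y_n\in K_{\ub_1}(n)$ because $x,y\in V$ (see \eqref{eq:defV}). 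Using $\psi_{t_n}(x) = \psi(x_n)\ab_2^{t_n}$, $\psi_{t_n}(y) = \psi(y_n)\ab_2^{t_n}$ and $\ab_2^{t_n}\overline{\ub}_2^{\delta_0}\ab_2^{-t_n} = \overline{\ub}_2^{\epsilon_n}$, one checks that the conclusion of the lemma is equivalent to
\[
  \ab_2^{-t_n}\bigl(\psi(x_n)^{-1}\psi(y_n)\bigr)\ab_2^{t_n}\ \longrightarrow\ \overline{\ub}_2^{\delta_0}\qquad(n\to\infty).
\]

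The geometric heart of the argument — and the only place where the product structure $G_i = \SL_2(\R)\times G_i'$ genuinely enters — is that $\overline{U}_1$ is the \emph{sole} direction expanded by $\ab_1^{-t}$, so the tiny displacement $\overline{\ub}_1^{\epsilon_n}$ stays inside the $\SL_2(\R)$-factor and, in addition, the two $U_1$-orbits shadow each other for a very long time. Concretely, by \cref{lemma:int_max_track,lemma:I_max_track} and formulae \eqref{eq:sl2R:1}--\eqref{eq:sl2R:2} applied to $\gb_{\mathfrak s} = \overline{\ub}_1^{\epsilon_n}$, the orbit $x_n\ub_1^s$ and the reparametrised orbit $y_n\ub_1^{q_n(s)}$ stay within $\rho$ in $M_1$ for all $0\le s\le\lambda_n$, and one chooses $\lambda_n = c\,e^{(1-\gamma')t_n}$ for a small $\gamma' > 0$. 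The choice of $\gamma'$ must satisfy two competing demands: $\epsilon_n\lambda_n\to 0$ (so that the quadratic error of $q_n$ remains controlled) while $\lambda_n/s_n\to\infty$ (so that one may invoke the defining property of $K_{\ub_1}(n)$), and it is precisely the hypothesis $\gamma<\eta/2$ that lets both be met at once. Along the whole interval the displacement between the two orbits stays in $\exp(\R A_1)\,\overline{\ub}_1^{\R}$, with trivial $\mathfrak g_1'$-component.

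Next I would apply the Basic Lemma~\cref{lemma:basic} in $M_2$ to suitable lifts of $\psi(x_n)$ and $\psi(y_n)$, with $\lambda = \lambda_n$, with $A = A_n$ the set of $s\in[0,\lambda_n]$ for which both $x_n\ub_1^s$ and $y_n\ub_1^{q_n(s)}$ lie in $\Good$ (whose relative density exceeds $1-\theta/8$ once $\lambda_n\ge s_n$, by \cref{prop:good_set}), and with the reparametrisation $\tau_n$ obtained by composing $q_n$ with the cocycle $z$ of $\psi$, so that $\psi(x_n)\ub_2^{z(x_n,s)} = \psi(x_n\ub_1^s)$ is $\rho$-close to $\psi(y_n)\ub_2^{z(y_n,q_n(s))} = \psi(y_n\ub_1^{q_n(s)})$ for $s\in A_n$ by uniform continuity of $\psi$ on $\Good$. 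Condition~(\ref{basiclemmaholder}) of \cref{lemma:basic} is checked as in \cref{lemma:z_for_centraliser} (using \cref{thm:mixing_condition} for the $z$-part) together with the estimate $\abs{q_n(s')-q_n(s)-(s'-s)}\lesssim \epsilon_n\lambda_n\,(s'-s)$, which the choice of $\gamma'$ is designed to keep admissible. The Basic Lemma then yields $\psi(x_n)^{-1}\psi(y_n) = \ub_2^{\bar s_n}\gb_n\ub_2^{-\tau_n(\bar s_n)}$ with $\bar s_n\le\lambda_n$ and $\gb_n$ subject to the usual bounds $\abs{u_n}\le\eps$, $\abs{a_n}\le\kappa\lambda_n^{-\eta/2}$, $\abs{\bar u_n}\le\kappa\lambda_n^{-1-\eta/2}$, $\abs{c_{j,\iota,n}}\le\kappa\eps\lambda_n^{-j+\eta/2}$; and because the orbit divergence never left the $\SL_2(\R)$-factor, the $\mathfrak g_2'$-components $c_{0,\iota,n}$ tend to $0$ as well.

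Finally I would conjugate this identity by $\ab_2^{-t_n}$ and let $n\to\infty$: since $\Ad(\ab_2^{-t_n})$ fixes $\mathfrak g_2'$ and $A_2$, multiplies $\overline{U}_2$ by $e^{t_n}$ and $U_2$ by $e^{-t_n}$, the displayed bounds together with $\bar s_n,\tau_n(\bar s_n)=o(e^{t_n})$ force $\ab_2^{-t_n}(\psi(x_n)^{-1}\psi(y_n))\ab_2^{t_n}$ to converge to an element of $\overline{\ub}_2^{\R}$, and a bookkeeping computation with \eqref{eq:sl2R:1}--\eqref{eq:sl2R:2}, tracing how the initial $\overline{U}_1$-coefficient $\epsilon_n$ is transported by $q_n$ and by $z$, identifies the limit as exactly $\overline{\ub}_2^{\delta_0}$. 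I expect the main obstacle to be the verification of condition~(\ref{basiclemmaholder}) over the long interval $[0,\lambda_n]$: the geometric reparametrisation $q_n$ contributes an error quadratic in time, which is why the cutoff $\lambda_n$ must be taken much smaller than the full maximal-tracking time $\sim e^{t_n}/\abs{\delta_0}$, and the assumption $\gamma<\eta/2$ is exactly what keeps $\lambda_n$ still large enough to be useful. The second delicate point, with no counterpart in Ratner's $\SL_2(\R)$ setting, is the control of the $\mathfrak g_2'$-direction, which is neutral for $\Ad(\ab_2^{-t})$ and therefore has to be handled by the structural observation that the divergence stays within the $\SL_2(\R)$-factor rather than by hyperbolic contraction.
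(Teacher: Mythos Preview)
Your strategy matches the paper's in outline, but you miss its central device. The paper does \emph{not} apply the Basic Lemma directly to the pair $(\psi(x_n),\psi(y_n))$. After moving to the first time $t^*\le s_n$ at which the orbit points $v_n:=x_n\ub_1^{t^*}$ and $w_n$ (on the $\ub_1$-orbit of $y_n$) both lie in $\Good$, so that $w_n=v_n\,\ab_1^a\overline{\ub}_1^{\bar u}$ by \eqref{eq:sl2R:1}, the paper introduces an \emph{auxiliary point} $\overline v'_n:=\psi(v_n)\,\ab_2^{a}\overline{\ub}_2^{\bar u}$ in $M_2$, built to carry the same $\sl_2$-displacement on the target side, and applies the Basic Lemma to the pair $(\psi(w_n),\overline v'_n)$. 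The output of the Basic Lemma then measures only the \emph{deviation} from the expected displacement, and the parametrisation $\chi$ used there picks up the geometric $q$ twice (once on each side), producing the second-derivative cancellation exploited in \eqref{eq:estimatechir}--\eqref{eq:deltan}. When one finally conjugates by $\ab_2^{t_n}$, the built-in factor $\overline{\ub}_2^{\bar u}$ becomes $\overline{\ub}_2^{\bar u\,s_n^{1+\gamma}}\to\overline{\ub}_2^{\delta_0}$ automatically by \eqref{eq:estimatesaubar}; the value $\delta_0$ is thus recovered \emph{by construction}, not by analysing the Basic Lemma output. Your direct approach has no such mechanism: the Basic Lemma gives only an upper bound on the $\overline U_2$-coefficient of $\gb_n$, uniform continuity of $\psi$ provides no rate on the $\overline U_2$-component of $\psi(v_n)^{-1}\psi(w_n)$, and after rescaling by $e^{t_n}$ neither suffices to identify the limit --- so the promised ``bookkeeping'' cannot succeed.

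Two smaller points. Your constraints on $\gamma'$ are incompatible: your own estimate for the $q_n$-contribution to condition~(\ref{basiclemmaholder}) is of order $\epsilon_n\lambda_n\,(s'-s)$, which over $[0,\lambda_n]$ can be absorbed into $4(s'-s)^{1-\beta}$ only if $\epsilon_n\lambda_n^{1+\beta}=O(1)$; combined with the density requirement $\lambda_n\ge s_n$ this forces $\lambda_n\sim s_n$ and $\beta\le\gamma$, which are precisely the paper's choices (interval $[0,s_n]$, Basic Lemma with exponent $\gamma$ in place of $\eta$). And your reason for $c_{0,\iota,n}\to 0$ (``the divergence never left the $\SL_2(\R)$-factor'') is a statement about $M_1$, not $M_2$; the correct argument is that $\Ad(\ub_2^s)$ acts trivially on $\mathfrak g_2'$, so the $G_2'$-component of the displacement is conserved along the $\ub_2$-orbit and equals that of the initial $\psi(v_n)^{-1}\psi(w_n)$, which tends to the identity by $d(v_n,w_n)\to 0$ and uniform continuity of $\psi$ on $\Good$.
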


The proof of this result follows the same strategy and actually follows quite
closely the proof of \cref{thm:centraliser}. We will use the parametrisation
$q(t)$ we defined in \S\ref{sec:geom_U_flow_1}.

\begin{proof}
  Let $x_n = x\ab_1^{-t_n}\in K_{\ub_1}(n)$ and
  $y_n = y\ab_1^{-t_n} = x_n \overline{\ub}_1^{\delta_n}\in K_{\ub_1}(n)$, where
  $\delta_n = \delta_0 s_n^{-1-\gamma}$. Let $t^*\in[0,s_n]$ the first time that
  both the $\ub_1$-orbit of $x_n$ and $y_n$ belong to $\Good$. Call
  $v_n = x_n \ub_1^{t^*}$ and $w_n = y_n \ub_1^{t^*}$. Then, we have that
  $w_n = v_n \ab_1^a \overline{\ub}_1^{\bar u}$ for some very small $a$ and
  $\bar u$.  More precisely, using~\eqref{eq:sl2R:1}, we have that
  \begin{equation}\label{eq:estimatesaubar}
    a = 2 \log (1-\delta_nt^*), \qquad \bar u = \delta_n(1-\delta_n t^*),
  \end{equation}
  in particular, $\bar u \le \delta_n$. Using the notation of
  \S\ref{sec:geom_U_flow_1}, the parametrization $q(t)$ is defined by
  \[
    v_n \ub_1^t \exp(a(t)A_1) \exp(\bar u(t) \overline{U}_1) = w_n \ub_1^{q(t)}.
  \]
  where, by~\eqref{eq:sl2R:2},
  \[
    q(t) = t-\frac{\bar u t^2 +t(1-e^a)}{(e^a-\bar u t)}.
  \]
  Call
  \[
    B_n = \{t\in [0,s_n] : v_n\ub_1^{t}\in\Good \text{ and }
    w_n\ub_1^{q(t)}\in\Good\},
  \]
  which is a compact subset of $[0,s_n]$. We remark that $0\in B_n$.
    
  The fact that $x_n\in K_{\ub_1}(n)$ implies that its $\ub_1$-orbit is outside
  of $\Good$ for at most $\frac{1}{40\Ctau^4}s_n$ time, and the same holds for
  $y_n$. Since we are assuming that $t^*$ is the first time both their orbits
  are in $\Good$, then the orbits of $v_n$ and $w_n$, under $\ub_1$, spend at
  least $(1-\frac{1}{20\Ctau^4})ss_n$ of their time inside $\Good$. Finally, as
  $\abs{q'(t) -1}\leq \delta_0 s_n^{-\gamma}$, which is arbitrarily small if $n$
  is sufficiently large, we deduce that
  \begin{equation}\label{eq:sizeofBn}
    \frac{\abs{B_n}}{s_n} \ge \left(1 - \frac{\theta}{18\Ctau^4}\right).
  \end{equation}
  By definition, using again~\eqref{eq:sl2R:1}, we have that
  \[
    d(v_n \ub_1^t, w_n \ub_1^{q(t)}) \leq \abs{a(t)} + \abs{\bar u(t)} \leq 4
    \delta_n s_n \leq \delta
  \]
  for all $t\in B_n$. Therefore the points $v'_n =\psi(v_n)$ and
  $w'_n =\psi(w_n)$ satisfy
  \[
    v_n' \ub_2^{z(v_n,t)} \in \psi(\Good), \qquad w_n' \ub_2^{z(w_n,q(t))} \in
    \psi(\Good),
  \]
  and
  \[
    d( v_n' \ub_2^{z(v_n,t)}, w_n' \ub_2^{z(w_n,q(t))} ) \leq \rho, \qquad
    \text{ for all } t \in B_n.
  \]
    
  Let us consider the point
  $\overline{v}_n' = v_n' \ab_2^{a} \overline{\ub}_2^{\bar u}$. As before, we
  have that
  \[
    d( \overline{v}_n' \ub_2^{q(t)}, v_n' \ub_2^{t} ) \leq \rho,
  \]
  for all $t\in B_n$, where the function $q(t)$ is the same as above.

  Let $t=t(r)$ be the Lipschitz function with $t(0)=0$ uniquely defined by
  $r=z(w_n,q(t))$, and let
  \[
    \chi(r) = q(z(v_n, t(r))).
  \]
  It is easy to see that also $\chi$ is Lipschitz and satisfies $\chi(0)=0$.
  Then, by the triangle inequality,
  \[
    d( \overline{v}_n' \ub_2^{\chi(r)}, w_n' \ub_2^{r} ) \leq \rho, \qquad
    \text{ for all } r \in B_n' := z(w_n, q(B_n)).
  \]
  We remark that $B_n'$ is compact and that $0\in B_n'$.

  Let $t_1 = \sup B_n$ and $\lambda_n = z(w_n, q(t_1))$.
  From~\eqref{eq:sizeofBn} we have
  \[
    \left(1 - \frac{\theta}{18\Ctau^4}\right)s_n \le t_1 \le s_n.
  \]
  By the uniform Lipschitz estimate on $z$, this yields
  \[
    \Ctau^{-2}\left(1 - \frac{\theta}{18\Ctau^4}\right)s_n \le \lambda_n \le
    \Ctau^2 s_n,
  \]
  and
  \[
    \frac{\abs{B_n'}}{\lambda_n} \ge 1 - \frac{\theta}{16}.
  \]
  We let
  \[
    A_n = \{0\} \cup (B_n' \cap [m, s_n]).
  \]
  Then, if $n$ is sufficiently large, we have
  \[
    \frac{\abs{A_n}}{\lambda_n} \ge 1 - \frac{\theta}{8}.
  \]
 
  Since $r\in B_n'$ if and only if $t(r) \in B_n$, then
  $w_n' \ub_2^r = w_n' \ub_2^{z(w_n, q(t))}\in \psi(\Good)$ and hence statisfies
  the Injectivity Condition $\IC(\rho,m)$ and the Frequently Bounded Radius
  Condition $\FBR(T_0, c(b), r_0)$.

  We want to apply the Basic Lemma, to the parametrization $\chi$ and the points
  $w_n'$ and $\overline{v}_n'$, with the exponent $\gamma$ instead of $\eta$ in
  \cref{lemma:basic}. It only remains to verify Condition~4. To do so, let
  $r', r \in B_n'$ and assume that either $r' - r > m$ or that
  $\chi(r') - \chi(r)> m$. Let $t'=t(r')$ and $t=t(r)$ be the corresponding
  times in $B_n$. We claim that we have that
  \begin{equation}\label{eq:basic_lemma_chi}
    q(t') - q(t) > m_0, \qquad \text{and} \qquad t' - t > m_0.
  \end{equation}
  Assuming the claim for a moment, let us estimate
  $\abs{(\chi(r')-\chi(r))-(r'-r)}$.

  By the cocycle relation, we have that
  \begin{equation}\label{eq:rrz}
    r' - r = z(w_n, q(t')) - z(w_n, q(t)) = z(w_n\ub_1^{q(t)}, q(t')-q(t)),
  \end{equation}
  with $w_n\ub_1^{q(t)}\in\Good$. Then, by \cref{thm:mixing_condition} applied
  to $z$ (see also the proof of \cref{lemma:z_for_centraliser}), and using our
  claim, we obtain
  \[
    \begin{split}
      z(w_n\ub_1^{q(t)}, q(t')-q(t))
      &\le (q(t')-q(t)) + \frac{1}{\Ctau^4}(q(t')-q(t))^{1-\eta} \\
      &\le q'(\xi)\left( (t'-t) + \frac{1}{\Ctau^4}(t'-t)^{1-\eta}\right),
    \end{split}
  \]
  for some $\xi\in[0,s_n]$.

  Similarly we have, for some $\tilde{\xi}\in[0,s_n]$, that
  \begin{equation}\label{eq:chichiz}
    \begin{split}
      \chi(r') - \chi(r)  &= q(z(v_n, t')) - q(z(v_n, t)) \\
      &= q'(\tilde{\xi}) (z(v_n, t') - z((v_n, t)) \\
      &= q'(\tilde{\xi}) z(v_n\ub_1^t, t'- t) \\
      &\le q'(\tilde{\xi}) \left( (t'-t) + \frac{1}{\Ctau^4} (t'-t)^{1-\eta}
      \right),
    \end{split}
  \end{equation}
  since $v_n\ub_1^t\in\Good$.

  Putting the previous inequalities together, we obtain
  \begin{equation}\label{eq:estimatechir}
    \abs{(\chi(r')-\chi(r))-(r'-r)} \le (t'-t)\cdot\abs{q'(\xi)-q'(\tilde \xi)} + 
    \frac{4}{\Ctau^4}(t'-t)^{1-\eta}.
  \end{equation}
  Using again the mean value theorem, we obtain a point $\xi^*$ such that
  $\abs{q'(\xi)- q'(\tilde \xi)} = \abs{\tilde \xi - \xi} \cdot
  \abs{q''(\xi^*)}$. By a direct computation,
  \begin{equation}\label{eq:qsecond}
    \abs{q''(\xi^*)} = \frac{2\bar u e^a}{(e^a - \bar u\xi^*)^3} \le 2\delta_n,
  \end{equation}
  where we used the estimates~\eqref{eq:estimatesaubar} and the fact that
  $\xi^*\in [0,s_n]$ to deduce that $\bar u \xi^* \le \delta_0 s_n^{-\gamma}$
  and hence the denominator is between $0$ and $1$.
    
  Moreover, since $\tilde\xi$ and $\xi$ belong to the interval $[t, t']$ and
  both are lesser than $s_n$, we have that
  \begin{equation}\label{eq:deltan}
    \delta_n = \delta_0 s_n^{-1-\gamma} \le \delta_0 
    \cdot \abs{\tilde \xi - \xi}^{-1} \cdot (t' - t)^{-\gamma}.
  \end{equation}
  Plugging~\eqref{eq:qsecond} and~\eqref{eq:deltan}
  into~\eqref{eq:estimatechir}, we get
  \[
    \abs{(\chi(r')-\chi(r))-(r'-r)} \le 2 \delta_0 (t'-t)^{1-\gamma} +
    \frac{4}{\Ctau^4}(t'-t)^{1-\eta}.
  \]
  We now go back from $(t'-t)$ to $(r'-r)$. Using~\eqref{eq:rrz} and the
  Lipschitz estimate on $z$ we have
  \begin{equation}\label{eq:tandr}
    \frac{1}{2\Ctau^2}(t'-t) \le \frac{1}{\Ctau^2}(q(t')-q(t)) \le r'-r
    \le \Ctau^2(q(t')-q(t)) \le 2\Ctau^2 (t'-t).
  \end{equation}
  Thus, we have shown that
  \[
    \abs{(\chi(r')-\chi(r))-(r'-r)} \le 4 (r' - r)^{1-\gamma},
  \]
  which proves Condition~4 of the Basic Lemma~\ref{lemma:basic}, as we wanted.

  It remains to prove that, if $r' - r > m$ or $\chi(r') - \chi (r) > m$
  then~\eqref{eq:basic_lemma_chi} holds. In the former case,
  from~\eqref{eq:tandr} we have
  \[
    t' - t \ge \frac{r'-r}{2\Ctau^2} > \frac{m}{2\Ctau^2} > m_0.
  \]
  Similarly, since $\frac{1}{2}\le q'(\tilde \xi)\le 2$,
  \[
    q(t') - q(t) = q'(\xi) (t' - t) \ge \frac{t'-t}{2} \ge \frac{r'-r}{4\Ctau^2}
    > \frac{m}{4\Ctau^2} > m_0.
  \]
    
  The latter case, when $\chi(r') - \chi (r) > m$, can be dealt similarly
  using~\eqref{eq:chichiz} to obtain
  \[
    \frac{1}{2\Ctau^2}(t'-t) \le \chi(r') - \chi(r) \le 2\Ctau^2 (t'-t).
  \]

  Having verified all the assumptions of the Basic Lemma~\ref{lemma:basic}, we
  conclude that there exists a $T\in [0, s_n]$ such that
  \[
    \overline{v}'_n \ub_2^T = w'_n \ub_2^{\chi(T)} \gb_c \exp(a_T A_2)
    \exp(\bar{u}_T\overline{U}_2) \ub_2^{\epsilon_T},
  \]
  with $\gb_c\in G'_2$ such that $d(\eb,\gb_c)<\epsilon$, and
  \begin{equation}\label{eq:bounds_on_ophor}
    \abs{a_T} \le C s_n^{-\frac{\eta}{2}}, 
    \qquad \abs{\bar{u}_T}\le C s_n^{-1-\frac{\eta}{2}}, 
    \qquad \abs{\epsilon_T} \le \epsilon.
  \end{equation}
  By the definition of $\overline{v}'_n$, we have
  \[
    v'_n \ab_2^{a} \overline{\ub}_2^{\bar u} = w'_n \ub_2^{\chi(T)} \gb_c
    \ab_2^{a_T} \overline{\ub}_2^{\bar u_T} \ub_2^{\epsilon_T-T},
  \]
  applying $\ab_2^{t_n}$ to both side, this yields
  \[
    v_n' \ab_2^{a+t_n} \overline{\ub}_2^{\bar u s_n^{1+\gamma}} = w_n'
    \ab_2^{t_n} \ub_2^{\chi(T)s_n^{-(1+\gamma)}} \gb_c \ab_2^{a_T}
    \overline{\ub}_2^{\bar{u}_Ts_n^{1+\gamma}}
    \ub_2^{(\epsilon_T-T)s_n^{-(1+\gamma)}}.
  \]
  Using the definition of $v_n'$, the left hand side is equal to
  \[
    \psi_{t_n}\Bigl(x\ub_1^{t^*e^{-t_n}}\Bigr)\ab_2^{a} \overline{\ub}_2^{\bar u
      s_n^{1+\gamma}}.
  \]
  Since $t^*\le s_n$, using the definition of $t_n$
  and~\eqref{eq:estimatesaubar}, we have
  \[
    d\left(\psi_{t_n}\left(x\ub_1^{t^*e^{-t_n}}\right)\ab_2^{a}
      \overline{\ub}_2^{\bar u s_n^{1+\gamma}},
      \psi_{t_n}(x)\overline{\ub}_2^{\delta_0}\right)\to 0
  \]
  as $n\to\infty$. In a similar fashion,
  \[
    w_n' \ab_2^{t_n} = \psi_{t_n}\left(y\ub_1^{t^*e^{-t_n}}\right),
  \]
  which approaches $\psi_{t_n}(y)$ for large $n$. Finally, by the
  estimates~\eqref{eq:bounds_on_ophor}, since $\gamma<\frac{\eta}{2}$,
  \[
    d\left(\psi_{t_n}(y),\ub_2^{\chi(T)s_n^{-(1+\gamma)}} \gb_c \ab_2^{a_T}
      \overline{\ub}_2^{\bar{u}_Ts_n^{1+\gamma}}
      \ub_2^{(\epsilon_T-T)s_n^{-(1+\gamma)}}\right) \to 0,
  \]
  as $n\to\infty$. In summary, we have obtained
  \[
    d(\psi_{t_n}(x)\overline{\ub}_2^{\delta_0}, \psi_{t_n}(y))\to 0,
  \]
  as we wanted.
\end{proof}

We want to show that the sequence of isomorphisms $\psi_{t_n}$ converges to a
function $\zeta$, which we will later show to be a measurable conjugacy of the
unipotent flows $\phi_{U_1}^t$ and $\phi_{U_2}^t$. Using elementary arguments,
given a point $x\in M_1$, we could find a subsequence, depending on the point,
such that $\psi_{t_n}(x)$ converges along that subsequence.  However, thanks to
the previous Lemmas, we can obtain a subsequence which is \emph{independent} of
the point, and ensure convergence of $\psi_{t_n}$ on a full measure set, as we
now show.

\begin{corollary}\label{cor:convergencepsi}
  There exist a $\phi_{U_1}^t$-invariant set $\Omega\subset M_1$ with
  $\mu_1(\Omega)=1$ and a subsequence $(n_k)_{k\in\N}$ such that, if
  $x\in\Omega$ then $\lim_{k\to\infty} \psi_{t_{n_k}}(x)=:\zeta(x)\in M_2$
  exists and $\zeta(x\ub_1^t) = \zeta(x)\ub_2^t$, for all $x\in\Omega$ and
  $t\in\R$.
\end{corollary}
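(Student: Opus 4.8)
The plan is to produce a \emph{single} subsequence $(n_k)$ along which $\psi_{t_n}$ converges at $\mu_1$-almost every point, the key being that convergence at one well-chosen point propagates: first to a full transversal plaque by Lemmas~\ref{lemma:conmutationU} and~\ref{lemma:conmutationUbarA}, then along $\ub_1$-orbits by the cocycle identity for $z$, and finally to a full-measure set by ergodicity. The genuine difficulty — and the reason the two commutation lemmas are needed here rather than a soft compactness argument — is the point-independence of $(n_k)$: a naive diagonal extraction over a countable dense set fails, since tightness only yields $\psi_{t_n}(x)\in K$ for infinitely many (not all) $n$, so the extractions at different points cannot be aligned.

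First I would check that $\{(\psi_{t_n})_*\mu_1\}_n$ is tight. A direct computation using $\ab_1$-invariance of $\mu_1$, $\ab_2$-invariance of $\mu_2$, and \cref{lem:01_background:1} gives $(\psi_{t_n})_*\mu_1\le\Ctau^2\mu_2$ for every $n$; since $\mu_2$ is a probability measure on the $\sigma$-compact space $M_2$, tightness follows. Fixing a compact $K\subset M_2$ with $(\psi_{t_n})_*\mu_1(M_2\setminus K)<1/4$ for all $n$ and applying reverse Fatou to $A_n:=\{x\in M_1:\psi_{t_n}(x)\in K\}$ (each of measure $>3/4$), we obtain a set $E\subset M_1$ of measure $\ge 3/4$ such that every $x\in E$ has $\psi_{t_n}(x)\in K$ for infinitely many $n$. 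As $\mu_1(E)+\mu_1(V)>1$ by~\eqref{eq:defV}, we may choose $x_1\in E\cap V$ that is moreover a Lebesgue density point of $V$, and then a subsequence $(n_k)$ along which $\psi_{t_{n_k}}(x_1)$ converges, say to $\zeta(x_1)\in K$.

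Next, since $G_1=\SL_2(\R)\times G_1'$, every $\gb$ in a sufficiently small neighbourhood of $\eb$ factors as $\gb=\gb_N\,\overline{\ub}_1^{\delta_0}$ with $\gb_N\in N_{G_1}(\ub_1)$, $d(\eb,\gb_N)<\delta$, $|\delta_0|<\delta$ (the Gauss decomposition in the $\SL_2(\R)$-factor, valid near the identity); applying \cref{lemma:conmutationUbarA} to the pair $x\gb_N,x\gb$ and \cref{lemma:conmutationU} to the pair $x,x\gb_N$, and setting $\Psi(\gb):=\Phi(\gb_N)\,\overline{\ub}_2^{\delta_0}$, gives $d\big(\psi_{t_n}(x\gb),\psi_{t_n}(x)\Psi(\gb)\big)\to 0$ whenever $x,x\gb_N,x\gb$ all lie in $V$. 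Specialising $x=x_1$ and letting $\gb$ vary in a small neighbourhood $W$ of $\eb$, the plaque $P_1:=\{x_1\gb:\gb\in W,\ x_1\gb_N\in V,\ x_1\gb\in V\}$ has positive $\mu_1$-measure because $x_1$ is a density point of $V$ (Fubini in the parameters $\gb_N$, $\delta_0$). Since these estimates hold along any subsequence and right translation by the fixed element $\Psi(\gb)$ is continuous, $\psi_{t_{n_k}}$ converges on all of $P_1$, with limit $\zeta(x_1\gb)=\zeta(x_1)\Psi(\gb)$, and along the \emph{same} subsequence $(n_k)$.

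Finally I would saturate: set $\Omega:=\bigcup_{s\in\R}P_1\ub_1^s$, a Borel, $\ub_1$-invariant set of positive measure, hence of full $\mu_1$-measure by ergodicity of the unipotent flow. For $x=x'\ub_1^\sigma$ with $x'\in P_1\subset V$, the identity
\[
  \psi_{t_n}(x'\ub_1^\sigma)=\psi_{t_n}(x')\,\ub_2^{\,e^{-t_n}\,z\big(x'\ab_1^{-t_n},\,\sigma e^{t_n}\big)},
\]
obtained from $\psi(y\ub_1^r)=\psi(y)\ub_2^{z(y,r)}$ and the relations $\ab_i^t\ub_i^s=\ub_i^{se^t}\ab_i^t$, combined with $x'\ab_1^{-t_n}\in K_{\ub_1}(n)$ (since $V\subset\bigcap_n\phi_{A_1}^{t_n}(K_{\ub_1}(n))$), the choice $t_n=\log s_n^{1+\gamma}$, and \cref{thm:mixing_condition} applied to the $\ub_1$-orbit after its first visit to $\Good$ (which occurs before time $s_n\ll\sigma e^{t_n}$), forces $e^{-t_n}z(x'\ab_1^{-t_n},\sigma e^{t_n})\to\sigma$. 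Hence $\psi_{t_{n_k}}(x)$ converges for every $x\in\Omega$; denoting the limit by $\zeta(x)$, the map $\zeta\colon\Omega\to M_2$ is measurable as a pointwise limit of measurable maps, and passing to the limit in the displayed identity gives $\zeta(x\ub_1^t)=\zeta(x)\ub_2^t$ for all $x\in\Omega$ and $t\in\R$, as required. The main obstacle is the one flagged above — obtaining a point-independent subsequence — and it is exactly the plaque-propagation step that resolves it, by upgrading convergence at the single point $x_1$ to convergence on a full-measure set along one fixed $(n_k)$.
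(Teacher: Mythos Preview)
Your strategy coincides with the paper's: find a base point in $V$ where some subsequence converges, propagate via Lemmas~\ref{lemma:conmutationU} and~\ref{lemma:conmutationUbarA} to a positive-measure plaque, saturate under $\ub_1$, and invoke ergodicity. The paper reaches the base point through Borel--Cantelli on a compact exhaustion $K_n\uparrow M_2$ rather than your single-$K$ reverse-Fatou argument, and it parametrises the plaque by the transversal leaf $W(x^*,\delta)$ rather than a full neighbourhood, but these are cosmetic differences. Your explicit treatment of the $\ub_1$-saturation step (the displayed identity and the estimate $e^{-t_n}z(x'\ab_1^{-t_n},\sigma e^{t_n})\to\sigma$) is in fact more careful than the paper's one-line appeal to the lemmas.

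There is, however, one genuine gap. You assert that $P_1=\{x_1\gb:\gb\in W,\ x_1\gb_N\in V,\ x_1\gb\in V\}$ has positive measure ``because $x_1$ is a density point of $V$''. This does not follow: the constraint $x_1\gb_N\in V$ depends only on the $\gb_N$-coordinate, so in product coordinates $(\gb_N,\delta_0)$ it cuts out $A\times(-\delta,\delta)$ where $A=\{\gb_N:x_1\gb_N\in V\}$ is the slice of $V$ along the codimension-one normaliser leaf through $x_1$. A density point of $V$ can lie on a leaf with $|A|=0$ (delete that single leaf from $V$; nothing changes at the level of densities in $M_1$). The fix is easy: by Fubini for the foliation of $M_1$ by $N_{G_1}(\ub_1)$-orbits, $\mu_1$-a.e.\ point of $V$ lies on a normaliser leaf meeting $V$ in positive leaf measure; impose this as an extra condition on $x_1$ (it holds a.e.\ on $V$, so the intersection with $E$ still has positive measure). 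Then $|A|>0$, and intersecting $A\times(-\delta,\delta)$ with the nearly-full set $\{(\gb_N,\delta_0):x_1\gb\in V\}$ given by the density-point property yields $|P_1|>0$ as you need.
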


\begin{proof}
  We begin by showing that, on a set of full measure, the sequence of
  isomorphisms $\psi_n$ has a converging subsequence.

  Let $K_n$ be an exhaustion of compact sets of $M_2$ such that
  $\mu_2(M_2\setminus K_n)<2^{-n}$, for all $n\in\N$. Denote $K_n^\complement$
  the complement of $K_n$ inside $M_2$ and
  \[
    F_n = \phi_{A_1}^{t_n} \circ \psi^{-1} \circ \phi_{A_2}^{-t_n}
    (K_n^\complement).
  \]
  Since $\sum_{n=1}^\infty \mu_1(F_n) <\infty$, by the Borel-Cantelli lemma the
  set
  \[
    F = \{ x\in M_1 : x \text{ belongs to finitely many } F_n\}
  \]
  has full $\mu_1$-measure. If $x\in F$, then $\psi_{t_n}(x)$ belongs to
  finitely many sets $K_n^\complement$, so there exists a subsequence
  $n_k = n_k(x)$ such that $\psi_{t_{n_k}}(x)$ converges in $M_2$.
    
  Since the set $V$, defined in~\eqref{eq:defV}, has positive measure, and $F$
  has full measure, we can assume $V\subset F$. Moreover, there exists a point
  $x^* \in V$ such that, if $W(x^*,\delta)$ is the local stable leaf of $x^*$,
  then $V\cap W(x^*,\delta)$ has positive measure, with respect to the natural
  Riemannian volume on the leaf itself. As $x^*\in F$, there exists a
  subsequence $n_k = n_k(x^*)$ such that $\psi_{t_{n_k}}(x^*)$ converges in
  $M_2$. Let
  \[
    \Omega = \{x\ub_1^t, t\in\R, x\in V\cap W(x^*,\delta) \}.
  \]
  By construction, the set $\Omega$ is invariant under the flow defined by
  $\ub_1^t$, and has positive $\mu_1$-measure. By ergodicity, $\mu_1(\Omega)=1$.

  Thanks to \cref{lemma:conmutationU} and \cref{lemma:conmutationUbarA}, we can
  define $\zeta(x) = \lim_{k\to\infty} \psi_{t_{n_k}} (x)$, for every
  $x\in\Omega$, which satisfies $\zeta(x\ub_1^t) = \zeta(x)\ub_2^t$ for all
  $t\in\R$ and $x\in\Omega$.
\end{proof}

We can now complete the proof of \cref{thm:sl2xG}.

\begin{proof}[Proof of \cref{thm:sl2xG}]
  Let $\Omega$ and $n_k$ be given by \cref{cor:convergencepsi}. We assume, for
  simplicity, that $\Omega=M_1$ and $n_k = n$. Then, for every $x\in M_1$ we can
  define the function
  \[
    \zeta(x):=\lim_{n\to\infty} \psi_{t_n}(x),
  \]
  which satisfies
  \[
    \zeta(x\ub_1^t)=\zeta(x)\ub_2^t,
  \]
  for all $x\in M_1$ and all times $t\in\R$. In other words,
  $\zeta\colon M_1 \to M_2$ is a measurable conjugacy between the two unipotent
  flows $\phi_{U_1}^t$ and $\phi_{U_2}^t$. By the Rigidity Theorem
  in~\cite{Ratner:Raghunathan2}, there exists an isomorphism
  $\varpi\colon G_1\to G_2$, with
  $\varpi(\Gamma_1)\subset \overline{\gb}^{-1}\Gamma_2\overline{\gb}$, such that
  $\zeta(\Gamma_1 \xb) = \Gamma_2 \overline{\gb} \varpi(\xb)$, for $\mu_1$-a.e.\
  $\Gamma_1 \xb$. From here, the proof follows verbatim the one of
  \cref{thm:main}.
\end{proof}

%%% Local Variables:
%%% mode: latex
%%% TeX-master: "time_changes_unipotents"
%%% End:

\appendix
\section{A consequence of Chevalley's Lemma}\label{sec:appendix_Chevalley} In
this short appendix we prove a more precise version of
\cref{lemma:consequence_Chevalley}. We begin by recalling the setup.

Let $H < G$ be a Zariski closed subgroup of $G$ isomorphic to $\SL_2(\R)$, and
let $\mathfrak{s} < \mathfrak{g}$ be its Lie algebra. Let us write $\mathfrak{g}
= \mathfrak{m} \oplus \mathfrak{s}$, where $\mathfrak{m}$ is a
$\mathfrak{s}$-submodule of $\mathfrak{g}$. Then, any $\gb \in G$ with $d(\eb,
\gb)<1$ can be written in a unique way as $\gb =\exp(Y)\hb \in G$, with $\hb \in
H$ and $Y \in \mathfrak{m}$ such that $d(\eb, \hb)<1$ and $d(\eb, \exp(Y))<1$.
In the following lemma, we prove the converse result, namely we will show that
if an element $\gb \in G$ sufficiently close to the identity can be written as
$\gb =\exp(Y)\hb$ with $\hb \in H$ and $Y \in \mathfrak{m}$, then automatically
we have that $\exp(Y)$ and $\hb$ are close to the identity in $G$.

\begin{lemma}\label{lemma:consequence_Chevalley2} For every $\varepsilon>0$
there exists $\delta >0$ such that if $\gb=\exp(Y)\hb \in G$, with $\hb \in H$
and $Y \in \mathfrak{m}$, is such that $d(\eb, \gb) < \delta$, then $d(\eb, \hb)
< \varepsilon$ and $d(\eb, \exp(Y)) < \varepsilon$.
\end{lemma}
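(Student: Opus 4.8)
The plan is to use the Chevalley data fixed in \S\ref{sec:basis_module}: a rational representation $\phi$ of $\mathbf G$ on a finite-dimensional real space $V$ and a vector $v_0\in V$ with $\mathbf S=\Stab_{\mathbf G}(v_0)$. Since $H$ is Zariski closed, connected, and isomorphic to $\SL_2(\R)$, it coincides with $\mathbf S(\R)\cap G$, so $\Stab_G(v_0)=H$. The identity that drives everything is that for $\gb=\exp(Y)\hb$ with $Y\in\mathfrak m$, $\hb\in H$,
\[
  \phi(\gb)\,v_0=\phi(\exp Y)\,\phi(\hb)\,v_0=\phi(\exp Y)\,v_0,
\]
because $\phi(\hb)$ fixes $v_0$; hence $\phi(\exp Y)v_0$ is close to $v_0$ exactly when $\gb$ is close to $\eb$, with a control that does not involve $\hb$.

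First I would reduce the statement to a bound on $\exp Y$ alone. Writing $\hb=\exp(-Y)\gb$ and using the triangle inequality for the left-invariant (hence proper) distance $d$, once $\exp Y$ stays in a fixed bounded set one gets $d(\eb,\hb)\le C\,d(\eb,\exp Y)+d(\eb,\gb)$ with $C$ depending only on that set, and conversely a lower bound on $d(\eb,\hb)$ forces one on $d(\eb,\exp Y)$. I would also normalise the decomposition as in the preamble — assume $d(\eb,\exp Y)<r_1$ and $d(\eb,\hb)<r_1$, where $r_1>0$ is a fixed (small) constant, shrunk if necessary so that $(Y',\hb')\mapsto\exp(Y')\hb'$ is a diffeomorphism from a neighbourhood of $(0,\eb)$ in $\mathfrak m\times H$ onto $B(\eb,3r_1)$, which is legitimate since $\mathfrak m$ is transverse to $\mathfrak s$ at $\eb$. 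This keeps $\exp Y$ and $\hb$ in a fixed compact set.

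The core is then a compactness/contradiction argument. If the conclusion failed for some $\varepsilon>0$, there would be $\gb_n\to\eb$ with $\gb_n=\exp(Y_n)\hb_n$, $d(\eb,\exp Y_n),d(\eb,\hb_n)<r_1$, and — after passing to a subsequence and invoking the reduction — $d(\eb,\exp Y_n)\ge\varepsilon$. Extracting a further subsequence, $\exp(Y_n)\to g$ and $\hb_n\to g^{-1}$ with $d(\eb,g)\in[\varepsilon,r_1]$; letting $n\to\infty$ in the displayed identity gives $\phi(g)v_0=v_0$, so $g\in\Stab_G(v_0)=H$. It then remains to rule out such a nontrivial $g\in H$. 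For this one uses the transversality at $\eb$ together with the algebraic nature of the orbit: since $\mathbf S$ is reductive, the orbit $\mathbf Gv_0$ may be taken Zariski closed in $V$ (Matsushima), whence $Gv_0$ is closed in $V$ and the orbit map $G/H\to Gv_0$ is a homeomorphism; combined with the normalisation $d(\eb,\exp Y_n)<r_1$ this excludes the accumulation of the $\exp(Y_n)$ onto a point of $H$ other than $\eb$, forcing $g=\eb$ and the desired contradiction.

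The step I expect to be the main obstacle is precisely this last one. A priori $Y$ and $\hb$ need not be small, and the inverse function theorem only governs the \emph{unique} small decomposition; one really must exclude ``far'' decompositions — e.g.\ $\eb=\exp(Y)\hb$ with $\exp Y=\hb^{-1}$ a nontrivial (central) element of $H$ — and it is exactly here that Chevalley's lemma, rather than a soft manifold argument, is needed, and where the closedness of the orbit is used to control where the limit $g$ can lie. The remaining ingredients — the identification $\Stab_G(v_0)=H$, the passage from $\hb$ to $\exp Y$, and the metric comparisons on the fixed compact set — are routine.
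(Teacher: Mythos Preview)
Your outline and the paper's proof share the opening move---the Chevalley identity $\rho(\gb)v_0=\rho(\exp Y)v_0$---but then diverge. The paper argues directly: it considers the map $\mathfrak g\to V$, $X\mapsto\rho(\exp X)v_0-v_0$, asserts that its zero set is $\mathfrak s$, restricts to $\mathfrak m$ where it is treated as a linear isomorphism onto its image with bounded inverse, and reads off a bound on $\|Y\|_{\mathfrak g}$ from $\|\rho(\exp Y)v_0-v_0\|=\|\rho(\gb)v_0-v_0\|$. No compactness, no contradiction, no Matsushima, and no a priori bound on $Y$ enter the paper's argument.

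Your compactness route has a genuine gap at the very point you flag. The ``normalisation'' $d(\eb,\exp Y)<r_1$, $d(\eb,\hb)<r_1$ is not a hypothesis of the lemma---it is essentially its conclusion---so invoking it at the outset is circular. The remedy you propose (closed orbit via Matsushima, so that the orbit map $G/H\to Gv_0$ is a homeomorphism) yields only $\exp(Y_n)H\to eH$ in $G/H$: this says $\exp(Y_n)$ is close to \emph{some} element of $H$, not that it is close to $\eb$, and your final step (``combined with the normalisation \ldots\ this excludes the accumulation onto a point of $H$ other than $\eb$'') leans on the unjustified bound once more. The difficulty is concrete: with $G=\SL_2(\C)$ viewed as a real semisimple linear group, $H=\SL_2(\R)$, and $\mathfrak m=i\,\mathfrak{sl}_2(\R)$ the $\Ad(H)$-invariant complement, one has $\exp\bigl(i\pi\operatorname{diag}(1,-1)\bigr)=-I\in H$, so $\eb=(-I)\cdot(-I)$ is a decomposition of the identity with both factors at a fixed positive distance from $\eb$---precisely the ``far'' decomposition you worry about, and your contradiction scheme as written has no mechanism to exclude it.
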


In the proof of this result, we will use Chevalley's Lemma, whose proof can be
found in, e.g., \cite[7.9]{Borel:IntroGAri} or \cite[3.1.4]{Zimmer:Erg_th}.

\begin{lemma}[Chevalley's Lemma]\label{lemma:Chevalley} Let $G$ be a linear
algebraic group over $\C$ and let $H <G$ be a Zariski closed subgroup. Assume
that $H$ is semisimple. There exist a rational representation $\rho \colon G \to
\GL(V)$, where $V$ is a finite-dimensional vector space, and a vector $v \in V$
such that $H= \Stab_G(v)$. 
\end{lemma}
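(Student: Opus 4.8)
The plan is to derive the vector‑stabiliser statement from the classical line‑stabiliser form of Chevalley's theorem, and then to close the line/vector gap using that a semisimple group has no nontrivial rational characters.

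\emph{Step 1: producing a line stabilised exactly by $H$.} Since $H$ is Zariski closed, its ideal $I(H)\subset\C[G]$ is finitely generated (as $\C[G]$ is Noetherian), say by $f_1,\dots,f_r$. The left regular representation $\lambda$ of $G$ on $\C[G]$, $(\lambda_g f)(x)=f(g^{-1}x)$, is rational and locally finite, so there is a finite‑dimensional $\lambda$‑invariant subspace $W\subset\C[G]$ with $f_1,\dots,f_r\in W$. Put $M:=W\cap I(H)$ and $d:=\dim M$. First I would verify $\{g\in G:\lambda_g M=M\}=H$: the inclusion $\supseteq$ is clear, and conversely, since $M$ contains the generators $f_i$, the ideal generated by $M$ is $I(H)$, so $\lambda_g M=M$ forces $\lambda_g I(H)=I(H)$; as the vanishing locus of $\lambda_g I(H)$ is $gH$, this gives $gH=H$, i.e.\ $g\in H$. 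Now take $V:=\bigwedge^{d}W$, with the rational representation $\rho$ given by $\rho(g):=\bigwedge^{d}\bigl(\lambda_g|_W\bigr)$, and let $L:=\bigwedge^{d}M\subset V$, a line. Choosing a basis of $M$, one checks that $\rho(g)$ stabilises the line $L$ if and only if $\lambda_g M=M$; hence $\Stab_G(L)=H$.

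\emph{Step 2: upgrading the line to a vector.} Fix a nonzero $v\in L$. Since $H$ preserves $L\cong\C$, it acts through a rational character $\chi\colon H\to\C^{\times}$ with $\rho(h)v=\chi(h)v$. As $H$ is semisimple it is perfect, $H=[H,H]$, so it admits no nontrivial morphism to the abelian group $\C^{\times}$; thus $\chi\equiv1$ and $H\subseteq\Stab_G(v)$. Conversely $\Stab_G(v)\subseteq\Stab_G(\C v)=\Stab_G(L)=H$, whence $H=\Stab_G(v)$, which is the claim for this $V$, $\rho$, $v$.

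The routine ingredients — finite generation of $I(H)$, local finiteness of $\lambda$, and the exterior‑power bookkeeping — are standard; the only step deserving care is the identity $\Stab_G(L)=H$ in Step 1, i.e.\ that $M$ is simultaneously finite‑dimensional and large enough to detect $H$, which is precisely where finite generation of $I(H)$ and local finiteness of $\lambda$ enter. Step 2 is immediate once one invokes triviality of the character group of a connected semisimple group, so I do not expect a genuine obstacle beyond assembling these pieces in the right order.
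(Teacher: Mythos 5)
The paper gives no proof of this lemma at all — it only points to \cite[7.9]{Borel:IntroGAri} and \cite[3.1.4]{Zimmer:Erg_th} — and your argument is exactly the standard proof found there: realise $H$ as the stabiliser of the line $\bigwedge^{d}(W\cap I(H))$ inside $\bigwedge^{d}W$ for a finite-dimensional $\lambda$-invariant $W$ containing generators of $I(H)$, and then pass from the line to a vector using that a semisimple group has no nontrivial rational characters. The argument is correct; the only point worth a word is that the perfectness $H=[H,H]$ invoked in Step~2 uses that $H$ is connected, which is part of the usual convention for ``semisimple'' and is satisfied in the paper's application (there $H\cong\SL_2$).
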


\subsubsection{Proof of \cref{lemma:consequence_Chevalley2}}

Let $V,v$, and $\rho$ be given by Chevalley's Lemma. In the following, we will
identify the tangent space $T_vV$ at $v$ with $V$, so that we can simply write
the exponential map $\exp_{T_vV} \colon T_v V \simeq V \to V$ as $\exp_{T_vV}(w)
= v+w$. Let us also fix a norm $\| \cdot \|$ on $V$, and for simplicity assume
$\|v\| = 1$. 

Call $e_v \colon \GL(V) \to V$ the evaluation map $e_v(L) = Lv$. Note that we
can write its differential $\diff e_v \colon \mathfrak{gl}(V) \to V$ as $\diff
e_v (A)= \exp_{\mathfrak{gl}(V)}(A)v - v$. Indeed, if $A \in \mathfrak{gl}(V) $,
then we have
$$
\diff e_v(A) + v = (\exp_{T_vV}\circ \diff e_v)(A) 
= (e_v \circ \exp_{\mathfrak{gl}(V)})(A) = \exp_{\mathfrak{gl}(V)}(A)v, 
$$
which proves the claim.

Define $\Phi = e_v \circ \rho \colon G \to V$. Then, by the chain rule, we can
write its differential $\diff \Phi \colon \mathfrak{g} \to T_vV\simeq V$ as 
$$
\diff \Phi \colon X \mapsto \exp_{\mathfrak{gl}(V)}(\diff \rho(X))v - v 
= \rho(\exp(X)).v-v.
$$
It is immediate to see that $\diff \Phi(X) = 0$ if and only if
$\rho(\exp(X)).v=v$; that is, if and only if $\exp(X) \in \Stab_G(v)=H$. This
shows that $\ker \diff \Phi = \mathfrak{s}$. Identifying $\mathfrak{m}$ with
$\mathfrak{g} / \mathfrak{s}$, we obtain a linear isomorphism (which, by a
little abuse of notation, we still denote by $\diff \Phi $) between
$\mathfrak{m}$ and the image $\diff \Phi (\mathfrak{g})$. 

Let now $\varepsilon >0$ be fixed. There exists $\delta_0 >0$ such that if
$\|X\|_{\mathfrak{g}}<\delta_0$ then $d(\eb, \exp(X)) < \varepsilon /2$. Since
the inverse $(\diff \Phi)^{-1}$ is a linear, and hence continuous, map, there
exists $\delta_1 >0$ such that for all $w \in \diff \Phi (\mathfrak{g})$ with
$\|w\|\leq \delta_1$, we have $\|(\diff \Phi)^{-1}w\|_{\mathfrak{g}}\leq
\delta_0$. Finally, by continuity of $\rho$, let $\delta>0$ be such that if $\gb
\in G$ is such that $d(\eb, \gb) < \delta$, then $\|\rho(\gb) - \Id\| <
\delta_1$. Without loss of generality, we can assume that $\delta < \varepsilon
/2$.

Let $\gb=\exp(Y)\hb \in G$ be such that $d(\eb, \gb) < \delta$. Then, we have
$$
\| \rho(\exp(Y)).v - v\| = \| \rho(\exp(Y)) \rho(\hb) .v - v\| = 
\| \rho(\gb).v - v\| \leq \| \rho(\gb) - \Id\| < \delta_1, 
$$
and, clearly, $\rho(\exp(Y)).v - v = \diff \Phi(Y) \in \diff \Phi
(\mathfrak{g})$. Since $Y \in \mathfrak{m}$, we deduce that $Y = (\diff
\Phi)^{-1} (\diff \Phi(Y))$, and therefore 
$$
\|Y\|_{\mathfrak{g}} = \| (\diff \Phi)^{-1}(\rho(\exp(Y)).v - v)\| < \delta_0. 
$$
We conclude that $d(\eb,\exp(Y)) < \varepsilon /2$ and $d(\eb,\hb) =
d(\eb,\exp(-Y)\gb) \leq \delta +\varepsilon /2 < \varepsilon $, hence the proof
is complete.

\bibliographystyle{amsalpha}
\bibliography{time_changes_unipotents.bib}

\end{document}

\typeout{get arXiv to do 4 passes: Label(s) may have changed. Rerun}